\title[]{Real submanifolds of maximum complex tangent space at a CR singular point, I}
\author[]{Xianghong Gong}
\address{Department of Mathematics,
 University of Wisconsin, Madison, WI 53706, U.S.A.}
 \email{gong@math.wisc.edu}
\author{Laurent Stolovitch}
\address{CNRS and Laboratoire J.-A. Dieudonn\'e
U.M.R. 6621, Universit\'e de Nice - Sophia Antipolis, Parc Valrose
06108 Nice Cedex 02, France.}
\email{stolo@unice.fr}
\thanks{Research of L. Stolovitch was partially supported by ANR grant ``ANR-10-BLAN 0102'' for the project DynPDE and ANR grant ``ANR-14-CE34-0002-01'' for the project ``Dynamics and CR geometry''}
 \keywords{ Local analytic geometry, CR singularity, normal form, integrability, reversible mapping, linearization, small divisors, hull of holomorphy}
 \subjclass[2010]{32V40, 37F50, 32S05, 37G05}
\newtheorem{thm}{Theorem}[section]
\newtheorem{cor}[thm]{Corollary}
\newtheorem{prop}[thm]{Proposition}
\newtheorem{lemma}[thm]{Lemma}
\newcommand{\diag}{\operatorname{diag}}
\theoremstyle{definition}
\newtheorem{defn}[thm]{Definition}
\newtheorem{exmp}[thm]{Example}
\newtheorem{rem}[thm]{Remark}
\renewcommand{\th}[1]{\begin{thm}\label{#1}}
\newcommand{\eth}{\end{thm}}
\newcommand{\co}[1]{\begin{cor}\label{#1}}
\newcommand{\eco}{\end{cor}}
\renewcommand{\le}[1]{\begin{lemma}\label{#1}}
\newcommand{\ele}{\end{lemma}}
\newcommand{\pr}[1]{\begin{prop}\label{#1}}
\newcommand{\epr}{\end{prop}}
\newcommand{\ga}{\begin{gather}}
\newcommand{\ega}{\end{gather}}
\newcommand{\gan}{\begin{gather*}}
\newcommand{\egan}{\end{gather*}}
\newcommand{\al}{\begin{align}}
\newcommand{\eal}{\end{align}}
\newcommand{\aln}{\begin{align*}}
\newcommand{\ealn}{\end{align*}}
\newcommand{\eq}[1]{\begin{equation}\label{#1}}
\newcommand{\eeq}{\end{equation}}
\newcommand{\ci}{~\cite}
\newcommand{\f}[2]{\frac{#1}{#2}}
\newcommand{\fix}{\operatorname{Fix}}
\newcommand{\cc}{{\bf C}}
\newcommand{\nn}{{\bf N}}
\newcommand{\zz}{{\bf Z}}
\newcommand{\rr}{{\bf R}}
\newcommand{\ov}{\overline}
\newcommand{\id}{\operatorname{I}}
\newcommand{\RE}{\operatorname{Re}}
\newcommand{\IM}{\operatorname{Im}}
\newcommand{\cL}{\mathcal}
\newcommand{\I}{\operatorname{I}}
\newcommand{\all}{\alpha}
\newcommand{\gaa}{\gamma}
\newcommand{\del}{\delta}
\newcommand{\Del}{\Delta}
\newcommand{\var}{\varphi}
\newcommand{\e}{\epsilon}
\newcommand{\om}{\omega}
\newcommand{\Om}{\Omega}
\newcommand{\la}{\lambda}
\newcommand{\pd}{\partial}
\newcommand{\re}[1]{(\ref{#1})}
\newcommand{\rea}[1]{$(\ref{#1})$}
\newcommand{\rl}[1]{Lemma~\ref{#1}}
\newcommand{\rp}[1]{Proposition~\ref{#1}}
\newcommand{\rt}[1]{Theorem~\ref{#1}}
\newcommand{\rd}[1]{Definition~\ref{#1}}
\newcommand{\rla}[1]{Lemma~$\ref{#1}$}
\newcommand{\rta}[1]{Theorem~$\ref{#1}$}
\newcounter{pp}
\newcommand{\bpp}{\begin{list}{$\hspace{-1em}\alph{pp})$}{\usecounter{pp}}}
\newcommand{\epp}{\end{list}}
\newcounter{ppp}
\newcommand{\bppp}{\begin{list}{$\hspace{-1em}(\roman{ppp})$}{\usecounter{ppp}}}
\newcommand{\eppp}{\end{list}}
\def\beq{\begin{equation}}
\def\eeq{\end{equation}}
\begin{document}


\begin{abstract}
We study a germ of  real analytic $n$-dimensional submanifold  of ${\mathbf C}^n$ that has a complex tangent space of maximal dimension at a CR singularity.
Under some assumptions, we show its equivalence to a normal form under a local biholomorphism at the singularity.
 We also show that if a real submanifold is formally equivalent 
 to a quadric,  it is actually holomorphically equivalent to it,  if a small divisors  condition is satisfied.
 Finally, we investigate the existence of   a complex submanifold of positive dimension in ${\mathbf C}^n$  that intersects
 a real submanifold along two totally and real analytic submanifolds that intersect transversally at a possibly non-isolated CR singularity.
\end{abstract}

\date{\today}
 \maketitle
\tableofcontents

\addtocontents{toc}{\protect\setcounter{tocdepth}{1}}

\setcounter{section}{0}
\setcounter{thm}{0}\setcounter{equation}{0}
\section{Introduction and main results}
\label{sect1}

\subsection{Introduction}
We are concerned with the local holomorphic invariants of a real analytic submanifold $M$ in $\cc^n$.
The tangent space of $M$ at a point $x$ contains a maximal complex subspace of dimension $d_{x}$. When  
$d_x$ is constant,
$M$ is called a  Cauchy-Riemann (CR) submanifold. The CR  submanifolds have been extensively studied since E.~Cartan \cite{Ca32}, \cite{Ca33},  Tanaka~\cite{Ta62}, and Chern-Moser~\cite{chern-moser}.

We say that a point $x_0$ in
 a  real submanifold
 $M$ in $\cc^n$ is  a 
 CR singularity,
 if the complex tangent spaces $T_xM\cap J_xT_xM$  do not
have a constant dimension in any neighborhood of $x_0$. A real submanifold with a CR singularity must have codimension at least $2$.
The study of real submanifolds with CR singularities was initiated by E.~Bishop in his pioneering work~\cite{Bi65}. He investigated  a $C^\infty$ real submanifold $M$ of which
the complex tangent space  at a CR singularity
is minimal, that is exactly one-dimensional.
 The very elementary models of this kind of manifolds are    the Bishop quadrics $Q_\gaa$ that depends on the
 Bishop invariant $0\leq\gaa\leq\infty$, given by
$$
Q_\gaa\subset\cc^2\colon z_2=|z_1|^2+\gaa(z_1^2+\ov z_1^2), \quad 0\leq\gaa<\infty; 
\quad Q_\infty\colon z_2=z_1^2+\ov z_1^2.
$$
 The complex tangent at the origin  is said to be {\it elliptic} if $0\leq\gaa<1/2$,  {\it parabolic} if $\gaa=1/2$, or {\it hyperbolic} if $\gaa>1/2$.
In ~\cite{MW83}, Moser and Webster studied the normal form problem of  a real analytic surface
$M$ in $\cc^2$
which is the higher order perturbation of $Q$. They showed that
 when $0<\gaa<1/2$, $M$ is holomorphically equivalent, near the origin,  to a normal form which
is  an algebraic surface that depends only on $\gaa$ and two discrete invariants.
We  mention that  the Moser-Webster normal form  theory, as in Bishop's work,
actually deals with an $n$-dimensional
real submanifold $M$ in $\cc^n$,  of which
the complex tangent space has   (minimum) dimension $1$
at a CR singularity.

The main purpose of this paper is to investigate an  $n$-dimensional real analytic  submanifold $M$ in $\cc^n$, which is totally real outside a proper analytic subset and of which the complex tangent space has the
{\it largest}
possible
dimension at a  given CR singularity. 
 We shall say that the singularity is a (maximal) complex tangent. The dimension must be $p=n/2$. Therefore,  $n=2p$ is even.
   We are interested in
the normal form problem, the rigidity property,  and  the local analytic geometry  of such real analytic manifolds.

In  suitable holomorphic coordinates, a $2p$-dimensional real analytic submanifold $M$ in $\cc^{2p}$
 that has a complex tangent space of maximum dimension at the origin  is given by
\eq{mzpjintr}
M\colon z_{p+j}=E_j(z',\ov z'),
\quad 1\leq j\leq p,
\eeq
where $z'=(z_1,\ldots, z_p)$ and
$$
E_j(z',\ov{z'})=h_j(z',\ov z')+q_j(\ov z') + O(|(z',\ov z')|^3). 
$$
Moreover,  each  $h_j(z',\ov z')$ is a homogeneous quadratic polynomial in $z',\ov z'$ without holomorphic or anti-holomorphic terms,
  and each $q_j(\ov z')$ is a homogeneous quadratic polynomial in $\ov z'$.
  One of our goals is to seek suitable normal forms
  of perturbations of quadrics at the CR singularity (the origin). 



 \subsection{Basic invariants}
To study $M$, we consider its complexification in $\cc^{2p}\times\cc^{2p}$ defined by
\begin{equation}
{\mathcal M}\colon
\begin{cases}
z_{p+i} = E_{i}(z',w'), & i=1,\ldots, p,
\\
w_{p+i} = \ov{ E_i}(w',z'),& i=1,\ldots, p.\\
\end{cases}
\nonumber
\end{equation}
 It is a complex submanifold of complex
dimension $2p$ with coordinates $(z',w')\in\cc^{2p}$.  Let $\pi_1,\pi_2$ be the restrictions of the projections $(z,w)\to z$
and   $(z,w)\to w$ to $\cL M$, respectively.
Note that $\pi_2=C\pi_1\rho_0$, where  $\rho_0$ is the restriction to $\cL M$ of the anti-holomorphic involution $(z,w)\to(\ov w,\ov z)$ and $C$ is the complex conjugate.

  Our basic assumption is the following condition.

\medskip
\noindent
{\bf Condition B.}  $q(z')=(q_1(z'),\ldots, q_p(z'))$ satisfies $q^{-1}(0)=\{0\}.$
\medskip

When $p=1$, condition B corresponds to 
 the non-vanishing of  the Bishop invariant $\gaa$.
  When $\gaa=0$,  Moser \cite{moser-zero} obtained a  formal  normal form that is still subject to
  further  formal changes of coordinates.
In \cite{HY09},  Huang and Yin obtained a formal normal form and a complete holomorphic classification for real analytic surfaces with $\gaa=0$.
    The formal normal forms for co-dimension two real submanifolds  in $\cc^n$ have been further studied
  by Huang-Yin~\cite{HY12} and Burcea~\cite{Bu13}.
  Coffman~\cite{Co06} showed that  any
 $m$ dimensional real analytic submanifold in $\cc^n$ of  one-dimensional complex tangent
 space at  a  CR singularity satisfying certain non-degeneracy conditions
 is locally holomorphically equivalent to a unique algebraic submanifold,
 provided $2(n+1)/3\leq m<n$.

	
When $M$ is a {\it quadric}, i.e. all $E_j$ in \re{mzpjintr} are   quadratic  polynomials, our basic condition~B
is equivalent to $\pi_1$ being a $2^p$-to-1 branched covering. 
Since $\pi_2=C\pi_1\rho_0$, then $\pi_2$ is also a $2^p$-to-$1$ branched covering.
We will see that the CR singularities of the real submanifolds are closely connected with these branched coverings and their deck transformations.


\medskip

We now introduce our main results.
Some  of them are analogous to the Moser-Wester theory.
 We will also describe new situations which arise with the maximum complex tangency.

\subsection{Branched coverings and deck transformations}
In section~\ref{secinv}, we study the existence of deck transformations for $\pi_1$.
We   show that they must be involutions generating 
 an abelian group of order $2^k$ for some $0\leq k\leq p$. The latter is a major difference with the case $p=1$.
 Indeed, 
in  the Moser-Webster theory, 
the group of deck transformations is generated by a unique non-trivial involution $\tau_1$.
Therefore, we will
impose the following condition.

\medskip
\noindent
{\bf Condition D.}  {\it $M$ satisfies condition $B$ and the branched   covering   $\pi_1$ 
of $\cL M$
admits the maximum $2^p$ deck transformations.}
\medskip

Condition D gives rise to two families of commuting involutions $\{\tau_{i1},\ldots, \tau_{i2^p}\}$
intertwined by the anti-holomorphic
involution $\rho_0\colon(z',w')\to(\ov w',\ov z')$ such that $\tau_{2j}=\rho_0\tau_{1j}\rho_0$ $(1\leq j\leq 2^p)$ are deck
transformations of $\pi_2$.    We will call $\{\tau_{11},\ldots, \tau_{12^p},\rho_0\}$ the set of {\it Moser-Webster involutions}.
We will show that there is a unique set of $p$ generators for the deck transformations  of $\pi_1$, denoted by $\tau_{11},\ldots, \tau_{1p}$, such  that each $\tau_{1j}$   fixes a hypersurface in $\cL M$ pointwise.  Then
$$\tau_1=\tau_{11}\circ\cdots\circ\tau_{1p}$$
 is the unique deck transformation of which the fixed-point set   has the smallest dimension $p$.
 Let  $\tau_2=\rho_0\tau_1\rho_0$ and
  $$
 \sigma=\tau_1\tau_2.
 $$
 Then $\sigma$ is {\it reversible} by $\tau_j$ and $\rho_0$, i.e.
 $\sigma^{-1}=\tau_j\sigma\tau_j^{-1}$ and $\sigma^{-1}=\rho_0\sigma\rho_0$.

 As in the Moser-Webster theory,  we will show that the existence of such $2^p$ deck transformations    transfers the normal form problem for the real submanifolds into the normal form problem for the sets
  of involutions $\{\tau_{11},\ldots,\tau_{1p},
  \rho_0\}$.

In this paper we will make the following assumption.

 \medskip
 \noindent
{\bf Condition J.} {\it   $M$ satisfies condition D and  $M$ is diagonalizable, i.e.
   $\sigma'(0)$ is diagonalizable.}


\medskip


Note that the condition excludes the higher dimensional analogous complex tangency of {\it parabolic} type, i.e. of $\gaa=1/2$.
The normal form problem for the parabolic complex tangents has been studied by Webster~\cite{We92}, and in~\cite{Go96} where the normalization is divergent in general.  In~\cite{AG09}, Ahern and Gong constructed a moduli space for real analytic submanifolds that are formally equivalent to the Bishop quadric with $\gamma=1/2$.

\subsection{Product quadrics} In this paper,
the basic model for quadric manifolds with   a CR singularity satisfying condition J is a product of  3 types of quadrics defined by
\begin{gather}\label{Qgs2}
 Q_{\gamma_e}\subset\cc^2 \colon z_{2}= (z_1+2\gaa_e\ov z_1)^2;\\
 Q_{\gamma_h}\subset\cc^2\colon z_{2}= (z_{1}+2\gaa_{h}\ov z_1)^2, \ 1/2<\gaa_h<\infty;\quad Q_\infty\colon z_2=z_1^2+\ov z_1^2;\\
 Q_{\gamma_s}\subset\cc^4\colon z_{3}= (z_1+2\gamma_s\ov z_{2})^2,
\quad z_{4}=( z_{2}+2
(1-\ov\gamma_{s})  \ov z_{1})^{2}.
\label{Qgs2+}
\end{gather}
Here $\gaa_s\in\cc$ and
\eq{0ge1}
0<\gamma_e<1/2, \quad
1/2<\gamma_h\leq\infty, \quad \RE
\gaa_s\leq1/2, \quad  \IM\gaa_s\geq0, \quad\gaa_s\neq0,1/2.
\eeq
Note that $Q_{\gaa_e}, Q_{\gaa_h}$ are elliptic and hyperbolic Bishop quadrics, respectively.  Realizing
a type of pairs
 of involutions  introduced in~\cite{St07}, we will say that the complex tangent of $Q_{\gaa_s}$ at the origin is {\it complex}.  We emphasize that this last type of quadric
 is new 
as it 
 is not holomorphically equivalent to a product of two Bishop surfaces.
  A product of the above quadrics
 will be called a {\em product of quadrics}, or a {\it product quadric}.
 We denote by $e_*,h_*, 2s_*$
  the number of elliptic, hyperbolic and complex coordinates, respectively.  We remark that the complex tangent of complex type has another basic model $Q_{\gamma_s}$ with $\gamma_s=1/2$, which is excluded by condition J (see \rp{sigs}).

\bigskip

%
%

This  is the first part of two papers devoted to the local study of real analytic manifold at maximal complex tangent point.
To limit its scope, we have to leave the complete classification of quadratic submanifolds of maximum deck transformations to  the second paper~\cite{part2} (see Theorem 1.1 therein), showing that there are  quadratic manifolds which are not holomorphically equivalent to a product quadric.  In~\cite{part2}, we also  show that all Poincar\'e-Dulac normal forms of the $\sigma$ of a general   higher order perturbation of a product quadric are divergent when $p>1$. With the divergent Poincar\'e-Dulac normal forms at our disposal,  we seek types of CR singularities that ensure  the convergent normalization and the analytic structure of the hull of holomorphy  associated with the types of CR singularities.

 We now introduce our main geometrical and dynamical results for analytic higher order perturbations of product quadric.
We first turn to  a holomorphic normalization of a real analytic submanifold $M$ with the so-called abelian CR singularity.  This will be achieved by
studying an integrability problem on a general family of commuting biholomorphisms described below. The holomorphic normalization will
be used to construct the local hull of holomorphy of $M$.
 We will also study the
 rigidity problem 
 of a quadric under higher order analytic perturbations,
 i.e.  the problem if such a perturbation  remains holomorphically equivalent to the quadric if  it is   formally equivalent
 to the quadric.
  The rigidity problem is  reduced to a theorem of holomorphic linearization of one or several commuting diffeomorphisms   that was devised in \cite{stolo-bsmf}.
 Finally,  we will study the existence of
 holomorphic  submanifolds attached to the real submanifold $M$. These are complex submanifolds of dimension $p$ intersecting $M$ along two totally real analytic submanifolds that intersect  transversally at a CR singularity.  Attaching complex submanifolds has less constraints than
 finding a convergent normalization.
 A remarkable feature
of   attached complex submanifolds is that their existence depends only on the existence of suitable (convergent)
 invariant submanifolds of $\sigma$. 

\subsection{Normal form of commuting biholomorphisms}
\begin{defn}
Let $\cL F=\{F_1,\ldots, F_\ell\}$ be a finite family of germs of biholomorphisms of $\cc^n$
fixing the origin.
 Let $D_m$ be the linear part of $F_m$ at the origin.  We say that the family $\cL F$ is (resp. formally)  {\it completely integrable}, if there is a (resp. formal) biholomorphic mapping $\Phi$ such that
$\{\Phi^{-1}F_m\Phi\colon 1\leq m\leq \ell\}=\{\hat F_m\colon 1\leq m\leq \ell\}$ satisfies
\bppp
\item $\hat F_m(z)=(\mu_{m 1}(z)z_1,\ldots, \mu_{m n}(z)z_n)$ where $\mu_{mj}$ are germs of holomorphic (resp. formal) functions such that $\mu_{m j}\circ D_{m'}=\mu_{m j}$ for
 $1\leq m,m'\leq \ell$  and $ 1\leq j\leq n$.   In particular,  $\hat F_m$ commutes with $D_{m'}$ for all $1\leq m,m'\leq\ell$.
\item  For each $j$ and each $Q\in\nn^n$ with $|Q|>1$,  $\mu_m^Q(0)=\mu_{m j}(0)$ hold for all $m$ if and only if
 $\mu_{m}^Q(z)=\mu_{mj}(z)$ hold  for all $m$.
\eppp
\end{defn}
A necessary condition for $\cL F$ to be formally completely integrable is that $F_1,\ldots, F_\ell$ commute pairwise.
The main result of section~\ref{nfcb} is the following.
\begin{thm} \label{lFba}
Let $\cL F$ be a family of finitely many
germs of biholomorphisms  at the origin.
If $\cL F$ is formally completely integrable and its linear part $\cL D$  has the Poincar\'e type, then it is holomorphically  completely integrable.
\end{thm}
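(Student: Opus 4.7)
\textbf{Proof plan for Theorem~\ref{lFba}.}

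The strategy is to convert the formal normalization into a convergent one by exploiting the Poincar\'e condition to control the small divisors that arise in the cohomological equations governing the normalization, in the spirit of the classical Poincar\'e--Dulac theorem and its commuting-family generalization developed in \cite{stolo-bsmf}.

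First, I would observe that the invariance property $\mu_{mj}\circ D_{m'}=\mu_{mj}$ forces every non-constant Taylor coefficient of $\mu_{mj}$ to be supported on multi-indices $Q$ that are jointly resonant, meaning $\mu_{m'}^Q(0)=1$ for every $m'$. Under the Poincar\'e assumption the eigenvalues of each $D_{m'}$ all lie inside or all lie outside the unit disk, so $|\mu_{m'}^Q(0)|\to 0$ or $|\mu_{m'}^Q(0)|\to\infty$ as $|Q|\to\infty$. Hence the joint resonance equations $\mu_{m'}^Q(0)=1$ admit only finitely many solutions $Q\in\nn^n$, so each $\mu_{mj}$ is in fact a polynomial and the target family $\{\hat F_m\}$ consists of polynomial biholomorphisms of uniformly bounded degree.

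Next, I would write $\Phi=I+\phi$ and expand the conjugation equation $F_m\circ\Phi=\Phi\circ\hat F_m$ in homogeneous degrees. At total degree $k$ and in the direction of the $j$-th coordinate, the coefficient $\phi_{k,Q,j}$ of the monomial $z^Q$ with $|Q|=k$ satisfies a standard cohomological equation
\begin{equation*}
\bigl(\mu_m(0)^Q-\mu_{mj}(0)\bigr)\,\phi_{k,Q,j}=R_{k,Q,j,m}
\end{equation*}
for every $m$, in which $R_{k,Q,j,m}$ is a polynomial expression in the Taylor coefficients of $F_m$, $\hat F_m$, and $\phi$ of total degree strictly below $k$. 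Condition (ii) in the definition of complete integrability says that when all divisors $\mu_m(0)^Q-\mu_{mj}(0)$ vanish simultaneously, the resonant contribution has already been absorbed into $\mu_{mj}(z)$, so that the corresponding $R_{k,Q,j,m}$ must vanish and one may take $\phi_{k,Q,j}=0$. For every remaining triple $(Q,j)$ at least one index $m$ gives a non-zero divisor, and the Poincar\'e condition supplies a uniform lower bound
\begin{equation*}
\min_{m}\bigl|\mu_m(0)^Q-\mu_{mj}(0)\bigr|\geq c>0
\end{equation*}
for all $|Q|$ larger than some fixed $N$, because in that range one of $|\mu_m(0)^Q|$ and $|\mu_{mj}(0)|$ tends to $0$ exponentially while the other remains bounded away from it. The finitely many $|Q|\leq N$ are handled by the formal integrability hypothesis directly.

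With uniform lower bounds on the divisors in hand, I would complete the argument by the classical majorant method: build a geometric majorant for $\phi$ by dividing each $R_{k,Q,j,m(Q,j)}$ by its selected divisor, dominate it by the Cauchy majorants of $F_m$ and $\hat F_m$, and verify that the resulting functional inequality admits a convergent solution in a polydisc of positive radius. The main obstacle I anticipate is the careful bookkeeping required to select, for each non-resonant $(Q,j)$, a single index $m=m(Q,j)$ providing the controlled divisor while maintaining simultaneous consistency with the cohomological equations indexed by the other values of $m$; condition (ii) is precisely what guarantees this consistency once all joint-resonant terms have been absorbed into the $\mu_{mj}(z)$. A clean way to finish is to invoke the main theorem of \cite{stolo-bsmf} on holomorphic normalization of commuting diffeomorphisms as a black box, specialized to the integrable target $\{\hat F_m\}$ identified above.
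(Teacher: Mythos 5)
Your first reduction — that the Poincar\'e type hypothesis forces all eigenvalues of each $D_{m'}$ to lie inside or all outside the unit disk, hence only finitely many joint resonances $\mu_{m'}^Q(0)=1$, hence each $\mu_{mj}$ is a polynomial — is false for this paper's notion of Poincar\'e type, and the rest of the argument rests on it. Definition~\ref{small divisors} is a \emph{collective} condition: it only says that for each \emph{non-resonant} pair $(j,Q)$ there exists some index $i$ and a representative $Q'$ (with the same $\mu_k^{Q'}=\mu_k^Q$ for all $k$) for which $\max(|\mu_i^{Q'}|,|\mu_i^{-Q'}|)>c^{-1}d^{|Q'|}$. It imposes no constraint on the resonant pairs, which may be infinite in number. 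In the central application to $\cL S=\{S_1,\ldots,S_p\}$, each $S_j$ has eigenvalues $\mu_j$, $\mu_j^{-1}$, and $1$ with multiplicity $2p-2$ (so eigenvalues on the unit circle), the joint resonant ideal is generated by $\xi_1\eta_1,\ldots,\xi_p\eta_p$ and is infinite-dimensional, and the normal-form multipliers $\Lambda_{1j}(\xi\eta)$ in Theorem~\ref{abelinv} are genuine power series, not polynomials. So the target family $\{\hat F_m\}$ cannot be assumed polynomial.

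This matters for the rest of the argument. Because $\hat F_m$ is not linear (nor polynomial), the conjugacy equation $F_m\circ\Phi=\Phi\circ\hat F_m$ does not reduce degree by degree to the scalar cohomological equation $(\mu_m(0)^Q-\mu_{mj}(0))\phi_{k,Q,j}=R_{k,Q,j,m}$ with constant divisors: the term $\phi(\hat{\mathbf D}_m(x)x)-{\mathbf D}_m\phi(x)$ mixes degrees through the nonlinear part of $\hat F_m$. The paper instead observes that, because $\phi$ is normalized and the $\hat\mu_{mj}$ are invariant, $\phi(\hat{\mathbf D}_i(x)x)$ contains only non-resonant monomials, and it groups monomials into equivalence classes $\delta$ so that the projected equation becomes an exact power-series identity $\bigl[\hat\mu_i^{Q_\delta'}(x)-\mu_{ij_\delta}\bigr]\phi_\delta(x)=\{f_i(\Phi)\}_\delta(x)$, whose \emph{power-series} divisor $\hat\mu_i^{Q_\delta'}(x)-\mu_{ij_\delta}$ is a unit that must be majorized uniformly over $\delta$ — that is precisely where the two alternatives $|\mu_i^{Q'}|\leq cd^{-|Q'|}$ or $|\mu_i^{-Q'}|\leq cd^{-|Q'|}$ enter, and it is a more delicate step than a uniform constant lower bound on $|\mu_m(0)^Q-\mu_{mj}(0)|$. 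Finally, invoking \cite{stolo-bsmf} as a black box does not close the gap either: that theorem is a Brjuno-type result on linearization along an ideal, its hypotheses are not the Poincar\'e-type condition used here, and the present theorem is proved directly in Section~\ref{nfcb} rather than reduced to it.
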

The definition of   Poincar\'e type is in Definition~\ref{small divisors}.
Such a formal integrability condition can hold under some geometrical properties.
 For instance, for a single germ of real analytic hyperbolic area-preserving mapping, the result  was due to
Moser~\cite{moser-hyperbolic},
and for a single germ of reversible hyperbolic  holomorphic mapping $\sigma=\tau_1\tau_2$
of which $\tau_1$ fixes a hypersurface,
 this result was due to Moser-Webster~\cite{MW83}.
Such  results for commuting germs of vector fields were obtained in \cite{St00,  stolo-annals} under a   collective small divisors Brjuno-type condition.  
  Our result is inspired by these results.

\subsection{Holomorphic normalization for the abelian CR singularity} In section~\ref{sectabel}, we  obtain the convergent normalization for an  {\it abelian} CR singularity which we now define.
 We first consider
a  product quadric $Q$ which    satisfies condition~J. 
  So the deck transformations of
 $\pi_1$
for  the complexification of
$Q$ are generated by $p$ involutions of which each fixes a hypersurface pointwise. We denote them by $T_{11}, \ldots, T_{1p}$.
 Let $T_{2j}=\rho T_{1j}\rho$.
It turns out that
  each $T_{1j}$ commutes with all $T_{ik}$
   except one,  $T_{2k_j}$ for some $1\leq k_j\leq p$. When we formulate $S_j=T_{1j}T_{2k_j}$ for $1\leq j\leq p$,
  the  $S_1, \ldots, S_p$ commute pairwise. 
  Consider a general  $M$ that  is a third-order perturbation of product quadric $Q$ and
  satisfies  condition~J.  
   We   define $\sigma_j=\tau_{1j}\tau_{2k_j}$. In suitable coordinates, $T_{ij}$
  (resp. $S_j$) is the linear part of $\tau_{ij}$ (resp. $\sigma_j$) at the origin.
  We say that the complex tangent of
  a third order perturbation $M$ of a product quadric
  at the origin is of {\it abelian type},  if
  $\sigma_1, \dots, \sigma_p$
  commute pairwise.
 If each linear part  $S_j$ of $\sigma_j$ has exactly two eigenvalues $\mu_j,
\mu_j^{-1}$ that are different from $1$,  then $\cL S:=\{S_1,\ldots, S_p\}$ is of Poincar\'e type if and only if $|\mu_j|\neq1$ for all $j$.
 As mentioned previously,  Moser and Webster
 actually dealt with $n$-dimensional real submanifolds in $\cc^n$ that
have the minimal dimension of complex tangent subspace at a CR singular point.
In their situation, there is only one possible composition, that is $\sigma=\tau_1\tau_2$. When the complex tangent
has an elliptic but non-vanishing Bishop invariant, $\sigma$ has exactly two positive  eigenvalues that are separated by
$1$, while the remaining eigenvalues are $1$ with multiplicity $n-2$.

  As an application of \rt{lFba}, we will prove  the following convergent normalization.
  \begin{thm}\label{iabelm}
Let $M$  be a germ of real analytic submanifold  in $\cc^{2p}$
that is a third order perturbation of a product quadric given by \rea{Qgs2}-\rea{0ge1} with an abelian CR singularity. Suppose that $M$ has all eigenvalues of modulus different from one, i.e. it has no hyperbolic component $(h_*=0)$ while each $\gaa_s$ in \rea{Qgs2+} satisfies $\RE\gamma_s<1/2$ additionally.
Then $M$ is holomorphically equivalent to
\begin{gather}\nonumber 
\widehat M\colon
z_{p+j}=\Lambda_{1j}(\zeta)\zeta_j,\quad \Lambda_{1j}(0)=\la_j,\quad 1\leq j\leq
p,
\end{gather}
where  $\zeta=(\zeta_1,\ldots, \zeta_p)$ are the   solutions to
\begin{align*}\nonumber
\zeta_e&=
 A_e(\zeta)z_e\ov z_e-
 B_e(\zeta)(z_e^2+\ov z_e^2),\quad 1\leq e\leq
e_*,\\
\zeta_s&=
 A_s(\zeta)z_s\ov z_{s+s_*}-
 B_s(\zeta)(z_s^2+\Lambda_{1s}^2(\zeta)\ov z_{s+s_*}^2),\quad  e_*< s\leq e_*+s_*,\\
\zeta_{s+s_*}&= 
A_{s+s_*}(\zeta) \ov z_s z_{s+s_*}-
B_{s+s_*}(\zeta)(z_{s+s_*}^2+\Lambda_{1(s+s_*)}^2(\zeta)\ov z_{s}^2),
\nonumber
\end{align*}
 while 
$ \Lambda_{1j}$ satisfies \rea{lam1e}-\rea{rhoz5},
and $A_j,B_j$  are rational functions in $\Lambda_{1j}$ defined  by \rea{AeAj}-\rea{AeAj+}.
\end{thm}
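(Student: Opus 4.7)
The plan is to pass from the real submanifold $M$ to the Moser--Webster involutions on the complexification $\mathcal{M}$, normalize the commuting family $\{\sigma_1,\dots,\sigma_p\}$ by applying \rt{lFba}, then descend back to $M$.

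First I would carry out the reduction. Because $M$ is a third order perturbation of a product quadric satisfying condition J, the branched covering $\pi_1$ of $\mathcal{M}$ admits the maximum $2^p$ deck transformations, and the set of Moser--Webster involutions $\{\tau_{11},\dots,\tau_{1p},\rho_0\}$ is well defined, with $\tau_{2j}=\rho_0\tau_{1j}\rho_0$ and $\sigma_j=\tau_{1j}\tau_{2k_j}$. The abelian hypothesis is that $\sigma_1,\dots,\sigma_p$ commute pairwise; this is exactly the hypothesis needed to hope for simultaneous linearization/integration. I would first compute the linear parts $S_j$ of the $\sigma_j$ at the origin, using the explicit form \re{Qgs2}--\re{Qgs2+} of the product quadric, and read off the eigenvalues $\mu_j,\mu_j^{-1}$. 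For the elliptic factors the hypothesis $0<\gamma_e<1/2$ gives real eigenvalues $\neq\pm1$; for the complex factors the additional hypothesis $\RE\gamma_s<1/2$ excludes $|\mu_j|=1$. Together with the absence of hyperbolic components $(h_*=0)$ this guarantees that every eigenvalue of every $S_j$ has modulus different from one, so that $\mathcal{S}=\{S_1,\dots,S_p\}$ is of Poincar\'e type in the sense of \rt{lFba}.

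Next, I would verify formal complete integrability of $\mathcal{F}=\{\sigma_1,\dots,\sigma_p\}$. The abelian hypothesis ensures the $\sigma_j$ pairwise commute, which is necessary but not sufficient. The sufficiency comes from the additional rigidity imposed by the full set of reversing involutions $\{\tau_{1j},\rho_0\}$: the reversibility relations $\sigma_j^{-1}=\tau_{1j}\sigma_j\tau_{1j}^{-1}$ and $\sigma_j^{-1}=\rho_0\sigma_j\rho_0$ force the Poincar\'e--Dulac resonant normal form of $\mathcal{S}$ to be of the diagonal multiplier type required by the definition of complete integrability, with the condition (ii) on multipliers following from the pairing structure $\mu_j,\mu_j^{-1}$ and the product-quadric block decomposition. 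This is the step where most work is needed; it is essentially a bookkeeping argument on resonant monomials tying together the abelian and reversibility structures, and I expect it to be the main obstacle. Having checked formal complete integrability, \rt{lFba} immediately yields a convergent biholomorphism $\Phi$ conjugating $\mathcal{S}$ to the integrable form $\hat S_j(z)=(\mu_{j1}(z)z_1,\dots,\mu_{j,2p}(z)z_{2p})$.

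I would then promote $\Phi$ to a simultaneous normalization of the full system $\{\tau_{11},\dots,\tau_{1p},\rho_0\}$. Each $\tau_{1j}$ fixes a hypersurface, and after $\Phi$ the product $\sigma_j=\tau_{1j}\tau_{2k_j}$ is diagonal in Poincar\'e--Dulac form; combined with the reversibility $\tau_{1j}\sigma_j\tau_{1j}^{-1}=\sigma_j^{-1}$ and the hypersurface fixed-point condition, a standard uniqueness-of-involution argument (as in Moser--Webster) pins down $\tau_{1j}$ to an explicit normal form in the same coordinates, and analogously pins down $\rho_0$ as the standard antiholomorphic involution $(z',w')\mapsto(\bar w',\bar z')$. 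At this point I would invoke the equivalence of the normal-form problem for $M$ with that for the involution package, already recorded in the paper, to translate the normal form of $\{\tau_{11},\dots,\tau_{1p},\rho_0\}$ back into a normal form for the real submanifold $M$.

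Finally, I would extract the displayed formulas. The fixed-point set of the anti-holomorphic involution $\rho_0$ inside the complexification gives back $M$; writing the normalized $\tau_{1j}$ and $\rho_0$ explicitly, and using the already established formulas \re{lam1e}--\re{rhoz5} for $\Lambda_{1j}$ and \re{AeAj}--\re{AeAj+} for $A_j,B_j$, one reads off the equations $z_{p+j}=\Lambda_{1j}(\zeta)\zeta_j$ together with the implicit definition of $\zeta_e,\zeta_s,\zeta_{s+s_*}$ in the statement. The elliptic rows arise from the Bishop-type factors, and the two coupled rows for $s$ and $s+s_*$ arise from the complex-type $Q_{\gamma_s}$ factors, reflecting that $\tau_{1s}$ and $\tau_{1,s+s_*}$ in that block act by swapping a pair of coordinates twisted by $\Lambda_{1j}$. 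This yields the claimed form $\widehat M$.
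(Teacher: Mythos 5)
Your overall architecture matches the paper's second proof: reduce to the Moser--Webster involutions, show $\{\sigma_1,\dots,\sigma_p\}$ is formally completely integrable with a linear part of Poincar\'e type, invoke \rt{lFba}, then realize the normalized involutions as a submanifold. However there are two genuine problems.

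First, your verification of the Poincar\'e type condition contains a factual error. You write that the hypotheses ``guarantee that every eigenvalue of every $S_j$ has modulus different from one.'' This is false for $p>1$: each $S_j$ has only the two nontrivial eigenvalues $\mu_j,\mu_j^{-1}$, and the remaining $2p-2$ eigenvalues are all equal to $1$. The Poincar\'e condition in Definition~\ref{small divisors} is a condition on the \emph{family}, and proving it requires, for each nonresonant $(j,Q)$, the construction of a reduced $Q'$ and a well-chosen index $i$ (the one maximizing $q'_i+q'_{i+p}$) so that $\max(|\mu_i^{Q'}|,|\mu_i^{-Q'}|)\geq d^{|Q'|}$. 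Your stated reason for Poincar\'e type is wrong even though the conclusion is correct.

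Second, and more seriously, the formal complete integrability step is not established. You concede it is ``the main obstacle'' but offer only the heuristic that reversibility should ``force the Poincar\'e--Dulac resonant normal form to be of the diagonal multiplier type.'' That does not follow from general principles: reversibility of a single $\sigma_j$ constrains only those resonant monomials involving the two nontrivial eigendirections of $S_j$, and does nothing to the many resonances coming from the unit eigenvalue of $S_j$ in the other $2p-2$ directions. The paper's argument is an explicit induction: one normalizes $\sigma_1$ (using the formal version of Moser--Webster's theorem~4.1 and the uniqueness of normalized conjugacies from Lemma~\ref{lem-nf-nf}), then uses the centralizer decomposition of Lemma~\ref{pcn0} together with $\sigma_1\sigma_m=\sigma_m\sigma_1$ to show that every remaining $\sigma_m$ already lies in $\cL C(S_1)$ and that the subsequent normalizer $\psi_2$ commutes with $S_1$; iterating this puts the whole family simultaneously into the centralizer of $\cL S$, which gives the completely integrable form. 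Without this propagation mechanism your sketch does not close; commutativity alone, even with reversibility, does not obviously prevent resonant off-diagonal terms supported in the unit-eigenvalue block. Finally, the descent to the explicit formulas for $\widehat M$ is also a computation you only gesture at; it requires building the invariants $f_j=\xi_j+\xi_j\circ\tau_{1j}$, $g_j=\ov{f_j\circ\rho}$, $F_j=(\xi_j\circ\tau_{1j})\xi_j$ and solving for $\zeta_j=\xi_j\eta_j$, which is where the rational expressions $A_j,B_j$ in \re{AeAj}--\re{AeAj+} actually arise.
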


There are many non-product real submanifolds  of abelian CR singularity.
\begin{exmp} Let $0<\gaa_i<\infty$.
Let $R(z_1,\ov z_1)=|z_1|^2+\gaa_1(z_1^2+\ov z_1^2)+O(3)$ be a real-valued power series in $z_1,\ov z_1$ of real coefficients. Then the origin is an abelian CR singularity of
$$
M\colon z_3=R(z_1,\ov z_1), \quad z_4=(z_2+2\gaa_2 \ov z_2+z_2z_3)^2.
$$
\end{exmp}
  We will also present a more direct proof of \rt{iabelm} by using a    convergence theorem of Moser and Webster~\cite{MW83}
and some formal results from
section~\ref{nfcb}.  The above $\Lambda_{11}, \dots, \Lambda_{1p}$ satisfy conditions $\Lambda_{1j}(0)=\lambda_j$ and \re{lam1e}-\re{rhoz5} and are otherwise {\it arbitrary} convergent power series. The $\Lambda_{11}, \dots, \Lambda_{1p}$ may be subjected to further normalization.
In~\cite{part2}, we find a  unique holomorphic normal form by refining the above normalization for $M$ satisfying a non resonance condition and a third order non-degeneracy condition (see Theorem 5.6 in [GS15]); in particular, it shows the existence of infinitely many formal invariants and non-product structures of the manifolds when $p>1$.

 As an application of \rt{iabelm},
we will prove the following flattening result.
\begin{cor} Let $M$ be as in \rta{iabelm}. In suitable holomorphic coordinates, $M$
is contained in the linear subspace   defined by $z_{p+e}=\ov z_{p+e}$ and $z_{p+s}=\ov z_{p+s+s_*}$ where $1\leq e\leq
e_*$ and $ e_*< s\leq e_*+s_*$.
\end{cor}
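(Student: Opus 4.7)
The plan is to derive the flattening directly from the explicit normal form in Theorem~\ref{iabelm} by tracking the action of the standard complex conjugation on the auxiliary coordinates $\zeta=(\zeta_1,\dots,\zeta_p)$.

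First I would verify that on the normalized manifold $\widehat M$ the components $\zeta_e$ are real, and the complex pairs satisfy $\overline{\zeta_s}=\zeta_{s+s_*}$. This is done by exploiting the reality/symmetry constraints on $A_j,B_j,\Lambda_{1j}$ encoded in \re{lam1e}-\re{rhoz5} and \re{AeAj}-\re{AeAj+}. Concretely, for an elliptic index $e$, the functions $A_e,B_e,\Lambda_{1e}$ are real on the locus where $\zeta_e$ is real (with the remaining $\zeta_s,\zeta_{s+s_*}$ complex conjugates of each other). Taking the complex conjugate of the implicit equation
\[
\zeta_e=A_e(\zeta)z_e\ov z_e-B_e(\zeta)(z_e^2+\ov z_e^2)
\]
yields the same equation with $\zeta_e$ replaced by $\ov{\zeta_e}$ (and $\zeta_s,\zeta_{s+s_*}$ swapped). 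For complex indices, conjugating the equation for $\zeta_s$ gives exactly the equation for $\zeta_{s+s_*}$ after swapping these two components (this is built into the formulas because the Moser-Webster involution $\rho_0$ intertwines the two branches of the coordinate pair). By the implicit function theorem, the system defining $\zeta$ in terms of $(z',\ov z')$ has a unique small solution, so uniqueness forces $\ov{\zeta_e}=\zeta_e$ and $\ov{\zeta_s}=\zeta_{s+s_*}$ on $M$.

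Next, I would use these relations in the defining equations $z_{p+j}=\Lambda_{1j}(\zeta)\zeta_j$. For an elliptic index,
\[
\ov{z_{p+e}}=\ov{\Lambda_{1e}(\zeta)}\,\ov{\zeta_e}=\Lambda_{1e}(\zeta)\zeta_e=z_{p+e},
\]
since $\Lambda_{1e}$ is real on the real locus of $\zeta$ by \re{lam1e}-\re{rhoz5}. For a complex pair,
\[
\ov{z_{p+s+s_*}}=\ov{\Lambda_{1(s+s_*)}(\zeta)}\,\ov{\zeta_{s+s_*}}=\Lambda_{1s}(\zeta)\zeta_s=z_{p+s},
\]
where the middle equality is again the symmetry relation on $\Lambda_{1j}$ under $\zeta_s\leftrightarrow\zeta_{s+s_*}$ built into \re{lam1e}-\re{rhoz5}. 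This places $M$ in the desired real-linear subspace.

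The technical heart of the argument is the bookkeeping in the first step: confirming that the symmetry conditions \re{lam1e}-\re{rhoz5} and \re{AeAj}-\re{AeAj+} are exactly what is needed for the conjugate of the implicit $\zeta$-system to coincide with the swapped system. Once this is in place, the second and third steps are essentially algebraic substitution. The uniqueness of the small solution of the implicit system is where the assumption of working near the origin enters; no further smallness or non-resonance hypothesis beyond those already built into Theorem~\ref{iabelm} is required.
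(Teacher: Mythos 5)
Your argument for the elliptic case is correct, and step 1 (establishing $\ov\zeta_e=\zeta_e$ and $\ov\zeta_{s+s_*}=\zeta_s$ via the symmetry of the implicit $\zeta$-system under conjugation and uniqueness of the small solution) is essentially what the paper does. But step 3 contains a computational error that hides the last, genuinely non-trivial step of the proof.

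The symmetry condition \re{lam1e} for a complex index is $\Lambda_{1s}^{-1}=\overline{\Lambda_{1(s+s_*)}\circ\rho_z}$, and on the locus where $\rho_z$ acts as the identity this gives $\overline{\Lambda_{1(s+s_*)}(\zeta)}=\Lambda_{1s}^{-1}(\zeta)$ — not $\Lambda_{1s}(\zeta)$, as you assert. Consequently
\[
\ov{z_{p+s+s_*}}=\ov{\Lambda_{1(s+s_*)}(\zeta)}\,\ov{\zeta_{s+s_*}}=\Lambda_{1s}^{-1}(\zeta)\,\zeta_s=\Lambda_{1s}^{-2}(\zeta)\,z_{p+s},
\]
which is what the paper obtains at the end of its Theorem~\ref{abelm}. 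Since $\Lambda_{1s}(0)=\la_s\ne\pm1$ in general, this is \emph{not} the linear relation $\ov{z_{p+s+s_*}}=z_{p+s}$; it is a real-analytic but nonlinear identity. Your argument therefore does not yet prove the corollary.

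The missing ingredient is one more holomorphic change of coordinates. Writing $\zeta_j=\tilde\Lambda_{1j}(z'')z_{p+j}$ for the inverse of $z_{p+j}=\Lambda_{1j}(\zeta)\zeta_j$ (a local biholomorphism in the $z''$-variables since $\Lambda_{1j}(0)=\la_j\ne0$), set $w_j=z_j$ for $j\le p$ and $w_{p+j}=\tilde\Lambda_{1j}(z'')z_{p+j}=\zeta_j$. In these coordinates, $\ov w_{p+e}=\ov\zeta_e=\zeta_e=w_{p+e}$ and $\ov w_{p+s+s_*}=\ov\zeta_{s+s_*}=\zeta_s=w_{p+s}$, giving the claimed linear containment. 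Without this last change of variables (which is precisely the role of ``suitable holomorphic coordinates'' in the statement), the corollary does not follow from Theorem~\ref{iabelm}.
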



\subsection{Analytic hull of holomorphy}
One of significances of the Bishop quadrics is that their  higher order analytic perturbation at an elliptic    complex tangent has
 a non-trivial hull of holomorphy.
As another application of the above normal form, we will  construct the local hull of holomorphy of $M$
via higher dimensional non-linear analytic polydiscs.
\begin{cor} \label{andiscver}
Let $M$ be as in \rta{iabelm}.  Suppose that $M$
  has   only elliptic component of complex tangent. Then   in suitable holomorphic coordinates,
$\cL H_{loc}(M)$,  the local hull of holomorphy of $M$, is filled
by a real analytic family of analytic polydiscs of dimension $p$.
\end{cor}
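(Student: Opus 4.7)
Under the hypothesis that $M$ has only elliptic CR singularity, we have $s_* = h_* = 0$ and $p = e_*$, with each $\lambda_e$ real and $0 < \lambda_e < 1$. \rta{iabelm} brings $M$ to the normal form
\[
z_{p+e} = \Lambda_{1e}(\zeta)\zeta_e, \qquad \zeta_e = A_e(\zeta) z_e \ov z_e - B_e(\zeta)(z_e^2 + \ov z_e^2), \quad 1 \le e \le p,
\]
and by the preceding flattening corollary $M$ sits inside the totally real subspace $\{z_{p+e} = \ov z_{p+e}\}$. The plan is to attach to $M$ a real analytic $p$-parameter family of analytic polydiscs of complex dimension $p$, and then identify the region they sweep out with $\cL H_{loc}(M)$.

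First I construct the family. In the unperturbed product quadric case $Q = \prod_e Q_{\gamma_e}$, the classical elliptic Bishop construction on each factor $Q_{\gamma_e}$ yields a real analytic one-parameter family $\{D_{t_e}\}_{t_e \in (0,\e)}$ of analytic discs with $\partial D_{t_e} \subset Q_{\gamma_e}$ whose union is $\cL H_{loc}(Q_{\gamma_e})$; their Cartesian products
\[
\Delta_t := D_{t_1}\times\cdots\times D_{t_p}, \qquad t=(t_1,\ldots,t_p) \in (0,\e)^p,
\]
form a real analytic $p$-parameter family of $p$-dimensional polydiscs attached to $Q$ and filling $\cL H_{loc}(Q)$. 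For the perturbed $M$, I seek discs
\[
\Delta_t(\xi) = \bigl(z_e(\xi_e, t), z_{p+e}(\xi_e, t)\bigr)_{e=1}^p, \qquad \xi \in \ov{\bd}^p,
\]
holomorphic in $\xi$, continuous up to $\partial \bd^p$, with $\Delta_t(\partial \bd \times\cdots\times \partial \bd) \subset M$. The diagonal structure of the normal form in the index $e$ (the only coupling being through the scalar $\zeta$) and the contraction $0 < \lambda_e < 1$ allow the boundary condition to be solved as a small perturbation of the quadric discs by an implicit function/contraction-mapping argument on a H\"older space of attached discs, in the spirit of Bishop~\cite{Bi65} and Moser-Webster~\cite{MW83} for $p=1$.

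Second, let $\Om := \bigcup_t \Delta_t(\bd^p)$. Since $\pd \Delta_t(\bd^p) \subset M$, the maximum principle gives $\Om \subset \cL H_{loc}(M)$. For the reverse inclusion, I would exploit the flattening: in the normal-form coordinates the $z'$-projection of each $\Delta_t$ is an honest polydisc in $\cc^p$ while the remaining coordinates are determined by the holomorphic relations $z_{p+e} = \Lambda_{1e}(\zeta)\zeta_e$ on $\Delta_t$. Consequently $\Om$ is a real analytic submanifold of real dimension $3p$ whose topological boundary consists of $M$ together with a Levi-flat hypersurface $\Sigma$ produced by the outer parameter values $t_e = \e$. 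By the Kontinuit\"atssatz each function holomorphic on a neighborhood of $M$ extends along the discs to all of $\Om$, and the Levi-flatness of $\Sigma$ obstructs any further extension, giving $\cL H_{loc}(M) \subset \Om$.

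The main obstacle is this second step: matching $\Om$ with the full local hull rather than with a mere disc-filled subset. It reduces to verifying, in the flattened coordinates, that the outer boundary $\Sigma$ is genuinely Levi-flat and that the complement of $\Om$ near $\Sigma$ is locally pseudoconvex; these are standard propagations through the product structure of the one-dimensional elliptic Bishop hull theorem, made possible by the explicit normal form. The construction of the family $\{\Delta_t\}$ in Step~1 is comparatively routine once \rta{iabelm} is in hand, since the normal form is tailored to reduce the boundary problem to a perturbative fixed-point argument.
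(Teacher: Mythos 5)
Your Step 1 misreads what the normal form supplies and your Step 2 — which you yourself flag as the main obstacle — proposes an argument that does not match the actual boundary geometry. On Step 1: once Theorem \ref{iabelm} (realized as \rt{abelm}) is applied, $M$ is already presented in coordinates where, for each $x''\in[0,\epsilon]^p$, the slice $M\cap\{z''=x''\}$ is the distinguished boundary of an \emph{explicit} product of $p$ ellipses; the attached polydiscs are thus given outright by solving the real-analytic diffeomorphism $\zeta\mapsto(\Lambda_{1j}(\zeta)\zeta_j)_j$ for $\zeta=R^{-1}(x'')$. Proposing a H\"older-space contraction argument to "perturb the quadric discs" after invoking the normal form is redundant and suggests you have not extracted the content of \rt{abelm}. (Also a sign slip: the paper normalizes $\lambda_e>1$ in the elliptic case, not $0<\lambda_e<1$.)

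On Step 2, the picture of a single Levi-flat outer hypersurface $\Sigma$ is incorrect. In the flattened coordinates the hull $\cL H_\epsilon$ lies inside $\{y''=0,\ x''\in[0,\epsilon]^p\}$ and its relative boundary is a union of $p$ faces $\cL H_1^\epsilon,\dots,\cL H_p^\epsilon$ (each where exactly one of the defining inequalities saturates) meeting along corners, with $M$ itself appearing only as the lowest-dimensional corner stratum; the paper emphasizes that $\partial\cL H_\epsilon\supsetneq M$ whenever $p>1$. A Kontinuit\"atssatz argument along discs gives $\Omega\subset\cL H_{loc}(M)$, but you never actually prove the reverse inclusion: asserting pseudoconvexity of the complement near $\Sigma$ does not by itself identify $\Omega$ with the intersection of \emph{all} domains of holomorphy containing $M$. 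The paper's proof closes this gap explicitly, by exhibiting $\cL H_\epsilon$ as a decreasing intersection $\bigcap_{\epsilon'>\epsilon,\,\delta_*}\cL H_{\epsilon'}^{\delta_*}$ of domains of holomorphy defined by the sublevel sets of $\max_j\{\rho_j^{\delta'},|y''|^2-\delta_0^2,x_{p+j}^2-\epsilon^2\}$, where each $\rho_j^{\delta'}$ is shown by a Cauchy--Schwarz estimate on its complex Hessian to be strictly plurisubharmonic near $0$. That estimate, together with the Cauchy-integral extension across the analytic polydiscs, is the actual content of the corollary and is absent from your sketch.
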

For a precise statement of the corollary, see \rt{hullj}.
The hulls of holomorphy for real submanifolds with a CR singularity have been studied extensively, starting with the work of Bishop. In the real analytic case with minimum complex tangent space at an elliptic
complex tangent, 
we refer to Moser-Webster~\cite{MW83} for $\gaa>0$, and Krantz-Huang \cite{HK95} for $\gaa=0$.  For the smooth case, see Kenig-Webster~\cite{KW82,KW84}, Huang~\cite{H98}.
For global
results on hull of holomorphy, we refer to \cite{bedford-gaveau, bedford-kling}.

\subsection{Rigidity of quadrics}

  In  Section~\ref{rigidquad},
we  prove the following theorem. 
\begin{thm}
 Let $M$ be a germ of analytic submanifold that is an higher order perturbation of a product quadric $Q$ in $\cc^{2p}$ given by \rea{Qgs2}-\rea{0ge1}. Assume that $M$ is formally equivalent to $Q$.
 Suppose that  each hyperbolic component has an eigenvalue $\mu_h$ which
is either a root of unity or satisfies Brjuno condition, and each complex component has an eigenvalue $\mu_s$ is not a root of unity and satisfies the Brjuno condition. 
Then $M$ is holomorphically equivalent to the product quadric.
\end{thm}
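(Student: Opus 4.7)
The idea is to transfer the rigidity question from $M$ to the associated Moser-Webster family of involutions via the complexification $\cL M$, and then to apply the holomorphic linearization theorem for commuting biholomorphisms of \cite{stolo-bsmf}. First I would lift the given formal biholomorphism $\Phi\colon(\cc^{2p},0)\to(\cc^{2p},0)$ sending $M$ to $Q$ to a formal biholomorphism of $(\cc^{2p}\times\cc^{2p},0)$ that sends $\cL M$ to $\cL Q$ and conjugates the Moser-Webster involutions $\tau_{11},\ldots,\tau_{1p}$ and $\rho_0$ of $M$ to the linear involutions $T_{11},\ldots,T_{1p}$ and $\rho_0$ of the product quadric. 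In particular each $\sigma_j=\tau_{1j}\tau_{2k_j}$ is formally conjugate to its linear part $S_j$, and because the $S_j$ commute pairwise on the product quadric, the family $\cL F=\{\sigma_1,\ldots,\sigma_p\}$ is formally linearizable and a fortiori commutes (formally, hence holomorphically, since the $\sigma_j$ are convergent).

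The spectrum of $S_j$ is of the form $\{1,\ldots,1,\mu_j,\mu_j^{-1}\}$, with an appropriate pairing in the complex-type blocks. In an elliptic block the relevant eigenvalue $\mu_e$ is real with $|\mu_e|\neq 1$, so $S_e$ lies in the Poincar\'e domain and \rt{lFba} immediately linearizes the elliptic factor holomorphically. In a hyperbolic block where $\mu_h$ is a primitive $N$-th root of unity, $\sigma_h$ is formally periodic of order $N$; since the formal ring injects into the analytic ring, $\sigma_h$ is actually periodic, and Bochner's linearization of compact group actions produces a convergent biholomorphism linearizing $\sigma_h$ that, by averaging, can be chosen to commute with all other $\sigma_k$ (which commute with $\sigma_h$). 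On the remaining hyperbolic and complex blocks the hypothesis supplies a Brjuno eigenvalue $\mu_h$ or $\mu_s$: this is exactly the collective small-divisors condition under which the main theorem of \cite{stolo-bsmf} guarantees simultaneous holomorphic linearization of a formally linearizable commuting family. Combining these three regimes produces a single convergent biholomorphism $\Psi$ simultaneously linearizing $\sigma_1,\ldots,\sigma_p$.

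To upgrade the linearization of $\cL F$ to a linearization of the involutions themselves I would follow the Moser-Webster scheme used for \rt{iabelm} in Section~\ref{sectabel}: each $\tau_{1j}$ is a reversor of $\sigma_j$ and fixes a smooth hypersurface, and on the linearized side there is a unique such reversor, namely $T_{1j}$. Using the reversibility identities $\sigma_j^{-1}=\tau_{1j}\sigma_j\tau_{1j}^{-1}=\rho_0\sigma_j\rho_0$ together with the uniqueness part of the linearization (via the centralizer of $\cL S=\{S_1,\ldots,S_p\}$), the conjugate $\Psi^{-1}\tau_{1j}\Psi$ is a holomorphic involution fixing a hypersurface and reversing each $S_k$; it must therefore coincide with $T_{1j}$. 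The same identity forces $\Psi^{-1}\rho_0\Psi=\rho_0$. Descending via the projection $\pi_1$ to $\cc^{2p}$ then yields a holomorphic biholomorphism carrying $M$ to $Q$.

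\textbf{Main obstacle.} The substantive technical point is the second paragraph: arranging the convergent linearization uniformly across blocks whose eigenvalues belong to genuinely different dynamical regimes (Poincar\'e, periodic, and Brjuno-Siegel), and verifying that the combined small-divisors condition amounts to the hypothesis required by \cite{stolo-bsmf}. A secondary delicate point is the uniqueness argument needed in the final paragraph, since the centralizer of $\cL S$ in the resonant directions can be large and one must exploit the additional constraints coming from $\tau_{1j}$ fixing a hypersurface and from the anti-holomorphic relation with $\rho_0$ to pin down the linearization.
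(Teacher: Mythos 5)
Your overall strategy — pass from $M$ to the Moser--Webster family of involutions, linearize $\sigma_1,\ldots,\sigma_p$ holomorphically using the formal linearizability and the small-divisor hypothesis, then upgrade to a linearization of the involutions, and finally descend back to $M$ — is the same as the paper's. But two of your steps have genuine gaps.

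First, the block-by-block treatment of the $\sigma_j$'s is both unnecessary and problematic. You propose three different linearization mechanisms (Poincar\'e domain, Bochner for the root-of-unity case, Brjuno--Siegel via \cite{stolo-bsmf}) applied to different blocks and then ``combined''. But each mechanism produces its own conjugating biholomorphism of the full $2p$-dimensional space, and these have no reason to be compatible; in particular the Bochner averaging you invoke to force commutation with the other $\sigma_k$'s is not obviously consistent with a conjugation already applied to linearize the other factors. The paper avoids all of this: it observes that the collective small-divisor condition of \cite{stolo-bsmf}[Theorem 2.1] (with $\cL I=0$) is, for this particular family of linear parts $\cL S$, exactly equivalent to the stated hypothesis (each $\mu_h$ is a root of unity or Brjuno, each $\mu_s$ Brjuno), so a single application of that theorem linearizes all $\sigma_j$ simultaneously. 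The elliptic eigenvalues $|\mu_e|\neq 1$ and the root-of-unity $\mu_h$ are automatically handled by that condition; no case split is needed.

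Second, and more seriously, your final paragraph asserts that after linearizing $\cL F=\{\sigma_j\}$ the conjugated $\hat\tau_{1j}:=\Psi^{-1}\tau_{1j}\Psi$ ``must coincide with $T_{1j}$'' by uniqueness. This is false as stated. After the conjugation, $\hat\tau_{1j}$ is a holomorphic involution reversing the linear $S_k$'s and fixing a hypersurface, but the centralizer of $\cL S$ is large (it contains all power series in the resonant monomials $\xi_i\eta_i$), so there are many nonlinear such reversors; nothing forces $\hat\tau_{1j}$ to be linear. You flag this as a ``secondary delicate point'', but in fact it is where the bulk of the remaining work lies. The paper's Proposition~\ref{linearS} supplies the missing argument in two stages: first a decomposition (Lemma~\ref{FHG-}) of the formal conjugacy into a normalized part and a centralizer part relative to $\{\cL S, T_1,\rho\}$, together with a majorant estimate coming from the explicit description of $\cL C(\cL S,T_1,\rho)$ in Proposition~\ref{STiR}, to show that $\tau_1,\tau_2$ can be convergently linearized; and second, an analogous decomposition relative to $\{\cL T_1,\cL T_2,\rho\}$ to convergently linearize the individual $\tau_{1j}$. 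Only then does Proposition~\ref{inmae} give the biholomorphism of $M$ onto $Q$. Without something replacing Proposition~\ref{linearS}, your proof of the last step does not go through.
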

We emphasize that condition (1.5) ensure that $M$ is diagonalizable (condition J). It is plausible that theorem remains valid when $M$ satisfies condition J and $\mu_s$ satisfy the Brjuno condition or are roots of unity; however, the resonance condition requires some tedious changes of computation in section 6. The proof uses
 a theorem of linearization of holomorphic mappings
in \cite{stolo-bsmf}. 
Brjuno small divisors condition is defined by
\re{omnu}, with $\nu=\mu_h$ and $p= 
1$. When $p=1$, the result
 under 
 the stronger 
 Siegel condition is in
  \cite{Go94}.  This last statement requires a small divisors condition to be true as shown in \cite{Gon04}.  
   When $p=1$ with a vanishing Bishop invariant, such rigidity result was obtained by
Moser~\cite{moser-zero}  and by Huang-Yin \cite{HY09b} in a more general context. 

%
%

\subsection{Attached complex submanifolds}
We now describe  convergent results for attached complex submanifolds.  The
results are for a   general
 $M$, including the one of which the complex tangent might not be of abelian type.

We say that a formal complex submanifold $K$ is  {\it attached} to $M$ if $K\cap M$
 contains at least two germs of  totally real and formal submanifolds $K_1, K_2$
that intersect
 transversally at a given CR singularity. In~\cite{Kl85}, Klingenberg showed that when
$M$ is non-resonant and $p=1$, there is   a unique   formal holomorphic
curve attached to $M$ with a hyperbolic complex tangent. He also proved the convergence of the attached formal holomorphic curve under
a Siegel small divisors
 condition.  When $p>1$, we will show that generically there is no formal complex submanifold that can be attached
to $M$   if the CR singularity has
an elliptic component.   When $p>1$ and $M$ is a higher order perturbation of a product quadric  of
 $Q_{\gaa_h},
Q_{\gaa_s}$,
we will encounter  various  interesting situations.

By adapting Klingenberg's proof for $p=1$
and using a theorem of P\"oschel~\cite{Po86}, we will prove the following.
\begin{thm}\label{1conv}
Let $M$ be a germ of analytic submanifold that is an higher order perturbation of a product quadric $Q$ in $\cc^{2p}$ without elliptic components.
Assume that the eigenvalues $\mu_1,\ldots, \mu_p$,  $\mu_1^{-1}, \ldots, \mu_p^{-1}$ of $D\sigma(0)$ are distinct. Let $\e_h^2,\e_s^2=1$, $\nu_h:=\mu_h^{\e_h}$, $\nu_s:=\bar\mu_s^{\e_s}$ and $\nu_{s+s_*}:= \bar\nu_s^{-1}$.
Assume $\nu=(\nu_1,\ldots,\nu_p)$ is weakly non resonant and Diophantine in the sense of P\"oschel.
Then $M$ admits an attached complex submanifold $M_{\e}$. 
\end{thm}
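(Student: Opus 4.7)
The plan is to adapt Klingenberg's reduction to invariant manifolds of $\sigma$ in the complexification $\cL M$, and to invoke P\"oschel's invariant manifold theorem \cite{Po86} in place of Klingenberg's Siegel-type small divisors argument. First I would establish the correspondence at the formal level: an attached complex $p$-dimensional submanifold $M_\e$ of $M$ at the origin corresponds bijectively to a germ of complex $p$-dimensional submanifold $\cL K \subset \cL M$ through the origin which is invariant under both $\tau_1$ and $\tau_2$ (equivalently under $\sigma = \tau_1\tau_2$ and under one of the two involutions). The two totally real submanifolds $K_1, K_2 \subset M$ witnessing the attachment are then recovered as $K_j = \pi_1(\cL K \cap \fix(\tau_j))$, $j=1,2$, which intersect transversally at the origin because $\fix(\tau_1)$ and $\fix(\tau_2)$ do so inside $\cL M$.

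Next I would construct $\cL K$ as a P\"oschel invariant manifold for $\sigma$. Under condition J and the hypothesis that $D\sigma(0)$ has distinct eigenvalues, the linear part $S$ is diagonalizable with eigenvalues pairing as $\{\mu_j,\mu_j^{-1}\}$, with the complex components contributing conjugate quadruples. The sign vector $\e$ selects one eigenvalue from each such group, yielding a $p$-dimensional spectral subspace $E_\nu \subset T_0\cL M$ with eigenvalues $\nu = (\nu_1,\ldots,\nu_p)$. The assumption that $\nu$ is weakly non-resonant and Diophantine in the sense of P\"oschel is exactly the hypothesis of P\"oschel's theorem, which furnishes a convergent analytic $\sigma$-invariant submanifold $\cL K_\e$ tangent to $E_\nu$ at the origin, on which $\sigma$ is analytically conjugate to its linear restriction $S|_{E_\nu}$.

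The decisive step is verifying that $\cL K_\e$ is also $\tau_1$-invariant (and then $\tau_2$-invariance follows from $\tau_2 = \sigma\tau_1$). Since $\tau_1 \sigma \tau_1 = \sigma^{-1}$, the image $\tau_1(\cL K_\e)$ is an analytic $\sigma^{-1}$-invariant submanifold through the origin whose tangent space is $d\tau_1(0) E_\nu$. The Moser-Webster structure of the involutions, together with the distinctness of spectrum, identifies this tangent space with the spectral subspace of $S^{-1}$ for the eigenvalues $\nu^{-1}$, which is $E_\nu$ itself. P\"oschel's theorem produces a unique analytic invariant manifold tangent to a prescribed spectral subspace when the eigenvalue set satisfies weak non-resonance, so $\tau_1(\cL K_\e) = \cL K_\e$. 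An analogous argument using the anti-holomorphic involution $\rho_0$ --- where the convention $\nu_s = \bar\mu_s^{\e_s}$, $\nu_{s+s_*} = \bar\nu_s^{-1}$ becomes essential, since it is the conjugation pattern that makes the selected spectral subspace invariant under $d\rho_0(0)$ modulo the identification of $S$ with $\bar S^{-1}$ --- ensures $\rho_0(\cL K_\e) = \cL K_\e$. Consequently $M_\e := \pi_1(\cL K_\e)$ descends to a genuine complex submanifold of $\cc^{2p}$ attached to $M$ along the transversal totally real submanifolds $K_1, K_2$.

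The main obstacle is this uniqueness-and-compatibility step: one must track carefully how $\tau_1$ and $\rho_0$ permute the eigenvalues of $S$ and map the selected spectral subspaces among themselves, then invoke uniqueness of the P\"oschel invariant manifold to identify $\tau_1(\cL K_\e)$ and $\rho_0(\cL K_\e)$ with $\cL K_\e$. The theorem's precise choice of conjugation pattern in $\nu$ is engineered so that this identification closes up, but verifying the identifications rigorously --- in particular handling the $4$-fold eigenvalue clusters of the complex-type components without spurious resonances --- is the delicate bookkeeping. Once this compatibility is in place, convergence is furnished directly by P\"oschel's theorem and no further small divisors analysis is needed.
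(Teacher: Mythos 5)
The decisive step of your argument, verifying that $\cL K_\e$ is $\tau_1$-invariant, is incorrect, and with it the framing of the formal correspondence. The reversibility $T_1 S T_1 = S^{-1}$ (with $T_1 := d\tau_1(0)$, $S := D\sigma(0)$) implies that if $Sv = \lambda v$ then $S^{-1}(T_1 v) = \lambda\, T_1 v$, so $T_1 v$ lies in the $\lambda$-eigenspace of $S^{-1}$, which is the $\lambda^{-1}$-eigenspace of $S$. Therefore $T_1(E_\nu) = E_{\nu^{-1}}$, the \emph{complementary} spectral subspace of $E_\nu$, not $E_\nu$ itself. Your identification of $d\tau_1(0)E_\nu$ with the spectral subspace of $S^{-1}$ for eigenvalues $\nu^{-1}$ is off by an inversion: it is the spectral subspace of $S^{-1}$ for eigenvalues $\nu$, i.e.\ $E_{\nu^{-1}}$. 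Since the eigenvalues are distinct, $E_\nu\cap E_{\nu^{-1}}=\{0\}$, so $\tau_1(\cL K_\e)$ meets $\cL K_\e$ only at the origin rather than coinciding with it. Consequently, an attached complex submanifold does \emph{not} correspond to a single $\tau_1$-invariant $\cL K$; it corresponds to a pair $(\cL K_1,\cL K_2)$ of $\sigma$- and $\rho_0$-invariant submanifolds that $\tau_1$ \emph{interchanges}, $\tau_1(\cL K_1)=\cL K_2$, which is exactly what \rt{invep} establishes. Your recovery formula $K_j=\pi_1(\cL K\cap\fix(\tau_j))$ therefore produces only $\{0\}$; the correct recovery is $K_1=\fix(\rho_0|_{\cL K_\e})$ and $K_2=\fix(\rho_0|_{\tau_1(\cL K_\e)})$, with $\pi_1(\tau_1\cL K_\e)=\pi_1(\cL K_\e)$ because $\tau_1$ is a deck transformation of $\pi_1$.

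Your $\rho_0$-invariance observation is the part that actually carries weight: since $\rho_0 S\rho_0=S^{-1}$ and $\rho_0$ is anti-holomorphic, $d\rho_0(0)$ maps $\lambda$-eigenvectors of $S$ to $\bar\lambda^{-1}$-eigenvectors, and the conjugation pattern $\nu_s=\bar\mu_s^{\e_s}$, $\nu_{s+s_*}=\bar\nu_s^{-1}$ is precisely what makes $\{\nu_j\}$ stable under $\lambda\mapsto\bar\lambda^{-1}$, so that $\rho_0(E_\nu)=E_\nu$ and P\"oschel uniqueness gives $\rho_0(\cL K_\e)=\cL K_\e$; one then also gets $\rho_0(\tau_1\cL K_\e)=\tau_2(\cL K_\e)=\tau_1(\cL K_\e)$, since $\sigma(\cL K_\e)=\cL K_\e$. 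Beyond the $\tau_1$ error, note that the paper's route is the reverse of yours: \rl{asynum}(iv) first constructs the formal anti-holomorphic involutions $\rho_1,\rho_2$ order by order under the weak non-resonance condition, \rt{invep} then identifies the graph $\cL K_1=\{w'=\ov\rho_1(z')\}$ as the unique $\sigma$-invariant formal manifold tangent to $E_\nu$ (the graph representation is also what guarantees that $\pi_1|_{\cL K_1}$ is an embedding, which would need separate verification starting from P\"oschel's abstract manifold), and only afterwards invokes P\"oschel's convergence theorem.
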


Weak non  resonance is defined in \rea{nuqn0}, while Diophantine condition in the sense of P\"oschel is defined in \rea{omnu}.

Finally, we prove the convergence of {\it all} attached  formal 
submanifolds: 
\begin{thm}\label{allconv}
Let $M$ be as in \rta{1conv}. 
Suppose that the $2p$ eigenvalues of $\sigma$ are non-resonant.
 If 
  the eigenvalues of $\sigma$ satisfy a Bruno type condition,
 all attached formal submanifolds are convergent.
\end{thm}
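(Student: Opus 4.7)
The plan is to reduce the convergence of attached formal submanifolds of $M$ to a convergent linearization problem for $\sigma$ on the complexification $\cL M$, and then to invoke Brjuno's theorem under the non-resonance and small-divisors hypotheses. First I would establish a lifting correspondence: given any attached formal complex submanifold $K \subset \cc^{2p}$ of dimension $p$, with $K \cap M$ containing two totally real formal submanifolds $K_1, K_2$ meeting transversally at the origin, consider its lift $\cL K = \pi_1^{-1}(K) \subset \cL M$. The lifts of $K_1$ and $K_2$ lie inside $\fix \tau_1$ and $\fix \tau_2$ respectively; since $\cL K$ is a $p$-dimensional formal submanifold of $\cL M$ containing these two transverse branches, it must be invariant under both involutions $\tau_1, \tau_2$, hence under $\sigma = \tau_1 \tau_2$ and under the antiholomorphic involution $\rho_0$. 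Conversely any $\sigma$-invariant formal submanifold of $\cL M$ of dimension $p$ containing such fixed-point branches projects under $\pi_1$ to an attached formal submanifold of $M$, so the two problems are equivalent.

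Next, since by hypothesis the $2p$ eigenvalues $\mu_1, \ldots, \mu_p, \mu_1^{-1}, \ldots, \mu_p^{-1}$ of $D\sigma(0)$ are non-resonant, $\sigma$ admits a formal linearization $\Phi$ conjugating it to its diagonal linear part. In these formal coordinates $\sigma$ is diagonal, and non-resonance forces every $p$-dimensional formal invariant submanifold through $0$ to be a coordinate subspace spanned by $p$ of the $2p$ eigen-directions; the reversibility constraints coming from $\tau_j$ and $\rho_0$ cut this list down to a finite collection of admissible subspaces. The Brjuno-type small-divisors hypothesis on the eigenvalues of $\sigma$ is exactly what is needed to apply Brjuno's holomorphic linearization theorem, so the formal conjugacy $\Phi$ converges. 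Since coordinate subspaces are trivially analytic, their images under the convergent $\Phi$ are convergent complex submanifolds of $\cL M$, and pushing forward by the convergent branched covering $\pi_1$ recovers $K$ as a convergent complex submanifold of $\cc^{2p}$, proving the theorem.

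The main obstacle will be the lifting correspondence: making it rigorous in the purely formal setting requires careful bookkeeping of the branches of $\pi_1$ over $K$ and their interaction with all of the Moser-Webster involutions, to guarantee that the lift is indeed $\sigma$-invariant (and not merely invariant under some smaller subgroup that would not feed into Brjuno's theorem), and to ensure that a transverse pair of totally real branches in $M$ really forces the lift to contain components of $\fix \tau_1$ and $\fix \tau_2$ of the correct dimension. A secondary subtlety is that in the complex-type case the eigenvalues of $\sigma$ may lie on the unit circle, so one is genuinely in the Siegel domain and must use the full strength of the Brjuno condition rather than Poincar\'e's theorem; however, once the correspondence of the first step is in place, the remaining parts of the argument follow the classical small-divisor pattern.
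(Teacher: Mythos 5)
Your first step—lifting an attached formal submanifold $K$ to a $\sigma$-invariant formal submanifold of $\cL M$—is sound and coincides with the content of Theorem~\ref{invep}, which the paper proves and then invokes. The genuine gap is in your second step: you assert that, under the non-resonance hypothesis, ``$\sigma$ admits a formal linearization $\Phi$ conjugating it to its diagonal linear part,'' and then apply Brjuno's theorem to make $\Phi$ converge. This cannot work, because $\sigma$ is \emph{never} formally linearizable, even under the paper's non-resonance assumption. The eigenvalues of $D\sigma(0)$ come in reciprocal pairs $\mu_j,\mu_j^{-1}$, so the resonance relations $\mu_j\cdot(\mu_k\mu_k^{-1})=\mu_j$ hold for all $j,k$; the resonant monomials $\xi_j(\xi\eta)^Q$ and $\eta_j(\xi\eta)^Q$ cannot be removed, and the formal normal form of $\sigma$ is the Moser--Webster form $\xi_j'=M_j(\xi\eta)\xi_j$, $\eta_j'=M_j^{-1}(\xi\eta)\eta_j$, which is genuinely nonlinear. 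The ``non-resonance'' in the theorem means only that $\mu^Q\ne 1$ for $0\ne Q\in\zz^p$; that forces the resonances of the full $2p$-tuple to be exactly those captured by the ideal $\cL I=(\xi_1\eta_1,\dots,\xi_p\eta_p)$, but it does not eliminate them.

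Because of this, neither the classical Brjuno linearization theorem nor Poincar\'e's theorem is applicable to $\sigma$ directly, and your reduction to coordinate subspaces does not go through as stated. The paper's actual argument replaces full linearization by \emph{linearization along the resonant ideal} $\cL I$: one conjugates the commuting family $\{\sigma_1,\dots,\sigma_p\}$ (not $\sigma$ alone) by a map whose nonlinear part lies in $\cL I^{2p}$, so that $\sigma$ becomes linear on the zero set $Z(\cL I)$. This is the theorem of~\cite{stolo-bsmf} applied with ideal $\cL I$, and the small-divisor hypothesis is correspondingly the Diophantine-on-$\cL I$ condition \rea{bnI} for the family $\cL S=\{S_1,\dots,S_p\}$, not a Brjuno condition on $\sigma$'s eigenvalues. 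Once $\sigma$ is linear on $Z(\cL I)$, each invariant submanifold $\cL H_\e$ from Theorem~\ref{invep} becomes a coordinate subspace (a component of $Z(\cL I)$), hence convergent, and the attached submanifolds follow via the correspondence you set up. You would also need to handle the second alternative hypothesis (abelian CR singularity of pure complex type), which the paper treats separately via the convergent normal form of Theorem~\ref{abelinv}; your argument does not address it at all.
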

The Brjuno-type condition,  defined in \re{bnI},  was introduced in~\cite{stolo-bsmf} for linearization on ideals.


%
%
%

\subsection{Notation}
We denote the identity map   by $I$ 
and 
by $LF$ the linear part at the origin of a mapping $F\colon(\cc^m,0)\to(\cc^n,0)$.
We also denote by 
$DF(z)$ or $F'(z)$, the Jacobian matrix of $F$
at 
$z$.
 By an analytic (or holomorphic) function, we mean a {\it germ} of analytic function at a point (which will be defined by the context) otherwise stated.
 We  denote by ${\cL O}_n$ (resp. $\widehat {\cL O}_n$, $\mathfrak M_n$, $\widehat{\mathfrak M}_n$) the space of germs of holomorphic functions of $\cc^n$ at the origin (resp. of formal power series in $\cc^n$,  holomorphic
germs, and formal germs vanishing at the origin). If $Q=(q_1,\dots, q_k)\in\nn^k$, then $|Q|=q_1+\dots+q_k$
and $x^Q=x_1^{q_1}\cdots x_k^{q_k}$.

\medskip
\noindent
{\bf Acknowledgment.} This joint work was completed while X.G. 
was visiting at  SRC-GAIA of
POSTECH.  He is grateful to Kang-Tae Kim for hospitality.
\setcounter{thm}{0}\setcounter{equation}{0}

\section{
CR singularities and deck transformations}
\label{secinv}

We consider a real submanifold $M$ of $\cc^n$.
Let $T_{x_0}^{(1,0)}M$ be the  space of tangent vectors of $M$ at $x_0$   of the form $\sum_{j=1}^na_j\f{\pd}{\pd z_j}$.   Let $M$ have dimension $n$. In this paper, we assume that
$T_{x_0}^{(1,0)}M$
 has the largest possible dimension $p=n/2$ at a given   point $x_0$.
  In suitable
holomorphic affine coordinates, we have $x_0=0$ and
\begin{equation}\label{variete-orig}
M\colon 
z_{p+j} = E_{j}(z',\bar z'), \quad 1\leq j\leq p.
\end{equation}
Here we   set $z'=(z_1,\ldots,z_p)$ and we will denote $z''=(z_{p+1},\ldots,z_{2p})$.
Also, the $E_j$ together with
their first order derivatives vanish at $0$.
 The tangent space $T_0M$ is then the $z'$-subspace.

  The main purpose of this section is to obtain some basic invariants and a relation between two families of involutions
 and the real analytic submanifolds which we want to normalize.


 Note that $M$ is totally real at $(z',z'')\in M$ if and only if 
 $C(z',\ov z')\neq0$, where $C(z',\ov{z'}):= \det(\frac{\partial E_i}{\partial \ov {z_j}})_{1\leq i,j\leq p}$.
We will assume that
 $C(z',\ov z')$ is not identically zero in any neighborhood of the origin.
 Then the zero set of $C$ on $M$, denoted by $M_{CRsing}$,  is called {\it CR singular set}
 of $M$, or the set of {\it complex tangents} of $M$.
   We assume that $M$ is real analytic. Then
 $M_{CRsing}$ is a possibly singular
  proper real analytic subset of $M$ that contains the origin.

\subsection{Existence of deck transformations and examples}

We first derive some quadratic invariants.
Applying a quadratic change of holomorphic coordinates, we obtain
\eq{variete-orig+}
E_j(z',\ov z')=h_j(z',\ov z')+q_j(\ov z')+O(|(z',\ov z')|^3).
\eeq
Here we have used the convention that if $x=(x_1,\ldots, x_n)$,
then $O(|x|^k)$ denotes a formal power series in $x$ without
terms of order  $<k$.
A biholomorphic map $f$ that preserves the form of the above submanifolds
$M$ and fixes the origin must preserve their complex tangent spaces
at the origin, i.e.  $z''=0$. Thus if $\tilde z$
denote the old coordinates and $z$ denote the new coordinates then $f$ has the form $$
 \tilde z'=  \mathbf{A}  z'+\mathbf{B}   z''+O(|  z|^2), \quad \tilde z''= \mathbf{U}  z''+O(|  z|^2).
$$
Here $\mathbf{A}$ and $\mathbf{U}$ are non-singular  $p\times p$ complex matrices.
Now $f(M)$ is given by
$$
\mathbf{U}  z''=h(\mathbf{A}  z',\ov{\mathbf{A}}\ov{  z}')+q(\ov{\mathbf{A}}\ov z')+O(|z|^3).
$$
We multiply the both sides by $\mathbf{U}^{-1}$ and
solve  for $z''$;  
the vectors of $p$ quadratic forms  $\{h(\tilde z',\ov {\tilde z'}), q(\tilde z')\}$ are transformed into
\eq{hhzp}
\{\hat h(z',z'),\hat q(\ov z')\}=\{\mathbf{U}^{-1}h(\mathbf{A}z',\ov{\mathbf{A}}\ov z'),\mathbf{\mathbf{U}}^{-1}q(\ov {\mathbf{A}}\ov z')\}.
\eeq
This shows that if $M$ and $\hat M$ are holomorphically equivalent,  their corresponding quadratic terms  are
equivalent via \re{hhzp}.
 Therefore, we obtain a holomorphic
invariant
$$
q_*=\dim_{\cc}\{z'\colon q_1(z')=\cdots =q_p(z')=0\}.
$$
We remark that when $M,\hat M$
are quadratic (i.e. when their corresponding $E, \hat E$ are homogeneous quadratic polynomials),
the equivalence relation \re{hhzp} implies that
$M, \hat M$ are linearly equivalent,
Therefore,  the above transformation
of $h$ and $q$
via $\mathbf{A}$ and $\mathbf{U}$ determines the classifications of  the quadrics under  local biholomorphisms as wells as under linear biholomorphisms.
We have shown that the two classifications for the quadrics are identical.


Recall that $M$ is real analytic.
Let us complexify such a real submanifold $M$ by replacing $\bar z'$ by $w'$
to obtain a complex   $n$-submanifold of $\cc^{2n}$, defined by
\begin{equation}\nonumber
{\mathcal M}\colon
\begin{cases}
z_{p+i} = E_{i}(z',w'), 
\\
w_{p+i} = \bar E_i(w',z'),\quad i=1,\ldots, p.\\
\end{cases}
\end{equation}
We  use $(z',w')$ as holomorphic coordinates of $\mathcal M$ and
 define the anti-holomorphic involution $\rho$ on it by
\begin{equation}\label{antiholom-invol}
\rho(z',w')=(\bar w',\bar z').
\end{equation}
  Occasionally we will also denote the above $\rho$ by $\rho_0$
for clarity.
We will  identify $M$ with a totally real and real analytic submanifold of $\cL M$ via embedding $z\to (z,\ov z)$.
We have $M={\mathcal M}\cap \text{Fix}(\rho)$ where $\text{Fix}(\rho)$ denotes the set of fixed points of $\rho$.
Let $\pi_1\colon \mathcal M\mapsto{\cc}^n$ be the restriction of the projection $(z,w)\to z$
and let $\pi_2$ be the restriction of $(z,w)\to w$. It is clear that $\pi_2=\ov{\pi_1\rho}$ on $\cL M$. Throughout the paper, $\pi_1,\pi_2,\rho$ are restricted on $\cL M$
unless stated otherwise.

%
%
%

Condition B that $q_*=0$,  introduced in section~\ref{sect1},  ensures that $\pi_1$ is a branched covering.
A necessary condition for $q_*=0$
is that functions $q_1(z')$, $ q_2(z'), \ldots, q_p(z')$
are linearly independent, since the intersection
of $k$ germs of holomorphic hypersurfaces at $0$ in $\cc^p$ has dimension at least
$p-k$. (See~\cite{Ch89}, p. 35;  \cite{Gunning2}[Corollary 8, p. 81].)

  When $\pi_1\colon\cL M\to\cc^{2p}$ is a branched covering,  we define a {\it deck transformation} on $\cL M$ for $\pi_1$ to
be   a germ of biholomorphic mapping $F$ defined at $0\in\cL M$ that satisfies  $\pi_1\circ F=\pi_1$. In other words, 
$F(z',w')=(z',f(z',w'))$  and
$$ 
E_i(z',w')=E_i(z',f(z',w')),\quad i=1,\dots, p.
$$ 

\begin{lemma}\label{2p1} Suppose that $q_*=0$.   Then $M_{CRsing}$ is a proper real analytic subset of $M$ and
 $M$ is totally real away from $M_{CRsing}$, i.e. the CR dimension of $M$
is zero.  Furthermore, $\pi_1$ is a
$2^p$-to-$1$ branched   covering.  The group of deck transformations  of $\pi_1$
consists of $2^\ell$ commuting  involutions with $0\leq \ell\leq p$.
\end{lemma}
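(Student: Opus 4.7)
The plan treats the three assertions in order. For the properness of $M_{CRsing}$, I would extend $C(z',\ov z'):=\det(\partial E_i/\partial\ov z_j)$ to the holomorphic function $c(z',w'):=\det(\partial E_i/\partial w_j)$ on $\cL M$; since $M$ is a maximally totally real submanifold of $\cL M$, a vanishing $C\equiv 0$ on $M$ would force $c\equiv 0$ on $\cL M$. But $c(0,w')$ has leading term $\det(\partial q/\partial w)(w')$, which is not identically zero because Condition~B makes $q\colon\cc^p\to\cc^p$ a finite map, hence generically a local biholomorphism. Total reality off $M_{CRsing}$ then follows from the Implicit Function Theorem applied to the defining relations of $M$.

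For the degree of $\pi_1$, the homogeneity of $q$ and Condition~B yield an estimate $|q(w')|\geq m|w'|^2$, so the remainder in $E(0,w')=q(w')+O(|w'|^3)$ is dominated by $q$ and $\pi_1^{-1}(0)=\{0\}$ set-theoretically, making $\pi_1$ a proper finite map near the origin. The local multiplicity at $0$ of the complete intersection $q=0$ of $p$ quadrics meeting only at the origin is $\prod_j\deg q_j=2^p$ by Bezout, and homotopy-invariance transfers this to $E(0,\cdot)$; hence $\pi_1$ is a $2^p$-sheeted branched covering.

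For the structure of $G$, the free action on a generic fiber of size $2^p$ forces $|G|$ to divide $2^p$, so $G$ is a finite $2$-group. By Bochner's linearization theorem, $G$ is conjugate into $GL(2p,\cc)$, and each $\tau$ takes the form $\tau(z',w')=(z',Az'+Bw')$. Matching the $(0,2)$-piece of the identity $E\circ\tau=E$ at quadratic order yields $q\circ B=q$, so $B\in G_0:=\{B\in GL(p)\colon q\circ B=q\}$. The two-sided bound $m|w'|^2\leq|q(w')|\leq M|w'|^2$ forces $\|B\|\leq\sqrt{M/m}$, making $G_0$ compact; conjugating into $U(p)$, each $B\in G_0$ is diagonalizable, and any eigenvalue $\lambda$ with eigenvector $v\neq 0$ satisfies $\lambda^2q(v)=q(v)$ with $q(v)\neq 0$ by Condition~B, forcing $\lambda=\pm 1$ and hence $B^2=I$. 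A direct computation then yields $\tau^{2k}(z',w')=(z',k(I+B)Az'+w')$; since $\tau$ has finite order, this sequence must eventually vanish, so $(I+B)A=0$ and $\tau^2=I$. A finite group of involutions is automatically abelian by the standard identity $(ab)^2=e\Rightarrow ab=ba$, and is therefore isomorphic to $(\zz/2)^\ell$ with $\ell\leq p$ since $|G|\leq 2^p$.

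The main obstacle I anticipate is the derivation of $B^2=I$, which hinges on identifying $G_0$ as a compact subgroup of $GL(p)$ whose elements have eigenvalues in $\{\pm 1\}$; this compactness depends critically on the homogeneous lower bound $|q(w')|\geq m|w'|^2$ provided by Condition~B, and the eigenvalue restriction uses Condition~B again through the non-vanishing of $q(v)$ on eigenvectors. Without Condition~B both steps collapse.
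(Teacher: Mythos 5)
Your proof is correct, and the decisive step---showing that the $w'$-block $B$ of $L\tau$ satisfies $q\circ B=q$, hence has only eigenvalues $\pm 1$ since $q$ never vanishes on a nonzero eigenvector, hence $B^2=I$, and then forcing $(I+B)A=0$ from the iterate formula and periodicity---is exactly the paper's. Where you diverge is in the supporting machinery. For the sheet count $2^p$ you use Bezout/multiplicity theory for the complete intersection $q=0$, whereas the paper invokes Sard's theorem to exhibit a full-measure set of regular values with $2^p$-point fibers. For linearization you apply Bochner's theorem to the whole group $G$ at the outset, whereas the paper linearizes a single periodic $f$ by averaging $g=\sum_i(\hat f^i)^{-1}\circ f^i$ over the cyclic group it generates. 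For diagonalizability of $B$ you show $G_0=\{B\colon q\circ B=q\}$ is compact and conjugate it into $U(p)$, whereas the paper simply observes that $\mathbf{A}^m=\mathbf{I}$ forces semisimplicity via Jordan form. All three substitutions are valid, though the compactness argument is a detour since $B$ already has finite order (dividing $2^p$), which gives diagonalizability at once.

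The one point deserving care is the ordering of the Bochner step. You linearize $G$ first and then read off $q\circ B=q$ from the $(0,2)$-piece of $E\circ\tau=E$; but after the conjugation, $E$ is replaced by a transformed defining function, and a generic linearizing biholomorphism need not preserve the structure $(z',w')\mapsto(z',\cdot)$ that makes $\tau$ a deck transformation. Both issues disappear if you take the linearizing map to be the averaging $\phi=\frac{1}{|G|}\sum_{\tau\in G}(L\tau)^{-1}\circ\tau$, which is tangent to the identity and fixes $z'$ because every $\tau$ and $L\tau$ does; then the quadratic part $q$ of the (transformed) $E$ is unchanged and $\phi^{-1}G\phi$ still consists of deck transformations. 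Alternatively---and more simply---note that $L\tau$ is itself a deck transformation of the complexified quadric, run the eigenvalue and $(I+B)A=0$ argument directly on $L\tau$ to get $(L\tau)^2=I$, and only at the end use linearization (Bochner, or the paper's averaging over the cyclic group generated by $\tau$) to transfer $(L\tau)^2=I$ to $\tau^2=I$. This removes the dependence of the algebraic computation on the coordinate change.
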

\begin{proof}  Since $q^{-1}(0)=\{0\}$, then  $z'\to q(z')$ is a finite holomorphic map; see~\cite{Ch89}, p.~105.
 Hence its Jacobian determinant
is not identically zero.  In particular, 
$\det(\frac{\partial E_i}{\partial \ov {z_j}})_{1\leq i,j\leq p}$ 
is not identically zero. This shows that $M$ has CR dimension $0$.

Since $w'\to q(w')$ is a homogeneous quadratic
mapping of the same space  which vanishes only at the origin, then
$$
|q(w')|\geq c|w'|^2.
$$
We want to verify that
$\pi_1$ is a $2^p$--to--$1$
branched   covering.  Let $\Delta_r=\{z\in\cc\colon|z|<r\}$.
 We choose  $C>0$ such that
$\pi_1(z,w)=(z',E(z',w'))$
defines a proper and onto mapping
\eq{pi1M}
\pi_1\colon \cL M_1:=  \mathcal M\cap((\Delta_{\delta}^{p}\times \Delta_{\delta^2}^{p})
\times(\Delta_{C\delta}^{p}\times \Delta_{C\delta^2}^{p}))\mapsto \Delta_{\delta}^{p}\times \Delta_{\delta^2}^{p}.
\eeq
By Sard's theorem, the regular values of $\pi_1$ have the full measure. For each regular
value $z$,  $\pi_1^{-1}(z)$ has exactly $2^p$ distinct points (see  \cite{Ch89}, p.~105 and
p.~112). It is obvious that  $\mathcal M_1$ is smooth and connected.
We   fix a fiber $F_z$ of $2^p$ points.   Then 
 the group of deck transformations of $\pi_1$  acts on $F_z$ in such a way that if a deck transformation fixes
a point in $F_z$, then it must be the identity.  Therefore, the number of
deck transformations divides $2^p$
and each deck transformation has period $2^\ell$ with $0\leq \ell\leq p$.

We first show that each deck transformation $f$ of $\pi_1$
is an involution.
We know that  $f$ is periodic and
has the form $$z'\to z',\quad
w'\to \mathbf{A}w'+\mathbf{B}z'+O(2),$$ where $\mathbf{A},\mathbf{B}$ are  matrices.
Assume that $f$  has period $m$. Then
$\hat f(z',w')=(z',\mathbf{A}w'+\mathbf{B}z')$ satisfies $\hat f^m= I $ and $f$  is locally equivalent to $\hat f$; indeed $\hat fgf^{-1}=g$ for
$$
g=\sum_{i=1}^m(\hat f^{i})^{-1}\circ f^i.
$$
Therefore, it suffices to show that $\hat f$ is an involution.
 We have
$$
\hat f^m(z',w')=(z',\mathbf{A}^mw'+(\mathbf{A}^{m-1}+\cdots+\mathbf{A}+\mathbf{I})\mathbf{B}z').
$$
Since $f$ is a deck
transformation, then $E(z',w')$ is invariant under $f$.
Recall from \re{variete-orig+} that $E(z',\ov z')$
starts with quadratic terms of the form $h(z',\ov z')+q(\ov z')$.
Comparing quadratic terms in
$
E(z',w')=E\circ\hat f(z',w'),
$
we see  that the linear map $\hat f$ has   invariant functions
$$
z''=h(z',w')+q(w').
$$
We know that $\mathbf{A}^m=\mathbf{I}$. By the Jordan normal form,
we choose a linear transformation $\tilde w'= \mathbf{S}w'$  such that $\mathbf{S}\mathbf{A}\mathbf{S}^{-1}$ is the diagonal matrix $\diag \mathbf{a}$
with $\mathbf{a}=(a_1,\ldots, a_p)$.
In  $(z',\tilde w')$ coordinates, the
mapping $\hat f$ has the form $( z',\tilde w')\to(z',(\diag \mathbf{ a})\tilde w'+\mathbf{S}\mathbf{B} z')$. Now  $$\tilde h_j( z',\tilde w')+\tilde q_{j}(\tilde w'):=h_j( z',\mathbf{S}^{-1}\tilde w)+q_{ j}(\mathbf{S}^{-1}\tilde w')$$  are invariant under $\hat f$. Hence
  $\tilde q_j(\tilde w')$   are invariant under    $\tilde w'\mapsto(\diag \mathbf{a})\tilde w'$. Since the common zero set of $q_1(w'),\ldots, q_p(w')$ is the origin, then
$$
V=\{\tilde w'\in\cc^p \colon \tilde q(\tilde w')=0\}=\{0\}.
 $$
We conclude that $\tilde q(\tilde w_1,0,\ldots,0)$ is not identically zero; otherwise $V$ would contain the $\tilde w_1$-axis.
 Now
$ \tilde q((\diag \mathbf{a})\tilde w')=\tilde q(\tilde w')$,  restricted to $\tilde w'=(\tilde w_1,0,\ldots, 0)$,
implies that $a_1=\pm1$.  By the same argument, we get  $a_j=\pm1$
for all $j$.
This shows that $\mathbf A^2=\mathbf I$. Let us combine it with
$$
\mathbf{A}^m=\mathbf{I}, \quad (\mathbf{A}^{m-1}+\cdots+\mathbf{A}+\mathbf{I})\mathbf{B}=\mathbf{ 0}.
$$
If $m=1$, it is obvious that $\hat f=I$.
If $m=2^\ell>1$, then $(\mathbf{A}+\mathbf{I})\mathbf{B}=\mathbf{ 0}$. Thus $\hat
f^2(z',w')=(z',\mathbf{A}^2w'+(\mathbf{A}+\mathbf{I})\mathbf{B}z')=(z',w')$.  This shows that every deck transformation of $\pi_1$ is an involution.

For any two deck transformations $f$ and $g$, $fg$ is still a deck
transformation. Hence $(fg)^2= I $ implies that $fg=gf$.
\end{proof}

Before we proceed   to discussing
the deck transformations,
we give some examples. The first example turns out to be
a holomorphic equivalent form of a real submanifold that
admits the maximum number of deck transformations
and satisfies other mild conditions.
\begin{exmp}\label{sqex}
Let $\mathbf{B}=(b_{jk})$ be a non-singular $p\times p$ matrix. Let $M$ be defined by
\eq{reformu}
z_{p+j}=\left(\sum_k b_{jk}\ov z_k+R_j(z',\ov z')\right)^2, \quad 1\leq j\leq p,
\eeq
  where each $R_j(0,\ov z')$ starts with terms of order at least $2$.
Then $M$ admits $2^p$
deck transformations for $\pi_1$.   Indeed,  let $\mathbf E_1,\ldots,\mathbf E_{2^p}$ be the set of
 diagonal $p\times p$ matrices with $\mathbf E_j^2=
\mathbf I$, and let $\mathbf R$ be the column vector $(R_1,\dots, R_p)^t$.
For each $\mathbf E_j$ let us show that there is a deck transformation $(z',w')\to(z',\tilde w')$ satisfying
\eq{inveq}
\mathbf B\tilde w'+\mathbf R(z',\tilde w')=\mathbf E_j(\mathbf Bw'+\mathbf R(z',w')).
\eeq
  Since $\mathbf B$ is invertible, it has a unique solution
$$
 \tilde w'=\mathbf B^{-1}\mathbf E_j\mathbf Bw'+O(|z'|)+O(|w'|^2).
$$
Finally, $(z',w')\to (z',\tilde w')$ is an involution, as if $(z',w',\tilde w')=(z',w',f(z',w'))$ satisfy \re{inveq} if and only if $(z',f(z', w'),w')$,
substituting for $(z',w',\tilde w')$ in \re{inveq},
satisfy \re{inveq}.  \end{exmp}

We now present an example  to show that the deck
transformations can be destroyed by perturbations
when $p>1$. This is the major difference between real submanifolds with $p>1$ and the ones with $p=1$.
%
%
The   example shows that
the number of deck transformations can be reduced to any number $2^\ell$ by a higher order perturbation.
\begin{exmp}Let $N_{\gaa,\e}$ be a perturbation
 of $Q_\gaa$ defined by
$$
z_{p+j}=z_j\ov z_j+\gaa_j\ov z_j^2+\e_{j-1} \ov z_{j-1}^3,
\quad 1\leq j\leq p.
$$
Here $\e_j\neq0$ for all $j$,  $\e_0=\e_p$ and $z_0=z_p$.
Let $\tau$ be a deck transformation of $N_{\gaa,\e}$ for $\pi_1$. We know that $\tau$ has the form
$$
z_j'=z_j, \quad w_j'=A_j(z',w')+B_j(z',w')+O(|(z',w')|^3).
$$
Here $A_j$ are linear and $B_j$ are homogeneous
quadratic polynomials.
We then have
\ga\label{zjBj-}
z_jA_j(z',w')+\gaa_jA_j^2(z',w')
=z_jw_j+\gaa_jw_j^2, \\
\label{zjBj} z_jB_j(z',w')+2\gaa_j (A_jB_j)(z',w')+ \e_{j-1}A_{j-1}^3(z',w')=\e_{j-1} w_{j-1}^3.
\end{gather}
We know that $L\tau$ is a deck transformation for
$Q_\gaa$. Thus $a_j(w'):=A_j(0,w')=\pm w_j$.
Set $z_j=0$ in 
\re{zjBj} to get $a_j(w')|\e_{j-1}(w_{j-1}^3-a_{j-1}^3 (w'))$. Thus $a_{j-1}(w')=w_{j-1}$. Hence, the matrix
of $L\tau$ is triangular and its diagonal entries are
$1$. Since $L\tau$
is periodic then $L\tau= I$. Since $\tau$ is periodic,
then $\tau= I$.
\end{exmp}

Based the above  example,  we impose the  basic condition  D that  the branched   covering $\pi_1$  of $\cL M$
admits the maximum $2^p$ deck transformations.
\medskip

We first derive some  properties of
real submanifolds under 
 condition D.

\subsection{
Real submanifolds and 
Moser-Webster involutions}

The main result of this subsection is to show the equivalence
of classification of the real submanifolds  satisfying condition $D$ with that of  families of involutions $\{\tau_{11}, \ldots, \tau_{1p},\rho\}$. More precisely, condition J is not imposed.
 The relation between two classifications plays an important role
in the Moser-Webster theory for $p=1$. This
 will be the base of our approach to the normal form problems.

Let $\cL F$ be a family of holomorphic maps in $\cc^n$ with coordinates $z$.  Let $L\cL F$ denote the set of linear maps $z\to f'(0)z$ with $f\in \cL F$.
Let $\cL O_n^\cL F$ denote the set of germs of holomorphic functions
$h$ at $0\in\cc^n$
so that $h\circ f=h$ for each $f\in \cL F$.
Let $[\mathfrak M_n]_1^{L\cL F}$ be the subset of linear functions   of
$\mathfrak M_n^{L\cL F}$.
\begin{lemma}\label{sd}
Let $G$ be an abelian group of
 holomorphic $($resp. formal$)$ involutions fixing $0\in\cc^n$. Then $G$ has $2^\ell$ elements 
 and they are simultaneously
diagonalizable by a holomorphic $($resp. formal$)$ transformation.
If $k=\dim_\cc[\mathfrak M_n]_1^{LG}$ then  $\ell\leq n-k$.  Assume furthermore that $\ell=n-k$. In suitable holomorphic $(z_1,\ldots, z_n)$ coordinates, the group $G$ is generated by $Z_{k+1},\ldots, Z_n$ with
\eq{zjzjp}
Z_j\colon z_j'=-z_j, \quad z_i'=z_i, \quad i\neq j, \quad 1\leq i\leq n.
\eeq
In the $z$ coordinates,
 the set of convergent $($resp. formal$)$
  power series in $z_1,\ldots, z_k$, $z_{k+1}^2,\ldots,z_{n}^2$  is equal
 to $\cL O_n^{G}$
 $($resp. $\widehat{\cL O}_n^{G})$, and with $Z=Z_{n-k}\cdots Z_n$,
 \eq{Mn1g}
 [\mathfrak M_n]_1^{G}=[\mathfrak M_n]_1^{Z}, \quad
 \fix(Z)=\bigcap_{j=k+1}^n\fix(Z_j).\eeq
\end{lemma}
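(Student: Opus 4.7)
My plan is to linearize the entire group simultaneously, diagonalize it, and then read off every remaining claim from the diagonal form. First I would apply the classical averaging trick: since $G$ is finite, the map $\Phi(z)=\frac{1}{|G|}\sum_{h\in G}(Lh)^{-1}(h(z))$ has linear part $I$ and is therefore a biholomorphism in either the convergent or formal category. A direct computation using the substitution $h'=hg$ gives $\Phi\circ g=Lg\circ\Phi$ for every $g\in G$ simultaneously, so I may replace $G$ by the finite abelian group $LG$ of commuting linear involutions.

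A commuting family of linear involutions has eigenvalues in $\{\pm 1\}$ and is simultaneously diagonalizable, so after a further linear change of coordinates each $g\in G$ acts as $z_j\mapsto\epsilon_j(g)z_j$ with $\epsilon_j(g)\in\{\pm 1\}$. The character map $g\mapsto(\epsilon_j(g))_{j=1}^n$ embeds $G$ injectively into $(\zz/2\zz)^n$, whence $|G|=2^\ell$. The invariant linear functions $[\mathfrak M_n]_1^{LG}$ are spanned by those $z_j$ on which $\epsilon_j(g)=1$ for every $g$; after reordering I may assume these are exactly $z_1,\ldots,z_k$. The restricted character map $G\to(\zz/2\zz)^{n-k}$ recording the signs on the last $n-k$ coordinates is still injective (since $z_1,\ldots,z_k$ are fixed by everyone), which forces $\ell\leq n-k$.

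When $\ell=n-k$ this restricted map is an isomorphism, so $G$ contains every sign change on $z_{k+1},\ldots,z_n$; in particular $G$ is generated by the involutions $Z_{k+1},\ldots,Z_n$ of \eqref{zjzjp}. A monomial $z^\alpha$ is $G$-invariant iff $\alpha_j$ is even for every $j>k$, which identifies $\cL O_n^G$ (respectively $\widehat{\cL O}_n^G$) with the power series in $z_1,\ldots,z_k,z_{k+1}^2,\ldots,z_n^2$. Specializing to linear forms, both $[\mathfrak M_n]_1^G$ and $[\mathfrak M_n]_1^{Z}$ for $Z=Z_{k+1}\cdots Z_n$ reduce to the span of $z_1,\ldots,z_k$, and the identity $\fix(Z)=\{z\colon z_j=0,\ j>k\}=\bigcap_{j=k+1}^n\fix(Z_j)$ is immediate.

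The only step that genuinely requires verification is the simultaneous linearization via $\Phi$, which is classical (Bochner's linearization specialized to finite groups). Once this is in place, no small-divisor, convergence, or transcendental difficulty enters: everything reduces to linear algebra over $\cc$ and bookkeeping of monomial exponents under the sign action, so I do not expect any real obstacle in the argument.
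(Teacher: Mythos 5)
Your proposal uses the same core idea as the paper's proof — simultaneous Bochner linearization of the group followed by diagonalization and elementary $\mathbb{F}_2$-combinatorics — and the one-shot averaging $\Phi = \frac{1}{|G|}\sum_{h\in G}(Lh)^{-1}\circ h$ is a cleaner packaging of the paper's inductive version, which successively conjugates by $I + (Lf_j)^{-1}f_j$.

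There is, however, a genuine logical gap at the very first step. You write ``since $G$ is finite,'' but finiteness of $G$ (equivalently, $|G|=2^\ell$ for some finite $\ell$) is part of the \emph{conclusion} of the lemma, not a hypothesis. Your averaging operator $\Phi$ cannot even be written down until $G$ is known to be finite, yet your proof that $|G|=2^\ell$ (the character-map embedding into $(\zz/2\zz)^n$) is carried out only \emph{after} applying $\Phi$, so the argument as stated is circular. The paper avoids this by working with an arbitrary finite subgroup of $G$ and proving the bound $\ell\le n-k$ for any such subgroup, which a posteriori forces $G$ itself to be finite. To patch your version, first establish finiteness without $\Phi$: the map $g\mapsto Lg$ is injective on $G$, because an involution tangent to the identity is the identity (if $g=I+h$ with $h=h_k+O(k+1)$, $h_k\neq 0$ homogeneous of degree $k\ge 2$, then the degree-$k$ part of $g\circ g - I$ is $2h_k\neq 0$, contradicting $g^2=I$); and $LG$ is a commuting family of diagonalizable linear involutions, hence simultaneously diagonalizable, hence isomorphic to a subgroup of $(\zz/2\zz)^n$. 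This gives $|G|=|LG|\le 2^n$, and with finiteness in hand the averaging and the remainder of your argument go through as written.
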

\begin{proof}
We first want to show that $G$ has $2^\ell$ elements.
  Suppose that it has more than one element and
we have already found a subgroup of $G$ that
has $2^{i}$ elements $f_1,\ldots, f_{2^i}$. Let $g$ be an element
in $G$ that is different from the $2^i$ elements. 
Since $g$ is an involution and commutes
with  each $f_j$, then
$$
f_1,\ldots, f_{2^{i}},\quad  gf_1,
\ldots, gf_{2^{i}}
$$
form a group of   $2^{i+1}$ elements.
We have proved that every finite subgroup of $G$ has exactly $2^\ell$
elements. Moreover, if $G$ is infinite then it contains a subgroup of $2^\ell$ elements   for every $\ell\geq0$. Let $\{f_1,\ldots, f_{2^\ell}\}$ be such a subgroup of $G$. It suffices to show that $\ell\leq n-k$.
We first linearize  all $f_j$ simultaneously.
We know that $Lf_1, \ldots, Lf_{2^\ell}$  commute pairwise. Note that $ I +f_1'(0)^{-1}f_1$ linearizes $f_1$.
Assume that $f_1$ is linear. Then $f_1=Lf_1$ and $Lf_2$ commute, and
$ I+f_2'(0)^{-1}f_2$ commutes with $f_1$ and linearizes $f_2$. Thus $f_j$ can be simultaneously linearized by a holomorphic (resp. formal) change of coordinates. Without loss of generality, we may assume that each $f_j$ is linear. We
want to diagonalize all $f_j$ simultaneously. Let $E_i^{\,1}$ and
$E_i^{-1}$ be the eigenspaces of $f_i$
with eigenvalues $1$ and $-1$, respectively. Since $f_i=f_j^{-1}f_if_j$, each eigenspace of $f_i$
is invariant under $f_j$. Then we can decompose
\eq{ccn}
\cc^n=\bigoplus_{(i_1, \ldots, i_s)}E_1^{i_1}\cap\cdots\cap E_s^{i_s}, \quad s= 2^{\ell}.
\eeq
Here $
(i_1,\ldots, i_s)$ runs over $\{-1,1\}^s$ with subspaces $E^{(i_1,\ldots, i_s)}:= E_{1}^{i_1}\cap\cdots\cap E_s^{i_s}$. On each of these subspaces,
$f_j= I$ or $- I$.
We are ready to choose
 a new basis for $\cc^n$ whose elements are in the subspaces. Under
the new basis, all $f_j$ are diagonal.

Let us rewrite \re{ccn} as
$
\cc^n=V_1\oplus V_2\oplus\cdots\oplus V_{d}.
$
Here  $V_j=E^{I_j}$ and $I_1=(1,\ldots, 1)$. Also,
$I_j\neq (1,\ldots, 1)$
and $\dim V_j>0$ for $j>1$. We have $\dim_\cc\fix (G)=\dim_\cc V_1=\dim_\cc[\mathfrak M_n]_1^{LG}=k$.  Therefore, $d-1\leq
n-\dim_\cc V_1\leq n-k$.
  We have proved that in suitable coordinates $G$ is contained in the group  generated by $Z_{k+1}, \ldots, Z_n$. The remaining assertions follow easily.
\end{proof}

We will need an elementary result about invariant functions.
\le{twosetin} Let  $Z_{k+1}, \ldots, Z_n$ be defined by \rea{zjzjp}.
Let $F=\{f_{k+1}, \dots, f_n\}$ be a family of germs of  holomorphic
mappings at the origin $0\in\cc^n$. Suppose that the family $F$ is holomorphically
equivalent to $\{Z_{k+1}, \dots, Z_n\}$.
Let $b_{1}(z), \dots, b_n(z)$ be germs of holomorphic functions that are invariant under $F$.
Suppose that  for $1\leq j\leq k$, $b_j(0)=0$ and the linear part of $b_j$ at the origin is $\tilde b_j$.
Suppose that
for $i>k$,
$b_i(z)=O(|z|^2)$ and the quadratic part of $b_i$ at the origin is $b_i^*$.  Suppose that $\tilde b_1, \dots, \tilde b_k$ are linear independent,  and that $b_{k+1}^*,
\dots, b_n^*$ are linearly independent modulo $\tilde b_1, \dots, \tilde b_k$,  i.e.
$$
\sum c_ib_i^*( z)=\sum d_j(z)\tilde b_j(z) + O(|z|^3)
$$
holds for some constants $c_i$ and formal power series $d_j$,
 if and only if all $c_i$ are zero.  Then  invariant functions
of $F$ are power series in $b_1, \dots, b_n$. Furthermore, $F$ is uniquely determined by $b_1,\ldots, b_n$.
The same conclusion holds if $F$ and $b_j$ are given by formal power series.
\ele
\begin{proof} Without loss of generality, we  assume that $F$ is $\{Z_{k+1}, \dots, Z_n\}$.  Hence, for all $j$, there is a formal power series $a_j$ such that $b_j(z)= a_j(z_1, \dots, z_k, z_{k+1}^2, \dots, z_n^2)$. Let us show that the map $w\to a(w)=(a_1(w),\ldots, a_n(w))$ is invertible.

By \rl{sd},
$\tilde b_1(z), \dots,\tilde b_k(z)$ are linear combinations of $z_1,\ldots, z_k$, and
vice versa. By \rl{sd} again, $b_{k+1}^*,
\dots, b_n^*$ are linear combinations of $z_{k+1}^2, \dots, z_n^2$ modulo $z_1,\ldots, z_k$. This shows
that
\eq{bjsz}\nonumber
b_i^*(z)=\sum_{j>k} c_{ij}z_j^2+\sum_{\ell\leq k} d_{i\ell}(z)\tilde b_\ell(z), \quad i>k.
\eeq
Since $b_{k+1}^*, \dots, b_n^*$ are linearly independent modulo $\tilde b_1,\dots, \tilde b_k$. Then $(c_{ij})$
is invertible; so is the linear part of $a$.

To show that $F$ is uniquely determined by its invariant functions, let $\tilde F$ be another such
family that is equivalent to $\{Z_{k+1},\dots, Z_n\}$.
 Assume that $F$ and $\tilde F$
have the same invariant functions.  Without loss of generality, assume that $\tilde F$
is $\{Z_{k+1},\dots, Z_n\}$.
 Then $z_1,\ldots, z_k$ are invariant by each $F_j$, i.e. the $i$th component of $F_j(z)$
is $z_i$ for $i\leq k$. Also $F_{j,\ell}^2(z)=z_\ell^2$ for $\ell>k$. We get $F_{j,\ell}=\pm z_\ell$. Since $z_\ell$
is not invariant by $\tilde F$, then it is not invariant by $F$ either. Then $F_{j_\ell,\ell}(z)=-z_{\ell}$ for
some $\ell_j>k$.  Since $F_{j_\ell}$ is equivalent to some   $Z_i$, the set of fixed points of $F_{j_\ell}$
is a hypersurface. This shows that $F_{j_\ell}=Z_{\ell}$. So the family $F$ is $\{Z_{k+1},\dots, Z_n\}$.
\end{proof}

 We now want to find a special set of generators
 for the deck transformations  and its basic properties, which will be important
 to our study of the normal form problems.
\begin{lemma}\label{sd1} Let $M$ be defined by \rea{variete-orig} and
\rea{variete-orig+} with $q_*=0$.
Suppose that  the group $\cL T_i$ of deck transformations
 of $\pi_i\colon \cL M\to\cc^p$  has
 exactly $2^p$ elements. Then the followings hold.
 \bppp
\item $\cL T_1$ is generated by $p$
 distinct involutions $\tau_{1j}$ such that $\fix(\tau_{11})$,
 $\dots$, $ \fix(\tau_{1p})$ are hypersurfaces
 intersecting transversally at $0$.
And $\tau_1=\tau_{11}\cdots\tau_{1p}$   is the unique deck transformation
 of which the set of fixed points
 has dimension $p$. Moreover, $\fix(\tau_1)=\bigcap\fix(\tau_{1j})$.
 \item $\cL O_n^{\cL T_1}$ $($resp. $\widehat{\cL O}_n^{\cL T_1})$
is precisely  the set of convergent $($resp. formal$)$ power series in
 $z'$ and $ E(z',w')$. $\cL O_n^{\cL T_2}$ $($resp. $\widehat{\cL O}_n^{\cL T_2})$
is the set of convergent $($resp. formal$)$ power series in
 $w'$ and $ \ov E(w',z')$. In particular, in $(z',w')$ coordinates   of $\cL M$, $\cL T_1$ and $\cL T_2$ satisfy
 \ga\label{i1i1}
 [\mathfrak M_n]_1^{L\cL T_1}\cap[\mathfrak M_n]_1^{L\cL T_2}
 =\{0\},\\  
 \dim\fix(\tau_i)=p, \quad \fix(\tau_1)\cap\fix(\tau_2)=\{0\}.\nonumber
 \end{gather}
 Here $[\mathfrak M_n]_1$ is the set of linear functions in $z',w'$ without constant terms.
  \eppp
\end{lemma}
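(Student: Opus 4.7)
The plan is to deduce (i) by applying \rl{sd} to the abelian involution group $\cL T_1$, and then to derive (ii) by combining the resulting normal form with \rl{twosetin} and the conjugation relation $\cL T_2=\rho\cL T_1\rho$.

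First I note that every element of $\cL T_1$ fixes the coordinates $z_1,\ldots,z_p$ by the definition of a deck transformation of $\pi_1$, so these $p$ linear functions lie in $[\mathfrak M_n]_1^{L\cL T_1}$ and hence $\dim[\mathfrak M_n]_1^{L\cL T_1}\geq p$. Since $|\cL T_1|=2^p$ and $n=2p$, the inequality $\ell\leq n-k$ of \rl{sd} forces $k=\dim[\mathfrak M_n]_1^{L\cL T_1}=p$, so this space is exactly the span of $z_1,\ldots,z_p$. The diagonalization part of \rl{sd} then provides holomorphic coordinates $(z',\tilde w')$ on $\cL M$, with $z'$ unchanged and $\tilde w'=\phi(z',w')$, in which $\cL T_1$ is generated by the $p$ coordinate reflections $\tilde w_j\mapsto -\tilde w_j$. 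Declaring $\tau_{1j}$ to be the deck transformation corresponding to the $j$-th reflection gives the required $p$ generators, each fixing a hypersurface $\{\tilde w_j=0\}$, these hypersurfaces meeting transversally at the origin. Every element of $\cL T_1$ equals $\prod_{j\in J}\tau_{1j}$ for some $J\subseteq\{1,\ldots,p\}$, and its fixed set has complex dimension $2p-|J|$; this equals $p$ precisely when $J=\{1,\ldots,p\}$, proving the uniqueness of $\tau_1=\tau_{11}\cdots\tau_{1p}$ and the identity $\fix(\tau_1)=\bigcap_j\fix(\tau_{1j})$. This finishes (i).

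For (ii) I apply \rl{twosetin} to the family $\cL T_1$ with the candidate invariants $b_j=z_j$ for $1\leq j\leq p$ and $b_{p+i}=E_i(z',w')$ for $1\leq i\leq p$. Each $E_i$ is $\cL T_1$-invariant by the definition of deck transformation; the linear parts $z_1,\ldots,z_p$ are linearly independent; and the quadratic parts $h_i(z',w')+q_i(w')$ of the $E_i$ are linearly independent modulo $z_1,\ldots,z_p$, since evaluating any relation $\sum_ic_i(h_i+q_i)=\sum_j d_j(z',w')z_j+O(3)$ at $z'=0$ yields $\sum_ic_iq_i(w')=0$, and the $q_i$ are linearly independent by condition B (the map $q$ is then finite). \rl{twosetin} combined with the description of invariants afforded by \rl{sd} in the diagonalizing coordinates then shows that $\cL O_n^{\cL T_1}$ (resp.\ $\widehat{\cL O}_n^{\cL T_1}$) is precisely the ring of convergent (resp.\ formal) power series in $z'$ and $E(z',w')$. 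The symmetric statement for $\cL T_2$ follows by conjugation: since $\cL T_2=\rho\cL T_1\rho$ and $\rho(z',w')=(\bar w',\bar z')$ swap $z$ and $w$ up to complex conjugation, the $\cL T_2$-invariants are generated by $w'$ and $\ov E(w',z')$. The identity $[\mathfrak M_n]_1^{L\cL T_1}\cap[\mathfrak M_n]_1^{L\cL T_2}=\{0\}$ is immediate from the descriptions $[\mathfrak M_n]_1^{L\cL T_1}=\mathrm{span}(z_1,\ldots,z_p)$ and $[\mathfrak M_n]_1^{L\cL T_2}=\mathrm{span}(w_1,\ldots,w_p)$; and $\dim\fix(\tau_2)=\dim\fix(\tau_1)=p$ follows from (i) together with $\fix(\tau_2)=\rho(\fix(\tau_1))$.

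The remaining claim $\fix(\tau_1)\cap\fix(\tau_2)=\{0\}$ is the subtle point, and I expect it to be the main obstacle. I would reduce it to a tangent-space calculation: in the diagonalizing coordinates for $\cL T_1$, $T_0\fix(\tau_1)$ is a graph $w'=Lz'$ for some linear map $L\colon\cc^p\to\cc^p$, and applying the antiholomorphic $\rho$ exchanges the roles of $z'$ and $w'$, so $T_0\fix(\tau_2)$ is a graph $z'=L^*w'$ built from $\bar L$. Their intersection consists of vectors satisfying $w'=LL^*w'$, so nontrivial intersection is possible only if $1$ is an eigenvalue of $LL^*$. Ruling this out requires exploiting the interaction of $\rho$ with the shear part of $L\tau_1$ forced by condition B; once established at the linear level, the transversality of two $p$-dimensional complex submanifolds in the $2p$-dimensional $\cL M$ forces $\fix(\tau_1)\cap\fix(\tau_2)=\{0\}$ as a germ at the origin.
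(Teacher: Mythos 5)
Your proofs of (i) and of the invariant-ring statement in (ii) are correct and essentially follow the paper's route. For (i) you apply Lemma~\ref{sd} to $\cL T_1$, pinning $\dim[\mathfrak M_n]_1^{L\cL T_1}=p$ from the inequality $\ell\le n-k$ exactly as the paper does, and then read off the generators, the uniqueness of $\tau_1$, and $\fix(\tau_1)=\bigcap\fix(\tau_{1j})$. For the first half of (ii) you feed $b_j=z_j$ ($j\le p$) and $b_{p+i}=E_i$ directly into Lemma~\ref{twosetin}, verifying the modulo-$z'$ independence of the quadratic parts by restricting to $z'=0$ and invoking condition~B; the paper instead builds the change of coordinates $(\xi,\eta)\mapsto(f,g)(\xi,\eta)$ by hand and checks that $\eta\mapsto g(0,\eta)$ has invertible linear part via $q$, but the underlying computation is the same and your direct appeal to Lemma~\ref{twosetin} is a slightly cleaner packaging. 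The derivations of \rea{i1i1} and of $\dim\fix(\tau_i)=p$ are also as in the paper.

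The one genuine gap is the last assertion $\fix(\tau_1)\cap\fix(\tau_2)=\{0\}$, which you explicitly leave open. Your reduction is sound as far as it goes: since $[\mathfrak M_n]_1^{L\tau_1}=\operatorname{span}(z')$ is the annihilator of the $(-1)$-eigenspace of $L\tau_1$, that eigenspace is $\{z'=0\}$, so $T_0\fix(\tau_1)$ is a graph $w'=Lz'$; by $\tau_2=\rho_0\tau_1\rho_0$ the tangent space of $\fix(\tau_2)$ is the graph $z'=\ov Lw'$; triviality of the tangent intersection is the condition that $1$ is not an eigenvalue of $L\ov L$, and this tangent-level transversality does upgrade to the germ statement. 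But you stop there, asserting that ruling out $1\in\operatorname{spec}(L\ov L)$ ``requires exploiting the interaction of $\rho$ with the shear part of $L\tau_1$ forced by condition~B''---that is a wish, not an argument, and it is not apparent that tracing the constraint back through $q$ alone closes the loop. The paper's own proof of this point does not revisit condition~B or compute $L$ at all: having already established $[\mathfrak M_n]_1^{L\tau_1}\cap[\mathfrak M_n]_1^{L\tau_2}=\{0\}$, it argues purely linear-algebraically, choosing coordinates $(\xi,\eta)$ in which $\fix(L\tau_1)=\{\eta=0\}$ and $[\mathfrak M_n]_1^{L\tau_1}=\operatorname{span}(\xi)$, expanding a basis $f_j=\sum_k(a_{jk}\xi_k+b_{jk}\eta_k)$ of $[\mathfrak M_n]_1^{L\tau_2}$, concluding that $(b_{jk})$ is nonsingular, and reading off the transversality of the fixed spaces from that (this step is also abstracted in Corollary~\ref{sd2}). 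That linear-algebraic bridge is what your proposal identifies but does not supply; you should finish the argument along those lines rather than by re-opening the quadratic computation.
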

\begin{proof} (i).
%
 Since $z_1,\ldots, z_p$ are invariant under deck transformations of $\pi_1$, we have $p'=\dim_\cc  [\cL O_n]_1^{L\cL T_1}\geq p$. By \rl{sd}, $\pi_1$ has at most $2^{2p-p'}$
deck transformations. Therefore, $p'=p$.
By \rl{sd} again,  we may assume that
 in suitable
 $(\xi,\eta)$ coordinates, the deck transformations are generated by $Z_{p+1},\dots, Z_{2p}$ defined by \re{zjzjp} in which $z=(\xi,\eta)$.
It follows that $Z=Z_{p+1}\cdots Z_{2p}$ is the unique  deck transformation of $\pi_1$,  of which the set of fixed points has dimension $p$.

(ii). We have proved that in $(\xi,\eta)$ coordinates the deck transformations are generated by the above $Z_{p+1}, \ldots,Z_{2p}$. Thus, the
 invariant holomorphic functions of  $Z_{p+1}, \ldots, Z_{2p}$ are precisely the holomorphic
functions in $\xi_1, \ldots, \xi_p, \eta^2_{1}, \ldots, \eta^2_{p}$.
Since $z_1, \ldots, z_p$ and $E_i(z',w')$ are invariant under deck transformations,
then on $\cL M$
\eq{zpfx}
z'=f(\xi, \eta_1^2, \ldots, \eta_p^2), \quad
E(z',w')=g(\xi,\eta_1^2, \ldots, \eta_p^2).
\eeq
Since $(z',w')$ are local coordinates of $\cL M$,
  the differentials of $z_1,\ldots, z_p$ under any coordinate
  system of $\cL M$
 are linearly independent. Computing the differentials of $z'$ in variables $\xi,\eta$ by using \re{zpfx}, we see that
the mapping
 $\xi \to f(\xi,0)$ is a local biholomorphism. Expressing both sides of
  the second identity in \re{zpfx} as power series in $\xi,\eta$, we obtain
\gan
E(f(\xi,0),w')=g(\xi,\eta_1^2,\ldots, \eta_p^2)+O(|(\xi,\eta)|^3).
\end{gather*}
We  set $\xi=0$, compute the left-hand
side, and rewrite the identity as
\ga\label{g0et} g(0,\eta_1^2,\ldots,\eta_p^2)
=q(w')+O(|(\xi,\eta)|^3).
\end{gather}
 As coordinate systems, $(z',w')$ and $(\xi,\eta)$
 vanish at $0\in\cL M$. We now use $(z',w')=O(|(\xi,\eta)|)$. By \re{zpfx}, $f(0)=g(0)=0$ and
 $g(\xi,0)=O(|\xi|^2)$.
Let us   verify that  the linear parts of $g_1(0,\eta), \ldots, g_p(0,\eta)$ are
linearly independent. Suppose that
$\sum_{j=1}^pc_jg_j(0,\eta)=O(|\eta|^2)$. Replacing  $\xi,\eta$ by $O(|(z',w')|)$ in \re{g0et} and setting $z'=0$, we obtain
$$
\sum_{j=1}^pc_jq_j(w')=O(|w'|^3),\quad i.e. \quad\sum_{j=1}^pc_jq_j(w')=0.
$$
As remarked after condition B was introduced, $q_*=0$ implies that $q_1(w'), \ldots, q_p(w')$ are linearly independent. Thus all $c_j$ are $0$. We have verified that $\xi\to f(\xi,0)$
is biholomorphic near $\xi=0$. Also $\eta\to g(0,\eta)$ is biholomorphic near $\eta=0$
and  $g(\xi,0)=O(|\xi|^2)$.
Therefore, $(\xi,\eta)\to(f,g)(\xi,\eta)$ is invertible near $0$. By solving \re{zpfx},
 the functions $\xi,\eta_1^2,\ldots, \eta_p^2$ are expressed as power series
  in $z'$ and $E(z',w')$.

It is clear that $z_1,\ldots, z_p$ are invariant under $\tau_{1j}$. From linearization of $\cL T_1$,
we know that the space of invariant linear functions of $L\cL T_1$ is the same
as the space  of linear invariant functions of $L\tau_1$, which
has dimension $p$. This shows that $z_1,\ldots, z_p$ span the space of linear invariant functions of $L\tau_1$. Also $w_1,\ldots, w_p$ span
the space of linear invariant functions of $L\tau_2$. We obtain $[\mathfrak M_n]_1^{L\cL T_1}\cap[\mathfrak M_n]_1^{L\cL T_2}=\{0\}$.  We have verified   \re{i1i1}.

In view of the linearization of $\cL T_1$ in (i), we obtain $\dim\fix(\tau_1)=\dim\fix(\cL T_1)=p$.
Moreover,  $\fix(\tau_i)$ is a smooth submanifold of which the tangent space at the origin
is   $\fix(L\tau_i)$.
We choose a basis $u_1,\ldots, u_p$ for  $\fix(L\tau_1)$. Let $v_1,\ldots, v_p$ be any $p$
vectors such that $u_1,\ldots, u_p$,  $v_1,\ldots, v_p$ form a basis of $\cc^n$. In new coordinates defined by
$\sum\xi_iu_i+\eta_iv_i$, we know that linear invariant functions of $L\tau_1$ are spanned by $\xi_1,\ldots, \xi_p$. The linear invariant functions in $(\xi,\eta)$
that are invariant by $L\tau_2$ are spanned by
$f_j(\xi,\eta)=\sum_k (a_{jk}\xi_k+b_{jk}\eta_k)$
for $1\leq j\leq p$. Since $[\mathfrak M_n]^{L\tau_1}\cap[\mathfrak M_n]^{L\tau_2}=\{0\}$,
 then $\xi_1,\ldots, \xi_p, f_1,\ldots$,   $ f_p$ are linearly
independent. Equivalently, $(b_{jk})$ is non-singular.  Now $\fix(L\tau_2)$ is spanned by vectors  $\sum_k (a_{jk}u_k+b_{jk}v_k)$.
This shows that $\fix(L\tau_1)\cap\fix(L\tau_2)=\{0\}$. Therefore, $\fix(\tau_1)$ intersects
 $\fix(\tau_2)$  transversally at the origin and the intersection must
be the origin.
\end{proof}

Note that the proof of the above lemma actually gives us a more general result.

\begin{cor} \label{sd2} 
Let $\mathfrak{I}$ be a group of commuting holomorphic $($formal$)$
involutions on $\cc^n$.
 \bppp
\item $\fix(L  \mathfrak{I})=\{0\}$ if and only if  $[\mathfrak M_n]_1^{L  \mathfrak{I}}$  has dimension $0$.
 \item Let $\widetilde{ \mathfrak{ I}}$ be another family of commuting holomorphic $($resp. formal$)$ involutions such that
 $[\mathfrak M_n]^{L  \mathfrak{I}}\cap[\mathfrak M_n]^{L\widetilde{  \mathfrak{I}}}=\{0\}$. Then
 $\fix(L  \mathfrak{I} )\cap\fix(L\widetilde{  \mathfrak{I}})=\{0\}$. Moreover,  $\fix(  \mathfrak{I} )\cap\fix(\widetilde{  \mathfrak{I}})=\{0\}$ if $  \mathfrak{I}$
 and $\widetilde{  \mathfrak{I}}$ consist of convergent involutions.
   \eppp
\end{cor}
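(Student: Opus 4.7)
The plan is to run the argument given in the proof of \rl{sd1}~(ii) in abstract form. For (i), I would apply \rl{sd} to simultaneously diagonalize the commuting involutions of $\mathfrak{I}$ in some coordinates $z_1,\ldots,z_n$ so that each $Lf$ is a diagonal $\pm 1$ matrix. Setting $J:=\{j:Lf(e_j)=e_j\text{ for every }f\in\mathfrak{I}\}$, one has $\fix(L\mathfrak{I})=\mathrm{span}\{e_j:j\in J\}$ and $[\mathfrak{M}_n]_1^{L\mathfrak{I}}=\mathrm{span}\{z_j:j\in J\}$, both of dimension $|J|$; the equivalence in (i) then reads off from $|J|=0$.

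For the linear half of (ii) I would choose a basis $u_1,\ldots,u_k$ of $V_1:=\fix(L\mathfrak{I})$ and extend to a basis of $\cc^n$ by vectors $v_1,\ldots,v_{n-k}$ spanning an $\mathfrak{I}$-invariant complement, which is provided by the simultaneous diagonalization. In the resulting $(\xi,\eta)$ coordinates the linear invariants of $L\mathfrak{I}$ become exactly $\mathrm{span}(\xi_1,\ldots,\xi_k)$; writing the linear invariants of $L\widetilde{\mathfrak{I}}$ as $f_j=\sum_k a_{jk}\xi_k+\sum_k b_{jk}\eta_k$, the inclusion $[\mathfrak{M}_n]_1^{L\mathfrak{I}}\cap[\mathfrak{M}_n]_1^{L\widetilde{\mathfrak{I}}}\subset[\mathfrak{M}_n]^{L\mathfrak{I}}\cap[\mathfrak{M}_n]^{L\widetilde{\mathfrak{I}}}=\{0\}$ lets one proceed along the lines of \rl{sd1}~(ii). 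To conclude $V_1\cap V_2=\{0\}$ I would then argue by contrapositive: if $v\in V_1\cap V_2$ is nonzero, take one of the $\mathfrak{I}$-invariant coordinate functionals $\ell$ (some $\xi$ which is also a diagonalizing $\mathfrak{I}$-coordinate) with $\ell(v)\ne0$, and construct a nonzero element of $[\mathfrak{M}_n]^{L\mathfrak{I}}\cap[\mathfrak{M}_n]^{L\widetilde{\mathfrak{I}}}$ by alternately applying the Reynolds averaging operators $R_\mathfrak{I}$ and $R_{\widetilde{\mathfrak{I}}}$ to a suitable power $\ell^N$; since $v$ is a common fixed point the value at $v$ is preserved throughout, and within each finite-dimensional space of polynomials of bounded degree the alternating idempotent projections stabilize to a common invariant with value $\ell(v)^N\ne0$ at $v$, contradicting the hypothesis.

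The main obstacle is exactly this last step: in \rl{sd1}~(ii) the matching dimensions $\dim V_1+\dim V_2=n$ allowed the argument to close at the linear level through the nonsingularity of $(b_{jk})$, whereas in the generality of the corollary the dimensions may fail to sum to $n$ and one must genuinely invoke the full invariant hypothesis of all orders. The technical heart is to justify the alternating-projection construction, whose convergence to a common invariant nonzero at $v$ is the only nontrivial input. Once the linear statement is secured, the convergent assertion $\fix(\mathfrak{I})\cap\fix(\widetilde{\mathfrak{I}})=\{0\}$ follows immediately: by \rl{sd} each holomorphic involution is linearizable, so $\fix(\mathfrak{I})$ and $\fix(\widetilde{\mathfrak{I}})$ are smooth germs of submanifolds through $0$ with tangent spaces $\fix(L\mathfrak{I})$ and $\fix(L\widetilde{\mathfrak{I}})$ which meet only at $0$ by the linear part, so transversality at the origin forces the intersection of the submanifolds to reduce to the single point $\{0\}$.
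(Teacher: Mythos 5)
Your part (i) is correct: after simultaneously diagonalizing $L\mathfrak{I}$ by \rl{sd}, both $\fix(L\mathfrak{I})$ and $[\mathfrak M_n]_1^{L\mathfrak{I}}$ are visibly spanned by the same index set of coordinates, so they have equal dimension and the equivalence is immediate. This matches the paper's implicit argument.

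For part (ii) your route diverges from the paper's. The remark preceding the corollary points back to the argument of \rl{sd1}~(ii), which linearizes $\mathfrak{I}$, expresses the linear invariants of $\widetilde{\mathfrak{I}}$ as $f_j=\sum_k a_{jk}\xi_k+b_{jk}\eta_k$, and deduces the transversality of the fixed spaces from the rank of $(b_{jk})$; you instead propose to iterate the two Reynolds averages. You are right that the hypothesis is on the full rings of invariants and cannot be weakened to the linear parts: in $\cc^3$ the two single involutions $(z_1,z_2,z_3)\mapsto(z_1,-z_2,-z_3)$ and $(z_1,z_2,z_3)\mapsto(z_1+z_2,-z_2,-z_3)$ have linear invariants $\cc z_1$ and $\cc(2z_1+z_2)$ meeting only at $0$, yet both fix $e_1$; and $z_2^2$ is a common nonconstant invariant, so the full hypothesis is indeed violated. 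You also correctly note that, unlike in \rl{sd1}, the dimensions of $\fix(L\mathfrak{I})$ and $\fix(L\widetilde{\mathfrak{I}})$ need not sum to $n$.

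The genuine gap is the convergence of the alternating Reynolds averages. Each $R_{\mathfrak{I}}$ is an orthogonal projection only with respect to an $\mathfrak{I}$-invariant inner product on the degree-$N$ polynomials, and the corresponding inner product for $\widetilde{\mathfrak{I}}$ is in general different; the product of two oblique projections need not be power-bounded. Already the rank-one projections
$P=\begin{pmatrix}1&2\\0&0\end{pmatrix}$ and $Q=\begin{pmatrix}1&0\\2&0\end{pmatrix}$ satisfy $P^2=P$, $Q^2=Q$, while $QP$ has an eigenvalue $5$, so $(QP)^k$ diverges; the same occurs after adjoining a common fixed coordinate. Your observation that the restriction to the line $\cc v$ is preserved at every step is true and useful, but by itself it does not control the iterates, so the "alternating idempotent projections stabilize" step is unsupported and the linear assertion of (ii) is not established as written.

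The convergent assertion at the end is fine: by \rl{sd} each group is linearizable, so the two fixed loci are smooth germs through the origin whose tangent spaces meet only at $0$; the map $(t_1,t_2)\mapsto\phi_1(t_1)-\phi_2(t_2)$, with $\phi_i$ parametrizing the fixed loci, then has injective differential at $0$, which forces the intersection germ to be $\{0\}$.
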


In view of  \rl{sd1},
we will refer  to the family
 $$
 \cL T_1:=\{\tau_{11}, \ldots, \tau_{1p}\},\quad
 \cL T_2:=\{\tau_{21}, \ldots, \tau_{2p}\},\quad  \rho $$
as the Moser-Webster involutions, where $\tau_{2j}=\rho\tau_{1j}\rho$.
%
  The significance of the two sets of
involutions is the following proposition that transforms the normalization of the real manifolds
into that 
of two families of commuting involutions.

 For clarity, recall the anti-holomorphic involution $\rho_0\colon(z',w')\to(\ov{w'},\ov{z'})$.
\begin{prop}\label{inmae}
Let $M$ and $\widetilde M$ be two real analytic submanifolds of the form \rea{variete-orig}
and \rea{variete-orig+}
 that
admit Moser-Webster involutions $\{\cL T_1,\rho_0\}$ 
and $\{\widetilde{\cL T_1},\rho_0\}$,
respectively. Then $M$ and $\widetilde M$ are holomorphically equivalent
if and only if $\{\cL T_1,\rho_0\}$ and $\{\widetilde{\cL T_1},\rho_0\}$
are holomorphically   equivalent, i.e. if there is a biholomorphic map $f$ commuting with $\rho_0$
such that
 $f\tau_{1j}f^{-1}=\tilde\tau_{1i_j}$ for $1\leq j\leq p$. Here $\{i_1,\ldots, i_p\}=\{1,\ldots, p\}$.

Let  $\cL T_1=\{\tau_{11},\ldots,\tau_{1p}\}$ be a family of $p$ distinct
commuting   holomorphic
involutions.  Suppose that $\fix(\tau_{11}), \ldots, \fix(\tau_{1p})$ are hypersurfaces
 intersecting transversely at the origin.
 Let $\rho$ be an anti-holomorphic involutions and let $\cL T_2$ be the family of
involutions $\tau_{2j}=\rho\tau_{1j}\rho$ with $1\leq j\leq p$.
Suppose that
\eq{ilii+}
 [\mathfrak M_n]_1^{L\cL T_1}\cap[\mathfrak M_n]_1^{L\cL T_2}
 =\{0\}.
 \eeq
There exists a real analytic real $n$-submanifold
\eq{realM2}
M\subset\cc^{2p}
\colon z_{p+j}=A_j^2(z',\bar z'), \quad1\leq j\leq p
\end{equation} such that the set
of Moser-Webster involutions
$\{\widetilde{\cL T_1},\rho_0\}$ of $M$
is holomorphically equivalent to
 $\{\cL T_1,\rho\}$.
\end{prop}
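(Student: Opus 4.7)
The statement splits into an equivalence (Part 1) and an existence (Part 2); these use complementary constructions.

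For Part 1, the plan is to complexify maps. Given a biholomorphism $f \colon (\cc^{2p}, M, 0) \to (\cc^{2p}, \widetilde M, 0)$, define
\begin{equation*}
F(z, w) := \bigl(f(z), \, \bar f(w)\bigr), \quad \bar f(w) := \overline{f(\bar w)}.
\end{equation*}
Then $F$ is a biholomorphism $\cL M \to \widetilde{\cL M}$ satisfying $F \circ \rho_0 = \rho_0 \circ F$ and $\pi_1 \circ F = f \circ \pi_1$, so it intertwines the deck transformations. Being a biholomorphism, $F$ maps deck transformations with codimension-one fixed hypersurfaces to the same; by \rl{sd1}(i), the distinguished generators $\tau_{1j}$ are thus sent to the generators $\tilde\tau_{1i_j}$ up to a permutation. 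Conversely, an $F$ with these properties descends --- via the identifications of $\cL M/\cL T_1$ and $\widetilde{\cL M}/\widetilde{\cL T_1}$ with $\cc^{2p}$ provided by \rl{sd1}(ii) --- to a biholomorphism $f$ of $\cc^{2p}$; the identity $F(\fix \rho_0) = \fix \rho_0$, combined with $\pi_1(\fix \rho_0) = M$, then yields $f(M) = \widetilde M$.

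For Part 2, the plan is to construct $M$ directly from $(\cL T_1, \rho)$. By \rl{sd}, choose coordinates $(\xi, \eta) \in \cc^p \times \cc^p$ on $\cc^{2p}$ in which each $\tau_{1j}$ is the sign-flip $\eta_j \mapsto -\eta_j$. Write $L\rho(\xi, \eta) = R(\bar\xi, \bar\eta)$ in $p \times p$ block form $R = \begin{pmatrix} R_{11} & R_{12} \\ R_{21} & R_{22}\end{pmatrix}$. A direct invariant calculation (using $L\tau_{2j} = R T_j \bar R$ where $T_j$ is the $j$th sign-flip, combined with $R\bar R = I$ from $\rho^2 = \id$) identifies the hypothesis \rea{ilii+} with invertibility of the block $R_{12}$. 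Define the local biholomorphism
\begin{equation*}
\phi \colon (\xi, \eta) \longmapsto \bigl(\xi, \; \overline{\pi_\xi(\rho(\xi, \eta))}\bigr),
\end{equation*}
where $\pi_\xi$ is projection to the first $p$ coordinates; its Jacobian at $0$ is $\begin{pmatrix} I & 0 \\ \bar R_{11} & \bar R_{12}\end{pmatrix}$, hence invertible. A short computation using $\rho^2 = \id$ then gives $\phi \circ \rho = \rho_0 \circ \phi$, where $\rho_0(z', w') = (\bar w', \bar z')$.

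Set $A_j(z', w') := (\phi^{-1})_{p+j}(z', w')$ for $1 \le j \le p$ and define
\begin{equation*}
M \colon z_{p+j} = A_j^2(z', \bar z'), \qquad \cL M \colon z_{p+j} = A_j^2(z', w'), \; w_{p+j} = \bar A_j^2(w', z').
\end{equation*}
Since the linear part of $A(0, w')$ is $\bar R_{12}^{-1} w'$, which is invertible, the quadratic part $q(w') = A^2(0, w')$ of the defining equation satisfies $q^{-1}(0) = \{0\}$; hence condition~B holds and Example~\ref{sqex} provides the full set $\widetilde{\cL T_1}$ of $2^p$ deck transformations of $\pi_1$. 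Under $\phi$, these deck transformations (which preserve $z'$ and flip signs of $A = \eta \circ \phi^{-1}$) are exactly the sign-flip involutions $\tau_{1j}$ in $(\xi, \eta)$-coordinates, and $\rho_0$ corresponds to $\rho$ by construction; therefore $\{\widetilde{\cL T_1}, \rho_0\}$ is $\phi$-conjugate to $\{\cL T_1, \rho\}$.

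The main obstacle is the invariant-theoretic step identifying hypothesis \rea{ilii+} with invertibility of $R_{12}$: one must carefully track how the antilinear $L\rho$ relates $L\cL T_1$- and $L\cL T_2$-invariant linear forms, where a form $h(u_1, u_2) = a\cdot u_1 + b\cdot u_2$ turns out to be $L\cL T_2$-invariant precisely when $R_{12}^T a + R_{22}^T b = 0$, so that its intersection with the $b = 0$ subspace of $L\cL T_1$-invariants is trivial iff $R_{12}$ has no kernel. Once this translation is in hand, the construction of $\phi$ and $A$ and the verification that $M$ realizes the given data become a direct check.
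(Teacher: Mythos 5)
Your proof is correct and follows essentially the same two-step strategy as the paper: for the equivalence you complexify $f$ to $f^c(z,w)=(f(z),\bar f(w))$ exactly as the paper does (the paper spells out the well-definedness of the descended map $f=\pi_1 g\pi_1^{-1}$ via Riemann extension across the branch locus, which your ``descends to the quotient'' sketch compresses), and for the realization you work in the coordinates of Lemma~\ref{sd} where the $\tau_{1j}$ are the sign flips, so that with $A=\xi$, $B=\eta$ your map $\phi^{-1}$ is precisely the paper's $\psi$ and the realization $z_{p+j}=A_j^2(z',\bar z')$ coincides with the paper's. Your explicit translation of hypothesis \re{ilii+} into invertibility of the block $R_{12}$ of $L\rho$ is a clean matrix-level way to verify that $\phi$ is a local biholomorphism, serving the same purpose as the paper's argument that $LA_1,\dots,LA_p,\ L\overline{A_1\circ\rho},\dots,L\overline{A_p\circ\rho}$ are linearly independent.
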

\begin{proof} We  recall from \re{pi1M} the branched  covering
\eq{pi1M+}\nonumber
\pi_1\colon \cL M_1:= \mathcal M\cap((\Delta_{\delta}^{p}\times \Delta_{\delta^2}^{p})
\times(\Delta_{C\delta}^{p}\times \Delta_{C\delta^2}^{p}))\longrightarrow \Delta_{\delta}^{p}\times \Delta_{\delta^2}^{p}.
\eeq
Here $C\geq1$.
  Let $\pi_1$
be restricted to $\cL M_1$.  
  Then $\pi_2=\ov{\pi_1\circ\rho}$ is defined on $\rho(\cL M_1)$. Note that
\eq{pi2M}\nonumber
\pi_2\colon \cL \rho(\cL M_1)
\longrightarrow \Delta_{\delta}^{p}\times \Delta_{\delta^2}^{p}.
\eeq
We have $\pi_1^{-1}(z)\cap \fix(\rho)=\{(z,\ov z)\}$ for $z\in M$
and $\pi_1(\fix(\rho))=M$.
Let   $\cL B_0\subset\Delta_{\delta}^{p}\times \Delta_{\delta^2}^{p}$
be the branched  locus. Take $\cL B=\pi_1^{-1}(\cL B_0)$.
We will denote by $\widetilde {\cL M}_1,\tilde{\cL B}$ and $\tilde{\cL B_0}$ the corresponding
  data for $\widetilde M$. Here $\widetilde {\cL M}_1$ is an analogous
   branched  covering over $\pi_1(\widetilde{\cL M}_1)$. We assume that
  the latter contains $f(\pi_1(\cL M_1))$ if $\widetilde M$ is equivalent to $M$
  via $f$.

Assume that $f$ is a biholomorphic map sending $M$ into $\widetilde M$. Let
$f^c$ be the restriction of biholomorphic map
$f^c(z,w)=(f(z),\ov f(w))$ to $\cL M$. Let $M$ be defined by $z''=E(z',\ov z')$
 and $\widetilde M$ be defined by $z''=\tilde E(z',\ov z')$.
 By $f(M)\subset\widetilde M$, $f=(f',f'')$ satisfies
$$
f''(z',E(z',\ov z'))=\tilde E(f'(z', E(z',\ov z')),\ov{f'}(\ov z',\ov E(\ov z',z'))).
$$
Using the defining equations for $\cL M$,  we get
$f^c(\cL M)\subset\widetilde{\cL M}$
and $\rho f^c=f^c\rho$ on $\cL M\cap\rho(\cL M)$.
We will also assume that $ f^c(\cL M_1)$ is contained in $\widetilde{\cL  M}_1$. It is clear that $f^c$
sends a fiber $\pi_1^{-1}(z)$ onto the fiber $\pi_1^{-1}(f(z))$ for $z\in \Om=
\cL \pi_1(\cL M_1)\setminus(\cL B_0\cup f^{-1}
(\tilde B_0))$, since the two fibers have the same
number of points and $f$ is injective.
 Thus $f^c\tau_{1j}=\tilde\tau_{1i_j}f^c$ on $\pi_1^{-1}(\Om)$. Here $i_j$ is of course locally determined on
 $\pi_1^{-1}(\Om)$.
 Since $\cL B$ has positive codimension in $\cL M_1$
then $\cL M_1\setminus\cL B$ is connected. Hence $i_j$ is well-defined on $\pi_1^{-1}(\Om)$.
Then $f^c\tau_{1j}=\tilde\tau_{1i_j} f^c$ on $\cL M_1\setminus B$. This
shows that
$f^c$ conjugates simultaneously the deck transformations of ${\mathcal M}$ to the deck transformations of $\widetilde {\mathcal M}$ for $\pi_1$.  The same conclusion holds for
 $\pi_2$.

Conversely, assume that there is a biholomorphic map $g\colon \cL M\to \widetilde{\cL  M}$
such that $\rho g=g\rho$ and $g\tau_{1i}=\tilde\tau_{1j_i}g$. Since $\tau_{11}, \ldots, \tau_{12^p}$ are distinct
and $M_1\setminus\cL B$ is connected, then $\bigcup_{j\neq i}\{ x\in\cL M_1\setminus\cL B\colon\tau_{1i}(x)=\tau_{1j}(x)\}$
is a complex subvariety  of positive codimension in
$\cL M_1\setminus\cL B$. Its image under the proper projection $\pi_1$ is  a subvariety of positive codimension in $
\Delta_{\delta}^p\times\Delta_{\delta^2}^p\setminus\cL B_0$. This shows that the latter contains a non-empty open subset $\omega$ such that $\{\tau_{11}(x),\ldots,\tau_{12^p}(x)\}=\pi_1^{-1}\pi_1(x)$ has $2^p$ distinct points
 for each $\pi_1(x)\in\omega$. Therefore, $\tau_{11},\ldots, \tau_{12^p}$ are
  all deck transformations of $\pi_1$ over $\om$. Hence they are
   all deck transformations of $\pi_1\colon\cL M_1\setminus\cL B\to\Delta_{\delta}^p\times\Delta_{\delta^2}^p\setminus\cL B_0$, too.
 This shows that $\pi_1^{-1}(\pi_1(x))=\{\tau_{1j}(x)\colon1\leq j\leq 2^p\}$ for $x\in\cL M_1\setminus\cL B$.
Now, $g$ sends $\tau_{1j}(x)$ to $\tilde\tau_{1i_j}(g(x))$ for each $j$. Hence
  $f(z)=\pi_1g\pi_1^{-1}(z)$
is well-defined and holomorphic for $z\in\Delta_{\delta}^p\times\Delta_{\delta^2}^p\setminus\cL B_0$.
By the Riemann extension for bounded holomorphic functions, $f$ extends to a holomorphic mapping, still denoted by $f$,
which is defined near the origin.   We know that
 $f$ is invertible and  in fact the inverse
  can be obtained by extending  the mapping $z\to
 \pi_1g^{-1}\pi_1(z)$.
 If $z=(z',E(z',w'))\in M$, then $w'=\ov z'$
and
$f(z)=\pi_1g\pi_1^{-1}(z)=\pi_1g(z,\ov z)$ with $(z,\ov z)\in \fix(\rho)$. Since $\rho g=g\rho$,
then $g(z,\ov z)\in \fix(\rho)$. Thus $f(z)=\pi_1g(z,\ov z)\in \widetilde M$.

Assume that $\{\tau_{1j}\}$ and $\rho$
are germs of involutions defined at the origin of $\cc^n$. Assume that they satisfy the conditions in the proposition.   From 
\rl{sd} it follows that $\tau_{11}, \ldots, \tau_{1p}$ generate a group of $2^p$ involutions, while the $p$ generators are the only elements of which each
fixes a hypersurface pointwise.
To realize them as deck transformations of the complexification of a real analytic submanifold, we apply   \rl{sd}
to find  a
coordinate map $(\xi,\eta)\to \phi(\xi,\eta)=(A,B)(\xi,\eta)$  such that invariant holomorphic functions of $\{\tau_{1j}\}$ are precisely holomorphic functions in
$$
 z'=(A_1(\xi,\eta),\ldots,
A_p(\xi,\eta)), \quad  z''=(B_1^2(\xi,\eta), \ldots, B_p^2(\xi,\eta)).
$$
Note that $B_j$ is skew-invariant under $\tau_{1j}$ and
is invariant under $\tau_{1i}$
for $i\neq j$ and $A$ is invariant under all $\tau_{1j}$.
Set $$
w_j'=\ov{A_j\circ\rho(\xi,\eta)}, \quad w''_j=\ov{B_j^2\circ\rho(\xi,\eta)}.
$$
Since $\tau_{2j}=\rho\tau_{1j}\rho$, the holomorphic functions invariant under all $\tau_{2j}$ are precisely the holomorphic functions in the above $w',w''$. We now draw conclusions for the linear parts of invariant functions and
involutions. Since $\phi$ is biholomorphic, then
$LA_1,\ldots, LA_p$ are linearly independent. They are also invariant under $L\tau_{1j}$. Since $\tau_{2j}=\rho\tau_{1j}\rho$,  the $p$ functions $L\ov {A_i\circ\rho}$
 are linearly independent and   invariant under $L\tau_{2j}$. Thus $$LA_1,\ldots, LA_p,\
 L\ov {A_1\circ\rho}, \ldots, L\ov{A_p\circ\rho}$$ are linearly independent,
since $[\mathfrak M_n]_1^{L\cL T_1}\cap[\mathfrak M_n]_1^{L\cL T_2}=\{0\}$. This shows that
  the map $(\xi,\eta)\to  (z',w')=(A(\xi,\eta),\ov{A\circ\rho(\xi,\eta)})$
  has
an inverse $(\xi,\eta)=\psi(z',w')$. Define
$$
M\colon z''=(B_1^2,\ldots, B_p^2)\circ\psi(z',\ov z').
$$
The  complexification of $M$ is given by
$$
\cL M\colon
z''=(B_1^2,\ldots, B_p^2)\circ\psi(z',w'),\quad
w''=(\ov B_1^2,\ldots, \ov B_p^2)\circ\ov\psi(w',  z').
$$
Note that $\phi\circ\psi(z',w')=(z',B\circ\psi(z',w'))$ is biholomorphic. In particular, we can write
$$
B_j^2\circ\psi(z',\ov z')=h_j(z',\ov z')+q_j(\ov z')+b_j(z')+O(|(z',\ov z')|^3).
$$
Here $q_j(\ov z')=\tilde q^2_j(\ov z')$, and $\tilde q(w')$
is the linear part of $w'\to B\circ\psi(0,w')$. Therefore, $|q(w')|
\geq c|w'|^2$ and $q_*=0$.  By \rl{2p1}, $\pi_1\colon\cL M\to\cc^p$
is a $2^p$-to-$1$ branched covering defined near
$0\in\cL M$.
Since  $B^2$ is invariant by $\tau_{1j}$, then $z''=B^2\circ\psi(z',w')$ is invariant by $\psi^{-1}\tau_{1j}\psi(z',w')$.
Also $A$ is invariant under $\tau_{1j}$. Then $z'=
A\circ\psi(z',w')$ is invariant by $\psi^{-1}\tau_{1j}\psi(z',w')$.
 This show that $\{\psi^{-1}\tau_{1j}\psi\}$ has the same invariant functions
as of the deck transformations of $\pi_1$. By \rl{twosetin}, $\{\psi^{-1}\tau_{1j}\psi\}$
agrees with the set of deck transformations of $\pi_1$.
For $\rho_0(z',w')=(\ov{w'},\ov{z'})$ we have $\rho_0\psi^{-1}=\psi^{-1}\rho.$
This shows that $M$ is a realization for    $\{\tau_{11},\ldots, \tau_{1p},\rho\}$.
\end{proof}

\begin{rem}
We choose the realization in such a way that $z_{p+j}$ are square functions.
This particular holomorphic equivalent form of $M$ will be crucial
to study the asymptotic manifolds in section~\ref{secideal}.
\end{rem}

Next we want to compute the deck transformations for a product quadric.
We will  first recall the Moser-Webster involutions for elliptic and hyperbolic complex tangents.  We will then
compute the deck transformations for complex tangents of complex type.

%
%
%

The Moser-Webster theory deals with the case $p=1$ for a real
analytic surface
\eq{z2z1}
\nonumber
z_2=|z_1|^2+\gaa (z_1^2+\ov z_1^2)+O(|z_1|^3), \quad \text{or}\quad  z_2=z_1^2+\ov z_1^2+O(|z_1|^3).
\eeq
Here $\gaa \geq0$ is the Bishop invariant of $M$.
One of most important properties of the Moser-Webster theory
is the existence of the above mentioned deck transformations.
  When $\gamma \neq0$, there is a pair of Moser-Webster involutions $\tau_{1}, \tau_2$ with $\tau_2=\rho\tau_1\rho$
such that
 $\tau_1$ generates the deck transformations of $\pi_1$.    In fact, $\tau_1$ is the only possible non-trivial deck
transformation of $\pi_1$.
When $\gamma \neq1/2$,  in suitable coordinates
their composition $\sigma=\tau_1\tau_2$ is  of the form
$$
\tau\colon\xi'=\mu\xi+O(|(\xi,\eta)|^2), \quad \eta'=\mu^{-1}\eta+O(|(\xi,\eta)|^2). 
$$
 Here $\rho(\xi,\eta)=(\ov\eta,\ov\xi)$ when $0<\gaa<1/2$, and $\rho(\xi,\eta)
=(\ov\xi,\ov\eta)$ when $\gaa>1/2$.
  When the complex tangent is elliptic 
  and $0<\gaa<1/2$, $\sigma$ is
{\it hyperbolic} with $\mu>1$;  when the complex tangent is  hyperbolic, i.e. $1/2<\gaa\leq\infty$,  then
 $\sigma$ is {\it elliptic } with $|\mu|=1$.
When the complex tangent is parabolic,  the linear part of $\sigma$ is not diagonalizable and $1$ is the eigenvalue.

We will see later that with $p\geq2$, there is yet another simple
model that is not in the product. This is the quadric
in $\cc^4$ defined by
\eq{z3z4} Q_{\gaa_s}\colon
  z_3=z_1\ov z_2+\gaa_s\ov z_2^2+(1- \gaa_s)z_1^2, \quad z_4=\ov z_3.
\eeq
Here $\gaa_s$ is a complex number.
We will, however, exclude $\gaa_s=0$ or equivalently $\gaa_s=1$   by condition B. We also exclude $\gaa_s=1/2$
by condition J. Note that $\gaa_s=1/2$ does not correspond to a product  Bishop  quadrics either,   by
examining the CR singular sets.
Under these mild non degeneracy conditions, we will show that $\gaa_s$ is an invariant when it is normalized to the range
     \eq{gsra}
\RE\gaa_s\leq 1/2, \quad \IM\gamma_s\geq0, \quad\gamma_s\neq0.
  \eeq
In this case, the complex  tangent is said of {\it complex} type.

We have introduced the types of the complex tangent at the origin. Of course a product of quadrics, or a product quadric,
can exhibit a combination of the above basic $4$ types.
We will see soon that 
quadrics have other invariants when $p>1$. Nevertheless, in our results,
the above invariants
that describe the types of the complex tangent will
play a   major role in the convergence or divergence of normalizations. 

Let us first recall involutions in \cite{MW83} where the complex tangents
are elliptic (with non-vanishing Bishop invariant) or hyperbolic.
When $\gaa>0$, the non-trivial
 deck transformations of
\eq{z2z12}
\nonumber
Q_{\gaa}\colon
z_2=|z_1|^2+\gaa (z_1^2+\ov z_1^2)
\eeq
 for $\pi_1,\pi_2$ are $\tau_1,\tau_2$,  respectively. They are
\eq{tau10e}
\tau_1\colon  z_1' = z_1, \quad w_1'=-w_1 -\gamma ^{-1}z_1; \quad \tau_2=\rho\tau_1\rho
\eeq
with $\rho$ being defined by (\ref{antiholom-invol}). Here the formula is valid for $\gaa =\infty$
(i.e. $\gaa ^{-1}=0$).  Note that $\tau_1$ and $\tau_2$
do not commute and $\sigma=\tau_1\tau_2$ satisfies
 \eq{} \nonumber
 \sigma^{-1}=\tau_i\sigma\tau_i=\rho\sigma\rho, \quad \tau_i^2= I,\quad\rho^2= I.
 \eeq
When the complex tangent is not parabolic, the eigenvalues of $\sigma$ are   $\mu,\mu^{-1}$ with
$\mu=\la^2$ and
$
\gaa\la^2 -\la +\gaa=0.
$
For the elliptic complex tangent, we can choose a solution  $\la>1$, and in suitable coordinates we obtain
\begin{gather}
\label{tau1e}
\tau_1\colon\xi'=\la\eta+O(|(\xi,\eta)|^2), \quad \eta'=\la^{-1}\xi+O(|(\xi,\eta)|^2),\\
\tau_2=\rho\tau_1\rho, \nonumber
\quad 
\nonumber
\rho(\xi,\eta)=(\ov\eta,\ov\xi),\\
\sigma\colon\xi'=\mu\xi+O(|(\xi,\eta)|^2), \quad \eta'=\mu^{-1}\eta+O(|(\xi,\eta)|^2),\quad \mu=\la^2.\nonumber
\end{gather}
When the complex tangent is hyperbolic, i.e.
$1/2<\gaa\leq\infty$,    $\tau_i$ and  $\sigma$ still have the above form, while $|\mu|=1=|\la|$ and
\eq{rhohy}
\nonumber
\rho(\xi,\eta)
=(\ov\xi,\ov\eta).
\eeq
When the complex tangent is parabolic, i.e. $\gaa=1/2$, the pair of involutions
still exists. However, $L\sigma$ is not diagonalizable and $1$ is its only eigenvalue. 
 We recall from ~\cite{MW83} that
\eq{gaala1}
\gaa =\f{1}{\la+\la^{-1}}.
\eeq

 For the complex type, new situations arise.   Such a quadric has the form
 \eq{z3z42} Q_{\gaa_s}\colon  z_3=z_1\ov z_2+\gaa_s\ov z_2^2+(1- \gaa_s)z_1^2, \quad z_4=\ov z_3.
\eeq
Here $\gaa_s$ is a complex number.
Let us first check  that such a quadric   is not the product of two Bishop quadrics~:
 Its CR singular set is defined by
$$
(z_1+2\gaa_s \ov z_2)(z_2+2(1-\ov\gaa_s)\ov z_1)=0,
$$
  which is the union of a complex line and a real surface when $\gaa_s=0,1$, or a union of  two totally real surfaces.
The CR singular set of  a quadric defined by $z_3=|z_1|^2+\gaa_1(z_1^2+\ov z_1^2)$
and $z_4=|z_2|^2+\gaa_2(z_2^2+\ov z_2^2)$ is given by
$$
(z_1+2\gaa_1 \ov z_1)(z_2+2\gaa_2\ov z_2)=0.
$$
 It is the union of two Bishop  surfaces when $\gaa_1\neq1/2$ and $\gaa_2\neq1/2$, and it contains
a submanifold of dimension $3$ otherwise.

By condition B, we know that $\gaa_s\neq0,1$.   Let us compute the deck transformations of the complexification of  \re{z3z42}.
 According to   \rl{sd1} (i), the deck transformations for $\pi_1$ are generated by two involutions
\ga\tau_{11}
\label{tau1112}
\nonumber
\colon \begin{cases}
z_1'=z_1,\\
z_2'=z_2,\\
w_1'=-w_1-(1-\ov\gaa_s)^{-1} z_2,\\
w_2'=w_2;
\end{cases}\quad
\tau_{12}\colon \begin{cases}
z_1'=z_1,\\
z_2'=z_2,\\
w_1'=w_1,\\
w_2'=-w_2- \gaa_s^{-1}z_1.
\end{cases}
\end{gather}
We still have $\rho$ defined by (\ref{antiholom-invol}).  Let $\tau_{2j}=\rho\tau_{1j}\rho$. Then $\tau_{21},\tau_{22}$
generate the deck transformations of $\pi_2$. Note that
\ga
\nonumber
\tau_{21}\colon \begin{cases}
z_1'=-z_1- (1-\gaa_s)^{-1} w_2,\\
z_2'=z_2,\\
w_1'=w_1,\\
w_2'=w_2;
\end{cases}\quad
\tau_{22}\colon \begin{cases}
z_1'=z_1,\\
z_2'=-z_2-\ov\gaa_s^{-1} w_1,\\
w_1'=w_1,\\
w_2'=w_2.
\end{cases}
\end{gather}
Recall that $\tau_i=\tau_{i1}\tau_{i2}$ is the unique deck transformation of $\pi_i$ that has the smallest dimension of the set of fixed-points among all deck transformations. They are
\ga\tau_{1}\colon \begin{cases}
z_1'=z_1,\\
z_2'=z_2,\\
w_1'=-w_1-(1-\ov\gaa_s)^{-1} z_2,\\
w_2'=-w_2-\gaa_s^{-1} z_1;
\end{cases}
\tau_{2}\colon   \begin{cases}
z_1'=-z_1- (1-\gaa_s)^{-1} w_2,\\
z_2'=-z_2-\ov\gaa_s^{-1} w_1,\\
w_1'=w_1,\\
w_2'=w_2.
\end{cases}
\label{tau12s}
\end{gather}
  And $\tau_1\tau_2$ is given by
\ga\sigma_s\colon \begin{cases}
z_1'=-z_1- (1-\gaa_s)^{-1}w_2,\\
z_2'=-z_2- \ov\gaa_s^{-1}w_1,\\
w_1'= (1-\ov\gaa_s)^{-1} z_2+((\ov\gaa_s-\ov\gaa_s^2)^{-1}-1)w_1,\\
w_2'=\gaa_s^{-1}z_1+((\gaa_s-\gaa_s^2)^{-1}-1)w_2.
\end{cases}
\nonumber
\end{gather}

In contrast to the elliptic and hyperbolic cases, $\tau_{11}$  and $\rho\tau_{11}\rho$ commute; in other words, $\tau_{11}\rho\tau_{11}\rho$ is actually an involution.
And $\tau_{12}$ and $\rho\tau_{12}\rho$ commute, too.  However, $\tau_{11}$ and $ \tau_{22}$ do not commute,  and $\tau_{12}, \tau_{21}$ do not commute either.  Thus, we form compositions $$\sigma_{s1}=\tau_{11}\tau_{22}, \quad \sigma_{s2}=\tau_{12}\tau_{21}, \quad\sigma_{s2}^{-1}=\rho\sigma_{s1}\rho.$$
By a simple computation, we have
\ga  \nonumber
\sigma_{s1}\colon \begin{cases}
z_1'=z_1,\\
z_2'=-z_2- \ov\gaa_s^{-1} w_1,\\
w_1'=(1-\ov\gaa_s)^{-1} z_2+((\ov\gaa_s-\ov\gaa_s^2)^{-1}-1)w_1,\\
w_2'=w_2;
\end{cases}\\
\sigma_{s2}\colon \begin{cases}
z_1'=-z_1-(1-\gaa_s)^{-1}w_2,\\
z_2'=z_2,\\
w_1'=w_1,\\
w_2'=\gaa_s^{-1} z_1+((\gaa_s-\gaa_s^2)^{-1}-1)w_2.
\end{cases}
\nonumber
\end{gather}
We verify that $
\sigma_{s1}\sigma_{s2}=\sigma_s=\tau_1\tau_2.
$
This allows us to compute the eigenvalues of $\sigma_{s1}\sigma_{s2}$ easily:
\begin{gather}\label{msms-}
\mu_s, 
\quad  \mu_s^{-1}, 
\quad
  \ov \mu_s^{-1}, \quad 
  \ov\mu_s, \\  
\mu_s=\ov\gaa_s^{-1}-1.
\label{msms-+}
\end{gather}
In fact we compute them by observing that the first two  in \re{msms-} and $1$ with multiplicity are eigenvalues of $\sigma_{s1}$, while the last two  in \re{msms-} and $1$ with multiplicity are eigenvalues of $\sigma_{s2}$.  Note that   $\sigma_s$ is diagonalizable if and only if $\gaa_s\neq 1/2$.
When $\gamma_s<1/2$ and $\gamma_s\neq0$, $\mu_s$ and $\mu_s^{-1}$ are eigenvalues of multiplicity two while condition J holds.
Therefore,  for $\gaa_s\neq1/2$, i.e.
 $\mu_s\neq1$, we can find a linear transformation of the form
$$
\psi\colon (z_1,w_2)\to(\xi_2,\eta_2)=\ov\phi(z_1,w_2), \quad
(z_2,w_1)\to(\xi_1,\eta_1)=\phi(w_1,z_2)$$
 such that $\sigma_{s1},\sigma_{s2},\sigma_s=\sigma_{s1}\sigma_{s2}$ are simultaneously diagonalized as
 \begin{equation}\label{linears}
\begin{array}{rllll}
\sigma_{s1}\colon & \xi_1' =\mu_s\xi_1, \quad & \eta_1' =\mu_s^{-1}\eta_1, \quad& \xi_2' =\xi_2,\quad& \eta_2' =\eta_2, \\
\sigma_{s2}\colon &\xi_1' =\xi_1,\quad &\eta_1' =\eta_1,\quad&\xi_2' =\ov\mu_s^{-1}\xi_2,\quad&\eta_2' =\ov\mu_s\eta_2,\\
\sigma_s\colon&\xi_1' =\mu_s\xi_1, \quad&\eta_1' =\mu_s^{-1}\eta_1, \quad& \xi_2' =\ov\mu_s^{-1}\xi_2,\quad&\eta_2' =\ov\mu_s\eta_2.
\end{array}
\end{equation}
Under the transformation $\psi$, the involution $\rho$, defined by (\ref{antiholom-invol}),
takes the form
 \eq{rhos}
\rho(\xi_1,\xi_2,\eta_1,\eta_2)=(\ov\xi_2,\ov\xi_1,\ov\eta_2,\ov\eta_1).\eeq
  Moreover, for $j=1,2$, we have $\tau_{2j}=\rho\tau_{1j}\rho$ and
\ga
\tau_{1j}\colon\xi_j'=\la_j\eta_j, \quad \eta_j'=\la_j^{-1}\xi_j; \quad\xi_i'=\xi_i, \quad\eta_i'=\eta_i, \quad i\neq j;\label{taus}\\
\la_1=\la_s, \quad \la_2=\ov\la_s^{-1}, \quad\mu_s=\la_s^2.\label{taus+}
\end{gather}
Condition J on $Q_{\gaa_s}$ is equivalent to $\gamma_s\neq 1/2$.
By a permutation of coordinates that preserves $\rho$, we obtain a unique holomorphic invariant $\mu_s$ satisfying
\eq{mu1im}
|\mu_s|\geq1, \quad \IM\mu_s\geq0,\quad\mu_s\neq0,-1, \quad 0\leq \arg\la_s\leq\pi/2,\quad\la_s\neq0, i.
\eeq
Note that $\RE\gaa_s<1/2$ implies that $|\mu_s|\neq1$.

For later purpose, we summarize some   facts for complex type in the following.
\begin{prop}\label{sigs} Let $Q_{\gaa_s}\subset\cc^4$ be the   quadric defined by \rea{z3z4} and \rea{gsra} with $\gaa_s\neq0, 1$.  Then $\pi_1$ admits two deck transformations $\tau_{11},\tau_{12}$ such that 
the set of fixed points of each $\tau_{1j}$ has dimension $3$. Also, $\tau_{2j}=\rho\tau_{1j}\rho$ are the deck transformations of $\pi_2$ and \ga
\tau_{11}\tau_{21}=\tau_{21}\tau_{11}, \quad
\tau_{12}\tau_{22}=\tau_{22}\tau_{12}.
 \nonumber\end{gather}
Let $\sigma_{s1}=\tau_{11}\tau_{22}$, $\sigma_{s2}=\tau_{12}\tau_{21}$, $\tau_i=\tau_{i1}\tau_{i2}$, and $\sigma_s=\tau_1\tau_2$. Then
\eq{}  \nonumber
\sigma=\sigma_{s1}\sigma_{s2}=\sigma_{s2}\sigma_{s1}, \quad \sigma_{s2}^{-1}=\rho\sigma_{s1}\rho, \quad \sigma_s^{-1}=\rho\sigma_s\rho.
\eeq
Assume further that $\gaa_s\neq1/2$.
In suitable coordinates $\sigma_{s1},\sigma_{s2},\sigma,\rho_s$ are given by \rea{linears}-\rea{rhos}, while
$\sigma_s$ has  $4$ $($possibly repeated$)$ eigenvalues  given by \rea{msms-}-\rea{msms-+}.  The $\sigma$ has four distinct eigenvalues if and only if $Q_{\gaa_s}$ can be holomorphically normalized so that
$\IM\gaa_s>0$ and $\RE\gaa_s<1/2$.
\end{prop}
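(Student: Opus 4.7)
Most of the statement is a direct summary of the explicit computations that precede it, so the plan is to organize those computations and isolate the one nontrivial equivalence at the end. First, since $\gaa_s\neq 0,1$, condition B holds for $Q_{\gaa_s}$ (the quadratic part $q(z',w') = (\gaa_s w_2^2+(1-\gaa_s)z_1^2,\ldots)$ has only the origin as common zero), so by \rl{2p1} the projection $\pi_1$ is a $2^2$-to-$1$ branched covering. I would then invoke \rl{sd1}(i) to conclude that the group of deck transformations is generated by two involutions $\tau_{11},\tau_{12}$ each fixing a hypersurface, and exhibit them as the explicit formulas \re{tau1112}. The fact that each $\fix(\tau_{1j})$ is $3$-dimensional is immediate from those formulas: $\tau_{1j}$ fixes three of the four coordinates and negates (up to an affine term) the remaining one. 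The statement $\tau_{2j}=\rho\tau_{1j}\rho$ is the definition, and the commutations $\tau_{11}\tau_{21}=\tau_{21}\tau_{11}$ and $\tau_{12}\tau_{22}=\tau_{22}\tau_{12}$ follow by inspection, since $\tau_{11}$ acts only on the $w_1$-coordinate (with a term in $z_2$) while $\tau_{21}$ acts only on the $z_1$-coordinate (with a term in $w_2$), and similarly for the pair $\tau_{12},\tau_{22}$.

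Next I would establish the factorization $\sigma=\sigma_{s1}\sigma_{s2}=\sigma_{s2}\sigma_{s1}$. Using the four involutions and the commutation relations just proved, one writes
\[
\sigma_{s1}\sigma_{s2}=\tau_{11}\tau_{22}\tau_{12}\tau_{21}=\tau_{11}\tau_{12}\tau_{22}\tau_{21}=\tau_1\tau_2,
\]
because $\tau_{22}$ and $\tau_{12}$ commute (both belong to $\cL T_2$ up to $\rho$-conjugation, and the two families of generators commute within each family by \rl{sd1}). A parallel manipulation gives $\sigma_{s2}\sigma_{s1}=\tau_1\tau_2$. The identity $\sigma_{s2}^{-1}=\rho\sigma_{s1}\rho$ is then immediate from $\tau_{2j}=\rho\tau_{1j}\rho$, and $\sigma_s^{-1}=\rho\sigma_s\rho$ follows from the general $\rho$-reversibility derived in the discussion of the Moser-Webster involutions.

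For the diagonalization part (assuming additionally $\gaa_s\neq 1/2$), the key observation is that $\sigma_{s1}$ leaves the pair $(z_1,w_2)$ pointwise fixed and acts as a linear automorphism of the $(z_2,w_1)$-plane; symmetrically for $\sigma_{s2}$. I would read off the $2\times 2$ matrix of $\sigma_{s1}$ from the explicit formula, compute its trace and determinant, obtain the eigenvalues $\mu_s,\mu_s^{-1}$ with $\mu_s=\ov\gaa_s^{-1}-1$ (this forces $\mu_s\neq 1$ precisely when $\gaa_s\neq 1/2$, which gives diagonalizability), and thereby produce coordinates $(\xi_1,\eta_1)$ diagonalizing $\sigma_{s1}$ on the invariant plane. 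The involution $\rho$ swaps the two planes, so setting $(\xi_2,\eta_2)$ by a conjugate change of coordinates on $(z_1,w_2)$ simultaneously diagonalizes $\sigma_{s2}$, produces the form \re{linears} and brings $\rho$ into the form \re{rhos}. The eigenvalues of $\sigma_s=\sigma_{s1}\sigma_{s2}$ then read off as in \re{msms-}, and \re{msms-+} is the definition of $\mu_s$.

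The heart of the remaining work, and the only step that is not a direct quotation of the explicit computations, is the final equivalence. The four eigenvalues $\mu_s,\mu_s^{-1},\ov\mu_s,\ov\mu_s^{-1}$ are distinct if and only if $\mu_s^2\neq 1$, $|\mu_s|\neq 1$, and $\mu_s\notin\rr$. Using $\mu_s=\ov\gaa_s^{-1}-1$, I would translate these three conditions into conditions on $\gaa_s$: the first becomes $\gaa_s\neq 1/2$ (automatic by hypothesis), the second is $|1-\ov\gaa_s|\neq|\gaa_s|$, equivalent after squaring to $\RE\gaa_s\neq 1/2$, and the third is $\IM\gaa_s\neq 0$. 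Thus distinctness of the four eigenvalues is equivalent to $\RE\gaa_s\neq 1/2$ and $\IM\gaa_s\neq 0$. Combined with the normalization \re{gsra} already attained using the permutation/conjugation symmetries of $Q_{\gaa_s}$ that replace $\gaa_s$ by $1-\ov\gaa_s$ (swap of $z_1$ and $\ov z_2$ type coordinates) and by $\ov\gaa_s$ (complex conjugation of the manifold), these become the strict inequalities $\RE\gaa_s<1/2$ and $\IM\gaa_s>0$. Conversely, if $Q_{\gaa_s}$ is normalized with these strict inequalities, then the three distinctness conditions above hold, completing the equivalence. The main obstacle I anticipate is keeping track of which holomorphic symmetries of $Q_{\gaa_s}$ are permitted in the normalization so that the boundary cases $\RE\gaa_s=1/2$ and $\IM\gaa_s=0$ genuinely correspond to coincidences among the four eigenvalues rather than to an artifact of the chosen fundamental domain for $\gaa_s$.
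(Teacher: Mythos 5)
Your outline is essentially the paper's own argument: everything except the closing equivalence is a readout of the explicit formulas preceding the proposition, and the last part is the eigenvalue/invariant computation. The checks of condition B, the identification of $\tau_{11},\tau_{12}$ as the two deck transformations fixing hypersurfaces, the dimension count, the reversibility identities, the trace/determinant computation giving $\mu_s=\ov\gama_s^{-1}-1$, and the clean bookkeeping at the end that translates distinctness of $\{\mu_s,\mu_s^{-1},\ov\mu_s,\ov\mu_s^{-1}\}$ into $\IM\gaa_s\neq 0$ and $\RE\gaa_s\neq 1/2$ are all correct and match the paper.

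One localized lapse: in the chain
\[
\sigma_{s1}\sigma_{s2}=\tau_{11}\tau_{22}\tau_{12}\tau_{21}=\tau_{11}\tau_{12}\tau_{22}\tau_{21}=\tau_1\tau_2,
\]
you justify $\tau_{22}\tau_{12}=\tau_{12}\tau_{22}$ by saying both involutions ``belong to $\cL T_2$ up to $\rho$-conjugation, and the two families of generators commute within each family.'' That reasoning is wrong: $\tau_{12}\in\cL T_1$ while $\tau_{22}\in\cL T_2$, and the general theory (\rl{sd1}) only gives commutativity \emph{within} each family. In the Moser--Webster case $p=1$, $\tau_1$ and $\tau_2$ do \emph{not} commute, so cross-family commutation is emphatically not automatic. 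In fact $\tau_{12}\tau_{22}=\tau_{22}\tau_{12}$ is precisely one of the two commutation relations the proposition asserts, and the correct reason is the direct inspection you yourself gave earlier: $\tau_{12}$ acts only on $w_2$ (with a term in $z_1$) and $\tau_{22}$ acts only on $z_2$ (with a term in $w_1$), so the two act on disjoint coordinates and their correction terms involve variables fixed by the other. Replace the parenthetical by that observation. (And while you are at it, the last equality $\tau_{11}\tau_{12}\tau_{22}\tau_{21}=\tau_1\tau_2$ uses $\tau_{22}\tau_{21}=\tau_{21}\tau_{22}$, which \emph{is} legitimately the intra-family commutativity from \rl{sd1}; it would be cleaner to say so.) Your closing paragraph on the normalization of $\gaa_s$ correctly identifies the concern; the paper handles it via the invariance of the unordered eigenvalue set $\{\mu_s,\mu_s^{-1},\ov\mu_s,\ov\mu_s^{-1}\}$ and the normalization \rea{mu1im}, which matches your reasoning.
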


In summary, we have found linear coordinates for the product quadrics such that the normal forms of
$S$, $T_{ij}$, $\rho$ of the corresponding $\sigma,\sigma_j,\tau_{ij},\rho_0$ are given by
\begin{align}
&S\colon\xi_j'=\mu_j\xi_j,\quad \eta_j'=\mu_j^{-1}\eta_j;\label{sxij-mv}\\
\label{rSxi-mv}
&S_j\colon\xi_j'=\mu_j\xi_j,\quad \eta_j'=\mu_j^{-1}\eta_j,\quad \xi_k'=\xi_k,\quad \eta_k'=\eta_k,
\quad k\neq j;\\
\label{rTij-mv}
&T_{ij}\colon\xi_j'=\la_{i j}\eta_j,\quad\eta_j'=\la_{i j}^{-1}\xi_j,\quad\xi_k'=\xi_k,\quad
\eta_k'=\eta_k, \quad k\neq j; 
\\ 
\label{rRho-mv}
& \rho\colon \left\{\begin{array}{ll}
(\xi_e',\eta_e',\xi_h',\eta_h')=
(\ov\eta_e,\ov\xi_e,\ov\xi_h,\ov\eta_h),\vspace{.75ex}
\\
(\xi_{s}', \xi_{s+s_*}',\eta_{s}',\eta_{s+s_*}')=(\ov\xi_{s+s_*},
\ov\xi_{s},\ov\eta_{s+s_*}, \ov\eta_{s}).
\end{array}\right.
\end{align}
Throughout the paper, the indices $h,e,s$ have the ranges $1\leq e\leq e_*$, $e_*<h\leq e_*+h_*$, and $e_*+h_*<s\leq p-s_*$.
Notice that we can always normalize $\rho_0$ into the above normal form $\rho$.

The commutativity of $\sigma_h,\sigma_e,\sigma_{s1},\sigma_{s2}$ will be important to understand the convergence of normalization for the  abelian CR singularity.

\setcounter{thm}{0}\setcounter{equation}{0}
\section{
Formal deck
transformations  and centralizers}\label{fsubm}

In section~\ref{secinv} we  show 
 the equivalence of   the classification of real analytic submanifolds $M$ that admit the maximum number of deck transformations and the classification of the families of
involutions $\{\tau_{11}, \ldots, \tau_{1p},\rho\}$ that satisfy some mild conditions  (see \rp{inmae}).
As a consequence we show that a real submanifold is formally equivalent to a quadric if and only if its
family  of Moser-Webster involutions is formally linearizable.



\subsection{Formal submanifolds}
We  first need some notation.
Let $I$ be an ideal
of the ring $\rr[[x]]$ of formal power series in   $x=(x_1,\ldots, x_N)$.
Since $\rr[[x]]$ is noetherian, then $I$ and its radical $\sqrt I$ are finitely generated. We say that $I$ defines a formal  submanifold $M$ of dimension $N-k$ if $\sqrt I$ is generated by $r_1,\ldots, r_k$ such that at the origin all $r_j$ vanish and $dr_1,\ldots, dr_k$
  are linearly independent. For such an $M$, let $I(M)$ denote $\sqrt I$ and let   $T_0M$ be defined by $dr_1(0)=\cdots=dr_k(0)=0$. If $F=(f_1,\ldots, f_N)$ is a formal mapping
  with $f_j\in\rr[[x]]$, we say that its set of (formal) fixed points is a submanifold if
  the ideal generated by $f_1(x)-x_1, \ldots, f_N(x)-x_N$ defines a submanifold.
Let $I,\tilde I$ be   ideals of $\rr[[x]], \rr[[y]]$ and
 let $\sqrt I, \sqrt{\tilde I}$ define two formal submanifolds $M, \tilde M$,
 respectively. We say
  that a formal map $y=G(x)$ maps $M$ into $\widetilde{M}$ if $\tilde I\circ G\subset \sqrt I$.
If $M,\tilde M$ are   in the same space, we write $M\subset\tilde M$ if $\tilde I\subset\sqrt I$. We say that a formal map $F$ fixes $M$
pointwise if $I(M)$ contain each component of the mapping $F-\id$.

\subsection{Formal deck transformations}
Consider a   formal $(2p)$-submanifold in $\cc^{2p}$
defined by
\eq{fmzp}
M\colon
z_{p+j} = E_{j}(z',\bar z'), \quad 1\leq j\leq p.
\eeq
Here $E_j$ are   formal power series in $z',\ov z'$.  We  assume that
\eq{fmzp+}
E_j(z',\bar z')=h_j(z',\ov z')+q_j(\ov z')+O((|(z',\ov z')|^3)
\eeq
and $h_j, q_j$ are homogeneous quadratic
polynomials. The formal complexification of $M$ is  defined by
\begin{equation}\nonumber
\label{variete-complex}
\begin{cases}
z_{p+i} = E_{i}(z',w'),\quad i=1,\ldots, p,\\
w_{p+i} = \bar E_i(w',z'),\quad i=1,\ldots, p.\\
\end{cases}
\end{equation}
 We define a {\it formal deck transformation}
of $\pi_1$ to be a formal biholomorphic map
$$
\tau\colon (z',w')\to (z',f(z',w')), \quad \tau(0)=0
$$
such that $\pi_1\tau=\pi_1$, i.e. $E\circ\tau=E$. Recall that condition B says that $q_*=\dim\{z'\in\cc^n\colon q(z')=0\}$ is zero, i.e.  $q$ vanishes only at
the origin
in $\cc^p$.
\begin{lemma}
Let $M$ be a formal submanifold defined by \rea{fmzp}-\rea{fmzp+}. Suppose that
$M$ satisfies condition {\rm B}. Then formal
deck transformations of $\pi_1$ are commutative  involutions. Each formal deck transformation $\tau$ of $\pi_1\colon \cL M\to \cc^p$ is
uniquely determined by its linear part $L\tau$
in the $(z',w')$ coordinates, while $L\tau$ is a deck transformation for the complexification for $\pi_1\colon \cL Q\to\cc^p$, where $\cL Q$ is the complexification of
the quadratic part  $Q$ of $M$.
 If $M$ is real analytic, all
formal
deck transformations of $\pi_1$ are convergent.
\end{lemma}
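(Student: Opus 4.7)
The plan is to reduce the lemma to a single uniqueness assertion: any formal deck transformation $\sigma$ of $\pi_1$ with identity linear part equals the identity. To see the link with the quadric, first extract the degree-$2$ terms in $E \circ \tau = E$: writing $\tau(z', w') = (z', \mathbf{A} w' + \mathbf{B} z' + O(|(z', w')|^2))$, the degree-$2$ identity
\begin{equation*}
h(z', \mathbf{A} w' + \mathbf{B} z') + q(\mathbf{A} w' + \mathbf{B} z') = h(z', w') + q(w')
\end{equation*}
says precisely that $L\tau$ is a deck transformation of the complexified quadric $\cL Q$. Since $\cL Q$ inherits condition B from $M$, \rl{2p1} applied to the analytic $\cL Q$ gives $(L\tau)^2 = I$ together with commutativity of any two such linear parts.

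The main technical step is the uniqueness. Assume $\sigma = I + (0, g)$ is a formal deck transformation with $L\sigma = I$, and let $g_d$ denote the lowest nonzero homogeneous part of $g$, of degree $d \geq 2$. Extracting the degree-$(d+1)$ part of $E(z', w' + g) = E(z', w')$ reduces to the polynomial matrix equation
\begin{equation*}
\bigl(D_{w'} Q\bigr)(z', w') \cdot g_d(z', w') = 0,
\end{equation*}
where $Q = h + q$ is the quadratic part of $E$ and $D_{w'} Q$ is its $p \times p$ Jacobian with respect to the $w'$ variables, whose entries are linear in $(z', w')$. Setting $z' = 0$ gives $D_{w'} Q(0, w') = D q(w')$; condition B forces $q$ to be a finite holomorphic map, so $\det D q(w') \not\equiv 0$, and hence $\det D_{w'} Q(z', w') \not\equiv 0$ in $\cc[z', w']$. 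Multiplying the equation by the adjugate of $D_{w'} Q$ yields $\det D_{w'} Q \cdot g_d \equiv 0$ in the integral domain $\cc[z', w']^p$, forcing $g_d \equiv 0$ and contradicting its choice. This reduction, with the role of condition B in making $\det D_{w'} Q$ a nonzero polynomial, is the primary obstacle.

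With these two facts in hand, the remaining assertions follow easily. Formal deck transformations form a group (closed under composition, and under inverses since $L\tau$ is invertible as a linear involution on $\cL Q$). If two formal deck transformations $\tau_1, \tau_2$ satisfy $L\tau_1 = L\tau_2$, then $\tau_1 \tau_2^{-1}$ has identity linear part, so equals $I$ by uniqueness, giving $\tau_1 = \tau_2$; in particular $\tau^2 = I$ since $L(\tau^2) = I$, and $\tau_1$ and $\tau_2$ commute because the commutator has identity linear part by commutativity of linear decks of $\cL Q$. For the convergence statement in the real analytic case, \rl{2p1} applied to the analytic submanifold $\cL M$ provides the group of convergent deck transformations of $\pi_1$, and by the uniqueness just proved, every formal deck transformation must coincide with the convergent one carrying the same linear part, hence is itself convergent.
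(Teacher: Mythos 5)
Your uniqueness argument and the algebraic reductions for the involution and commutativity properties are correct and track the paper's proof closely: the paper likewise extracts the degree-$(d+1)$ part of $E\circ\tau=E$ to get $\partial_{w'}\{h+q\}\cdot u_d=0$ and uses $\det\partial_{w'}\{h+q\}\not\equiv0$ (from condition B) to kill $u_d$; your adjugate step just makes that last deduction explicit.

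The gap is in the convergence step. You invoke \rl{2p1} to supply, for each formal deck transformation $\tau$, a \emph{convergent} deck transformation of $\cL M$ with the same linear part $L\tau$, and then appeal to uniqueness. But \rl{2p1} only tells you that the group of convergent deck transformations has order $2^\ell$ for some $0\leq\ell\leq p$; it gives no information about \emph{which} linear involutions of $\cL Q$ actually occur as linear parts of convergent deck transformations, and in particular it does not guarantee that $L\tau$ is among them. If no convergent deck transformation had linear part $L\tau$, your argument would say nothing about $\tau$. The paper closes exactly this gap with Artin's approximation theorem: given the formal solution $u$ of $E(z',w'+u)=E(z',w')$, Artin produces a convergent solution $\tilde u$ agreeing with $u$ to first order, hence a convergent deck transformation $\tilde\tau$ with $L\tilde\tau=L\tau$; then $\tilde\tau^{-1}\tau$ is a tangent-to-identity formal deck transformation, so it is the identity by the uniqueness you proved, and $\tau=\tilde\tau$ converges. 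Without Artin (or some equivalent approximation result) the existence of a convergent representative with the prescribed linear part is not available, so you should cite it explicitly.
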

\begin{proof} Let us recall some results about the quadric $Q$.
We already know that $q_*=0$ implies that $\pi_1$ for the complexification of
$Q$ is a branched   covering. As used in the proof of \rl{2p1},  $\pi_1$  is an open mapping near
 the origin and its regular values are dense.
In particular, we have
\eq{dwph}
\det\pd_{w'}\{h(z',w')+  q(w')\}\not\equiv0.
\eeq

Let $\tau$ be a formal deck transformation for  $M$. To show that $\tau$ is an involution, we note that its linear part at the origin, $L\tau$, is a deck transformation
of $Q$. Hence $L\tau$ is an involution.
Replacing $\tau$ by the deck transformation $\tau^2$, we may assume that
$\tau$ is tangent to the identity. Write
$$
\tau(z',w')=(z',w'+u(z',w')).
$$
We want to show that $u=0$. Assume that $u(z',w')=O(|(z',w')|^k)$  and let $u_k$ be
 homogeneous and of degree $k$ such that $u(z',w')=u_k(z',w')+O(|(z',w')|^{k+1})$. We have
$$
E(z',w'+u(z',w'))=E(z',w').
$$
Comparing terms of order $k+1$, we get
$$
\pd_{w'}\{h(z',w')+ q(w')\}u_k(z',w')=0.
$$
By \re{dwph}, $u_k=0$. This shows that each formal deck transformation $\tau$ of $\pi_1$ for $M$ is an involution. As mentioned above, $L\tau$ is a deck transformation of $\pi_1$ for $Q$.
Also if $\tau,\tilde\tau$  are commuting formal involutions then $\tau^{-1}\tilde\tau$ is an involution and $\tau=\tilde\tau$ if and only if $L\tau= L\tilde\tau$.

 Assume now that $M$ is real analytic. We want to show that  each formal deck transformation $\tau$ is convergent.
By a theorem of Artin~\cite{artin68} applied to the solution $u$, there is a convergent $\tilde\tau(z',w')=\tau(z',w')
+O(|(z',w')|^2)$ such that $E\circ\tilde\tau=E$, i.e. $\tilde\tau$ is a deck
transformation. Then $\tilde\tau^{-1}\tau$ is a deck transformation   tangent to the identity. Since it is a
formal involution by the above argument,
then it must be identity. Therefore,
 $\tau=\tilde\tau$ converges.
\end{proof}

Analogous to real analytic submanifolds, we say that a formal manifold defined
by \rea{fmzp}-\rea{fmzp+} satisfies condition D if its  formal branched  covering $\pi_1$ admits
$2^p$ formal deck transformations.

 Recall from section~\ref{secinv}
that
it is crucial to distinguish a special set of generators
for the deck transformations in order to relate the classification
of real analytic manifolds to the classification of
  certain $\{\tau_{11}, \ldots, \tau_{1p},\rho\}$.
The set of generators is uniquely determined by
the dimension of  fixed-point sets.
We want to extend these results at the
formal level.

\begin{prop} \label{mmtp}
Let $M,\tilde M$ be formal $p$-submanifolds
in $\cc^n$ of the form \rea{fmzp}-\rea{fmzp+}.
Suppose that   $M,\tilde M$ satisfy condition  {\rm D}. Then the following hold~$:$
\bppp
\item $M$ and $\tilde M$ are
formally equivalent
if and only if  their associated families of involutions
$\{\tau_{11}, \ldots, \tau_{1p},
\rho\}$ and $\{\tilde\tau_{11}, \ldots, \tilde\tau_{1p},\rho\}$
are   formally equivalent.
\item
Let  $\cL T_1=\{\tau_{11}, \ldots,\tau_{1p}\}$ be a family of
 formal holomorphic
involutions which commute pairwise. Suppose that
 the tangent spaces of $\fix(\tau_{11}),\ldots,
 \fix(\tau_{1p})$  are hyperplanes
   intersecting  transversally at the origin. Let $\rho$
 be an anti-holomorphic formal involution and let $\cL T_2=\{\tau_{21},\ldots,\tau_{2p}\}$   with $\tau_{2j}=\rho\tau_{1j}\rho$.
Suppose that  $\sigma=\tau_1\tau_2$ has distinct eigenvalues for $\tau_i=\tau_{i1}\cdots\tau_{ip}$, and
$$
 [\mathfrak M_n]_1^{L\cL T_1}\cap[\mathfrak M_n]_1^{L\cL T_2}
 =\{0\}.
 $$
There exists a formal  submanifold  defined by
\begin{equation}\label{masym}
 z''=(B_1^2,\ldots, B_p^2)(z',\ov z')
\end{equation}
for some formal power series $B_1,\ldots, B_p$
such that $M$ satisfies condition  {\rm D}.  The set
of involutions  $\{ \tilde\tau_{11},\ldots, \tilde\tau_{1p},   \rho_0\}$  of $M$ is formally equivalent to
 $\{\tau_{11}, \ldots, \tau_{1p},\rho\}$.
 \eppp
\end{prop}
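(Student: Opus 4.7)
The strategy is to transfer the analytic proof of \rp{inmae} into the formal category, leveraging the formal versions of \rl{sd}, \rl{sd1} and \rl{twosetin} which were already formulated for formal power series.

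For the forward direction of (i), given a formal biholomorphism $f\colon(\cc^{2p},0)\to(\cc^{2p},0)$ carrying $M$ to $\tilde M$, I would extend it to the formal complexified map $f^c(z,w)=(f(z),\bar f(w))$; from the defining equations one checks that $f^c$ sends $\cL M$ into $\widetilde{\cL M}$ and satisfies $\rho_0 f^c=f^c\rho_0$. The identity $\pi_1\circ f^c=f\circ\pi_1$ implies that conjugation by $f^c$ maps formal deck transformations of $\pi_1$ to those of $\tilde\pi_1$. Applied to $\tau_{1j}$, the conjugate $f^c\tau_{1j}(f^c)^{-1}$ is an involution whose formal fixed-point ideal defines a smooth hypersurface (the image of $\fix(\tau_{1j})$), so by the uniqueness in \rl{sd1}(i) it must equal some $\tilde\tau_{1i_j}$. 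For the backward direction, given $g\colon\cL M\to\widetilde{\cL M}$ intertwining $\cL T_1$ with $\widetilde{\cL T_1}$ and commuting with $\rho_0$, I use \rl{sd1}(ii) in its formal form: the invariant ring $\widehat{\cL O}^{\cL T_1}$ is generated by $z_1,\dots,z_p$ and $E_1(z',w'),\dots,E_p(z',w')$, and likewise for $\widetilde{\cL M}$. Pullback by $g$ sends $\widehat{\cL O}^{\widetilde{\cL T_1}}$ into $\widehat{\cL O}^{\cL T_1}$, yielding formal series $f_i,F_i$ with $g^*(z_i)=f_i(z',E(z',w'))$ and $g^*(z_{p+i})=F_i(z',E(z',w'))$. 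Setting $f=(f_1,\dots,f_p,F_1,\dots,F_p)$, the invertibility of $g$ together with the linear independence of the linear parts of $z',E$ on $\cL M$ forces $f$ to be a formal biholomorphism, and the intertwining $\rho_0 g=g\rho_0$ combined with the explicit form of $\rho_0$ shows that $f$ carries $M$ into $\tilde M$.

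For (ii), I would apply the formal version of \rl{sd} to the abelian group generated by $\cL T_1$: there exist formal coordinates $(\xi,\eta)$ in which $\widehat{\cL O}^{\cL T_1}$ is precisely the ring of formal series in $A_1,\dots,A_p,B_1^2,\dots,B_p^2$, where each $A_i$ is $\cL T_1$-invariant and each $B_j$ is skew-invariant under $\tau_{1j}$ and invariant under the remaining $\tau_{1i}$. Define
\[
w_j'=\overline{A_j\circ\rho}(\xi,\eta),\qquad w_j''=\overline{B_j^2\circ\rho}(\xi,\eta).
\]
The assumption $[\mathfrak M_n]_1^{L\cL T_1}\cap[\mathfrak M_n]_1^{L\cL T_2}=\{0\}$ forces the $2p$ linear parts $LA_j$ and $L\overline{A_j\circ\rho}$ to be linearly independent, so the formal map $(\xi,\eta)\mapsto(z',w')=(A(\xi,\eta),\overline{A\circ\rho}(\xi,\eta))$ has a formal inverse $\psi$. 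Set
\[
M\colon\ z''=(B_1^2,\dots,B_p^2)\circ\psi(z',\bar z'),
\]
which is of the required form \eqref{masym}. Writing the second-order Taylor expansion of $(B^2)\circ\psi$ shows the quadratic part has the form $h_j(z',\bar z')+q_j(\bar z')$ with $q_j(\bar z')=\tilde q_j(\bar z')^2$ and $\tilde q$ the linear part of $w'\mapsto B\circ\psi(0,w')$, whence $q_*=0$ (condition B). Since $\psi^{-1}\tau_{1j}\psi$ preserves both $z'=A\circ\psi$ and $z''=(B^2)\circ\psi$, it is a formal deck transformation of $\pi_1$ for $\cL M$, and the $2^p$ such conjugates give condition D. Finally, the families $\{\psi^{-1}\tau_{1j}\psi\}$ and the deck transformations produced by $\cL M$ share the same invariant functions, so \rl{twosetin} identifies them, and $\psi\rho_0\psi^{-1}=\rho$ completes the formal equivalence of $\{\tau_{11},\dots,\tau_{1p},\rho\}$ with the Moser--Webster family of $M$.

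The main obstacle is circumventing the analytic tools used in \rp{inmae}: connectedness of $\cL M_1\setminus\cL B$, Sard's theorem and the Riemann extension across the branch locus. These were needed to propagate a local identification of conjugated deck transformations across fibers and to recover $f$ from the lift $g$. In the formal setting their roles are played respectively by (a) the uniqueness characterization in \rl{sd1}(i) of each $\tau_{1j}$ as the unique generator whose formal fixed-point set is a hypersurface, which identifies $f^c\tau_{1j}(f^c)^{-1}$ with the appropriate $\tilde\tau_{1i_j}$ without a fiberwise argument, and (b) the description of the quotient $\cL M/\cL T_1$ via its invariant ring, which replaces the recovery $f=\pi_1\circ g\circ\pi_1^{-1}$ by a purely algebraic construction of $f$ from the pullback $g^*$.
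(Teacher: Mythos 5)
Your proposal follows essentially the same route as the paper: complexify $f$ to a formal map $F=f^c|_{\cL M}$ for part (i), use the generated invariant ring and \rl{twosetin} (or equivalently the uniqueness characterization in \rl{sd1}(i)) to match the families of deck transformations, and for part (ii) carry out the realization construction of \rp{inmae} verbatim in the formal category via the formal version of \rl{sd}. The minor difference — identifying $F^{-1}\tilde\tau_{1j}F$ by its fixed-point hypersurface rather than by comparing invariant rings directly — is a cosmetic variation on the same mechanism, so the argument is sound and aligns with the paper's own proof.
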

\begin{proof} (i) Let $M$ and $\tilde M$ be given by $z''=E(z',\ov{z'})$ and $\tilde z''=\tilde E(\tilde z',\ov{\tilde z'})$, respectively.
Suppose that $f$ is a formal holomorphic
 transformation sending $M$ into $\tilde M$.  We have
\eq{fppz}
f''(z',E(z',w'))=\tilde E(f'(z',E(z',w')),\ov f'(w',\ov E(w',z'))).
\eeq
Here $f=(f',f'')$. Recall that 
 $\rho_0(z',w')=(\ov{w'},\ov{z'})$. Define a formal mapping $(z',w')\to (\tilde z',\tilde w')=F(z',w')$ by
\eq{Fzpwp}
F(z',w'):= (f'(z',E(z',w')),\ov {f'}(w',\ov E(w',z'))).
\eeq
It is clear that $ F\rho_0=\rho_0 F$.
  By \rl{twosetin}, we know that $\tilde z'$ and $\tilde z''=\tilde E(\tilde z', \tilde w')$ generate invariant formal
power series of $\{\tilde\tau_{1j}\}$.
Thus, $\tilde z'\circ F(z',w')=f'(z',E(z',w'))$ and $\tilde E\circ F(z',w')$  are invariant by  $F^{-1}\circ\tilde\tau_{1j}\circ F$. By \re{fppz} and the definition of $F$,
 $$\tilde E\circ F(z',w')=f''(z',E(z',w')).$$
This shows that $f(z', E(z',w'))$  is invariant under
 $F^{-1}\circ\tilde\tau_{1j}\circ F$. Since $f$ is invertible, then $z'$ and $E(z',w')$
  are invariant under $F^{-1}\circ\tilde\tau_{1j}\circ F$.
 Therefore, $\{\tau_{1j}\}$ and $\{F^{-1}\circ\tilde\tau_{1i}\circ F\}$  are the same by \rl{twosetin} as
 they
 have the same invariant functions.  

Assume now that $\{\tau_{1j}\}=\{F^{-1}\circ\tilde\tau_{1i}\circ F\}$ for some
formal biholomorphic map $F$ commuting with $\rho_0$.  Recall that $\tilde z',\tilde z''$
are invariant by $ \tilde\tau_{1j}$. Then $\tilde z'\circ F$ and $\tilde E\circ F$ are
 invariant by $\{\tau_{1j}\}$.    By \rl{twosetin}, invariant power series of ${\tau_{1j}}$
are generated by $z',E(z',w')$. Thus
we can write
\ga \nonumber
\tilde z'\circ F(z',w')=f'(z',E(z',w')), \\
\label{hefp} \tilde E\circ F(z',w')=f''(z',E(z',w'))
 \end{gather}
for some formal power series map $f=(f',f'')$.   Since $\rho_0 F=F\rho_0$,
then by \re{Fzpwp} $$F(z',w') 
=(f'(z',0),\ov f'(w',0))+O(|(z',w')|^2).$$
Since $F$ is (formally) biholomorphic then $z'\to f'(z',0)$ is biholomorphic.
Then
$$
f''(0,E(0,w'))=\tilde E(0,\ov f'(w',0))+O(|w'|^3).
$$
 We have $E(0,w')=q(w')+O(|w'|^3)$ and $\tilde E(0,w')=\tilde q(w')+O(|w'|^3)$. Here $q(w'),\tilde q(w')$ are quadratic. By condition $q_*=0$, we know that $\tilde
 q_1, \ldots,\tilde q_p$ and hence $\tilde q_1\circ L, \ldots, \tilde q_p\circ L$ are linearly independent.
 Here $L$ is the linear part of the mapping $w'\to \ov f'(w',0)$, which is invertible. This shows that
the linear part of $w'\to f''(0,w')$
 is biholomorphic. 
 By \re{hefp}, $f''(z',0)=O(|z'|^2)$. Hence $f=(f',f'')$ is biholomorphic.
By a simple computation, we have $f(M)=\tilde M$, i.e.
$$
\tilde E(f'(z),\ov{f'(z)})=f''(z)
$$
for $z''=E(z',\ov z')$.

(ii)
Assume that $\{\tau_{1j}\}$ and $\rho$
are given in the $(\xi,\eta)$
space.  We want to show that a formal holomorphic
 equivalence class of $\{\tau_{1j},\rho\}$ can be realized by a formal submanifold satisfying condition D.
The proof is almost identical to the realization proof of \rp{inmae} and we will be brief.  Using a formal, instead of convergent,
change of coordinates,  we
know that invariant formal power series of $\{\tau_{1j}\}$ are generated by
$$
 z'=(A_1(\xi,\eta),\ldots,
A_p(\xi,\eta)), \quad  z''=(B_1^2(\xi,\eta), \ldots, B_p^2(\xi,\eta)),
$$
where $B_j$ is skew-invariant by $  \tau_{1j}$, and $A,B_i$ are invariant under $\tau_{1j}$
for $i\neq j$.  Moreover, $\phi(\xi,\eta)=(A,B)(\xi,\eta)$ is formally biholomorphic.
Set $$
w_j'=\ov{A_j\circ\rho(\xi,\eta)}, \quad w''_j=\ov{B_j^2\circ\rho(\xi,\eta)}.
$$
Then $(\xi,\eta)\to (A(\xi,\eta),\ov{A\circ\rho(\xi,\eta)})$
has an inverse $\psi$.  Define
$$
M\colon z''=(B_1^2,\ldots, B_p^2)\circ\psi(z',\ov z').
$$
The complexification of $M$ is given by
$$
\cL M\colon
z''=(B_1^2,\ldots, B_p^2)\circ\psi (z',w'),\quad
w''=(\ov B_1^2,\ldots, \ov B_p^2)\circ\ov\psi(w',  z').
$$
Note that $\phi\circ\psi (z',w')=(z',B\circ\psi(z',w'))$. Since $\phi\psi$ is invertible,
the linear part $D$ of $B\circ\psi$ satisfies
$
|D(0,w')|\geq |w'|/C.
$
This shows that $q_*=0$.
As in the proof of \rp{inmae}, we can verify that $M$ is a realization for $\{\tau_{1j},\rho\}$.\end{proof}
\setcounter{thm}{0}\setcounter{equation}{0}

\section{
Normal forms of
commuting biholomorphisms}\label{nfcb}

In this section, we shall consider a family of commuting germs of  holomorphic diffeomorphisms at a common fixed point, say $0\in \cc^n$.
We shall give conditions that ensure that the family can be transformed simultaneously and holomorphically to a normal form. This means that there exists a germ of biholomorphism at the origin which conjugates each   germ
of biholomorphism in the family to a mapping that commutes with the linear part of every mapping of the family.

\subsection{Centralizers and Decomposition} 

\begin{defn}\label{def-centr}Let $\cL  F$ be a family of formal mappings  on $\cc^n$  fixing
the origin.
Let $\cL C(\cL  F)$
 be the  {\it centralizer} of $ \cL   F$, i.e.
 the set of formal  holomorphic mappings $g$ that fix  the origin and
  commute with each element $f$ of $\cL  F$, i.e., $f\circ g=g\circ f$.
\end{defn}
Let ${\mathcal C}_2(\cL F)$ be  the ``higher order 
 formal centralizer'' of $\cL F$, that is
$$
{\mathcal C}_2(\cL F)=\{H\in (\widehat{\mathfrak M}^2_n)^n\,|\;H\circ F=F\circ H,\; F\in\cL F \}.
$$

We now deal with the following decomposition problem: Let $\mathcal C$ be a set of analytic mappings. We  want to decompose an arbitrary invertible
 mapping into the composition of an element of a centralizer of $\mathcal C$ and an element which is normalized with respect to $\mathcal C$. 
We first prove a general convergence decomposition,  which  will be used several times.
Let  $e_j$ denote the standard $j$th unit vector of $\cc^n$.
\begin{defn}
Let $\cL A$ be a group of permutations of $\{1,\ldots, n\}$. Then
$\cL A$ acts on the right (resp. on the left) on $\widehat{\cL O}_n^n$ by permutation of variables $z=(z_1,\ldots, z_n)$ as follows: Let $F(z)=\sum_{|Q|>0}F_Qz^Q$ be a formal mapping from $\cc^n$ to $\cc^n$,   and let $\nu, \mu\in \cL A$; set
$$\nu\circ F\circ\mu(z):=\sum_i \sum_{Q\in \nn^n}F_{\nu(i),\mu^{-1}(Q)}z^Q
e_{ i}.
$$
Define the components $(\cL AF)_i, (F\cL A)_i$, and consequently $(\cL AF\cL A)_i$ by
\aln
(\cL A F)_i(z)& :=\sum_{Q\in \nn^n}\max_{\nu\in \cL A}|F_{\nu(i),Q}|z^Q, \qquad
(F\cL A)_i(z):=\sum_{Q\in \nn^n}\max_{\mu\in\cL A}|F_{i,\mu^{-1}(Q)}|z^Q,
\\
(\cL AF\cL A)_i(z) &\, =\sum_{Q\in \nn^n}\max_{(\nu,\mu)\in \cL A^2}|F_{\nu(i),\mu^{-1}(Q)}|z^Q.
 \end{align*}
\end{defn}
We see that $F\cL A$ is the smallest (w.r.t. $\prec$) power series mapping that majorizes $F$
and is right-invariant under   $\cL A$, while $\cL AF$ is the smallest power series
mapping that majorizes $F$ and is left-invariant under   $\cL A$.
In particular, if $F, G$ are mappings without constant or linear terms, then
\ga
\label{FiGp}
\cL A(F\circ (I+G))\cL A\prec (\cL AF\cL A)(\cL A I\cL A+\cL AG\cL A),
\end{gather}
where the last relation holds if the composition is well-defined.

To simply our notation, we will take $\cL A$ to be the full permutation group of $\{1, \ldots, n\}$. We will denote
$$
F_{sym}=\cL AF\cL A.
$$


\begin{lemma}\label{fhg-}  Let  $\hat {\cL H}$ be  a real subspace of $({\widehat { \mathfrak M}}_n^2)^n$.
Let $\pi : ({\widehat { \mathfrak M}_n^2)^n}\rightarrow \hat  {\cL H}$ be
a $\rr$ linear projection $($i.e. $\pi^2=\pi)$
 that preserves the degrees of the mappings and let $\hat  {\cL G}:= (\I-\pi)({\widehat { \mathfrak M}}_n^2)^n$.
Suppose that there is a positive constant $C$ such that
\eq{pifp}
\pi(E)\prec  CE_{sym}
\eeq
for any $E\in (\widehat { \mathfrak M}_n^2)^n$.
 Let $F$ be a formal map tangent to the identity.
There exists a unique decomposition
\eq{decompo}
F=HG^{-1}
\eeq
with $G-I\in\hat  {\cL G}$ and $H-I\in \hat  {\cL H}$.
  If $F$ is convergent, then $G$ and $H$ are also convergent.
\end{lemma}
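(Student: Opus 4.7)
The plan is to reformulate the decomposition as a scalar fixed-point equation for $g$ alone, solve it formally by induction on degree, and then promote the formal solution to a convergent one via a majorant argument that uses the hypothesis $\pi(E)\prec C E_{sym}$ in an essential way.

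First, I would rewrite \rea{decompo} as $F\circ G=H$. Setting $F=I+f$, $G=I+g$, $H=I+h$ with $f,g,h\in(\widehat{\mathfrak M}_n^2)^n$, this is equivalent to
\begin{equation*}
h(z)-g(z)=f\bigl(z+g(z)\bigr).
\end{equation*}
Imposing $g\in\hat{\cL G}=(I-\pi)(\widehat{\mathfrak M}_n^2)^n$ and $h\in\hat{\cL H}=\pi\bigl((\widehat{\mathfrak M}_n^2)^n\bigr)$, applying $\pi$ and $I-\pi$ to the above equation, and using $\pi(h)=h$, $\pi(g)=0$, $(I-\pi)(h)=0$, $(I-\pi)(g)=g$, the system splits into the two uncoupled equations
\begin{equation*}
h=\pi\bigl(f\circ(I+g)\bigr),\qquad g=-(I-\pi)\bigl(f\circ(I+g)\bigr).
\end{equation*}
So the problem reduces to solving the second equation for $g\in\hat{\cL G}$; then $h$ is defined by the first.

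For formal existence and uniqueness I would induct on the homogeneous degree $k\ge 2$. Since $f\in(\widehat{\mathfrak M}_n^2)^n$, the homogeneous part of degree $k$ in $f(z+g(z))$ involves only the components $g_j$ with $j\le k-1$ (the $g_k$-contribution comes from $Df\cdot g$, but $Df$ has degree $\ge 1$, hence $g_k$ never appears in degree $k$). At degree $k=2$ one has $[f\circ(I+g)]_2=f_2$ and sets $g_2=-(I-\pi)(f_2)$, $h_2=\pi(f_2)$; at higher degrees, $g_k$ and $h_k$ are read off from the corresponding projections of the explicitly known polynomial $[f\circ(I+g)]_k$. This proves the formal part and simultaneously its uniqueness.

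For convergence when $F$ is analytic, I would run a majorant argument. From $\pi(E)\prec C E_{sym}$ and $E\prec E_{sym}$ I get $(I-\pi)(E)\prec (1+C)E_{sym}$, so combining with the majorant inequality \rea{FiGp},
\begin{equation*}
g\prec (1+C)\,\bigl[f\circ(I+g)\bigr]_{sym}\prec (1+C)\,f_{sym}\circ\bigl(I_{sym}+g_{sym}\bigr).
\end{equation*}
Since the identity satisfies $I_{sym}(z)=(z_1+\cdots+z_n)(1,\ldots,1)$, I would pass to a scalar majorant by choosing a convergent series $\phi$ of one variable with $f_{sym}(z)\prec \phi(z_1+\cdots+z_n)(1,\ldots,1)$ (possible since $f$ is convergent on a polydisc) and seeking $g\prec \psi(z_1+\cdots+z_n)(1,\ldots,1)$. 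The resulting scalar equation is
\begin{equation*}
\psi(t)=(1+C)\,\phi\bigl(n\,t+n\,\psi(t)\bigr).
\end{equation*}
Because $\phi$ vanishes to order two, $\partial_\psi\bigl(\psi-(1+C)\phi(n t+n\psi)\bigr)\big|_{t=\psi=0}=1$, so the analytic implicit function theorem produces a unique analytic solution $\psi$ near $t=0$. A degree-by-degree induction then shows that the formal $g$ is dominated coefficient-wise by this analytic $\psi$, so $g$ converges; then $h=\pi(f\circ(I+g))$ is also convergent.

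The main obstacle I anticipate is the bookkeeping around the symmetrization operator $(\cdot)_{sym}$: composition does not commute cleanly with symmetrization, and the combinatorial factor $n$ creeping in through $I_{sym}$ must be tracked carefully so that the scalar implicit equation genuinely has a solution at the origin rather than diverging. The hypothesis $\pi(E)\prec C E_{sym}$ is precisely what controls the nonlinear term $f\circ(I+g)$ after it has been fed into $\pi$ and $I-\pi$, and without it no such clean scalar majorant would be available.
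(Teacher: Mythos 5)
Your proof is correct and follows essentially the same route as the paper's: split $h-g=f(I+g)$ by $\pi$ and $I-\pi$, solve degree-by-degree for formal existence and uniqueness, then run a majorant argument using \eqref{pifp} and \eqref{FiGp} together with the implicit function theorem. The only cosmetic differences are that you reduce to a one-variable implicit equation where the paper applies the implicit function theorem directly to the vector majorant $u=Cf_{sym}(I_{sym}+u)$, and that you keep the $\pi$/$(I-\pi)$ labels consistent with the statement (the paper's proof text appears to interchange them, which only affects the constant, $C$ versus $1+C$).
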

\begin{proof} If $f$ is a formal mapping, we define the  $k$-jet: 
 $$
 J^kf(z)=\sum_{|Q|\leq k}f_Qz^Q.
 $$
 Write $F=I +f$,  $G=I +g$ and $H=I+h$. We need to solve $FG=H$, i.e to solve
$$
h-g=f(I+g).
$$
Since $f'(0)=0$, then for any $k\geq 2$, the $k$-jet of $f(I +g)$ depends only on the $(k-1)$-jet of $g$. Since $\pi$ is linear and preserves degrees,  \re{pifp} implies that $J^k$ commutes with $\pi$.
 Hence we can define, for all $k\geq 2$,
$$
-J^{k}(g):=\pi\left(J^{k}(f(I +g)\right),\quad J^{k}(h):=(I-\pi)\left(J^{k}(f(I +g)\right).
$$
This solves the formal decomposition uniquely. Assume that $F$ is a germ of holomorphic mapping.
Hence, we have
$$
 g\prec C(f(I +g))_{sym}\prec C f_{sym}(I_{sym}+ g_{sym}).
$$
Since $g_{sym}$ is the smallest left and right $\cL A$ invariant power series that dominates $g$, we have
$$
g_{sym} \prec C f_{sym}(I_{sym}+ g_{sym}).
$$
Therefore, $g_{sym}$ is dominated by the solution $u$ to
$$
u=Cf_{sym}(I_{sym}+ u),\quad u(0)=0.
$$
Notice that $u$  is real analytic near the origin by the implicit function theorem. So, $g_{sym}$ is convergent,  and   $g$, $h
=g+f(I +g)$ are convergent. 
\end{proof}

\begin{rem}Let $\cL A,\cL B$ be two subgroups of permutations. Instead of using the full permutations group, we could have used $G_{sym}:=\cL AG\cL B$. We have
$$
G\prec \cL AG\cL B\prec C\cL A(F\circ (I+G))\cL B\prec (\cL AF\cL A)(\cL AI\cL B+\cL AG\cL B).
$$
\end{rem}


\subsection{Abelian family of biholomorphisms}
Let ${\mathbf D}_i:=\text{diag}(\mu_{i1},\ldots,\mu_{in})$ with $1\leq i\leq\ell$
 be diagonal invertible matrices of $\cc^n$. Let $D_i\colon x\to {\mathbf D}_ix$ be the linear mappings.
Let  $D$ denote the family $\{  D_i 
\}_{i=1,\ldots \ell}$ of linear mappings.
\begin{defn}\label{ccst}
We say that $F=(f_1,\dots, f_n)$ is {\it normalized} with respect to $D$ if it  is tangent to the identity and it
does not have components along the centralizer of $D$, i.e.   for each
$Q$ with $|Q|\geq2$,
\eq{norm-D}
\nonumber
f_{ j,Q}=0,\quad\text{if}\; \mu_i^Q=\mu_{ij}\; \text{for all}\; i. 
\eeq
Let   ${\cL C}^{\mathsf{ c}}(D)$ denote the set of formal mappings  normalized with respect to  $D$. Let
${\cL C}_2^{\mathsf{ c}}(D)$ be the set of all $H\in   (\widehat{\mathfrak M}^2_n)^n$ satisfying $I+H\in
{\cL C}^{\mathsf{ c}}(D)$.
\end{defn}
%

  Let us consider a family $F:=\{F_i\}_{i=1}^{\ell}$ of  germs of holomorphic diffeomorphisms  of $(\cc^n,0)$ of which the linear of $F_i(x)$ at the origin is
$D_i.
$
Thus
$$
F_i(x)={\mathbf D}_ix+f_i(x), \quad   f_i(0)=0,\quad Df_i(0)=0.
$$
The group of germs of (resp. formal) biholomorphisms tangent to identity acts on
the family $F$ by  
$\Phi_*F:=\{\Phi^{-1}\circ F_i\circ\Phi\colon 1\leq i\leq\ell\}$.

Let  $\{F_i\}_{i=1,\ldots \ell}$ be a family of {\bf commuting germs of biholomorphisms}
with $\ell<\infty$.
Let us recall a result by M. Chaperon (see theorem 4 in \cite{Ch86}, page 132):
\begin{prop} 
If the family of diffeomorphisms is abelian then there exists a formal diffeomorphism $  \Phi$, which is tangent to the identity,
 such that
$$
{\widehat F_i}({\mathbf D}_jx)= {\mathbf D}_j{\widehat F_i}(x),\quad 1\leq i,j\leq \ell
$$
where ${\widehat F_i} :=   \Phi_*F_i$, for $1\leq i\leq \ell$.
We call the family  $\{\widehat F_i\}$ a {formal normal form} of the family $F$ (or a normalized family) with respect to the family $D$ of linear maps.
\end{prop}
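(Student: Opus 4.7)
The plan is to construct $\Phi$ as a formal power series $I+\phi$ by induction on the total degree, exploiting the pairwise commutativity of the $F_i$ to supply a compatibility condition that lets a \emph{single} corrector $\phi^{(k)}$ simultaneously cancel the non-resonant components of every $f_i := F_i - D_i$ at degree $k$.

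First, I would fix notation. Write $F_i = D_i + f_i$ with $f_i \in (\widehat{\mathfrak{M}}_n^2)^n$, and introduce the cohomological operators
\[
\mathcal{L}_i \phi \;:=\; D_i\phi - \phi\circ D_i.
\]
Each $\mathcal{L}_i$ is diagonal in the monomial basis with eigenvalue $\mu_{i\alpha} - \mu_i^Q$ on $x^Q e_\alpha$, and the $\mathcal{L}_i$'s commute pairwise because the $D_i$'s do. The joint kernel $\bigcap_i \ker \mathcal{L}_i$ is spanned exactly by the jointly-resonant monomials $\{x^Q e_\alpha : \mu_i^Q = \mu_{i\alpha}\ \text{for all}\ i\}$, which is precisely the higher-order centralizer ${\cL C}_2^{\mathsf{c}}(D)$ of Definition~\ref{ccst}. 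The goal is to arrange $\widehat f_i \in {\cL C}_2^{\mathsf{c}}(D)$ for every $i$, which is equivalent to every $\widehat F_i$ commuting with every $D_j$.

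For the inductive step, suppose that $f_m^{(\ell)} \in {\cL C}_2^{\mathsf{c}}(D)$ for every $m$ and every $2 \leq \ell < k$. Expanding $F_i F_j = F_j F_i$ at degree $k$ and feeding in the inductive normalization gives, coefficient-wise on $x^Q e_\alpha$ with $|Q| = k$, the compatibility relation
\[
(\mu_{i\alpha}-\mu_i^Q)\,c_{j,Q,\alpha} \;=\; (\mu_{j\alpha}-\mu_j^Q)\,c_{i,Q,\alpha}.
\]
On any non-resonant monomial (one for which some index $m_0$ has $\mu_{m_0}^Q \neq \mu_{m_0\alpha}$), this has two consequences: first, $c_{i,Q,\alpha} = 0$ whenever $\mu_{i\alpha} = \mu_i^Q$; second, the ratio
\[
\phi_{Q,\alpha} \;:=\; -\,\frac{c_{i_0, Q, \alpha}}{\mu_{i_0\alpha} - \mu_{i_0}^Q}
\]
is independent of the choice of $i_0$ among indices with $\mu_{i_0\alpha} \neq \mu_{i_0}^Q$. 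Collecting these coefficients into a homogeneous $\phi^{(k)}$ of degree $k$ and conjugating each $F_i$ by $I + \phi^{(k)}$ alters $f_i^{(k)}$ by $\mathcal{L}_i \phi^{(k)}$ while preserving every lower degree; by construction, this kills the entire non-resonant part of $f_i^{(k)}$ for every $i$ at once.

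Iterating through $k = 2, 3, \dots$ and composing the successive $I + \phi^{(k)}$ produces the desired formal $\Phi$, tangent to the identity, for which each $\widehat F_i = \Phi_* F_i$ lies in $D_i + {\cL C}_2^{\mathsf{c}}(D)$, i.e.\ satisfies $\widehat F_i \circ D_j = D_j \circ \widehat F_i$ for every $j$; pairwise commutativity of $\{\widehat F_i\}$ is preserved automatically since formal conjugation is a group automorphism. The one delicate point, and the main bookkeeping obstacle, is verifying that the commutativity identity at degree $k$ does reduce, after using the inductive normalization at lower degrees, to the coefficient-wise compatibility displayed above; this uses that each $f_m^{(\ell)}\in{\cL C}_2^{\mathsf{c}}(D)$ intertwines with every $D_r$ and that the $\mathcal{L}_m$'s commute. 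No small-divisor estimate is needed since the construction is purely formal, which is precisely what distinguishes this proposition from Theorem~\ref{lFba}.
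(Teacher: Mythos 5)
Your argument is mathematically sound and is the standard degree-by-degree normalization; note, though, that the paper itself gives no proof here — it attributes the statement to Chaperon (\cite{Ch86}, Theorem~4, p.~132) — so there is no in-paper argument to compare against, and you are supplying a proof where the paper supplies a citation.

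One genuine issue that you should fix: you have systematically \emph{swapped} the paper's notation. Throughout the proposal you write $\mathcal{C}_2^{\mathsf{c}}(D)$ when you mean the higher-order centralizer $\mathcal{C}_2(D)$. In Definition~\ref{def-centr}, $\mathcal{C}_2(D)$ is the set of maps in $(\widehat{\mathfrak M}^2_n)^n$ that commute with every $D_j$ — exactly the joint kernel $\bigcap_i \ker \mathcal{L}_i$ of your cohomological operators, spanned by the resonant monomials. By contrast, Definition~\ref{ccst} defines $\mathcal{C}_2^{\mathsf{c}}(D)$ as the \emph{normalized} maps, i.e.\ those supported entirely on non-resonant monomials — the complement. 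So where you say ``the joint kernel ... is precisely $\mathcal{C}_2^{\mathsf{c}}(D)$'', where your inductive hypothesis reads ``$f_m^{(\ell)}\in\mathcal{C}_2^{\mathsf{c}}(D)$'', and where you conclude ``$\widehat F_i\in D_i+\mathcal{C}_2^{\mathsf{c}}(D)$'', every occurrence of $\mathcal{C}_2^{\mathsf{c}}(D)$ should read $\mathcal{C}_2(D)$. Taken literally, the inductive hypothesis as written would say the lower-degree coefficients are purely non-resonant, which would make $\widehat F_i$ fail to commute with the $D_j$; taken as intended (resonant), the argument is correct.

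With that notational correction, the rest holds. The ``delicate point'' you flag does indeed work out: writing $F_m=G_m+h_m$ with $G_m=D_m+\sum_{\ell<k}f_m^{(\ell)}$ and $h_m$ starting at degree $k$, the induction hypothesis makes each $G_m$ commute with every $D_r$, hence $(G_iG_j)^{(k)}$ and $(G_jG_i)^{(k)}$ are both resonant homogeneous maps. The degree-$k$ part of $[F_i,F_j]$ is then $(G_iG_j-G_jG_i)^{(k)}+D_ih_j^{(k)}+h_i^{(k)}\!\circ\! D_j-D_jh_i^{(k)}-h_j^{(k)}\!\circ\! D_i$, whose projection onto the non-resonant span gives precisely the coefficient-wise compatibility $(\mu_{i\alpha}-\mu_i^Q)c_{j,Q,\alpha}=(\mu_{j\alpha}-\mu_j^Q)c_{i,Q,\alpha}$ you use. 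From there, the well-definedness of $\phi_{Q,\alpha}$ and the vanishing of $c_{i,Q,\alpha}$ when $\mu_{i\alpha}=\mu_i^Q$ but some other $\mu_{m_0\alpha}\neq\mu_{m_0}^Q$ follow as you say. The construction is correct.
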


For convenience, we restrict   changes of holomorphic coordinates to the ones that are tangent to the identity.
Also $\Phi_*\{F_i\}_{i=1}^\ell=\{\tilde F_i\}_{i=1}^\ell$ means that
 $$
 \Phi_*F_i=\tilde F_i, \quad 1\leq i\leq \ell.
 $$
These  restrictions will be removed by mild changes.  For instance, if $\Phi$ transforms a family $F$ into a family $\hat F$ that commutes with $LF$, the family of the linear part of the $F$, then $(L\Phi)^{-1}(LF_i)L\Phi=L\hat F_i$. Therefore, $\Phi(L\Phi)^{-1}$ is tangent to the identity
and  transforms $F$ into $ (L\Phi) \hat F (L\Phi)^{-1}$ which commutes with $LF$.

Let $\widehat{\mathcal O}_n^D$ be the ring of formal invariants of the family $D$, that is
$$
\widehat{\mathcal O}_n^D:=\{f\in \widehat{\mathcal O}_n\,|\;f({\mathbf D}_ix)=f(x),\; i=1,\ldots, \ell  \}.
$$
 If $Q\in\nn^n$, $Q\neq 0$, then
    $x^Q\in \widehat{\mathcal O}_n^D$ if and only if
\eq{muiq-}
\nonumber
\mu_i^Q:=\mu_{i1}^{q_1}\cdots\mu_{in}^{q_n}=1,\quad \forall\; 1\leq i\leq \ell.
\eeq
If $|Q|>1$, then   $x^Qe_j\in {\mathcal C}_2(D)$ if and only if
$ 
\mu_i^Q=\mu_{ij}, \forall\; 1\leq i\leq \ell. 
$
With notation of \rd{def-centr}, we have
\begin{lemma}\label{pcn0}
Any formal diffeomorphism $\Phi$ of $(\cc^n,0)$, tangent to identity, can be written uniquely
as $\Phi=\Phi_1\circ\Phi_0^{-1}$  with $\Phi_1\in\cL C^{\mathsf c}(D)$ and $\Phi_0\in \cL C(D)$. Furthermore, $\Phi_0,\Phi_1$
are convergent when $\Phi$ is convergent.
\end{lemma}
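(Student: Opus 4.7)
The plan is to deduce this directly from the abstract decomposition Lemma~\ref{fhg-} by exhibiting the appropriate projection. Since each $D_i$ is diagonal, the space $(\widehat{\mathfrak M}_n^2)^n$ splits monomial by monomial into two parts: a monomial $x^Q e_j$ with $|Q|\geq 2$ either satisfies $\mu_i^Q=\mu_{ij}$ for every $i$ (a resonant monomial, lying in $\cL C_2(D)$), or it does not (in which case it lies in $\cL C_2^{\mathsf c}(D)$ by Definition~\ref{ccst}). Therefore I would define
\[
\hat{\cL H}:=\cL C_2^{\mathsf c}(D),\qquad \pi:(\widehat{\mathfrak M}_n^2)^n\to \hat{\cL H}
\]
to be the $\rr$-linear projection that discards the resonant coefficients; equivalently, $\pi(E)_j=\sum_{Q:\exists i,\ \mu_i^Q\neq\mu_{ij}}E_{j,Q}x^Q$. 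Then $\I-\pi$ is exactly the projection onto $\cL C_2(D)$, so $\hat{\cL G}=\cL C_2(D)$.

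Next I would verify the hypotheses of Lemma~\ref{fhg-}. The projection $\pi$ is degree-preserving because it only deletes some coefficients monomial-by-monomial without mixing degrees, and since every coefficient of $\pi(E)$ is either the corresponding coefficient of $E$ or zero, one has trivially $\pi(E)\prec E\prec E_{\mathrm{sym}}$, so \eqref{pifp} holds with $C=1$. Lemma~\ref{fhg-} then yields a unique decomposition $\Phi=\Phi_1\circ\Phi_0^{-1}$ with $\Phi_1-I\in\cL C_2^{\mathsf c}(D)$ and $\Phi_0-I\in\cL C_2(D)$, and the convergence assertion follows from the same lemma. Unwinding the definitions, $\Phi_1\in\cL C^{\mathsf c}(D)$ and $\Phi_0\in\cL C(D)$, as required.

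There is essentially no obstacle here beyond checking that the two conditions defining $\cL C^{\mathsf c}(D)$ and $\cL C(D)$ really are complementary on higher-order jets, which reduces to the dichotomy ``$\mu_i^Q=\mu_{ij}\ \forall i$'' versus its negation for each pair $(j,Q)$. The only small point worth noting explicitly is that $\Phi_0$ being a formal invertible map tangent to the identity with $\Phi_0-I\in\cL C_2(D)$ is actually equivalent to $\Phi_0\in\cL C(D)$: indeed $\Phi_0\circ D_i=D_i\circ\Phi_0$ coefficient-wise is the condition $(\Phi_0)_{j,Q}(\mu_i^Q-\mu_{ij})=0$ for all $i,j,Q$, which is exactly $\Phi_0-I\in\cL C_2(D)$. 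With this identification, the conclusion of Lemma~\ref{fhg-} translates verbatim into the statement of the present lemma.
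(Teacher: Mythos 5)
Your proof is correct and takes exactly the same route as the paper: apply Lemma~\ref{fhg-} with $\pi$ the coefficient-wise projection onto the non-resonant monomials and observe that it preserves degrees and satisfies $\pi(E)\prec E_{sym}$ with $C=1$. (Your choice $\hat{\cL H}=\cL C_2^{\mathsf c}(D)$ is in fact the consistent one; the paper's text says ``$\hat{\cL H}$ is replaced by $\cL C_2(D)$'' but then writes a formula for $\pi$ that projects onto $\cL C_2^{\mathsf c}(D)$, so you have silently corrected a notational slip, and your closing remark identifying $\Phi_0-I\in\cL C_2(D)$ with $\Phi_0\in\cL C(D)$ is a detail the paper leaves implicit.)
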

\begin{proof} This follows from \rl{fhg-}, where $\hat{\cL H}$ is replaced by $\cL C_2(D)$ and $\pi$
is defined by
$$
\pi\left(\sum f_{j,Q}x^Qe_j\right)=\sum_j\sum_{x^Qe_j \not\in\cL C_2(D)}f_{j,Q}x^Qe_j.
\qquad \qedhere$$
\end{proof}
\begin{lemma}\label{lem-nf-nf}
Let $\hat F:=\{\hat F_i\}$ and $\tilde F:=\{\tilde F_i\}$ be two formal normal forms of the abelian family of diffeomorphisms $F:=\{F_i\}$.    There exists a
formal diffeomorphism $\Phi$, tangent to identity at the origin, such that $ \Phi\in  {\mathcal C}(D)$ and $\Phi\circ\tilde F_i=\hat F_i\circ\Phi$.
Furthermore, there is a unique $\Phi\in\cL C^c(D)$ that transforms the family $F$ into a normal form.
\end{lemma}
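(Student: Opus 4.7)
The plan is to reduce the lemma to the uniqueness of the decomposition in Lemma~\ref{pcn0} combined with a classical cohomological argument at each homogeneous degree. First, since both $\hat F$ and $\tilde F$ are normal forms of the abelian family $F$, there exist formal diffeomorphisms $\Psi,\tilde\Psi$, tangent to identity at $0$, such that $\hat F_i = \Psi_* F_i$ and $\tilde F_i = \tilde\Psi_* F_i$ for every $i$. Setting $\Phi := \Psi\circ\tilde\Psi^{-1}$ produces a formal diffeomorphism tangent to identity satisfying
\begin{equation*}
\hat F_i\circ\Phi = \Phi\circ\tilde F_i,\qquad 1\leq i\leq\ell.
\end{equation*}
The goal of the existence part is to show that $\Phi$ can be chosen in $\mathcal{C}(D)$.

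Apply Lemma~\ref{pcn0} to decompose $\Phi = \Phi_1\circ\Phi_0^{-1}$ with $\Phi_1\in\mathcal{C}^{\mathsf c}(D)$ and $\Phi_0\in\mathcal{C}(D)$. The conjugacy relation becomes $\Phi_1^{-1}\hat F_i\Phi_1 = \Phi_0^{-1}\tilde F_i\Phi_0 =: G_i$. Since $\hat F_i$ and $\tilde F_i$ commute with each $D_j$ and $\Phi_0\in\mathcal{C}(D)$, both $\hat F_i$ and $G_i$ lie in $\mathcal{C}(D)$, so their nonlinear coefficients are supported on monomials $x^Q e_j\in\mathcal{C}_2(D)$.

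The heart of the argument is to show $\Phi_1=I$ by comparing homogeneous parts in $\hat F_i\circ\Phi_1 = \Phi_1\circ G_i$. Write $\Phi_1 = I+\phi$ and, arguing by contradiction, let $\phi^{(k_0)}$ be the lowest-degree nonvanishing homogeneous component of $\phi$, with $k_0\geq 2$. Expanding both sides and using that $\hat f_i, g_i$ are of order $\geq 2$, the degree $k_0$ terms yield the cohomological equation
\begin{equation*}
D_i\phi^{(k_0)}(x) - \phi^{(k_0)}(D_i x) = g_i^{(k_0)}(x) - \hat f_i^{(k_0)}(x),
\end{equation*}
which at the coefficient level reads $(\mu_{ij}-\mu_i^Q)\phi_{j,Q}^{(k_0)} = g_{i,j,Q}^{(k_0)} - \hat f_{i,j,Q}^{(k_0)}$ for every $i$. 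If $x^Q e_j\in\mathcal{C}_2(D)$, the left-hand side vanishes for all $i$ while $\phi_{j,Q}^{(k_0)} = 0$ by definition of $\mathcal{C}^{\mathsf c}(D)$; the equation then just says $g_{i,j,Q}^{(k_0)} = \hat f_{i,j,Q}^{(k_0)}$. If $x^Q e_j\notin\mathcal{C}_2(D)$, then $g_{i,j,Q}^{(k_0)} = \hat f_{i,j,Q}^{(k_0)} = 0$ because $\hat F_i,G_i\in\mathcal{C}(D)$, and picking an index $i_0$ with $\mu_{i_0j}\neq\mu_{i_0}^Q$ forces $\phi_{j,Q}^{(k_0)} = 0$. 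In all cases $\phi^{(k_0)}=0$, contradicting the choice of $k_0$; hence $\phi=0$, $\Phi_1=I$, and $\Phi = \Phi_0^{-1}\in\mathcal{C}(D)$, which is the existence part.

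For the uniqueness assertion, suppose $\Phi_a\in\mathcal{C}^{\mathsf c}(D)$, $a=1,2$, each transform $F$ into a normal form. Applying the existence argument to $\Theta := \Phi_2^{-1}\Phi_1$ in the roles of the conjugacy between the two normal forms $\Phi_1^{-1}F\Phi_1$ and $\Phi_2^{-1}F\Phi_2$ yields $\Theta\in\mathcal{C}(D)$. Then $\Phi_1$ admits two decompositions of the form given by Lemma~\ref{pcn0}, namely $\Phi_1 = \Phi_1\circ I^{-1}$ and $\Phi_1 = \Phi_2\circ(\Theta^{-1})^{-1}$, with $\Phi_1,\Phi_2\in\mathcal{C}^{\mathsf c}(D)$ and $I,\Theta^{-1}\in\mathcal{C}(D)$. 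The uniqueness clause of Lemma~\ref{pcn0} forces $\Phi_1=\Phi_2$. The only subtle step is the degree-by-degree cohomological vanishing above; the rest is bookkeeping around Lemma~\ref{pcn0}.
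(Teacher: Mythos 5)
Your proof is correct and follows essentially the same route as the paper's: pass to $\Phi$ conjugating the two normal forms, decompose via Lemma~\ref{pcn0} as $\Phi_1\circ\Phi_0^{-1}$, kill $\Phi_1$ degree by degree using the cohomological equation and the definitions of $\cL C_2(D)$ and $\cL C^{\mathsf c}(D)$, then derive uniqueness from the uniqueness clause of the decomposition. Your write-up spells out the coefficient-level dichotomy ($x^Qe_j\in\cL C_2(D)$ versus not) a bit more explicitly than the paper, but the argument is the same.
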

\begin{proof}
Since both $\hat F$ and $\tilde F$ are normal forms of $F$,  there exists a formal diffeomorphism $\Phi$, tangent to identity at the origin, such that $\tilde F_i\circ\Phi=\Phi\circ \hat F_i$. According to  \rl{pcn0}, we can decompose  $\Phi=\Phi_1\circ\Phi_0^{-1}$ where $\Phi_0\in \cL C(D)$ and $\Phi_1\in \cL C^{\mathsf c}(D)$. Hence, we have $\Phi_1^{-1}\circ\tilde F_i\circ\Phi_1=\Phi_0^{-1}\circ \hat F_i\circ\Phi_0$.  Let us set $G_i:=\Phi_0^{-1}\circ\hat F_i\circ\Phi_0$. Then $G_i$ is a formal diffeomorphism satisfying $G_i(x)-{\mathbf D}_ix\in {\mathcal C}(D)$. 
Let us show by induction on $N\geq 2$ that if $\Phi_1=I+\Phi_1^N+O(N+1)$ with $\Phi_1^N$ being
homogeneous of degree $N$, then $\Phi_1^N=0$. Indeed, a computation  shows that
$$
\{G_i\}_N=\{\hat F_i\}_N+ D_i\circ \Phi_1^N- \Phi_1^N\circ D_i.
$$
Express   $\Phi_1^N$ as sum of monomial mappings. The monomial mappings are not
in $\cL C(D)$,  while those of $F_i$ and $G_i$ are. We obtain $\Phi_1^N=0$.

To verify the last assertion,  assume that $\Psi_*F=\hat F$ and $\tilde\Psi_*F
=\tilde F$
are in the normal form.  Suppose that $\Psi,\tilde\Psi$ are normalized. Then $(\Psi^{-1}\tilde\Psi)_*\hat F
=\tilde\Psi_*(\Psi^{-1})_*\hat F$ is in the normal form.  Write $\Psi^{-1}\tilde\Psi=\psi_1\psi_0^{-1}$ with $\psi_1\in\cL C^{\mathsf c}(D)$
and $\psi_0\in\cL C (D)$. Then $(\psi_1)_*\hat F$ is in a normal
form. From the above proof, we know that $\psi_1=I$. Now $\Psi=\tilde\Psi\psi_0$, which implies that $\Psi=\tilde\Psi$.
\end{proof}
%
\begin{lemma}\label{complete-lemma}
If a formal normal form of $F$ is completely integrable so are all other normal forms of $F$; in particular, the unique $\Phi$ in \rla{lem-nf-nf} transforms $F$ into a completely integrable normal form.
\end{lemma}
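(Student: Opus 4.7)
The plan is to exploit \rl{lem-nf-nf}, which says that any two formal normal forms of $F$ are related by conjugation by an element of $\cL C(D)$ tangent to the identity, and to show that such a conjugation preserves both defining conditions $(i)$ and $(ii)$ of complete integrability.

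Fix a completely integrable normal form $\hat F$ with $\hat F_m(z)=(\hat\mu_{m1}(z)z_1,\ldots,\hat\mu_{mn}(z)z_n)$, and an arbitrary $G\in\cL C(D)$ tangent to the identity; set $\tilde F_m := G^{-1}\circ\hat F_m\circ G$. Since $G$ and $\hat F_m$ both commute with every $D_{m'}$, so does $\tilde F_m$, hence $\tilde F$ is a normal form. Define $\eta_{mj}(z) := \hat\mu_{mj}(G(z))$; because $\hat\mu_{mj}\in\widehat{\cL O}_n^D$ and $G$ commutes with each $D_{m'}$, each $\eta_{mj}$ again lies in $\widehat{\cL O}_n^D$.

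The heart of the argument is the identity
\[
G\bigl(\eta_{m1}(z)z_1,\ldots,\eta_{mn}(z)z_n\bigr) \;=\; \hat F_m\bigl(G(z)\bigr),
\]
whose composition with $G^{-1}$ on the left yields $\tilde F_m(z)=(\eta_{m1}(z)z_1,\ldots,\eta_{mn}(z)z_n)$, establishing condition $(i)$ for $\tilde F$. This is the main technical step I anticipate. Writing $G_k(z)=\sum_Q c_{k,Q}z^Q$, every nonzero coefficient forces $\mu_m^Q=\mu_{mk}$ for all $m$ because $G\in\cL C(D)$; condition $(ii)$ applied to $\hat F$ then gives the formal identity $\hat\mu_m^Q(w)\equiv\hat\mu_{mk}(w)$, so substituting $w=G(z)$ yields $\eta_m^Q(z)\equiv\eta_{mk}(z)$ for every monomial appearing in $G_k$. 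This is precisely what is needed to pass the diagonal multiplier $\eta_m$ through $G$ on the left-hand side.

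Condition $(ii)$ for $\tilde F$ is then a short check: writing $\tilde\mu_{mj}=\hat\mu_{mj}\circ G$, any linear resonance $\mu_m^Q=\mu_{mj}$ (for all $m$) yields $\hat\mu_m^Q\equiv\hat\mu_{mj}$ by $(ii)$ for $\hat F$, and pre-composition with $G$ preserves this identity of formal power series. For the ``in particular'' clause, the unique $\Phi\in\cL C^{\mathsf c}(D)$ from \rl{lem-nf-nf} sends $F$ to a normal form that is conjugate in $\cL C(D)$ to the given completely integrable one, so the argument just sketched shows this canonical normal form is itself completely integrable.
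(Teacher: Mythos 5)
Your proof is correct and takes essentially the same approach as the paper: setting $G=\Phi^{-1}$, your central identity $G(\eta_{m1}(z)z_1,\ldots,\eta_{mn}(z)z_n)=\hat F_m(G(z))$, verified monomial-by-monomial using condition (ii) for $\hat F$, is precisely the paper's computation $\Phi\circ\hat F_i=\diag(\hat\mu_{i1},\ldots,\hat\mu_{in})\cdot\Phi$ leading to $\tilde\mu_{ij}=\hat\mu_{ij}\circ\Phi^{-1}$.
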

\begin{proof}
By Lemma \ref{lem-nf-nf}, we transform a normal form $\{\hat F_i\}$ into another one $\{\tilde F_i\}$ by applying a transformation $\Phi$ that commutes with each $D_j$. Hence, we have $\hat F_i:=\Phi^{-1}\circ \tilde F_i\circ \Phi$, for all $i=1,\ldots, \ell$. Let us write $\Phi(x)=\sum_{Q\in\nn^n,\; 1\leq j\leq n}\phi_{j,Q}x^Q e_j$. Suppose that $\{\hat F_i\}$ is completely integrable, then
$$
\Phi\circ \hat F_i(x)=\sum_{Q\in\nn^n}\phi_{j,Q}\mu_{i}(x)^Qx^Q e_j.
$$
Since $\Phi$ commutes with each $D_j$, then
$$
\Phi\circ \hat F_i(x)= \text{diag}(\mu_{i1}(x),\ldots,\mu_{in}(x))\cdot \Phi(x).
$$
The conjugacy equation leads to
$$
\mu_{ij}(x) \cdot \Phi_j(x)= \tilde {F}_{ij}(\Phi(x)), \quad 1\leq j\leq n.
$$
As a consequence, we have $\tilde F_i(x)=\text{diag}((\tilde \mu_{i1}(x),\ldots,\tilde \mu_{in}(x))\cdot x$ with $(\tilde\mu_{ij}\circ \Phi(x))\cdot\Phi_  j (x)= \mu_{ij}(x)\cdot\Phi_j(x)$, i.e. $\tilde \mu_{ij}=\mu_{ij}\circ \Phi^{-1}$.

Each function $\tilde\mu_{ij}$ is  an invariant function of $D$ since
$$
\tilde\mu_{ij}({\mathbf D}_k x)= \mu_{ij}\circ \Phi^{-1}({\mathbf D}_kx)= \mu_{ij}\circ D_k (\Phi^{-1}(x))= \mu_{ij}\circ \Phi^{-1}(x).
$$
The second and third conditions of the definition of the complete integrability is obviously satisfied by $\{\tilde{F}_i\}$ since $\tilde \mu_{ij}=\mu_{ij}\circ \Phi^{-1}$.
\end{proof}
\begin{lemma}
If a formal normal form of $F$ is linear so are all other normal forms of $F$.
\end{lemma}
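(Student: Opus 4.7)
The plan is to invoke Lemma \ref{lem-nf-nf} directly: any two formal normal forms of the family $F$ are conjugated by a formal diffeomorphism $\Phi$, tangent to the identity, which lies in the centralizer $\cL C(D)$ of the linear family. So if $\hat F=\{\hat F_i\}$ and $\tilde F=\{\tilde F_i\}$ are two normal forms of $F$, there is $\Phi\in\cL C(D)$ tangent to $I$ with $\Phi\circ\tilde F_i=\hat F_i\circ\Phi$ for all $i$.

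Now suppose that the normal form $\hat F$ is linear, that is, $\hat F_i=D_i$ for every $i$. Substituting into the conjugacy relation yields
\[
\tilde F_i=\Phi^{-1}\circ D_i\circ\Phi.
\]
The key observation is that $\Phi$ commutes with $D_i$ because $\Phi\in\cL C(D)$; hence $\Phi^{-1}\circ D_i\circ\Phi=D_i$. Thus $\tilde F_i=D_i$ for every $i$, so $\tilde F$ is linear as well.

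There is really no obstacle here: the entire content is contained in Lemma \ref{lem-nf-nf}, whose proof already produced the centralizer-valued conjugacy. The only thing to check is the quantifier — the statement concerns \emph{all} normal forms, but any normal form arises by some formal, identity-tangent change of coordinates from $F$, and Lemma \ref{lem-nf-nf} applies to any such pair. Hence the argument above covers every case.
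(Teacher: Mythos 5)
Your proof is correct and follows the same route as the paper: both invoke Lemma \ref{lem-nf-nf} to obtain a conjugating map $\Phi\in\cL C(D)$ between the two normal forms, and then use the commutation $\Phi\circ D_i=D_i\circ\Phi$ to conclude $\tilde F_i=\Phi^{-1}\circ D_i\circ\Phi=D_i$. No discrepancy.
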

\begin{proof}
According to Lemma \ref{lem-nf-nf}, we transform a    linear normal form $\{\hat F_i\}$ into another one $\{\tilde F_i\}$ by applying a transformation $\Phi$ that commutes with each $D_j$. Since $ \hat F_i(x)={\mathbf D}_ix$, we have $\tilde F_i=\Phi^{-1}(D_i\Phi(x))$, for all $i=1,\ldots, \ell$. Since $\Phi$ commutes with each map $x\mapsto {\mathbf D}_ix$, then
$$
\tilde F_i=\Phi^{-1}(D_i\Phi(x))=\Phi^{-1}(\Phi({\mathbf D}_ix))={\mathbf D}_ix.
\qquad\qedhere
$$
\end{proof}

\begin{defn}\label{small divisors}
We  say that 
 {\it the family $D$ is of Poincar\'e type}
 if there exist  constants $d>1$ and $c>0$ such that, for each $(j,Q)\in\{1,\ldots, n\} \times\nn^n$
 that satisfies
  $\mu_{m}^{Q}-\mu_{m j}\neq 0$ for some $m$,
  there exists $(i,Q')\in \{1,\ldots, \ell\}\times \nn^n$ such that
$\mu_{k}^{Q'}=\mu_k^Q$ for all $1\leq k\leq \ell$,
$\mu_{i}^{Q'}-\mu_{ij}\neq0$,
 and
\gan
\max\left(|\mu_{i}^{Q'}|,|\mu_{i}^{-Q'}|\right)>c^{-1}d^{|Q'|}, \quad
 \text{ $Q'-Q\in\nn^n\cup(-\nn^n$)}.
\end{gather*}
 \end{defn}
  Such a condition has appeared in the definition of the good set in~\cite{BHV10}.
\begin{defn}
Let $f=\sum_{Q\in\nn^n}f_Qx^Q$ and $g=\sum_{Q\in\nn^n}g_Qx^Q$ be two formal power series. We   say that $g$  {\it majorizes} $f$,
 written as $f\prec g$, if $g_Q\geq 0$ and $|f_Q|\leq g_Q$ for all $Q\in\nn^n$.  Set
$$
\bar f := \sum_{Q\in\nn^n}|f_Q|x^Q.
$$
\end{defn}
\begin{thm}\label{thm-conv-nf}
Let $F$ be an abelian family of germs of holomorphic diffeomorphisms at the origin of $\cc^n$. Assume that it is formally completely integrable and   that its linear part at the origin is of Poincar\'e type.
Then $F$ is holomorphically conjugated to a normal form $\hat F=\{\hat F_1,\ldots, \hat F_\ell\}$
 so that 
 each
 $\hat F_i$ is defined by
$$
x_j' = \mu_{ij}(x)x_j,\quad j=1,\ldots,n
$$
where  $\mu_{ij}(x)$ are   germs of holomorphic functions  invariant under $D$
and $\mu_{ij}(0)=\mu_{ij}$.
 In fact, the unique normalized mapping $\Phi$ in \rla{lem-nf-nf}
is convergent.
\end{thm}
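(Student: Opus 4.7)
The plan is to reduce to the convergence of a unique normalized conjugacy and then run a majorant argument driven by the small-divisor estimate from the Poincar\'e-type hypothesis. First, I would apply \rl{pcn0} to any formal conjugacy $\Phi$ guaranteed by formal complete integrability, writing $\Phi=\Phi_1\circ\Phi_0^{-1}$ with $\Phi_1\in\cL C^{\mathsf c}(D)$ and $\Phi_0\in\cL C(D)$. Since $\Phi_0$ commutes with every $D_j$, the transformed family $(\Phi_1)_*F=(\Phi_0)_*\bigl((\Phi)_*F\bigr)$ is again a formal normal form, and by \rl{complete-lemma} it is again completely integrable. Thus it suffices to show that the unique $\Phi_1\in\cL C^{\mathsf c}(D)$ provided by \rl{lem-nf-nf} is holomorphic at the origin.

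Next I would set $\Phi_1=I+U$ with $U\in\cL C_2^{\mathsf c}(D)$ and $\hat F_i(x)=\operatorname{diag}(\mu_{i1}(x),\dots,\mu_{in}(x))\cdot x$. Setting $R_i:=F_i-D_i$ and expanding the conjugation identity $F_i\circ\Phi_1=\Phi_1\circ\hat F_i$ componentwise gives
$$
U_j(\hat F_i(x))-\mu_{ij}(x)\,U_j(x)=R_{i,j}(x+U(x)).
$$
Extracting the coefficient of $x^Q$ and using $\mu_{ij}(0)=\mu_{ij}$ with $\mu_{ij}(x)-\mu_{ij}\in\widehat{\mathfrak M}_n$ yields, for each $i$,
$$
(\mu_i^Q-\mu_{ij})\,U_{j,Q}=\Psi_{i,j,Q}(U_{j',Q'}:|Q'|<|Q|;\;F_i),
$$
a polynomial in coefficients of $U$ of strictly lower total degree and in the given data. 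The normalization of $U$ forces $U_{j,Q}=0$ whenever $x^Qe_j\in\cL C_2(D)$; in the remaining cases $\mu_i^Q\neq\mu_{ij}$ for some $i$, and I need a uniform lower bound on this small divisor.

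This is precisely what \rd{small divisors} delivers. For non-centralizing $(j,Q)$, the hypothesis supplies $(i,Q')$ with $\mu_k^{Q'}=\mu_k^Q$ for all $k$, hence $\mu_i^Q=\mu_i^{Q'}$, with $\mu_i^{Q'}\neq\mu_{ij}$ and $\max(|\mu_i^{Q'}|,|\mu_i^{-Q'}|)>c^{-1}d^{|Q'|}$. In the first sub-case one gets $|\mu_i^Q-\mu_{ij}|\geq\tfrac{1}{2c}\,d^{|Q'|}$ for $|Q'|$ large; in the second, a symmetric estimate $|\mu_i^Q-\mu_{ij}|\geq\tfrac{|\mu_{ij}|}{2c}\,d^{|Q'|}$ is obtained after factoring out $\mu_i^{Q'}/\mu_{ij}$ and switching to the conjugation equation for $F_i^{-1}$, whose linear part $D_i^{-1}$ interchanges the roles of $Q'$ and $-Q'$. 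The monomial $x^{Q'-Q}$ is $D$-invariant, because $\mu_k^{Q'-Q}=1$ for all $k$, and the comparability $Q'-Q\in\nn^n\cup(-\nn^n)$ lets me transport the estimate from $Q'$ back to $Q$ by multiplying the conjugation equation by $x^{\pm(Q'-Q)}$ while staying within the centralizer.

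With this small-divisor estimate in hand, a majorant argument in the spirit of \rl{fhg-} finishes the proof. Using the symmetrization $f\mapsto f_{sym}$ from Section~\ref{nfcb}, I would dominate $U_{sym}$ by the solution $v$ of a scalar implicit equation $v=C\,R_{sym}(I_{sym}+v)$, where the constant $C$ absorbs the small-divisor factors and the summability $\sum_k d^{-k}<\infty$; the implicit function theorem then yields a holomorphic $v$ near the origin, whence $U$ and $\Phi_1$ converge. The main obstacle will be the uniform bookkeeping required to handle the two alternatives of \rd{small divisors} (large $|\mu_i^{Q'}|$ versus large $|\mu_i^{-Q'}|$) within a single majorant equation; the $D$-invariant monomial shift $x^{\pm(Q'-Q)}$ is the technical device that makes the case split transparent and keeps everything compatible with the symmetrization operator on which \rl{fhg-} depends.
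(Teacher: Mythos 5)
Your high-level strategy is the same as the paper's: reduce to the uniqueness of the normalized $\Phi_1\in\cL C^{\mathsf c}(D)$ via \rl{pcn0} and \rl{complete-lemma}, then run a majorant argument driven by the Poincar\'e-type small-divisor estimate. But there are two substantive gaps in the middle step.

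First, the recurrence you write,
\[
(\mu_i^Q-\mu_{ij})\,U_{j,Q}=\Psi_{i,j,Q}(U_{j',Q'}:|Q'|<|Q|;\;F_i),
\]
is incomplete: $\Psi$ also depends on the Taylor coefficients of the unknown normal form $\hat\mu_{ij}(x)$, which are not ``given data'' but are themselves determined in the course of the construction. Your sketch never explains how the $\hat\mu_{ij}$ are produced. In the paper's argument this is handled by projecting the conjugacy equation $F_i\circ\Phi=\Phi\circ\hat F_i$ onto $\cL C_2(D)$ to get the \emph{resonant} equation $(\hat D_i(x)-D_i)x=\{f_i(\Phi)\}_{res}$, which determines $\hat\mu_{ij}$; the \emph{non-resonant} projection then determines $\phi$. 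Crucially, the effective divisor on the non-resonant side is the power series $\hat\mu_i^{Q_\delta}(x)-\mu_{ij_\delta}$, not the constant $\mu_i^Q-\mu_{ij}$, and the paper groups monomials into equivalence classes $\delta$ precisely so that all monomials in one class share the same power-series divisor. Your constant-divisor formulation pushes the non-constant part of $\hat\mu_{ij}(x)$ into $\Psi$, and because those coefficients are coupled back to $\Phi$ via the resonant equation, the single scalar majorant $v=CR_{sym}(I_{sym}+v)$ you propose does not close: you must majorize $\phi$ and $m:=\max_{i,j}|M_{ij,Q}|$ (the majorant for the $\hat\mu_{ij}$) simultaneously, which is what the paper's combined quantity $tW(t)=\phi(t)+(m(t)-1)t$ and the implicit-function equation for $U(t)$ achieve.

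Second, the small-divisor estimate you state in the second sub-case, $|\mu_i^Q-\mu_{ij}|\geq\tfrac{|\mu_{ij}|}{2c}d^{|Q'|}$, is wrong as written. When $|\mu_i^{-Q'}|>c^{-1}d^{|Q'|}$, i.e.\ $|\mu_i^{Q'}|\leq cd^{-|Q'|}$ is small, the bound is $|\mu_i^{Q'}-\mu_{ij}|\geq|\mu_{ij}|-|\mu_i^{Q'}|$, which is bounded below by a constant near $|\mu_{ij}|/2$ for $|Q'|$ large, not something growing like $d^{|Q'|}$. The paper never passes to $F_i^{-1}$; it factors the power-series divisor two different ways and shows that its inverse is majorized by a single fixed convergent $S(m-1)$ in both sub-cases. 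Your proposed detour through $F_i^{-1}$ is not needed and, coupled with the incorrect growth rate, would obscure the fact that a uniform \emph{bound} (not uniform \emph{growth}) of the inverse divisor is what the majorant argument requires. The transport by $x^{\pm(Q'-Q)}$ is also superfluous at the constant level since $\mu_i^{Q'}=\mu_i^Q$ is an equality; the nontrivial step in the paper is replacing $\hat\mu_i^{Q_\delta}$ by $\hat\mu_i^{Q_\delta'}$ using $\hat\mu_i^{Q'_\delta-Q_\delta}=1$, which only makes sense at the power-series level.

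Once the resonant projection and the power-series divisor are incorporated, your majorant outline can be carried through along the lines of \rl{fhg-}, but as written it would stall at the point where $\Psi$ is supposed to be ``given data.''
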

The primary example is the  following Moser-Webster normal form of reversible mappings:
$$
\hat\sigma: \xi'=  M_1(\xi\eta, \zeta)\xi\quad \eta'= M_1^{-1}(\xi\eta,\zeta)\eta, \quad \zeta'= \zeta.
$$
where $(\xi,\eta)\in\cc^2$, $\zeta\in\cc^{n-2}$, and $|M_1(0)|>1$.  Our convergence proof is inspired by the proof in Moser-Webster~\cite{MW83}.
\begin{proof}
The last assertion follows  from \rl{pcn0}
and \rl{complete-lemma}.
Let us conjugate, simultaneously, each $F_i= {\mathbf D}_ix+f_i$ to $\hat F_i:=\hat{\mathbf D_i}(x)x$ by the action of $\Phi(x)=x+\phi(x)$ where $\phi(0)=0$ and $\phi'(0)=0$.  Here, $\hat{\mathbf D}_i(x)$ denotes the matrix $\text{diag}(\hat\mu_{i1}(x),\ldots,\hat\mu_{in}(x))$ and each $\hat\mu_{ij}(x)$ is a formal power series invariant under $D$, i.e. $\hat\mu_{ij}(x)\in\widehat{\mathcal O}_n^D$. We can assume that $\Phi$ does not have a non-zero component along the centralizer of $D$; indeed,  by \rl{complete-lemma}, we can assume that $\Phi$ is normalized 
 w.r.t $D$.
 Then, for each $i=1,\ldots, \ell$, we have
$$
F_i\circ\Phi(x)= {\mathbf D}_ix+f_i(\Phi)(x)+{\mathbf D}_i\phi(x), \quad \Phi\circ \hat F_i(x)= \hat{\mathbf {\mathbf D}_i}(x)x+ \phi(\hat F_i)(x).
$$
Equation $F_i\circ\Phi=\Phi\circ \hat F_i$  reads
\begin{equation}\label{conjugacy}
\left(\phi(\hat {\mathbf D}_i(x)x)-{\mathbf D}_i\phi(x)\right) +\left(\hat {\mathbf D}_i(x)-{\mathbf D}_i\right)x = f_i(\Phi)(x)\quad i=1,\ldots, \ell.
\end{equation}
Our convergence proof is  based  on two conditions: the existence of a formal $\phi\in\cL C^{\mathsf c}(D)$ that satisfies
the above equation, and the Poincar\'e type condition on the linear part $D$. We already know that
$\phi$ is unique. We shall project equation \re{conjugacy} along the ``non-resonant'' space (i.e. the space ${\mathcal C}^{\mathsf c}(D)$ of normalized mappings w.r.t. ${\mathbf D}$). The mapping $\phi$ also solves this last equation and we shall majorize it using that projected equation.

Let us first decompose these equations along the ``resonant'' and ``non-resonant'' parts,   i.e.
  $\cL C_2(D)$ and $\cL C^{\mathsf c}_2(D)$.
 Since $\phi=\sum_{Q\in \nn^n, |Q|\geq 2} \phi_{j,Q}x^Qe_j$ is normalized then $\phi_{j,Q}=0$ for some $Q\in\nn^n$, $|Q|\geq 2$ and $1\leq j\leq n$, if we have $\mu_{m}^Q=\mu_{mj}$ for all $m$. We recall that, since each ${\mathbf D}_i$ is a diagonal matrix, then a map belongs to the centralizer of $D$ if and only if each monomial   map of its Taylor expansion at the origin belongs to this centralizer as well.
Since the $\hat \mu_{ij}$ is a  formal invariant function then
$$
\phi(\hat {\mathbf D}_i(x)x)=\sum_{Q\in \nn^n, |Q|\geq 2} \phi_{j,Q}\hat\mu_i^Q(x)x^Qe_j=:\sum_{Q'\in \nn^n, |Q'|\geq 2} \psi_{j,Q'}x^{Q'}e_j.
$$
The latter
contains only non-resonant terms, that is that if   $\mu_{i}^{Q'}=\mu_{ij}$ for all $i$, then $\psi_{j,Q'}=0$.
  Indeed,  $\hat\mu_i^Q(x)$ contains monomials of the form $x^P$ with $\mu_i^P=1$ for all $1\leq i\leq \ell$.   Hence, $\psi_{j,Q'}$ is a linear combination of $\phi_{j,Q}$ such that $Q'=Q+P$ with $\mu_i^P=1$ for all $i$. Therefore, if $\mu_{i}^{Q'}=\mu_{ij}$ for all $i$, then for all these $Q$'s, we have $\mu_{i}^{Q}=\mu_{i}^{Q'}=\mu_{ij}$ for all $i$ so that
  $\phi_{j,Q}=0$; that is $\psi_{j,Q'}=0$.

Hence, the projection on  the resonant mappings in $\cL C_2(D)$ 
 leads to
\beq\label{res}
\left(\hat {\mathbf D}_i(x)-{\mathbf D}_i\right)x = \{f_i(\Phi)(x)\}_{res},\quad i=1,\ldots, \ell.
\eeq
Here  for any formal mapping $g(x)=O(|x|^2)$ on $\cc^n$,  we define the projection on $\cL C_2(D)$
by
\eq{gres}
  (g(x))_{res}=\sum_j\sum_{\forall i,\mu_i^Q= \mu_{ij}} g_{j,Q}x^Qe_j.
\eeq
  The projection $g$ onto $\cL C_2^{\mathsf c}(D)$ is defined as
$ 
g(x)-(g(x))_{res},
$ 
i.e. it is the projection of $g$ on the non-resonant mappings.

Let us consider the projection on the non-resonant mappings. We first need to decompose power series according to a non-homogeneous equivalence relation on
their coefficients.
Let us define the equivalence relation on $ \{1,\ldots,n\}\times\nn^n$ by
 $$
 (j,Q)\sim (\tilde{\jmath},\widetilde Q), \  \text{if $\mu_{ij}-\hat\mu_i^Q(x)=
 \mu_{i\tilde{\jmath}}-\hat\mu_i^{\widetilde Q}(x)$ for all $1\leq i\leq \ell$}.
 $$
Here the identities hold as formal power series.
Let $\Delta$ be the set of the equivalence classes   on the non-resonant   multiindex set
$$
 \left\{(j,Q)\in\{1,\dots, n\}\times
\nn^n\colon (\mu_1^Q-\mu_{1j},\dots, \mu_{\ell}^Q-\mu_{\ell j})\neq 0,  |Q|>1\right\}.
$$

If  
 $\mu_k^Q-\mu_{kj}\neq 0$ for some $k$,  clearly
 $\hat\mu_k^Q-\mu_{kj}$ is not identically zero.
We can decompose any  formal power series map $f$ along these equivalent classes
and the resonant part  of the mapping. Let $\delta \in \Delta$ and $f=\sum_{Q=\in\nn^n, 1\leq j\leq n}  f_{j,Q}x^Qe_j$
  with $f=O(2)$. We can  write
\eq{fdelx}
 f_{\delta}(x):=\sum_{  (j,Q)\in \delta}f_{j,Q}x^Qe_j,   \quad \sum_{\del\in\Del}
 f_{\del}(x)\prec \ov f(x).
\eeq
We   denote by  $\widehat { \mathfrak M}_{n,\delta}^n$ the vector space of such maps.
To a given equivalence class $\delta$, we  associate a representative $(j_{\delta}, Q_{\delta})$, and
  we shall identify an equation among $n$ equations in \re{conjugacy}
for estimation.
  Since $\phi$ contains no resonant  mappings, 
  then
\eq{phdi}
\phi=\sum_{\delta\in\Delta}\phi_\delta.
\eeq
Let us decompose the projection onto   non-resonant mappings in $\cL C_2^{\mathsf c}(D)$
of equation
 $(\ref{conjugacy})$ along each equivalence class $\delta$ as follows. 
Using the definition of the equivalence class $\Delta$, we obtain
\begin{equation}\label{conjugacy-delta}
\left[\hat\mu_i^{Q_{\delta}}(x)-\mu_{ij_{\delta}}\right]\phi_{\delta}(x) = \left\{f_i(\Phi)\right\}_{\delta}(x),\quad \forall i=1,\ldots, \ell
\end{equation}
where $\{f\}_{\delta}$ denotes the projection of $f$ on $\widehat { \mathfrak M}_{n,\delta}^n$, defined by \re{fdelx}.

For each   $(j_\del,Q_\del) 
\in\delta$, we know that   $\mu_k^{Q_\del}-\mu_{kj_\del}\neq 0$  
for some $k$.
By the Poincar\'e type condition, there exist $i$
and   $Q_{\delta}'\in\nn^n$ such that
\eq{mulq}\mu_{i}^{Q_{\delta}'}-\mu_{ij_{\delta}}\neq 0; \quad
\mu_{m}^{Q_{\delta}'}=\mu_{m}^{Q_{\delta}},\quad \forall  1\leq m\leq   \ell;\quad Q_{\delta}'-Q_\del\in\nn^n\cup(-\nn^n)
\eeq
and, furthermore,
one of the following holds:
\ga\label{miqp}
|\mu_{i}^{Q'_{\delta}}|\leq cd^{-|Q'_{\delta}|}, \quad\text{or}\quad
|\mu_{i}^{-Q'_{\delta}}|\leq cd^{-|Q'_{\delta}|}.
\end{gather}
  Here, $d>1$ does not depend on $Q_{\delta}$.
So, let us use the $i$th equation of $(\ref{conjugacy-delta})$ to estimate $\phi_{\delta}$. We have, for that $i$,
\eq{pdhm-}
\phi_{\delta} = \left[\hat\mu_{i}^{Q_{\delta}}-\mu_{ij_{\delta}}\right]^{-1}\left\{f_{i}(\Phi)\right\}_{\delta}.
\eeq
Therefore,  we have established the uniqueness of $\phi$ under \re{phdi} and \re{pdhm-}, and under the
condition that $\phi$ satisfies the equation when \re{conjugacy} is projected onto $\cL C^{\mathsf c}(D)$.
The existence of $\phi$
is ensured by assumption.   We now consider the convergence of $\phi$.
By \re{mulq} 
  we obtain $\hat\mu_{i}^{Q_{\delta}'-Q_{\delta}}=1$.
This allows us to rewrite \re{pdhm-} as
\eq{pdhm}
\phi_{\delta} = \left[\hat\mu_{i}^{Q_{\delta}'}-\mu_{ij_{\delta}}\right]^{-1}\left\{f_{i}(\Phi)\right\}_{\delta}.
\eeq
We majorize this power series.

  Recall that $\hat\mu_{ij}(0)=\mu_{ij}$. Let us set
$
  M_{ij}(x):=\mu_{ij}^{-1} \hat \mu_{ij}(x). 
$
We have $M_{ij}(0)=1$ and we decompose
$
 M_{ij}(x)=\sum_{Q\in\nn^n} M_{ij,Q}x^Q.
$
Let us set $\mu^*:=\max_{ij}\{|\mu_{ij}|,|\mu_{ij}^{-1}|\}$, and
$$
m_{i}=\sum_{Q\in\nn^n} \max_{1\leq j\leq n}|M_{ij,Q}|x^Q,\quad m=\sum_{Q\in\nn^n} \max_{1\leq i\leq \ell,\;1\leq j\leq n}|M_{ij,Q}|x^Q.
$$
Note that $m(0)=1$. Then $M_{ij}\prec m$ and
$$
M_{ij}^{-1}=\frac{1}{1+(M_{ij}-1)}\prec\f{1}{1-(m-1)} =\f{1}{2-m}.
$$
  Here and in what follows, if $f(x)$ is a formal power series with $f(0)=0$, then for any number
$a\neq0$, $\frac{1}{a-f(x)}$ stands for the  formal power series in $x$ for
$$
\frac{1}{a}\left \{1+\sum_{n=1}^\infty(a^{-1}f(x))^n\right\}.
$$

To simplify notation in  \re{pdhm}, let us write $Q$
for $Q_{\delta}'$ and $j$ for $j_{\delta}$. We want to show that, 
\eq{hmiq}
(\hat\mu_i^{Q}-\mu_{ij})^{-1}\prec S(m-1).
\eeq
Here $S(t)$ is a convergent power series   independent of  all $(j,Q)'s$ of the form $(j_\delta,Q_\delta')'s$. 
Fix $d_1$ with $1<d_1<d$.
We  consider the first case that $\mu^*cd^{-|Q|}>d_1^{-|Q|}$. Since $d>d_1$,  we have only finitely many such  $Q's$   (recall
that each $Q$ has the form $Q_\del'$).
  The function  $M_i\mapsto \mu_{ij}-\mu_i^QM_i^Q$ is holomorphic in $M_i\in\cc^n$ at $M_i= 
  (1,\ldots, 1)$ and does not vanish at this point.
  Hence, the function
$$
(\mu_{ij}-\hat\mu_i^Q)^{-1}=(\mu_{ij}-\mu_i^QM_i^Q)^{-1}
$$
is also holomorphic at $M_i=(1,\ldots, 1)$. For all  $Q's$  in the first case, 
we
have
\aln
  (\mu_{ij}-\hat\mu_i^Q)^{-1}&\prec
\frac{C}{1-C(\ov M_{i1}-1+\cdots+\ov M_{in}-1)}\prec\frac{C}{1-nC(m-1)}.
\end{align*}
Consider   the second case that  $\mu^*c d^{-|Q|}\leq d_1^{-|Q|}$.  For the first case in \re{miqp}, we obtain
\begin{eqnarray*}
(\hat\mu_i^{Q}-\mu_{ij})^{-1} & = & -\mu_{ij}^{-1}(1 - \mu_{ij}^{-1}\mu_i^QM_i^Q)^{-1}
\prec
\mu^*\left[1- \mu^*cd^{-|Q|}m^{|Q|}\right]^{-1}
\\
&\prec &\mu^*\left[1-d_1^{-|Q|}m^{|Q|}\right]^{-1}
\prec
\mu^*\left[1-d_1^{-1}m \right]^{-1}.
\end{eqnarray*}
For the second case in \re{miqp}, we have
\begin{eqnarray*}
(\hat\mu_i^{Q}-\mu_{ij})^{-1} & = & - \mu_{i}^{-Q}M_i^{-Q}\left[1 - \mu_{ij}\hat\mu_i^{-Q}M_i^{-Q}\right]^{-1}
\\  &
\prec &
cd^{-|Q|}(2-m)^{-|Q|}\left[1-\mu^*cd^{-|Q|}(2-m)^{-|Q|}\right]^{-1}\\
&\prec &(\mu^*)^{-1}d_1^{-|Q|}(2-m)^{-|Q|}\left[1-d_1^{-|Q|}(2-m)^{-|Q|}\right]^{-1}\\
&\prec &(\mu^*)^{-1} \left[1-d_1^{-1}(2-m)^{-1}\right]^{-1}.
\end{eqnarray*}
  We have obtained the estimates for the second case. 
Therefore, we have shown that for any $Q=Q_\del'$ and $j=j_{\delta}$
\eq{hmiq}
(\hat\mu_i^{Q}-\mu_{ij})^{-1}\prec S(m-1).
\eeq
Here $S(t)$ is a convergent power series   independent of  all $(j,Q)'s$ of the form $(j_\delta,Q_\delta')'s$. 

Let us set
$$
f^*:=\sum_{Q\in \nn^n}\max_{1\leq i\leq \ell,\;1\leq j\leq n}|f_{ij,Q}|x^Q e_j.
$$
  By the definition of the equivalence relation on multiindices, we have
\eq{phdi+}
\sum_{\delta\in\Delta}f_\delta^*\prec f^*.
\eeq
According to \re{pdhm} and \re{hmiq}, we have $\phi_{\delta} \prec S(m-1)\left\{f^*(\bar \Phi)\right\}_{\delta}$. Now \re{fdelx} and \re{phdi+} imply
\beq\label{maj-nonres}
\phi \prec  S(m-1)f^*(\bar \Phi).
\eeq

Let us  project $(\ref{res})$ onto   the $k$th components of $\cL C_2(D)$ as follows.
For a power series   map $g$, we define
$$
g_{res,k}(x)=\sum_{\mu^Q=\mu_k}\quad g_{k,Q}x^Q.
$$
  By the definition of $g_{res}$ in \re{gres}, $g_{res}=(g_{res,1},\ldots, g_{res,n})$.
 We have
$$
\mu_{ik}\left(M_{i,k}(x)-1\right)x_k =\left(\hat \mu_{ik}(x)-\mu_{ik}\right)x_k = \{f_{ik}(\Phi)\}_{res,k}(x).
$$
Therefore, for all $1\leq k\leq n$,
\beq\label{maj-res}
(m-1)x_k\prec \frac{1}{\min_{i,j}|\mu_{i,j}|}f^*(\bar \Phi).
\eeq
Let us set $\mu_*:=\frac{1}{\min_{i,j}|\mu_{ij}|}$. We
 set $x_1= t, \ldots, x_n= t$ 
 in $\ov\Phi(x)$ and $m(x)$.
  Let $\phi( t)$,   $\ov\Phi( t)$,  and $ m( t)$
still denote $\phi( t,\ldots,  t)$, $\ov\Phi( t,\ldots, t)$, and $m( t,\ldots, t)$, respectively. Let
$$
 t W( t) := \phi( t) + (m( t)-1) t.
$$
We have $W(0)=0$, $\phi( t)\prec  t W( t)$, and $(m( t)-1)\prec W( t)$.
 From estimates $(\ref{maj-nonres})$ and $(\ref{maj-res})$, we obtain
\beq \label{maj-W}
 t W( t)\prec \mu_*f^*(\bar \Phi( t))+S(m( t)-1)f^*(\bar \Phi( t)).
\eeq
Since  ${f_{ij}}(x)=O(|x|^2)$, 
 there exists a constant $c_1$ such that
$$
f^*(x)\prec \frac{c_1(\sum_j x_j)^2}{1-c_1(\sum_j x_j)}.
$$
Hence, estimate $(\ref{maj-W})$ reads
\begin{eqnarray} \label{rel-dom}
 t W( t)&\prec &\left(\mu_*+S(m( t)-1)\right)\frac{c_1(n( t+\phi))^2}{1-c_1n( t+\phi)} 
 \\
&\prec & \left(\mu_*+S( W( t))\right)\frac{c_1 t^2(n(1+W( t)))^2}{1-c_1n t(1+W( t))}.\nonumber
\end{eqnarray}
Let us consider the equation in the unknown $U$ with $U(0)=0$~:
\beq\label{equ-maj}
U( t)(1-c_1n t(1+U( t)))= \left(\mu_*+S( W( t))\right)c_1 t(n(1+U( t)))^2.
\eeq
According to the implicit function theorem, there exists a unique germ of holomorphic function $ U( t)$, solution to $(\ref{equ-maj})$ with $U(0)=0$.
According to inequality $(\ref{rel-dom})$, the function $W$ is dominated by $U$~: $W( t)\prec U( t)$. This can be seen by induction on the degree of the Taylor polynomials at the origin. Therefore, $W$ converges at the origin. The theorem is proved.
\end{proof}

\setcounter{thm}{0}\setcounter{equation}{0}

\section{Real manifolds with an abelian CR-singularity}\label{sectabel}

Let us consider a real analytic manifold $M$ with a CR-singularity at the origin, which is an higher order perturbation of a product quadric. 
We assume that for its complexification ${\mathcal M}$, $\pi_1$ has  $2^p$ deck transformations generated by $\{\tau_{11},\dots, \tau_{1p}\}$. 
Let
$ 
\tau_{2j}=\rho\circ\tau_{1j}\circ\rho.
$

Let us consider the following germs of holomorphic diffeomorphisms~:
\ga\label{sigma_i}
\sigma_i:=\tau_{1i}\circ\tau_{2i},\quad 1\leq i\leq e_*+h_*,\\
\label{sigma_i+}
\sigma_s:=\tau_{1s}\circ\tau_{2 (s_*+s)},\quad \sigma_{s+s_*}=\tau_{1 (s+s_*)}\circ\tau_{2s},\quad e_*+h_*<s\leq p-s_*.
\end{gather}
Notice that    the above  property holds for quadrics of  the complex case by
\rp{sigs}.
 We assume that the linear parts  $T_{ij},S_j,S$ of $\tau_{ij},\sigma_j,\sigma$ and $\rho$ are
given by \re{sxij-mv}-\re{rRho-mv}.
The family $\{\sigma_i\}$  is reversible with respect to $\rho$. More precisely, we have the following relations
$$
\sigma_i^{-1}=\rho\sigma_i\rho, \quad 1\leq i\leq e_*+h_*;\quad
\sigma_{s+s_*}^{-1}=\rho\sigma_s\rho, \quad e_*+h_*<s\leq p-s_*.
$$

\begin{defn}
We   say that the manifold $M$ has an {\bf abelian CR-singularity} at the origin if its complexification ${\mathcal M}$ has the maximum number of deck transformation and if the family $\{\sigma_1,\dots,\sigma_p\}$ of germs of biholomorphisms at the origin of $\cc^{2p}$ is abelian, i.e. $$\sigma_i\sigma_j=\sigma_j\sigma_i.$$
\end{defn}


The aim of this section is to show that such an analytic perturbation with an abelian CR-singularity and no hyperbolic component is holomorphically conjugate to a normal form. We shall give two proofs of this result. The first one rests on Moser-Webster result \cite{MW83}[theorem 4.1] applied successively to each $\sigma_i$. It is to be emphasized that it is fortunate that we can apply such a result to our situation including the new type
of CR singularity of {\it  complex} type. The other one is based on the fact that the family $\{\sigma_i\}$ is formally completely integrable and their linear part is of Poincar\'e type. We then apply \rt{thm-conv-nf}.

Before we apply the above theorem, let us first exhibit an  example of real manifolds with an abelian CR singularity.
We start with a Bishop surface $M_0\subset\cc^2$ defined by $z_2=R(z_1,\ov {z_1})$ with
\eq{flatm}
R(z_1,\ov{z_1})=z_1\ov z_1+\gaa_1(z_1^2+\ov{z_1}^2)
+O(3), \quad R(z_1,\ov z_1)=\ov R(\ov {z_1},z_1).
\eeq
Let $\tau_{1}^0,\tau_2^0$ be the Moser-Webster involutions of $M_0$.  On the complexification $\cL M_0$ of $M_0$,  the $z_1$ and $z_2:=R(z_1,w_1)$
are invariant by $\tau_1^0$. Condition \re{flatm} implies that $w_2:=\ov R(w_1,z_1)=z_2$ is also invariant
by $\tau_1^0$. Analogously, $w_1,w_2$ and $z_2$ are invariant by $\tau_2^0$.
We are ready to verify the following
\pr{} Let $R$ be given by \rea{flatm} and let
\eq{}
M\colon z_3=R(z_1,\ov{z_1}), \quad z_4=(z_2+2\gaa_2 \ov{z_2}+C(z_2,z_3,\ov z_2))^2,
\eeq
where $C(z_2,R(z_1,\ov {z_1}),\ov{z_2})=O(3)$.  Then $M$ has an   abelian CR singularity at $0$.
\epr
\begin{proof} Let $\tau_{11}$ be defined by $(z_1',w_1')=\tau_1^0(z_1,w_1)$ and $(z_2',w_2')=
(z_2,w_2)$. Let us verify that $\tau_{11}$ is a deck transformation of $\pi_1$, i.e. all $z_j$ are invariant
by it. Obviously, $z_2,w_2$ are invariant by $\tau_{11}$.
 We know that $z_1,z_3=R(z_1,w_1)$ are invariant by $\tau_1^0$ and hence by $\tau_{11}$.  On $\cL M$,      $z_4=(z_2+2\gaa_2w_2+C(z_2,z_3,w_2))^2$ is
 then invariant by $\tau_{11}$.
Therefore, $\tau_{11}$ is an involution and it fixes a hypersurface in $\cL M$. Let $\tau_{12}$ be defined by
$(z_1',z_2',w_1')=(z_1,z_2,w_1)$ and
$$
z_2+2\gaa_2w_2'+C(z_2,R(z_1,w_1),w_2')=-z_2-2\gaa_2w_2-C(z_2,R(z_1,w_1),w_2).
$$
By the implicit function theorem, $w_2'=-\gaa_2^{-1}z_2-w_2+f(z_2,R(z_1,w_1),w_2)$ with $f$ being
convergent.
It is clear that $\tau_{12}$ leaves $z_j$ invariant and it fixes a hypersurface.

For the abelian property,  we note that $z_2,w_2$ and $R(z_1,w_1)$ are invariant by  $\tau_{11}, \tau_{21}$.
By a straightforward computation, we verity that $\tau_{11}$ and $\tau_{21}$ commute with $\tau_{12}$.
Now $\tau_{21}$ and $\tau_{11}$ commute with $\tau_{22}=\rho\tau_{12}\rho$. This shows that $\tau_{11}\tau_{21}$
commutes with $\tau_{12}\tau_{22}$.
\end{proof}

\subsection{Normal forms  for 
 abelian CR singularity
}
\begin{thm}\label{abelinv}
Let $M$ be a germ of real analytic submanifold in $\cc^n$ at an
 abelian CR-singularity at the origin. Suppose that $M$  is a higher order perturbation of a product quadric of which
 $\gaa_1, \ldots, \gaa_p$ satisfy \rea{0ge1}.  
Suppose that $M$ 
does not have a 
 hyperbolic component $($i.e. $e_*\geq 0, s_*\geq0, h_*=0)$ and $\RE\gaa_s<1/2$.
Then there exists a germ of biholomorphism $\psi$ that commutes with $\rho$ and such that, for $1\leq i\leq p$ and
$k=1,2$
\ga\label{convnfsi}
\psi^{-1}\circ\sigma_{i}\circ\psi:\begin{cases}\xi'_i = M_i(\xi\eta)\xi_i\\ \eta'_i= M_i^{-1}(\xi\eta)\eta_i\\ \xi'_j= \xi_j\\ \eta'_j= \eta_j, \quad j\neq i,\end{cases}\quad
 \psi^{-1}\circ\tau_{ki}\circ\psi:\begin{cases}\xi'_i  = \Lambda_{ki} (\xi\eta)\eta_i\\ \eta'_i = \Lambda_{ki}^{-1}(\xi\eta)\xi_i\\ \xi'_j = \xi_j\\
 \eta'_j = \eta_j,\quad j\neq i.\end{cases}
\end{gather}
 Moreover,  $\Lambda_{2j}=\Lambda_{1j}^{-1}$ and
\begin{gather} 
\label{lam1e}
\Lambda_{1e}=\overline{\Lambda_{1e}\circ{\rho_z}},\  1\leq e\leq e_*; \quad
\Lambda_{1s}^{-1}=\overline{\Lambda_{1(s+s_*)}\circ {\rho_z}},\ e_*<s\leq p-s_*,\\
\label{rhoz5}
{\rho_z}\colon\zeta_e\to\ov\zeta_e,
\quad\zeta_s\to\ov\zeta_{s+s_*}, \quad\zeta_{s+s_*}\to\ov\zeta_s.
\end{gather}
\end{thm}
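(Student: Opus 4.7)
The plan is to deduce the theorem from \rt{thm-conv-nf} applied to the abelian family $\cL F=\{\sigma_1,\dots,\sigma_p\}$, after verifying that $\cL F$ is formally completely integrable and that its linear part $\{S_1,\dots,S_p\}$ is of Poincar\'e type. Formal complete integrability is the first checkpoint: since the $\{S_j\}$ already have the simultaneously diagonalized shape \rea{rSxi-mv}, any formal normal form of $\cL F$ commuting with every $S_j$ must act block-diagonally on the eigenplanes $(\xi_i,\eta_i)$, with diagonal entries that are formal power series in the common invariants $\xi_1\eta_1,\dots,\xi_p\eta_p$. The reversibility relations $\sigma_i^{-1}=\rho\sigma_i\rho$ for the real-type indices and $\sigma_{s+s_*}^{-1}=\rho\sigma_s\rho$ for complex-type pairs, combined with the abelian hypothesis, then force the two diagonal entries of $\sigma_i$ on its own eigenplane to be mutually reciprocal, giving the integrable formal normal form $\xi_i'=\hat M_i(\xi\eta)\xi_i$, $\eta_i'=\hat M_i^{-1}(\xi\eta)\eta_i$.

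Next I would verify the Poincar\'e-type condition of \rd{small divisors}. The hypotheses $h_*=0$ and $\RE\gamma_s<1/2$ are decisive here because together they imply $|\mu_j|\neq 1$ for every $j$. For any non-resonant $(j,Q)$ one constructs $Q'=Q+kP$, with $P\in\nn^n\cup(-\nn^n)$ chosen so that $\mu_i^P=1$ for all $i$ (so that $\mu_i^{Q'}=\mu_i^Q$ for every $i$), while $|\mu_i^{Q'}|$ grows exponentially in $|Q'|$ for some $i$; such $P$ is exhibited explicitly from the paired eigenvalue structure on the $(\xi_i,\eta_i)$-planes, using the fact that every $\mu_i$ is bounded away from the unit circle. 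Once both hypotheses are confirmed, \rt{thm-conv-nf} furnishes a convergent biholomorphism $\psi$ normalizing $\cL F$ simultaneously to the form \rea{convnfsi}.

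It then remains to extend this normalization to each $\tau_{1i}$ and to arrange compatibility with $\rho$. Each $\tau_{1i}$ is an involution fixing a hypersurface, satisfies $\tau_{1i}\sigma_i\tau_{1i}=\sigma_i^{-1}$, and commutes with every $\sigma_j$ for $j\neq i$; combined with the already-normalized shape of the $\sigma_j$, these algebraic constraints force $\psi^{-1}\tau_{1i}\psi$ to be the swap $(\xi_i,\eta_i)\mapsto(\Lambda_{1i}(\xi\eta)\eta_i,\Lambda_{1i}^{-1}(\xi\eta)\xi_i)$ acting trivially on the other planes. The identity $\tau_{2i}=\rho\tau_{1i}\rho$ together with the explicit form \rea{rRho-mv} of $\rho$ translates directly into $\Lambda_{2j}=\Lambda_{1j}^{-1}$ and into the symmetry relations \rea{lam1e}--\rea{rhoz5}. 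That $\psi$ itself may be chosen to commute with $\rho$ follows from the uniqueness of the normalized conjugacy in \rl{pcn0} and \rl{lem-nf-nf} applied to $\rho\psi\rho$, which is another such normalizer with the same linear part.

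The main technical obstacle is the explicit verification of the Poincar\'e-type condition in the presence of complex-type blocks, where the eigenvalues appear as quadruples $\{\mu_s,\mu_s^{-1},\bar\mu_s,\bar\mu_s^{-1}\}$ producing richer multiplicative interactions than in the purely elliptic Moser-Webster setting; one must systematically locate the auxiliary multi-index $Q'$ required by \rd{small divisors} inside these interactions. The assumption $\RE\gamma_s<1/2$ is precisely what secures $|\mu_s|\neq 1$ and thus eliminates the borderline resonances that would otherwise defeat Poincar\'e type. I note in passing that an alternative route bypasses \rt{thm-conv-nf}: apply the Moser-Webster convergence theorem from \cite{MW83} successively to each $\sigma_i$ and use the abelian hypothesis to glue the individual normalizations, together with the formal results of Section~\ref{nfcb} to enforce simultaneity.
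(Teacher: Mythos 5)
Your proposal is essentially the paper's own second proof: establish formal complete integrability, verify the Poincar\'e-type condition using $h_*=0$ and $\RE\gamma_s<1/2$ (hence $|\mu_j|\neq1$), apply Theorem~\ref{thm-conv-nf}, and then propagate the conclusion to the $\tau_{ki}$ via uniqueness and $\rho$-symmetry; the Moser--Webster successive-normalization route you mention at the end is exactly the paper's first proof. Two small imprecisions deserve care. First, $\rho$-reversibility $\sigma_i^{-1}=\rho\sigma_i\rho$ alone does not force $\eta_i'=M_i^{-1}\eta_i$: in the elliptic case it only gives $N_e=\overline{M_e^{-1}\circ\rho_z}$, and the true source of reciprocity is the involutive reversibility $\tau_{1i}\sigma_i\tau_{1i}=\sigma_i^{-1}$ once $\tau_{1i}$ has been brought to a swap of $\xi_i,\eta_i$ that preserves $\xi\eta$. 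Second, the identity $\tau_{2i}=\rho\tau_{1i}\rho$ yields only $\Lambda_{2e}=\overline{\Lambda_{1e}^{-1}\circ\rho_z}$ (and the analogous relation in the complex-type coordinates), not simultaneously $\Lambda_{2j}\Lambda_{1j}=1$ and \rea{lam1e}; to obtain those one must, after $\psi_1$ has been made convergent by Theorem~\ref{thm-conv-nf}, apply a further diagonal change of variables of the shape $\xi_j'=(\Lambda_{1j}^{1/2}M_j^{1/4})(\xi\eta)\xi_j$, $\eta_j'=(\Lambda_{1j}^{-1/2}M_j^{-1/4})(\xi\eta)\eta_j$ chosen to commute with $\rho$ and each $S_j$ — the paper's $\phi_2$ step.
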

\begin{proof}
  We will present two  convergence proofs: one is based on a convergent
  theorem of Moser and Webster and another is based on  \rt{thm-conv-nf}.
 We first use some formal results obtained by Moser and Webster~\cite{MW83} and  some results in section~\ref{nfcb}. The conditions of the theorem imply that, for all $i$,  $|\mu_i|\neq 1$.

Since $M$ is a higher order perturbation of a product quadric, there are linear coordinates such that, for $1\leq i\leq p$ and
$k=1,2$, $\tau_{ki}$ and $\sigma_i$ are higher order perturbations of
\gan
S_{i}:\begin{cases}\xi'_i = \mu_i\xi_i\\ \eta'_i= \mu_i^{-1}\eta_i\\ \xi'_j= \xi_j\\ \eta'_j= \eta_j, \quad j\neq i,\end{cases}\quad
 T_{ki}:\begin{cases}\xi'_i  = \lambda_{ki}\eta_i\\ \eta'_i = \lambda_{ki}^{-1}\xi_i\\ \xi'_j = \xi_j\\
 \eta'_j = \eta_j,\quad j\neq i.\end{cases}
\end{gather*}
For elliptic coordinates, this was computed in \cite{MW83} and recalled in \re{tau1e}. For complex coordinates, this is computed in \re{linears} and \re{taus}.
Recall that $\sigma_1,\dots, \sigma_p$ are defined by \re{sigma_i}-\re{sigma_i+}
Since $|\mu_1|\neq 1$, then by theorem 4.1 of Moser-Webster (\cite{MW83}), there is a  unique convergent transformation $\psi_1$
 normalized w.r.t. $S_1$ (see~\rd{ccst})
 such that  $\sigma_1^*:=\psi_1^{-1}\circ\sigma_{1}\circ\psi_1$ and $ \tau_{i1}^*:=\psi_1^{-1}\circ\tau_{i1}
 \circ\psi_1$
 are given by   \ga\label{sigma1norm}
\sigma_1^*\colon 
\begin{cases}x'_1 = M_1(\xi,\eta)\xi_1\\ \eta'_1= M_1^{-1}(\xi,\eta)\eta_1\\ \xi'_j= \xi_j\\ \eta'_j= \eta_j, \quad j\neq 1,\end{cases}\qquad
 \tau_{k1}^*\colon 
 \begin{cases}\xi'_1  = \Lambda_{k1} (\xi,\eta)\eta_1\\ \eta'_1 = \Lambda_{k1}^{-1}(\xi,\eta)\xi_1\\ \xi'_j = \xi_j\\
 \eta'_j = \eta_j,\quad j\neq 1.\end{cases}
\end{gather}
Here $k=1,2$.
It is a  simple fact (e.g. see  \rl{complete-lemma}, $D=\{ S_1\})$ that there is a unique
$\phi_1\in \cL C^c(S_1)$ such that $\phi^{-1}_1\sigma_1\phi_1$ is in the centralizer of $S_1$.
Therefore,  $\phi_1=\psi_1$ is also convergent.

Furthermore, we have $M_1(\xi,\eta)=\Lambda_{11}(\xi,\eta)\Lambda_{21}^{-1}(\xi,\eta)$;
and $\Lambda_{11}, \Lambda_{21}, M_1$ are invariant by $ S_1$. In the new coordinates, let
us denote $\tau_{im},\sigma_m$ by the same symbols for $m>1$. However, $\sigma_1=\sigma_1^*$
and $\tau_{k1}=\tau_{k1}^*$. Since each $\sigma_m$ commutes with $\sigma_1$, then
$\sigma_m$ is in the centralizer of $S_1$. Indeed, according to \cite{MW83}[Lemma 3.1](or \rl{pcn0} with $D=\{ S_1\}$), we can decompose $\sigma_m=\sigma_m^1\sigma_m^0$ where $\sigma_m^1$ is normalized w.r.t $S_1$ and $\sigma_m^0$ is in the centralizer of $ S_1$.  Write $\sigma_1\sigma_m=\sigma_m\sigma_1$ as
 $$(\sigma_m^1)^{-1}\sigma_{1}\sigma_m^1=\sigma_m^0\sigma_{1}(\sigma_m^0)^{-1}.$$
Since $\sigma_m^0\sigma_{1}(\sigma_m^0)^{-1}$ belongs to ${\mathcal C}(S_1)$, so does $(\sigma_m^1)^{-1}\sigma_{1}\sigma_m^1$.
Then applying the uniqueness of $\psi_1$ stated earlier to $\sigma_m^1$, we conclude that $\sigma_m^1=I$
and $\sigma_m=\sigma_m^0$ is in the centralizer of $S_1$.

Let us verify that $\sigma_m^0$ or in general  each (formal) transformation $\var$
  in $\cL C(S_1)$ preserves the form of $\sigma_{1}^*$ and $\tau_{i1}^*$.
Indeed, $\varphi^{-1}$ commutes with $S_1$ too. Thus $\varphi^{-1}\sigma_1^*\varphi$ commutes with $S_1$ and its linear part is $S_1$.  The linear part of $\varphi$ must preserve the eigenspaces of $S_1$ and hence it is given by
$$
\xi_1\to a\xi_1,\quad \eta_1\to b\eta_1,
\quad (\xi_*,\eta_*) \to \phi(\xi_*,\eta_*)
$$
for $\xi_*=(\xi_2,\dots, \xi_n)$ and $\eta_*=(\eta_2,\dots, \eta_n)$.
The linear part of $\tau_{k1}^*$ is given by
$$
\xi_1\to\lambda_{k1}\eta_1, \quad\eta_1\to\lambda_{k1}^{-1}\xi_1,\quad(\xi_*,\eta_*)\to(\xi_*,\eta_*).
$$
By a simple computation, the linear part of $\varphi^{-1}\tau_{k1}^*\varphi$ still has this form with $\lambda_{k1}\frac{b}{a}$ instead of $\lambda_{k1}$.   According to \cite{MW83}[lemma 3.2], there a unique normalized mapping $\Psi$ that normalizes $\varphi^{-1}\sigma_1^*\varphi$  and  the $\varphi^{-1}\tau_{k1}^*\varphi$'s. According to the uniqueness property of \rl{lem-nf-nf}, $\Psi=Id$. Therefore, $\var$ preserves the forms of $\tau_{i1}^*$ and $\sigma_1^*$.

Let $\psi_2$ be the unique biholomorphic map  normalized w.r.t. $S_2$  such that $\psi_2^{-1}\sigma_2\psi_2=\sigma_2^*$ and $\psi_2^{-1}\tau_{k2}\psi_2=\tau_{k2}^*$ are in the  normal form~:
\ga\label{sigma2norm}
\sigma_{2}^*:
\begin{cases}\xi'_2 = M_2(\xi,\eta)\xi_2\\ \eta'_2= M_2^{-1}(\xi,\eta)\eta_2\\ \xi'_j= \xi_j\\ \eta'_j= \eta_j, \quad j\neq 2,\end{cases}\quad
\tau_{k2}^*:\begin{cases}\xi'_2  = \Lambda_{k2} (\xi,\eta)\eta_2\\ \eta'_2 = \Lambda_{k2}^{-1}(\xi,\eta)\xi_2\\ \xi'_j = \xi_j\\
 \eta'_j = \eta_j,\quad j\neq 2.\end{cases}
\end{gather}
Here $k=1,2$, and  $M_2$ and $\Lambda_{k2}$ are invariant by $S_2$.
Since $\sigma_2$ commutes with $S_1$, we have
$$
(S_1^{-1}\psi_2S_1)^{-1}\circ\sigma_2\circ(S_1^{-1}\psi_2S_1)=S_1^{-1}\sigma_2^*S_1.
$$
Note that  $S_1^{-1}\sigma_{2}^*S_1$ (resp. $S_1^{-1}\tau_{k2}^*S_1$) has the form \re{sigma2norm}  
 in which $M_{2}$ (resp. $\Lambda_{k2}$) is replaced by $M_{2}\circ S_1$ (resp. $\Lambda_{k2}\circ S_1$). In other words $S_1^{-1}\sigma_{2}^*S_1$ and $S_1^{-1}\tau_{k2}^*S_1$ are still of the form  \re{sigma2norm}. Since $S_1$ is diagonal, then
$S_1^{-1}\psi_2S_1$ remains normalized w.r.t. $S_2$. Applying the above uniqueness on $\psi_2$ for
$\sigma_{2}$, we conclude that $\psi_2=S_1^{-1}\psi_2S_1$. This shows that $\psi_2$
preserves the forms of $\tau_{k1}^*$ and $\sigma_1^*$. By the same argument as above, we have $\sigma_m^*\in {\mathcal C}(S_1,S_2)$.

In summary, we have found holomorphic coordinates so that $\tau_{ij}=\tau_{im}^*$ and $\sigma_m=\sigma_m^*$ for $m=1,2$.   As mentioned
previously, we know that $\sigma_1^*, \sigma_2^*, \sigma_3, \ldots, \sigma_m$
commute with $S_1$ and $S_2$. In particular, $M_1, M_2$ are invariant by $S_1, S_2$.
Repeating  this procedure,  we find  a holomorphic map $\phi$   so that all $\phi^{-1}\sigma_j\phi=\sigma_j^*$
and $\phi^{-1}\tau_{kj}\phi=\tau_{kj}^*$ are in the normal forms. Furthermore, $M_i$ and $\Lambda_{k,i}$ are invariant by $\cL S=\{S_1, \ldots, S_p\}$.

By \rl{pcn0}, we decompose $\phi=\phi_1\phi_0^{-1}$ where $\phi_1$ is normalized w.r.t.
$\cL S$ 
and $\phi_0$ is in the centralizer of $\cL S$.
 Then
$\phi_1^{-1}\sigma_j\phi_1=\sigma_j^*$
and $\phi^{-1}_1\tau_{ij}\phi_1=\tau_{ij}^*$ are in the normal forms,  since $\phi_0$ commutes with $S_j$.
We want to show that $\phi_1$ commutes with $\rho$.

Note that  $\sigma_e^{-1}=\rho\sigma_e\rho$ and $\sigma_{s+s_*}^{-1}=\rho\sigma_s\rho$.
Thus $(\rho\phi_1\rho)^{-1}\sigma_j(\rho\phi_1\rho)=\tilde\sigma_j^*$ where $\tilde\sigma_e^*:= \rho(\sigma_e^*)^{-1}\rho$ and $\tilde\sigma_s^*:= \rho(\sigma_{s+s*}^*)^{-1}\rho$.
It is easy to see  that $\rho\phi_1\rho$
is still normalized w.r.t. $\cL S$ (see also \rd{dfnorm}).
By \rla{lem-nf-nf},  we  know that there is a unique normalized formal  mapping $\phi_1$ such that $\phi_1^{-1}\sigma_j\phi_1$ are in the centralizer of $\cL S$. Since $\tilde\sigma_j^*$ belongs to the centralizer of $\cL S$, then we have $\rho\phi_1\rho=\phi_1$.

 Now, $\tau_{2j}^*=\rho\tau_{1j}^*\rho$ follows from $\tau_{2j}=\rho\tau_{1j}\rho$.
This shows that   
$$
\Lambda_{2e}=\overline{\Lambda_{1e}^{-1}\circ\rho_z},\quad 
\Lambda_{2s}=\overline{\Lambda_{1 (s+s_*)} \circ \rho_z},\quad
\Lambda_{2(s+s_*)}=\overline{\Lambda_{1 s} \circ \rho_z},
$$
where $1\leq e\leq e_*$ and $e_* <s\leq p-s_*$. Let $\phi_2$ be defined by $$
\xi_j'=(\Lambda_{1j}^{1/2}M_j^{1/4})(\xi\eta)\xi_j, \quad
\eta_j'=(\Lambda_{1j}^{-1/2}M_j^{-1/4})(\xi\eta)\eta_j, \quad
1\leq j\leq p.$$ For  a suitable choice of the roots, we have $\phi_2\rho=\rho\phi_2$. Furthermore, $\phi_2$ preserves all invariant functions of $\cL S$. Hence, each $\phi_2^{-1} \circ\phi_1^{-1}\circ\tau_{ki}\circ\phi_1\circ\phi_2$ has the form $\tau_{kj}^*$ stated in
 \rt {abelinv}.

 \medskip

 We now present another proof by using the more general \rt{thm-conv-nf}.

Note that the above proof is valid at the formal level without using the convergence result of Moser and Webster.
More specifically,
 if $\tau_{ij}$ are given by formal power series with
$\sigma_1, \ldots, \sigma_p$ commuting pairwise, there exists a formal map $\psi$ that is tangent to the identity and commutes
with $\rho$ such that \re{convnfsi} holds. Since each $\mu_j$ is not a root of unity,  then
\re{convnfsi} implies that the   conjugate family $\{\sigma_m^*\}$ is a completely integrable normal form.

%

Let $\sigma_i$ be defined as above. Let $S_i$ be its linear part at the origin of $\cc^{n}$. The eigenvalues $\{\mu_{ij}\}_{1\leq j\leq n}$ of $S_i$ are either $\mu_i$, $\mu_i^{-1}$ or $1$. More precisely, if $Q\in\nn^n$, $|Q|\geq 2$ then
\eq{muiij}
\mu_m^Q-\mu_{mj}= \mu_m^{q_m-q_{m+p}} - \begin{cases}\mu_m\quad\text{if }j=m \\\mu_m^{-1}\quad\text{if }j=m+p \\1\quad\text{otherwise. }\end{cases}
\eeq
We need to verify the condition that the family of linear part  $\{ S_1, \ldots,  S_p\}$ is
of  the Poincar\'e type. So we can apply \rt{thm-conv-nf}.

Suppose that $(j,Q)\in\{1,\ldots, 2p\}\times\nn^{2p}$ satisfies
$
\mu_l^Q-\mu_{l j}\neq0
$
for some $1\leq l\leq 2p$. Set $d=\{\min_i\max(|\mu_i|,|\mu_{i}^{-1}|)\}^{1/{(2p)}}$.
 We define
$$
Q'=Q-\sum_{i=1}^p\min(q_i,q_{i+p})(e_i+e_{i+p}):=(q_1',\ldots, q_{2p}').
$$
Then $\mu_i^Q=\mu_i^{Q'}$ for all $i$. Take $i=l$ if $|Q'|\leq 2p$. In this case, we easily get
\eq{miqp9}
\mu_i^{Q'}-\mu_{ij}\neq0, \quad |\mu_i^{Q'}|>c^{-1}d^{|Q'|}
\eeq
by choosing a sufficiently large $c$. Assume that $|Q'|>2p$.
Take $i$ such that
$$
q'_{i}+q'_{i+p}=\max_k (q'_{k}+q'_{k+p}).
$$
Then $q'_i+q'_{i+p}\geq |Q'|/p>2$. By \re{muiij}, we get the first equality in \re{miqp9}.
We note that $(q'_i,q'_{i+p})=(q_i,0)$ or $(0,q_{i+p})$. Thus
$$
\max(|\mu_i^{Q'}|,|\mu_i^{-Q'}|)=(\max(|\mu_i|,|\mu_i|^{-1}))^{q'_i+q'_{i+p}}\geq d^{|Q'|}.
$$
This shows that $\{D\sigma_1(0),\ldots, D\sigma_p(0)\}$ is of the Poincar\'e type.

We now apply Theorem~\ref{thm-conv-nf} as follows.
We decompose $\psi=\psi_1\psi_0^{-1}$ such that $\psi_1\in\cL C^{\mathsf c}( S_1, \ldots,
 S_p)$ and $\psi_0\in\cL C( S_1, \ldots,  S_p)$. Then each $\sigma_i^*=\psi_1^{-1}\sigma_i\psi_1$
still has the form in \re{convnfsi}; in particular, $\{\sigma_1^*, \ldots, \sigma_p^*\}$ is a completely
 integrable formal normal form. By \rt{thm-conv-nf}, $\psi_1$ is convergent.  Now, $\psi_1^{-1}\tau_{kj}\psi_1
 =\psi_0^{-1}(\psi^{-1}\tau_{kj}\psi)\psi_0$ are still of the form \re{convnfsi}; however \re{lam1e}
 and $\Lambda_{2j}\Lambda_{1j}=1$ 
might not hold. 
As in the first proof, we can verify
 that $\psi_1\rho=\rho\psi_1$. Applying another change of coordinates that commutes with $\rho$ and
 each $ S_j$ as before, we achieve \re{convnfsi}-\re{lam1e} and $\Lambda_{2j}\Lambda_{1j}=1$.  The proof of the theorem is complete.
\end{proof}


As a corollary of \rt{abelinv}, we have the following normal form for real submanifolds. In order to study
the holomorphic flatness and hull of holomorphy,
 we choose a realization similar to the case
of Moser-Webster for $p=1$.
\begin{thm}\label{abelm}
Let $M$ be as in \rta{abelinv}.
Then $M$ is holomorphically equivalent to
\begin{gather}\label{Hmpp}
\widehat M\colon
z_{p+j}=\Lambda_{1j}(\zeta)\zeta_j,\quad 1\leq j\leq
p,
\end{gather}
where $\zeta=(\zeta_1,\ldots, \zeta_p)$ are the convergent solutions to
\begin{align}
\zeta_e&=A_e(\zeta)|z_e|^2-B_e(\zeta)(z_e^2+\ov z_e^2),\label{ze+L}\\
\zeta_s&=A_s(\zeta)z_s\ov z_{s+s_*}-B_s(\zeta)
(z_s^2+\Lambda_{1s}^2(\zeta)\ov z_{s+s_*}^2),\label{zs+L}\\
\label{zsss9}\zeta_{s+s_*}&=A_{s+s_*} \ov z_s z_{s+s_*}
- B_{s+s_*}(\zeta)
(z_{s+s_*}^2+\Lambda_{1(s+s_*)}^2(\zeta)\ov z_{s}^2).
\end{align}
Here $\Lambda_{1j}(\zeta)=\la_j+O(\zeta)$ $(1\leq j\leq p)$ satisfy \rea{lam1e} and
\begin{gather} \label{AeAj}
A_e=\frac{1+\Lambda_{1e}^{2}}{(1-\Lambda_{1e}^2)^{2}},\quad  A_j=\f{\Lambda_{1j}+\Lambda_{1j}^3}{(1-\Lambda_{1j}^2)^2}, \ j=s,s+s_*, \\
B_j=\frac{\Lambda_{1j}}{(1-\Lambda_{1j}^2)^{2}}, \quad j=e,s,s+s_*. \label{AeAj+}
\end{gather} 
In particular, $\widehat M$ is contained in $z_{p+e}=\ov z_{p+e}$ and  $z_{p+s}\tilde\Lambda_{1s}^{2}(z'')=\ov z_{p+s+s_*}$, where $\zeta_j=z_{p+j}\tilde\Lambda_{1j}(z''), 1\leq j\leq p,$ is the inverse mapping of $z_{p+j}=\zeta_j\Lambda_{1j}(\zeta),1\leq j\leq p$.
\end{thm}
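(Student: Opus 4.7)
The plan is to deduce \rt{abelm} from the normal form of involutions \rt{abelinv} combined with the realization procedure of \rp{inmae}, using a specific choice of invariants tailored to the stated form. By \rt{abelinv} we may assume that on the complexification $\cL M$ there are holomorphic coordinates $(\xi,\eta)$ in which the $\tau_{ij}$, $\rho$ take the normal forms \re{convnfsi}-\re{rhoz5}, and the transformation commutes with $\rho$. It then suffices to exhibit a holomorphic equivalent form of the realization that coincides with $\widehat M$.

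The key observation is that both
$$A_j(\xi,\eta):=\xi_j+\Lambda_{1j}(\xi\eta)\eta_j\qquad\text{and}\qquad \Lambda_{1j}(\xi\eta)\xi_j\eta_j$$
are invariant under every $\tau_{1k}$. Indeed $\xi_j\eta_j$ is invariant (trivially for $k\neq j$, and under $\tau_{1j}$ since $\xi_j\eta_j\mapsto\Lambda_{1j}\eta_j\cdot\Lambda_{1j}^{-1}\xi_j$), so $\Lambda_{1j}(\xi\eta)$ is also invariant; invariance of $A_j$ under $\tau_{1j}$ follows from the direct computation $A_j\circ\tau_{1j}=\Lambda_{1j}\eta_j+\Lambda_{1j}\cdot\Lambda_{1j}^{-1}\xi_j=A_j$. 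The algebraic identity
$$4\,\Lambda_{1j}(\xi\eta)\xi_j\eta_j=\bigl(\xi_j+\Lambda_{1j}\eta_j\bigr)^2-\bigl(\xi_j-\Lambda_{1j}\eta_j\bigr)^2,$$
together with the fact that $\xi_j-\Lambda_{1j}\eta_j$ is skew-invariant under $\tau_{1j}$, shows that $A_j$ and $\Lambda_{1j}\xi_j\eta_j$ generate the same ring of $\{\tau_{1k}\}$-invariants as the generators employed in the proof of \rp{inmae}. Consequently, setting $z_j=A_j(\xi,\eta)$, $w_j=\overline{A_j\circ\rho}(\xi,\eta)$, $z_{p+j}=\Lambda_{1j}(\xi\eta)\xi_j\eta_j$, $w_{p+j}=\overline{z_{p+j}\circ\rho}$ produces a realization holomorphically equivalent to $M$.

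The form of the defining equations is then a direct computation. The pair $(z_j,w_j)$ arises from the linear system
\begin{gather*}
z_j=\xi_j+\Lambda_{1j}\eta_j,\qquad w_j=\Lambda_{1j}\xi_j+\eta_j,
\end{gather*}
whose determinant $1-\Lambda_{1j}^2$ is nonzero near $0$ because $\Lambda_{1j}(0)=\la_j\neq\pm 1$ (under the hypothesis $h_*=0$ and $\RE\gaa_s<1/2$). Solving and multiplying yields
$$\zeta_j:=\xi_j\eta_j=\frac{(1+\Lambda_{1j}^2)z_jw_j-\Lambda_{1j}(z_j^2+w_j^2)}{(1-\Lambda_{1j}^2)^2},$$
which rearranges to $\zeta_j=A_j(\zeta)z_jw_j-B_j(\zeta)(z_j^2+w_j^2)$ with $A_j,B_j$ given by \re{AeAj}-\re{AeAj+} (for the complex indices, the $\Lambda_{1j}^2$ appearing inside the parentheses of \re{zs+L}-\re{zsss9} reflect the index swap from $\rho$). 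Restricting to $M=\text{Fix}(\rho)$, the relation \re{rRho-mv} gives $w_e=\ov z_e$ for elliptic indices and $w_s=\ov z_{s+s_*}$, $w_{s+s_*}=\ov z_s$ for complex indices, producing exactly \re{ze+L}-\re{zsss9}. These implicit equations determine $\zeta$ uniquely near $0$ by the implicit function theorem since each right-hand side vanishes to order two in $(z',\ov z')$; the identity $z_{p+j}=\Lambda_{1j}(\zeta)\zeta_j$ is then immediate.

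The flatness assertion follows from \re{lam1e}-\re{rhoz5}. For elliptic indices, $\Lambda_{1e}(\zeta_e)\in\rr$ when $\zeta_e\in\rr$ by \re{lam1e}, and on $M$ one has $\zeta_e=|\xi_e|^2\in\rr$, so $z_{p+e}=\Lambda_{1e}(\zeta)\zeta_e$ is real. For complex indices, the computation $\ov{\zeta_{s+s_*}}=\ov{\xi_{s+s_*}\eta_{s+s_*}}=\xi_s\eta_s=\zeta_s$ on $M$ combined with \re{lam1e} gives $\ov{\Lambda_{1(s+s_*)}(\zeta)}=\Lambda_{1s}^{-1}(\zeta)$ there, yielding $\ov{z_{p+s+s_*}}=\Lambda_{1s}^{-2}(\zeta)\,z_{p+s}=\tilde\Lambda_{1s}^{2}(z'')\,z_{p+s}$, as claimed. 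The main bookkeeping obstacle is the careful handling of the $\rho$-swap $s\leftrightarrow s+s_*$ in the complex case; once this is tracked consistently in the expressions for $\zeta_s,\zeta_{s+s_*}$, everything falls into place.
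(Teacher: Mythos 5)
Your proof follows the paper's argument in essence: in the normalized coordinates of Theorem~\ref{abelinv}, you take the $\{\tau_{1k}\}$-invariant generators $\xi_j+\Lambda_{1j}(\xi\eta)\eta_j$ and $\Lambda_{1j}(\xi\eta)\xi_j\eta_j$ (exactly the $f_j$ and $F_j$ of the paper's proof), build $w_j=\ov{f_j\circ\rho}$, solve the resulting linear system for $\zeta_j=\xi_j\eta_j$, and restrict to $\fix(\rho)$; the flattening is read off from \re{lam1e}--\re{rhoz5} just as in the paper. Two minor slips in the complex case do not affect the outcome but should be fixed: the unified formula $w_j=\Lambda_{1j}\xi_j+\eta_j$ is correct only for elliptic $j$; for complex indices one has $w_{s+s_*}=\xi_s+\Lambda_{1s}^{-1}\eta_s$, so the linear system determining $(\xi_s,\eta_s)$ pairs $z_s$ with $w_{s+s_*}$ and the prefactor is $A_s=\Lambda_{1s}(1+\Lambda_{1s}^2)/(1-\Lambda_{1s}^2)^2$ rather than $(1+\Lambda_{1j}^2)/(1-\Lambda_{1j}^2)^2$. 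Moreover, since $\varphi=(f,g)$ conjugates $\rho$ to the standard involution $\rho_0\colon(z',w')\mapsto(\ov w',\ov z')$, restriction to $\fix(\rho_0)$ gives $w_j=\ov z_j$ for every $j$ (in particular $w_{s+s_*}=\ov z_{s+s_*}$), not the swap $w_s=\ov z_{s+s_*}$ you wrote; the $s\leftrightarrow s+s_*$ exchange lives in which pair $(z_s,w_{s+s_*})$ solves for $(\xi_s,\eta_s)$, not in the fixed-point relations.
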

\begin{proof}   We use a realization which is different from \re{realM2}.
We assume
that $M$ already has the normal form as in \rt{abelinv}. Thus for $j=1,\ldots, p$, we have
\eq{ta1j}
\tau_{1j}\colon\xi_j'
=\Lambda_{1j}(\xi\eta)\eta_j,\quad \eta_j'=\Lambda_{1j}^{-1}(\xi\eta)\xi_j, \quad (\xi_k',\eta_k')=(\xi_k,\eta_k), \quad k\neq j.
\eeq
 Let us define
$$
f_j(\xi,\eta)=\xi_j+\xi_j\circ\tau_{1j}, \quad
g_j=\ov{f_j\circ\rho}, \quad 1\leq j\leq p.
$$
The latter implies that the biholomorphic mapping $\varphi (\xi,\eta)= (f(\xi,\eta),
g(\xi,\eta))$ transforms $\rho$ into the standard
complex conjugation $(z',w')\to(\ov w',\ov z')$.
Define $$
F_{j}(\xi,\eta)=\xi_j\circ\tau_{1j}(\xi,\eta) \xi_j, \quad
1\leq j\leq p.
$$
 Using the expressions of $\tau_{1j}$ given by \re{ta1j}, we verify that  $f_j$ and $F_{j}$
are invariant by $\tau_{1k}$.
Note that the linear part of $f_j(\xi,\eta)$ is $\xi_j+\la_j\eta_j$
for $1\leq j\leq p$, and the
quadratic part of $F_{j}(\xi,\eta)$ is $\la_j\xi_j^2$.
By \rl{twosetin},  $f_1,\dots, f_p$ and $F_{1},\dots, F_{p}$
 generate all invariant functions
of $\{\tau_{11},\ldots,\tau_{1p}\}$.

Using $\ov{\Lambda_{1 e}\circ{\rho_z}}=\Lambda_{1e}$ and $\ov{\Lambda_{1s}\circ{\rho_z}}=\Lambda_{1(s+s_*
)}^{-1}$,
  rewrite $z_j=f_j(\xi,\eta), w_j=g_j(\xi,\eta)$ as
\begin{alignat*}{5}
 &\xi_e&&=\frac{z_e-\Lambda_{1e}(\xi\eta)w_e}{1-\Lambda_{1e}^2},
\quad &\eta_e&=\frac{w_e-\Lambda_{1e}(\xi\eta)z_e}{1-\Lambda_{1e}^2},\\
& \xi_s&&=\frac{z_s-\Lambda_{1s}^2(\xi\eta)w_{s+s_*}}{1-\Lambda_{1s}^2(\xi\eta)}, \quad
&\eta_s&=\frac{\Lambda_{1s}(\xi\eta)(w_{s+s_*}-z_s)}{1-\Lambda_{1s}^2(\xi\eta)},\\
 &\xi_{s+s_*}&&=\frac{z_{s+s_*}-\Lambda_{1(s+s_*)}^2(\xi\eta)w_{s}}{1-\Lambda_{1(s+s_*)}^2(\xi\eta)}, \quad
&\eta_{s+s_*}&=\frac{\Lambda_{1(s+s_*)}(\xi\eta)(w_{s}-z_{s+s_*}) }{1-\Lambda_{1(s+s_*)}^2(\xi\eta)}.
\end{alignat*}
Using the above formulas and  $w_j=\ov z_j$, we compute $\zeta_j=\xi_j\eta_j$ to
obtain  \re{ze+L}-\re{zsss9}.

Note that $F_j(\xi,\eta)=\zeta_j\Lambda_{1j}(\zeta)$. This shows
that $z_{p+j}=F_j\circ\varphi^{-1}(z',\ov z')$
have the form \re{Hmpp}. Again, we use the
formula of $\tau_{1k}$ to verify that $z=(z',z'')$
are invariant by all $ \var\tau_{1k}\var^{-1}$.   On the other hand, $z=(z',z'')$ generate invariant functions
of   the deck transformations
of $\pi_1$ for the complexification of $\hat M$ given by \re{Hmpp}.  This shows that $\{\var\tau_{11}\var^{-1}, \ldots, \var\tau_{1p}\var^{-1}\}$
and the deck transformations of $\pi_1$, of which each family consists of commuting involutions, have the same
invariant functions.  By \rl{twosetin}, we know that the two families must be identical.
This shows
that \re{Hmpp} is a realization for $\{\tau_{11},
\ldots, \tau_{1p},\rho\}$.

To verify the last assertion of the theorem, we first note that by \rea{lam1e}  the solutions $\zeta_1,\dots,\zeta_p$ to \re{ze+L}-\re{zsss9} satisfy $\zeta_e=\ov\zeta_e$ and $\zeta_{s+s_*}=\ov\zeta_s$.
By \rea{lam1e}, on $M$ we have $\ov z_{p+e}=\ov{\Lambda_{1e}(\zeta)}\ov\zeta_e=z_{p+e}$. Also
$\ov z_{p+s+s_*}=\ov{\Lambda_{1(s+s_*)}(\zeta)}\ov\zeta_{s+s_*}=\Lambda_{1s}^{-1}(\zeta)\zeta_{s}
=\Lambda_{1s}^{-2}(\zeta)z_{p+s}$. From \re{Hmpp}, solve
$\zeta_j=\tilde\Lambda_{1j}(z'')z_{p+j}$ $ (1\leq j\leq p)$ to obtain $\Lambda_{1j}^{-1}(\zeta)=\tilde\Lambda_{1j}(z'')$.
The proof is complete.
\end{proof}

\subsection{Hull of holomorphy for 
the  abelian CR singularity}
Let $X$ be a subset of $\cc^n$. We
define the hull of holomorphy of $X$, denoted by $\cL H(X)$,
to be the intersection of domains of holomorphy in $\cc^n$ that contain $X$.
Let $B^n_r$ be the ball in $\cc^n$ of radius $r$ and centered at the origin.

By \rt{abelm},  we assume that
 $M$ has pure elliptic type and it is equivalent to
\begin{gather*}
M\colon
z_{p+j}=\Lambda_{1 j}(\zeta)\zeta_j,\quad 1\leq j\leq
p,
\end{gather*}
where $\zeta_j=\zeta_j(z')$ $(j=1,\ldots, p)$ are the convergent real-valued solutions to \re{ze+L}.
For  $\zeta\in\rr^p$ with small $|\zeta|$, we know that  $\Lambda_{1 j}(\zeta)>1$.

Near  the origin in $\rr^p$, we define a real analytic  diffeomorphism:
$$
R\colon \zeta\to 
\left(\Lambda_{1 1}(\zeta)\zeta_1,\dots,\Lambda_{1 p}(\zeta)\zeta_p \right).
$$
 If $\e$ is small enough, for each $x''\in [0,\e]^{p}$, we can define $\zeta=R^{-1}(x'')$.
  Note that $R$ sends $\zeta_j=0$ into $x_{p+j}=0$ for each $j$. We can write
 $$
 R^{-1}(x'')=(x_{p+1}S_1(x''), \ldots, x_{2p}S_p(x''))$$
  with $S_j(0)>0.$
 Then
$M\cap\{z''=x''\}$ is given by \re{Hmpp}-\re{zsss9}.
For $x''\in[0,\e]^p$ let $D_j(x'')$ be the compact   set in  the complex plane whose boundary is defined
by the $j$th equation in \re{Hmpp}-\re{zsss9} where $\zeta=R^{-1}(x'')$.  When $x_{p+j}>0$,   the boundary
of $D_j(x'')$ is an ellipse with
\eq{djxpp}
D_j(x'')\subset B^1_{C_1\sqrt{x_{p+j}}}.
\eeq
Here and in what follows constants  will depend only on $\la_1,\ldots, \la_p$.
 Thus
$$
D(x''):=D_1(x'')\times\cdots\times D_p(x'')\times\{x''\}\subset\cc^p\times\rr^p
$$ is a product
of ellipses and its dimension equals the number of positive numbers among $x_{p+1},\ldots, x_{2p}$.
 We will call
$D(x'')$ an {\it analytic polydisc} and 
 $$\pd^*D(x''):=\pd D_1(x'')\times
\cdots\times\pd D_p(x'')\times\{x''\}$$ its distinguished boundary which
 is contained in $M$.  Set $D(0'')=\pd^*D(0'')=\{0\}$.
Thus,  $M$ is foliated by $\pd^*D(x'')$ as $x''$ vary in $[0,\e]^p$ and $\e$ is sufficiently small. Specifying the  $\e$ later, we will use this foliation
and Hartogs' figures in   analytic polydiscs  to find the local hull of holomorphy of $M$  at the origin.

As $x''$ vary in $[0,\e]^p$, let
$
M_\epsilon
$
be the union of $\pd^*D(x'')$, and  $\cL H_\e$  the union of  $D(x'')$. Both $\cL H_\e$ and $M_\e$ are
  compact
subsets in $\cc^{2p}$.  Note that
$$
B^{2p}_{\e_*}+M_\e:=\{a+b\colon a\in B^{2p}_{\e_*}, b\in M_\e\}
$$
is contained in a given neighborhood of $M_\e$, if $\e_*$ is sufficiently small.  Analogously,  $B^{2p}_{\e_*}+\cL H_\e$
is a   connected open neighborhood of $\cL H_\e$.
Let us first verify that  a function that
is holomorphic in a connected neighborhood of $M_\e$ in $\cc^{2p}$  extends holomorphically to a neighborhood of $\cL H_\e$ such that the extension agrees with the original function on a possibly smaller neighborhood of $M_\epsilon$.
 Assume that $f$ is holomorphic in a neighborhood $\cL U$ of $\pd^*_\e D:=\cup_{x''\in[0,\e]^p}\pd^*D(x'')$.
 Note that  $\cL H_\e$ is defined by
 \begin{align}\label{xpj}
A_j(x'')|z_j|^2  -B_j(x'')(z_j^2+\ov z_j^2)&\leq x_{p+j}, \quad 1\leq j\leq p;\\
y''=0, \quad x''&\in[0,\e]^p\label{y''x''}
\end{align}
with
\begin{gather}\nonumber
A_j(x'')=\frac{1+\Lambda_{1 j}^2(R^{-1}(x''))}{S_j(x'')(1-\Lambda_{1 j}^2(R^{-1}(x''))^2},
\quad
B_j(x'')=\frac{\Lambda_{1 j}(R^{-1}(x''))}{S_j(x'')(1-\Lambda_{1 j}^2(R^{-1}(x''))^2}.
\end{gather}
Let $\del$ be a small positive number.  For $x''\in[-\del,\e]^p$, let $D_j^\del(x'')\subset\cc$ be defined by
 \begin{align}\nonumber
A_j(x'')|z_j|^2  -B_j(x'')(z_j^2+\ov z_j^2)&\leq x_{p+j}+\del.
\end{align}
Let
$
P_{\e}^{\del}
$
(resp.~$\pd^*P_{\e}^{\del}$)
be the set of $z=(z',z'')$ such that
$y''\in[-\del,\del]^p$,  $x''\in[-\del,\e]^p$,  and $z_j\in D_j^{\del}(x'')$ (resp.~$z_j\in\pd D_j^{\del}(x'')$) for $1\leq j\leq p$.
Let $\cL U_{\e}^{\del}$ (resp. $\cL U_{\e}^{\del_1}$) be a small neighborhood of $P_{\e}^{\del}$ (resp. $P_{\e}^{\del_1}$).
Assume that $0<\del_1<\del$ and $\del_1$ is sufficiently small.  We may also assume that $\cL U_{\e}^{\del_1}$
is contained in $\cL U_{\e}^{\del}$ and  $\pd^*P_{\e}^{\del}\subset \cL U.$
  Thus, for $(z',z'')\in \cL U_{\e}^{\del_1}$, we can define \eq{Fzpzpp}
F(z',z'')= \f{1}{(2\pi i)^p}\int_{\zeta_1\in \pd D_1^{\del}(x'')}
\cdots\int_{\zeta_p\in \pd D^{\del}_p(x'')}\f{f(\zeta,z'')\, d\zeta_1\cdots d\zeta_p}{(\zeta_1-z_1)\cdots(\zeta_p-z_p)}.
\eeq
When $z$ is sufficiently small, $F(z)=f(z)$ as $f$ is holomorphic near the origin.  Fix $z_0\in\cL U_{\e}^{\del_1}$. We want to show that
  $F$ is holomorphic at $z_0$.  So $F$ is a desired extension of $f$.
By  continuity, when $z=(z_1,\dots, z_{2p})$ tends  to $z_0$, $x''$ tends to $x_0''$
and  $\pd D_j^\del(x'')$ tends to $\pd D_j^{\del}(x_0'')$,
while $z_j\in D_j^{\del}(x_0'')$ when $z$ is sufficiently close to $z_0$.
  By Cauchy theorem, for $z$ sufficiently close to $z_0$ we change 
   the repeated integral for
$  \zeta_j\in \pd D_j^{\del}(x_0'')$, $1\leq j\leq p$.
The domain of integration is thus fixed. The integrand is holomorphic in $z$. Hence   $F$ is holomorphic at $z=z_0$.

Next we want to show that $\cL H_\e$ is the hull of holomorphy of $M_\e$ in $B^{2p}_{\e_0}$ for suitable
$\e,\e_0$ that can be arbitrarily small.

Let us first show that
 $\cL H_\e$ is the intersection  of domains of holomorphy in $\cc^{2p}$.
  Recall that  $ \cL H_\e$ is defined by \re{xpj}-\re{y''x''}.
Define   for $\delta':=(\delta_1,\ldots,\delta_p)$ with $\del_j>0$ 
\begin{align*}
\rho_j^{\delta'}&=A_j(x'')|z_j|^2-B_j(x'')(z_j^2+\ov z_j^2)-x_{p+j}+(  \delta_1^{-1}+\cdots+ \delta_p^{-1})
\sum_{i=1}^py_{p+i}^2\\
&\quad +\sum_{i\neq j} \delta_i^{-1}
\left\{A_i(x'')|z_i|^2-B_i(x'')(z_i^2+\ov z_i^2)-x_{p+i}\right\}.
\end{align*}
When $p=1$, the last summation is $0$.
 The complex Hessian of $\rho_j^{\delta'}$ is
\begin{align*}
&\sum_{\all,\beta=1}^{2p}\frac{\pd^2\rho^{\delta'}_j}{\pd z_\alpha\ov z_\beta}t_\alpha\ov t_\beta=A_j(x'')|t_j|^2
+\frac{ \delta_1^{-1}+\cdots+ \delta_p^{-1}}{2}\sum_i |t_{p+i}|^2+\sum_{i\neq j}\frac{1}{ \delta_i}A _i(x'')|t_i|^2\\
&\qquad +\RE\sum_k a_{jk}(x''; z_j)t_j \ov t_{p+k} +
\sum_{k,\ell} b_{j,k\ell}(x''; z_j)t_{p+k} \ov t_{p+\ell} \\
&\qquad+\RE\sum_{i\neq j} \sum_k\frac{1}{ \delta_i} c_{j,ik}(x''; z_i)t_i \ov t_{p+k}
+\sum_{i\neq j} \sum_{k,\ell}\frac{1}{ \delta_i} d_{j,k\ell}(x''; z_i)t_{p+k} \ov t_{p+\ell}.\end{align*}
Here $a_{jk}(x'';0)=b_{j,kl}(x'';0)= c_{j,ik}(x'';0)=d_{j,kl}(x'';0)=0$,
and  $i,j,k,\ell$ are in $\{1,\ldots, p\}$.  From the Cauchy-Schwarz inequality,
it follows  that for $ z\in B^{2p}_{e_0}$ with $\e_0>0$ sufficiently
small and  $0<\delta_j<1$,
\begin{align*}
2\sum_{\all,\beta=1}^{2p}\frac{\pd^2\rho^{\delta'}_j}{\pd z_\alpha\ov z_\beta}t_\alpha\ov t_\beta&\geq A_j(x'')|t_j|^2
+\frac{ \delta_1^{-1}+\cdots+ \delta_p^{-1}}{2}\sum_j|t_{p+j}|^2+  \sum_{i\neq j} \delta_i^{-1}A_i(x'')|t_i|^2.
\end{align*}
Therefore, each $\rho_j^{\delta'}$ is  strictly plurisubharmonic on $|z|<\e_0$ for all $0< \delta_i<1$.
 Hence for $ \del_*=(\del_0,\ldots, \del_p)=(\del_0,\del')\in(0,1)^{p+1}$,
$$
\rho_\e^{\delta_*}(z)=\max_j\{\rho_j^{\delta'},|y''|^2- \delta_0^2, x_{p+j}^2-\e^2\}
$$
is plurisubharmonic on $B^{2p}_{\e_0}$.   By \re{djxpp},
$ D(x'')$ is contained in $B^{2p}_{C_2\e^{1/2}}$ for $x''\in[0,\e]^p$.   We now fix $\e<(\e_0/{C_2})^2$ to ensure
\eq{dxbe}
D(x'')\subset B_{\e_0}^{2p}, \quad \forall x''\in[0,\e]^p.
\eeq
This shows that
$
 \cL H^{\del_*}_\e:=\{z\in  B_{\e_0}^{2p}\;|\;\colon \rho_\e^{\del_*}(z)<0\}
$
is a domain of holomorphy.

Let us verify that
$ 
\cL H_{\e}=\bigcap_{\e'>\e, \delta_0>0,\ldots,\del_p>0} \cL H^{\del_*}_{\e'}.
$ 
Fix $z\in \cL H_\e$. From \re{dxbe} we get $z\in B_{\e_0}^{2p}$.
We have $y''=0$.  Hence \re{xpj} hold and $x_{p+j}^2\leq\e^2$. Clearly, $\rho_j^ \delta(z)<0$ for
each $j$ and $ \delta\in(0,1)^p$. This shows that $z\in \cL H_\e$ is in the intersection.  For the other inclusion,
let us assume that $z$ is in the intersection. Then $y''=0$.  
 With $\rho_j^{\del_*}(z)<0$, we let $ \delta_i$ tend to $0$ for $i\neq j$.
We conclude
\begin{gather}\nonumber
A_i(x'')|z_i|^2-B_i(x'')(z_i^2+\ov z_i^2)\leq x_{p+i}
\end{gather}
for all $i\neq j$, and hence for all $i$ as $p>1$.
 When $p=1$ the above inequality can be obtained directly from
$\rho_1^{\del_*}$.  We also see  that $0\leq x_{p+j}\leq\e$. We have verified \re{xpj} and \re{y''x''}. This shows that $z\in\cL H_\e$.

In view of \re{xpj}-\re{y''x''}, the boundary of $\cL H_\e$ is the union $\cup_{j=1}^p\cL H_j^\e$ with  $\cL H_j^\e$
being defined by
\begin{alignat*}{2}
A_j(x'')|z_j|^2 -B_j(x'')(z_j^2+\ov z_j^2)&= x_{p+j},\\
A_i(x'')|z_i|^2 -B_i(x'')(z_i^2+\ov z_i^2)&\leq x_{p+i}, \quad 1\leq i\leq p,\  i
\neq j;\\
y''=0, \quad\qquad x_{p+ i}&\leq\e, \qquad\ \  1\leq  i\leq p.
\end{alignat*}
Therefore, we have proved the following theorem.
\begin{thm}\label{hullj}
 Let $M$ be a germ of real analytic submanifold at an abelian CR singularity. Assume that the complex tangent of $M$
is purely elliptic 
 at the origin. There is a base of neighborhoods $\{U_j\}$ of the origin in $\cc^n$ which satisfies the following: For each $U_j$,  the hull of holomorphy $H(M\cap U_j)$ of $M\cap U_j$ is foliated by embedded complex submanifolds with boundaries. Furthermore, near the origin $H(M\cap U_j)$
is the transversal intersection of $p$ real analytic submanifolds of dimension $3p$ with boundary.
 The boundary of $H(M\cap U_j)$ contains $M\cap U_j$; and two sets are the same if and only if $p=1$.
\end{thm}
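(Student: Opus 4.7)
My plan is to show that, for every sufficiently small $\e>0$, the compact set $\cL H_\e$ constructed in the paragraphs preceding the theorem coincides with the hull of holomorphy of $M\cap U_\e$ for a suitably chosen open neighborhood $U_\e$ of the origin; the base $\{U_j\}$ is then obtained by letting $\e=\e_j\to 0$. First I would invoke \rt{abelm} to reduce to the case in which $M$ is in the normal form \re{Hmpp}--\re{zsss9} with $s_*=0$, so that $\zeta_j=\zeta_j(z')$ is real-valued, $\Lambda_{1j}(0)=\la_j>1$, and all the structures $D(x'')$, $\pd^*D(x'')\subset M$, $M_\e$ and $\cL H_\e$ are available. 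The realization of $\cL H_\e$ as the intersection $\bigcap_{\e'>\e,\,\del_*\in(0,1)^{p+1}}\cL H_{\e'}^{\del_*}$ of domains of holomorphy, via the strictly plurisubharmonic $\rho_\e^{\del_*}$, is already in place.

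The heart of the argument is to verify two opposite inclusions. For $H(M\cap U_\e)\subseteq\cL H_\e$, I would use the Cauchy integral \re{Fzpzpp} on the thickened product $P_\e^{\del_1}$: any $f$ holomorphic on a neighborhood $\cL U$ of $M_\e$ in $U_\e$ admits a holomorphic extension $F$ to $\cL U_\e^{\del_1}$ that agrees with $f$ near the origin, hence on the common connected component of their domains. Combined with the representation of $\cL H_\e$ as an intersection of domains of holomorphy, this yields the inclusion. For the reverse inclusion $\cL H_\e\subseteq H(M\cap U_\e)$, each $D(x'')$ is a $p$-dimensional analytic polydisc with distinguished boundary $\pd^*D(x'')\subset M\cap U_\e$, so a Bochner-type maximum principle (or the iterated Cauchy formula in each variable) forces every function holomorphic near $M\cap U_\e$ to extend across $D(x'')$; taking the union over $x''\in[0,\e]^p$ gives the inclusion.

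With the equality $H(M\cap U_\e)=\cL H_\e$ in hand, the three geometric assertions follow from the explicit description of $\cL H_\e$. The assignment $x''\mapsto D(x'')$ on $(0,\e]^p$ exhibits $H(M\cap U_\e)\setminus\{0\}$ as a real-analytic $p$-parameter foliation by embedded complex $p$-polydiscs with boundary. Writing $\cL H_\e=\Sigma_1\cap\cdots\cap\Sigma_p$ where
\[
\Sigma_j=\bigl\{y''=0,\ x_{p+i}\in[0,\e],\ A_j(x'')|z_j|^2-B_j(x'')(z_j^2+\ov z_j^2)\le x_{p+j}\bigr\},
\]
each $\Sigma_j$ is a $3p$-dimensional real analytic submanifold with boundary, and transversality at the smooth strata of $\pd\Sigma_j\cap\pd\Sigma_k$ follows from the linear independence of the differentials of the $p$ defining functions, which involve disjoint complex coordinates $z_j$. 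Finally, the decomposition $\pd\cL H_\e=\bigcup_j\cL H_j^\e$ identified in the preceding discussion gives $M\cap U_\e\subseteq\pd\cL H_\e$; equality holds iff $p=1$, because for $p\ge 2$ each stratum $\cL H_j^\e\setminus\bigcap_k\cL H_k^\e$ is a non-empty real-analytic family of open analytic discs in the $z_i$ directions with $i\ne j$, which cannot lie on $M$.

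The main technical difficulty I expect is the precise coordination of neighborhoods in the two inclusions: the Cauchy extension produces a function defined on a neighborhood whose size depends on the thickening parameters $\del_1<\del$, whereas the disc-pushing argument requires the polydiscs $D(x'')$ to fit inside the fixed $U_\e$. This calls for the careful ordering of choices $\e_0,\e,\del,\del_1$ already encoded in \re{dxbe}; once these are fixed, taking a shrinking sequence $\e_j\to 0$ and letting $U_j$ be a small open neighborhood of $\cL H_{\e_j}$ inside $B^{2p}_{\e_0(\e_j)}$ produces the base of neighborhoods and completes the proof.
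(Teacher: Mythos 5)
Your proposal follows essentially the same route as the paper: reduce to the normal form of Theorem~\ref{abelm}, build the compact set $\cL H_\e$ from the analytic polydiscs $D(x'')$, establish $\cL H_\e=H(M_\e)$ by combining the Cauchy extension \re{Fzpzpp} with the plurisubharmonic exhaustion $\cL H_\e=\bigcap\cL H_{\e'}^{\del_*}$, and read off the geometric conclusions from the defining inequalities \re{xpj}--\re{y''x''}. One small but genuine confusion: you attribute the inclusion $H(M\cap U_\e)\subseteq\cL H_\e$ to the Cauchy integral, and the ``reverse'' inclusion to a Bochner-type maximum principle. The two are swapped. Since $\cL H_\e$ is exhibited as an intersection of the domains of holomorphy $\cL H_{\e'}^{\del_*}$, each of which contains $M_\e$, the definition of $H$ as the intersection over \emph{all} such domains already gives $H(M\cap U_\e)\subseteq\cL H_\e$, with no use of the Cauchy formula at all. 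The Cauchy integral/polydisc-pushing argument is what provides the opposite inclusion $\cL H_\e\subseteq H(M\cap U_\e)$: any $f$ holomorphic in a domain of holomorphy $\Omega\supseteq M_\e$ extends past the distinguished boundaries $\pd^*D(x'')$ into the full polydiscs $D(x'')$, forcing $\cL H_\e\subseteq\Omega$. Your ``Bochner-type'' paragraph is precisely this argument, re-described; the earlier paragraph is doing the same work again for the wrong inclusion. Once the two halves are re-paired with the right directions, the argument is correct and coincides with the paper's. The closing discussion of the boundary decomposition $\pd\cL H_\e=\bigcup_j\cL H_j^\e$ and the $p=1$ criterion is also right and fills in a detail the paper leaves implicit.
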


 \begin{rem} The proof shows that the hull  $H(M\cap U_j)$ is foliated by analytic polydiscs, i.e. holomorphic embeddings
of closed unit polydisc in   $\cc^p$.
\end{rem}

\setcounter{thm}{0}\setcounter{equation}{0}

\section{Rigidity of product quadrics}\label{rigidquad}

The aim of this section is to prove the following rigidity theorem: Let us consider
a higher order analytic perturbation of a product quadric. If this manifold is formally equivalent to the product quadric, then under a small divisors  condition, it is also holomorphically equivalent to it.
Notice that when $p>1$, there are real submanifold $M$ with a linearizable $\sigma$ such
that $M$ is not formally equivalent to the quadric, or equivalently, the $\{\tau_{1j},\rho\}$ is not formally linearizable; see~\cite{part2}.

The proof goes   as follows~: Since the manifold is formally equivalent to the quadric,   the associated   involutions $\{\tau_{1 i}\}$ and $\{\tau_{2 i}\}$ are simultaneously linearizable by a formal biholomorphism that commutes with $\rho$. In particular,  $\sigma_1,\ldots, \sigma_p$,  as defined by \re{sigma_i} and \re{sigma_i+}, are formally linearizable and  they   commute pairwise. These are germs of biholomorphisms with a diagonal linear part. According to \cite{stolo-bsmf}[theorem 2.1], this abelian family can be holomorphically linearized under a collective Brjuno type condition \re{bnI}. Furthermore, the transformation commutes with $\rho$.
Then, we linearize simultaneously and holomorphically both $\tau_1:=\tau_{1 1}\circ\cdots\circ \tau_{1 p}$ and $\tau_2:=\tau_{2 1}\circ\cdots\circ \tau_{2 p}$ by a transformation that commutes with both $\rho$ and $\cL S$, the family of linear parts of the $\sigma_1,\ldots,\sigma_p$.
Finally, we linearize simultaneously and holomorphically both families $\{\tau_{1 i}\}$ and $\{\tau_{2 i}\}$ by a transformation that commutes with $\rho$, $\cL S$, $T_1$ and $T_2$.

These last two steps will be obtained through a majorant method and the application of a holomorphic implicit function theorem. This is obtained in
Proposition~\ref{linearS}. They first require a complete description of the various centralizers and their   associated normalized  mappings, i.e. suitable complements. This is a goal of Proposition~\ref{STiR}.

Throughout this section, we do not assume that  $\mu_1,\ldots, \mu_p$ are non resonant in the
sense that $\mu^Q\neq1$ if $Q\in\zz^p$ and $Q\neq0$.  In fact, we will apply our results to $M$
which might be resonant.
However, we will retain the assumption that $\sigma$ has distinct
eigenvalues when we apply the results to the manifolds.

\subsection{Centralizers}
We recall from \re{sigma_i} and \re{sigma_i+}, the definition and property of germs of holomorphic diffeomorphisms~:
$\sigma_i:=\tau_{1i}\circ\tau_{2i}$, $\sigma_i^{-1}=\rho\sigma_i\rho$, $1\leq i\leq e_*+h_*$; $\sigma_s:=\tau_{1s}\circ\tau_{2 (s_*+s)}$, $\sigma_{s+s_*}:=\tau_{1(s+s_*)}\circ\tau_{2s}$, $\sigma_{s+s_*}^{-1}=\rho\sigma_s\rho$, $e_*+h_*<s\leq p-s_*$.
Recall the  linear maps
\begin{align}
&S\colon\xi_j'=\mu_j\xi_j,\quad \eta_j'=\mu_j^{-1}\eta_j;\nonumber\\
\label{rSxi-}
&S_j\colon\xi_j'=\mu_j\xi_j,\quad \eta_j'=\mu_j^{-1}\eta_j,\quad \xi_k'=\xi_k,\quad \eta_k'=\eta_k,
\quad k\neq j;\\
\label{rTij-}
&T_{ij}\colon\xi_j'=\la_{i j}\eta_j,\quad\eta_j'=\la_{i j}^{-1}\xi_j,\quad\xi_k'=\xi_k,\quad
\eta_k'=\eta_k, \quad k\neq j; 
 \intertext{while $\rho$ is given by}
\label{rRho-}
& \rho\colon \left\{\begin{array}{ll}
(\xi_e',\eta_e',\xi_h',\eta_h')=
(\ov\eta_e,\ov\xi_e,\ov\xi_h,\ov\eta_h),\vspace{.75ex}
\\
(\xi_{s}', \xi_{s+s_*}',\eta_{s}',\eta_{s+s_*}')=(\ov\xi_{s+s_*},
\ov\xi_{s},\ov\eta_{s+s_*}, \ov\eta_{s}).
\end{array}\right.
\end{align}
Recall that indices $e.h,s$ have the ranges $1\leq e\leq e_*, e_*<h\leq e_*+h_*$, and $e_*+h_*<s\leq p-s_*$.
The basic conditions on $\mu_j=\la_j^2$ are the following:
\eq{resonant-mu}
|\mu_h|=1, \quad \mu_{s+s_*}=\ov\mu_s^{-1},\quad \mu_e>1, \quad |\mu_s|\geq1, \quad \mu_s^k\neq 1,  k=1,2,\dots
.
\eeq
In particular, a $\mu_j$ may be repeated  and $\mu_h$ can be $1$.

We need to introduce   notation for the indices to describe various centralizers
regarding $T_{1j}, S_j$ and $\rho$.
We first introduce index sets for the centralizer of $\cL S:=\{S_1,\ldots, S_p\}, T_1:=T_{i 1}\circ\cdots \circ T_{i p},\rho$. We recall that 
$\cL T_i:=\{T_{i 1}, \ldots, T_{ip}\}$.

Let  $(P,Q)\in\nn^p\times\nn^p$ and $1\leq j\leq p$. By definition, $\xi^P\eta^Qe_j$ belongs to the centralizer of $\cL S$ if and only if it commutes with each $S_i$. In other words,  $\xi^P\eta^Qe_j\in\cL C(\cL S)$ if and only if
\eq{mukpk}
 \mu_k^{p_k-q_k}=1,\quad \forall k\neq j; \quad\mu_j^{p_j-q_j}=\mu_j.
\eeq
Note that the same condition holds for  $\xi^Q\eta^Pe_{p+j}$ to belong to $\cL C(\cL S)$. This leads us to define the set of multiindices
$$
\cL R_j:=\{(P,Q)\in\nn^{2p}\colon\mu_j^{p_j-q_j}=\mu_j,\   \mu_i^{p_i-q_i}=1, \forall i\neq j\}, \quad 1\leq j\leq p.
$$
We observe that if $(P,Q)\in\cL R_j$, then \re{resonant-mu} implies that
\begin{gather}
p_j=q_j+1, \quad j\neq h; \quad p_i=q_i, \quad \forall i\neq j,h;\nonumber\\
 \la_h^{p_h-q_h}=\pm1, \quad h\neq j;\quad \la_j^{p_j-q_j-1}=\pm1, \quad j=h.\label{rj}
\end{gather}
Here we have used the assumption that $\mu_s$ are not root of unity, which simplifies greatly the results and computation in this section.

For convenience, we define for $P=(p_e,p_h,p_s,p_{s+s_*})$ and $Q=(q_e,q_h,q_s,q_{s+s_*})$
\al
\rho( P Q) &:=
( q_e ,p_h, p_{s+s_*}, p_{s}, p_e, q_h, q_{s+s_*}, q_{s}),
\nonumber
\\
\label{rhoapq}\rho_a(PQ) &:= (q_e ,p_h, p_{s+s_*}, p_{s}),\quad
\rho_b(PQ) 
:=(p_e, q_h, q_{s+s_*}, q_{s}),\\
\ov f_{\rho( P Q)} &:=(\ov{f\circ\rho})_{ P Q}. 
\label{compos-rho}
\end{align}
Here $p_e=(p_{1},\ldots,p_{e_*})$ denotes the ``elliptic coordinates'' of $P$.
Hence,
\begin{equation}\label{compos-rho+}
 \rho( P Q)=(\rho_a( P Q),\rho_b( P Q))=(\rho_b( QP),\rho_a( QP)).
\end{equation}

According to \re{mukpk} and  equation $(\ref{rRho-})$ of $\rho$,
the restriction of $\rho$ to  $\cL R_h$ is an involution, which will be denoted by $\rho_h$. Moreover, $\rho$ is  a bijection  $\rho_s$
from $\cL R_s$ onto $\cL R_{s+s_*}$.   We define an involution on $\cL R_e$ by
\eq{rhoepq}
\rho_e(PQ):=(\rho_b(PQ),\rho_a(PQ))
=(p_e,q_h,q_{s+s_*},q_s,q_e,p_h,p_h,p_{s+s_*},p_s).
\eeq
Note that  $\rho_e$ is not a restriction of $\rho$, and $\rho_s$ is not an involution either.

Next, we introduce sets of indices to be used to compute the centralizers on $\cL T_1,\cL T_2,\rho$. Set
\gan
\cL N_j:=\cL R_j\cap\{(P,Q) \colon p_i\geq q_i, \quad \forall i\neq j\}, \quad 1\leq j\leq p.
\end{gather*}
 Since there is no restriction for $p=1$, we have $\cL N_j=\cL R_j$ for $j=e$ or $h$.
Let us set
\gan
A_{jk}( P, Q):=
\max\{ p_k, q_k\}, \quad k\neq j,\quad
A_{jj}( P, Q)=
 p_j;\\
B_{jk}( P, Q):=
\min\{ p_k, q_k\}, \quad k\neq j,\quad
B_{jj}( P, Q)=
 q_j.
\end{gather*}
We define a mapping
$$
(A_j,B_j)\colon \cL R_j\to\cL N_j
$$
with
$
A_j:=(A_{j1},\ldots, A_{jp})$ and $ B_j:=(B_{j1},\ldots, B_{jp}).
$
For $(P,Q)\in\cL N_j$   with  $j=e,h$, we have $A_j\circ \rho_j(P,Q)=(p_e,p_h,p_{s+s_*}, p_s)$ and $B_j\circ \rho_j(P,Q)=(q_e,q_h,q_{s+s_*}, q_s)$. In other words, on  $\cL N_j$
 for $j=e$ or $h$, $A_j\circ\rho_j$ just interchanges the $s$th   and  the $(s+s_*)$th coordinates for each $s$, so does $B_j\circ\rho_j$,   while $A_{s+s_*}\rho_s$ and $B_{s+s_*}\rho_s$ have the same property on $\cL N_s$.
 Furthermore,
 \ga
  \label{commut-arho}
 (A_{h},B_{h})\rho = \rho(A_{h},B_{h})\quad \text{on $\cL R_h$,}\\ 
(A_{s+s_*},B_{s+s_*})\rho = \rho(A_{ s},B_{s})\quad  \text{on $\cL R_s$}.   \label{commut-arho+}
 \end{gather}

Finally,   with the convention that the product over an empty set is $1$, 
we define, for $(P,Q)\in\cL R_j$~:
\al \label{nujab}
\nu_{ P Q}& :=\begin{cases}\prod_{h'}\la_{h'}^{ q_{h'}- p_{h'}}, & j\neq h, \\
  \la_h^{p_h-q_h-1}\prod_{h'\neq h }\la_{h'}^{ q_{h'}- p_{h'}},& j=h;
\end{cases}
\\
\nu_{ P Q}^+ &:=\begin{cases}
\prod_{h'| q_{h'}> p_{h'}}\la_{h'}^{ q_{h'}- p_{h'}},   & j\neq h;\\
\label{nujab+}\prod_{h'\neq h, q_{h'}> p_{h'}}\la_{h'}^{ q_{h'}- p_{h'}}, & j=h.
\end{cases}
\end{align}
Here $e_*<h',h\leq e_*+h_*$.
For convenience, we however define
$$
\nu_{QP}:=\nu_{PQ}, \quad (P,Q)\in\cL R_j.
$$
If $p=1$ we set $\nu_{P Q}^{ +}=1$.
 \begin{lemma}\label{nunu+} Let $(P,Q)\in\cL R_j$. Then  $\la_j^{-1}\la^{P-Q}=\nu_{PQ}$, and
\begin{alignat}{3} \label{nupm1}
 &\nu_{PQ}=\pm1;\quad  \nu_{PQ}^+=\pm1;
 \quad &&\nu_{PQ}^+=1, \quad
 (P,Q)\in\cL N_j;\\
 \label{nupm1+}
 &\nu_{\rho_e(PQ)}=\nu_{PQ}, \  j=e; \quad  \  &&\nu_{\rho(PQ)}=\nu_{PQ};
\\
 \label{nupm1++}
 & \nu_{\rho_e(PQ)}^+=\nu_{PQ}^+\nu_{PQ}, \  j=e;  \quad  &&\nu_{\rho(PQ)}^+=  \nu_{PQ}^+.
 \end{alignat}
\end{lemma}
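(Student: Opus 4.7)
The plan is to verify each identity by direct computation, using systematically the constraints from \rea{rj}, together with the fact that each hyperbolic factor $\la_{h'}^{p_{h'}-q_{h'}}$ (and $\la_h^{p_h-q_h-1}$ in case $j=h$) equals $\pm 1$, hence is its own inverse.

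First, for $\la_j^{-1}\la^{P-Q}=\nu_{PQ}$: by \rea{rj}, for $(P,Q)\in\cL R_j$ almost all entries of $P-Q$ vanish. Precisely, if $j\neq h$ then $p_i-q_i=0$ for every $i\notin\{j\}\cup\{\text{hyperbolic indices}\}$ and $p_j-q_j=1$, so
$$\la_j^{-1}\la^{P-Q}=\prod_{h'}\la_{h'}^{p_{h'}-q_{h'}};$$
if $j=h$, then $p_i=q_i$ for every non-hyperbolic $i$, so $\la_j^{-1}\la^{P-Q}=\la_h^{p_h-q_h-1}\prod_{h'\neq h}\la_{h'}^{p_{h'}-q_{h'}}$. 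Since $\la_{h'}^{p_{h'}-q_{h'}}=\pm 1$ each such factor equals $\la_{h'}^{q_{h'}-p_{h'}}$, which yields $\nu_{PQ}$ in both cases. The same observation gives \rea{nupm1}: every factor appearing in $\nu_{PQ}$ or $\nu_{PQ}^+$ is $\pm 1$, so both products are $\pm 1$; and if $(P,Q)\in\cL N_j$ then by definition $p_{h'}\geq q_{h'}$ for each relevant $h'$, so the index set $\{h'\mid q_{h'}>p_{h'}\}$ in the definition of $\nu_{PQ}^+$ is empty and $\nu_{PQ}^+=1$.

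Next I will handle the $\rho$-equivariance \rea{nupm1+}, \rea{nupm1++} on $\cL R_h$ and $\cL R_s$. The key observation is that $\rho_a(PQ)$ and $\rho_b(PQ)$ from \rea{rhoapq} leave the hyperbolic coordinates untouched: if $(P',Q')=\rho(P,Q)$ then $p'_{h'}=p_{h'}$ and $q'_{h'}=q_{h'}$ for every hyperbolic $h'$. In the case $j=h$, the distinguished index also remains $h$, and in the case $j=s$ both $s$ and $s+s_*$ are non-hyperbolic so the non-$h$ formula applies before and after. Therefore both $\nu_{\rho(PQ)}$ and $\nu_{\rho(PQ)}^+$ equal $\nu_{PQ}$ and $\nu_{PQ}^+$ term-by-term.

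Finally, for the $\rho_e$ identities on $\cL R_e$: here \rea{rhoepq} swaps the hyperbolic coordinates, so $(p'_{h'},q'_{h'})=(q_{h'},p_{h'})$. Thus
$$\nu_{\rho_e(PQ)}=\prod_{h'}\la_{h'}^{p_{h'}-q_{h'}},$$
and since $\la_{h'}^{p_{h'}-q_{h'}}=\pm 1$ is self-inverse this equals $\nu_{PQ}$. For $\nu_{\rho_e(PQ)}^+$, the index set $\{h'\mid q'_{h'}>p'_{h'}\}$ becomes $\{h'\mid p_{h'}>q_{h'}\}$, giving $\nu_{\rho_e(PQ)}^+=\prod_{p_{h'}>q_{h'}}\la_{h'}^{q_{h'}-p_{h'}}$. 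Splitting $\nu_{PQ}=\prod_{h'}\la_{h'}^{q_{h'}-p_{h'}}$ according to the sign of $q_{h'}-p_{h'}$ and multiplying by $\nu_{PQ}^+=\prod_{q_{h'}>p_{h'}}\la_{h'}^{q_{h'}-p_{h'}}$, the factors with $q_{h'}>p_{h'}$ appear squared (hence equal $1$), the $q_{h'}=p_{h'}$ factors are trivial, and only the $q_{h'}<p_{h'}$ factors survive, matching $\nu_{\rho_e(PQ)}^+$ exactly. No real obstacle arises; the whole lemma is a bookkeeping exercise built on the two facts (i) $\rea{rj}$ forces almost all index differences to vanish and (ii) hyperbolic powers are $\pm 1$, hence self-inverse.
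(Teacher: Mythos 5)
Your proof is correct and follows the same route as the paper: reduce everything to the fact (from \rea{rj}) that each hyperbolic factor $\la_{h'}^{p_{h'}-q_{h'}}$ (and $\la_h^{p_h-q_h-1}$ when $j=h$) is $\pm 1$, hence self-inverse, and then track how $\rho$ and $\rho_e$ act on the hyperbolic coordinates. The paper's proof is much terser — it establishes the $\pm 1$ facts and dispatches \rea{nupm1+}--\rea{nupm1++} with a one-line remark that they ``follow from the definition of $\rho_e$, $\rho$, and the above identities'' — but the underlying computation is exactly what you carried out.
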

\begin{proof}   The first identity follows from the definition of $\nu_{PQ}$ and $\cL R_j$.
 From the definition
of $\cL R_j$, we have $(\la_i^{p_{i}-q_{i}})^2=\mu_i^{p_{i}-q_{i}}=1$ for $i=h'$
in \re{nujab}-\re{nujab+}.
We also have $\mu_h^{p_{h}-q_{h}-1}=1$ for terms in \re{nujab}-\re{nujab+}. Thus
\eq{Lhph}
\nonumber
\la_{h'}^{p_{h'}-q_{h'}}=\pm1,\quad \la_h^{p_h-q_h-1}=\pm1.
\eeq
    Thus  we obtain \re{nupm1}; the rest identities
follow from the definition of $\rho_e$, $\rho$, and the above identities.    \end{proof}

\begin{lemma}\label{comput-rhoT1}
  For all multiindices  $(P,Q)$, $\xi^P\eta^Q\circ \rho = \ov\xi^{\rho_a(PQ)}\ov\eta^{\rho_b(PQ)}$.
For all multiindices  $(P,Q)\in  \cL R_e\cup\cL R_h$, we have
\ga\label{lambda}
\ov{\la^{\rho_a( P, Q)-\rho_b( P, Q)}}=
\la^{ Q- P},\quad \ov{\mu^{ \rho_b-\rho_a}}=\mu^{P-Q},\\
\label{T1rho}
\xi^P\eta^Q\circ \rho\circ T_1= \la^{ Q- P}\ov\xi^{\rho_b(PQ)}\ov\eta^{\rho_a(PQ)},\\
\label{rhoS-1}
\xi^P\eta^Q\circ \rho\circ S^{-1}= \mu^{ P-Q}\ov\xi^{\rho_a(PQ)}\ov\eta^{\rho_b(PQ)}.
\end{gather}
\end{lemma}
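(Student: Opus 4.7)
The plan is to reduce everything to the first identity, which is a purely bookkeeping computation, and then obtain the other identities by composing with $T_1$ or $S^{-1}$ and extracting the resulting scalar factor using the conjugation rules on $\la$ and $\mu$ coming from \re{resonant-mu}.

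For the first identity, I would unwind the definition \re{rRho-}. The map $\rho$ sends the point with coordinates $(\xi_e,\eta_e,\xi_h,\eta_h,\xi_s,\xi_{s+s_*},\eta_s,\eta_{s+s_*})$ to the point with coordinates $(\ov\eta_e,\ov\xi_e,\ov\xi_h,\ov\eta_h,\ov\xi_{s+s_*},\ov\xi_s,\ov\eta_{s+s_*},\ov\eta_s)$. Substituting into $\xi^P\eta^Q$ and regrouping the resulting anti-holomorphic monomial block by block (elliptic, hyperbolic, complex pairs), the total power of $\ov\xi_j$ is read off as $\rho_a(PQ)$ and the total power of $\ov\eta_j$ as $\rho_b(PQ)$ as defined in \re{rhoapq}. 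This establishes $\xi^P\eta^Q\circ\rho=\ov\xi^{\rho_a(PQ)}\ov\eta^{\rho_b(PQ)}$ unconditionally.

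Next I would record the conjugation rules consequences of \re{resonant-mu}: $\ov\la_e=\la_e$, $\ov\la_h=\la_h^{-1}$ (since $|\mu_h|=1$ and $\mu_h=\la_h^2$), $\ov\la_s=\la_{s+s_*}^{-1}$ and $\ov\la_{s+s_*}=\la_s^{-1}$ (from $\mu_{s+s_*}=\ov\mu_s^{-1}$), and the analogous rules for $\mu$. A direct termwise application of these rules to
\[
\la^{\rho_a-\rho_b}=\la_e^{q_e-p_e}\la_h^{p_h-q_h}\la_s^{p_{s+s_*}-q_{s+s_*}}\la_{s+s_*}^{p_s-q_s}
\]
gives $\ov{\la^{\rho_a-\rho_b}}=\la_e^{q_e-p_e}\la_h^{q_h-p_h}\la_s^{q_s-p_s}\la_{s+s_*}^{q_{s+s_*}-p_{s+s_*}}=\la^{Q-P}$, and the analogous computation for $\mu$ yields $\ov{\mu^{\rho_b-\rho_a}}=\mu^{P-Q}$. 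These identities in fact hold without the restriction $(P,Q)\in\cL R_e\cup\cL R_h$; the restriction is convenient for subsequent use where \re{rj} in Lemma \ref{nunu+} reduces the scalars to $\pm1$.

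For \re{T1rho}, I would post-compose the first identity with $T_1$, viewing $\ov\xi^{\rho_a}\ov\eta^{\rho_b}$ as the evaluation of an anti-holomorphic monomial at $T_1(\xi,\eta)=(\la\eta,\la^{-1}\xi)$: this gives $\ov{(\la\eta)}^{\rho_a}\ov{(\la^{-1}\xi)}^{\rho_b}=\ov{\la^{\rho_a-\rho_b}}\,\ov\xi^{\rho_b}\ov\eta^{\rho_a}$, and the scalar is converted to $\la^{Q-P}$ by the identity just proved. Identity \re{rhoS-1} is obtained in exactly the same way using $S^{-1}(\xi,\eta)=(\mu^{-1}\xi,\mu\eta)$, which gives the scalar $\ov{\mu^{\rho_b-\rho_a}}=\mu^{P-Q}$ without permuting the $\xi$/$\eta$ blocks, so $\rho_a,\rho_b$ stay in their original places. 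The only real potential for error is sign and index bookkeeping — in particular the fact that for complex indices $\rho$ swaps the $s$ and $s+s_*$ slots so that the exponent of $\la_s$ in $\ov{\la^{\rho_a-\rho_b}}$ comes from the $\ov\la_{s+s_*}=\la_s^{-1}$ rule applied to the $s+s_*$ slot — but once the reindexing in \re{rhoapq} is laid out carefully this is straightforward.
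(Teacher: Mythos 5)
Your proof is correct and follows essentially the same route as the paper: read off the first identity from \re{rRho-}, establish \re{lambda} from the conjugation rules on $\la_j$, $\mu_j$, and obtain \re{T1rho}, \re{rhoS-1} by post-composing with $T_1$ and $S^{-1}$ and converting the resulting scalar via \re{lambda}. Your observation that \re{lambda} (hence also \re{T1rho} and \re{rhoS-1}) in fact holds for all multiindices is accurate: the paper instead invokes $p_s=q_s$ and $p_{s+s_*}=q_{s+s_*}$, which are consequences of the hypothesis $(P,Q)\in\cL R_e\cup\cL R_h$, to make the complex-index factors trivial, whereas your use of $\ov\la_s=\la_{s+s_*}^{-1}$ and $\ov\la_{s+s_*}=\la_s^{-1}$ dispenses with that hypothesis.
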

\begin{proof}
Identity \re{lambda} follows from \re{rhoapq} 
 and the fact that $\la_e$ and $\mu_e$ are reals, $\la_h^{-1}=\ov\la_h$,  $p_s=q_s$, and  $p_{s+s_*}=
q_{s+s_*}$. 
A direct computation shows that
$$
\xi^P\eta^Q\circ \rho\circ T_1= \ov\la^{ \rho_a- \rho_b}\ov\xi^{\rho_b(PQ)}\ov\eta^{\rho_a(PQ)},\quad \xi^P\eta^Q\circ \rho\circ S^{-1}= \ov\mu^{ \rho_b-\rho_a}\ov\xi^{\rho_a(PQ)}\ov\eta^{\rho_b(PQ)}.
$$
The result follows from \re{lambda}.
\end{proof}

Finally we note that
\eq{iotemove}
\iota_e\colon (P,Q)\to (A_e,B_e)\circ\rho_e(PQ)=(A_e,B_e)(\rho_b(P,Q),\rho_a(P,Q))
\eeq
defines an involution on $\cL N_e$.
 We now can describe the centralizers.
\begin{prop}\label{STiR} Let $\cL S=\{S_1,\ldots, S_p\}$,  $\cL T_i=\{T_{i1},\ldots, T_{ip}\}$
and $\rho$ be given by \rea{rSxi-}-\rea{rRho-}.
Let $\var=I+(U,V)$ be a formal biholomorphic map
that is tangent to the identity.
\bppp
\item $\var\in{\cL C}(\cL S)$ if and only if
\ga\label{muimuj}
U_{j,PQ}=0=V_{j,QP}, \quad\forall (P,Q)\not\in\cL R_j.
\end{gather}
Also, $\var\in{\cL C}(\cL S,\rho)$ if and only if \rea{muimuj} holds and
\ga
\label{uhus} U_{h,PQ}= \ov{U}_{h,\rho(PQ)}, \  (P,Q)\in\cL R_h;
\quad U_{s+s_*,PQ}=\ov{U}_{s,\rho(PQ)},\ (P,Q)\in\cL R_{s+s_*};\\
 V_{e,QP}=\ov U_{e, \rho_e(PQ)},\quad (P,Q)\in\cL R_e;
 \label{veqpo}\\
V_{h,QP}=\ov V_{h,\rho(QP)},\  (P,Q)\in\cL R_h; \quad V_{s+s_*,QP}=\ov{V}_{s,\rho(QP)}, \ (P,Q)\in\cL R_{s+s_*}.
\label{uhqp}
\end{gather}
\item $\var\in{\cL C}(\cL S, T_1)$ if and only if \rea{muimuj} holds and
\eq{vjlju}
V_{j,QP}= \nu_{PQ}
U_{j,PQ}, \quad \forall (P,Q)\in\cL R_j.
\eeq
\item $\var\in{\cL C}(\cL S,T_1,\rho)$ if and only if  \rea{muimuj}, \rea{uhus}
 and \rea{vjlju} hold, and
\begin{alignat}{4}\label{uenu}
 U_{e,PQ}&= \nu_{PQ}\ov{U}_{e, \rho_e(PQ)},\quad && (P,Q)\in\cL R_e.
\end{alignat}
\item
 Let $p>1$.
$\var\in{\cL C}(\cL T_1,\cL T_2)$ if and only if    \rea{muimuj} and \rea{vjlju} hold, and
\begin{eqnarray}
\label{ujrjnj}
U_{j, P Q}&=& \nu_{P Q}^+
U_{j,(A_j,B_j)( P, Q)},\quad (P,Q)\in\cL R_j\setminus\cL N_j. 
\end{eqnarray}
Also, $\var\in{\cL C}(\cL T_1,\cL T_2,\rho)$ if and only if
\rea{muimuj}, \rea{vjlju} and \rea{ujrjnj}  hold, and 
\begin{alignat}{4}
\label{sqnupq}
U_{e, P Q}&=  
\ov U_{e , (A_e,B_e)
 \rho_e( PQ)},\quad &&(P,Q)\in  \cL N_e,
\\   
\label{sqnupq1} U_{h,  P Q}&=\ov U_{h,\rho( PQ)},\quad  && (P,Q)\in \cL N_h,\\
 U_{s+s_*,  P Q}&=  
 \ov U_{s,
  \rho( P Q)}, \quad  &&(P,Q)\in  \cL N_{s+s_*}.
\label{sqnupq2}
\end{alignat}
\eppp
\end{prop}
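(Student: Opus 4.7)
The plan is to prove each of (i)--(iv) by a direct coefficient matching: write $\varphi(\xi,\eta)=(\xi+U,\eta+V)$ with $U_j=\sum U_{j,PQ}\xi^P\eta^Q$ and $V_j=\sum V_{j,PQ}\xi^P\eta^Q$, then impose $\varphi\circ L=L\circ\varphi$ for every generator $L$ of the family under consideration, and read off the constraints on $\{U_{j,PQ},V_{j,PQ}\}$ monomial by monomial. The labelling convention $V_{j,QP}$ used in the statement is a bookkeeping device: with our diagonal $S_i$, the $\eta_j$-equation $V_j(S_i(\xi,\eta))=\mu_i^{-\delta_{ij}}V_j(\xi,\eta)$ forces the non-vanishing multi-indices to satisfy $(Q,P)\in\cL R_j$, so writing the coefficients as $V_{j,QP}$ with $(P,Q)\in\cL R_j$ puts $U$ and $V$ on the same index set.

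For (i), the $S_i$-equations give $\mu^{P-Q}=\mu_j^{\delta_{ij}}$ for every $i$, i.e.\ $(P,Q)\in\cL R_j$; varying $i$ this yields \rea{muimuj}. For the $\rho$-equations, the elliptic coordinates give $U_e\circ\rho=\overline{V_e}$ and $V_e\circ\rho=\overline{U_e}$, which, using \rla{comput-rhoT1} and the identity $\rho(QP)=\rho_e(PQ)$ from \rea{compos-rho+} and \rea{rhoepq}, produces \rea{veqpo}; the hyperbolic and complex components of $\rho$ act diagonally in the relevant blocks, so the analogous matching directly yields \rea{uhus} and \rea{uhqp}. Part (ii) is then just the $T_1$-equation $U_j\circ T_1=\lambda_{1j}V_j$: substituting $\xi\to\lambda_1\eta$, $\eta\to\lambda_1^{-1}\xi$ into a monomial $\xi^P\eta^Q$ gives a factor $\lambda_1^{P-Q}$, and on $\cL R_j$ the first assertion of \rla{nunu+} identifies $\lambda_{1j}^{-1}\lambda_1^{P-Q}$ with $\nu_{PQ}$, proving \rea{vjlju}. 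For (iii), one combines (i), (ii), and the $\rho$-constraints, and the only new identity that has to be extracted is \rea{uenu}, which arises in the elliptic case because the $\rho$-relation expresses $V_e$ in terms of $\overline{U_e\circ\rho}$, while the $T_1$-relation expresses the same $V_e$ in terms of $\nu_{PQ}U_{e,PQ}$; equating the two and using \rea{nupm1}--\rea{nupm1++} gives \rea{uenu}.

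Part (iv) is the one requiring the most care. Since $T_{1k}$ acts only on the $(\xi_k,\eta_k)$-block, the equation $U_j\circ T_{1k}=U_j$ for $k\neq j$ forces each pair $(U_{j,PQ},U_{j,\widetilde P\widetilde Q})$ with $(\widetilde P,\widetilde Q)$ obtained from $(P,Q)$ by swapping $p_k\leftrightarrow q_k$ to be proportional with ratio $\lambda_{1k}^{p_k-q_k}$; iterating over the $k$'s with $p_k<q_k$ (or $q_k<p_k$) replaces $(P,Q)$ by its ``ordered'' representative $(A_j,B_j)(P,Q)\in\cL N_j$, and the accumulated scalar is precisely $\nu_{PQ}^+$ by \rea{nujab+} and \rla{nunu+}, giving \rea{ujrjnj}. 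That imposing $T_{2k}$ in addition produces no new relations follows because $T_{2k}$ differs from $T_{1k}$ only by the replacement $\lambda_{1k}\mapsto\lambda_{2k}$, so on $\cL R_j$ the two relations agree up to the sign factors already absorbed in \rla{nunu+}. The $\rho$-commutation on the reduced data then gives \rea{sqnupq1}--\rea{sqnupq2} directly in the hyperbolic and complex blocks (where $\rho$ respects the splitting $\cL N_j$), and in the elliptic block it mixes with the $T$-relations through $\rho_e\neq\rho|_{\cL R_e}$; the resulting equation is \rea{sqnupq}, where the $\nu_{PQ}$ factor of \rea{uenu} has disappeared because on $\cL N_e$ one has $\nu_{PQ}^+=1$ by the last equality in \rea{nupm1}, forcing $\nu_{PQ}$ itself to cancel between the two sides.

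The main technical obstacle is the bookkeeping around the elliptic block: $\rho$ swaps $\xi_e\leftrightarrow\overline{\eta_e}$, so every $\rho$-relation couples $U_e$ to $V_e$, and one must verify that the combined $\rho$-, $T_1$- and $T_{1k}$-conditions are mutually compatible and do not collapse to $U_e\equiv 0$. This compatibility is exactly what the identities \rea{nupm1}--\rea{nupm1++} and the involutions $\rho_e$, $\iota_e$ of \rea{iotemove} are designed to encode; once these are in place the verification reduces to a mechanical monomial computation, which I would organize by first proving (i), then layering (ii), then (iii), and finally treating (iv) separately by exploiting the additional symmetries coming from the individual $T_{ik}$'s.
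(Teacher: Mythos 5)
Your plan is essentially the same as the paper's: treat each part by monomial-level coefficient matching against the generators, using \rla{comput-rhoT1} and \rla{nunu+} as the key bookkeeping lemmas, first isolating the $\cL S$-constraint to put $U$ and $V$ on the index set $\cL R_j$, then layering the $T_1$-relation and the $\rho$-relation, and in part (iv) descending to $\cL N_j$ via the $T_{1k}$-actions that swap $p_k\leftrightarrow q_k$. The computations in (i)--(iii) — including the use of $\rho(QP)=\rho_e(PQ)$ from \re{compos-rho+} and \re{rhoepq} to land on \re{veqpo}, and the derivation of \re{uenu} by equating the two expressions for $V_e$ — are exactly what the paper does, and your reduction of \re{uenu} plus \re{ujrjnj} to \re{sqnupq} via $\nu_{PQ}^2=1$ and $\nu_{PQ}^+=1$ on $\cL N_e$ matches the paper's sufficiency computation.

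One step is stated imprecisely enough to warrant a correction. You justify dropping the $\cL T_2$-constraints by saying ``$T_{2k}$ differs from $T_{1k}$ only by the replacement $\lambda_{1k}\mapsto\lambda_{2k}$, so on $\cL R_j$ the two relations agree up to the sign factors already absorbed in \rla{nunu+}.'' This is false as stated for the complex indices: since $\rho$ exchanges the $s$ and $s+s_*$ blocks, $T_{2s}=\rho T_{1s}\rho$ acts on the $(\xi_{s+s_*},\eta_{s+s_*})$-block, \emph{not} on the $(\xi_s,\eta_s)$-block where $T_{1s}$ acts, so the two are not related by a scalar change on the same block. Moreover \rla{nunu+} says nothing about $\lambda_{1k}$ versus $\lambda_{2k}$. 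The clean reason no new relations arise is the group-theoretic one implicit in the paper: each $T_{2j}$ is a composition of a $T_{1i}$ with an $S_m$ (e.g.\ $T_{2e}=T_{1e}S_e$, $T_{2h}=T_{1h}S_h$, $T_{2(s+s_*)}=T_{1s}S_s$, $T_{2s}=T_{1(s+s_*)}S_{s+s_*}$), so commuting with $\cL T_1$ and $\cL S$ forces commuting with $\cL T_2$; this is why the paper's sufficiency check in (iv) only verifies $\varphi\in\cL C(\cL T_1)$ on top of \re{muimuj}. You should replace the ``$\lambda_{1k}\mapsto\lambda_{2k}$'' heuristic with this observation; it also removes the need to track which $k\neq j$ have $p_k\neq q_k$ (these are exactly the hyperbolic ones, by \re{rj} and the non-resonance of $\mu_s$ — another fact your iteration over ``$p_k<q_k$ or $q_k<p_k$'' takes for granted and should make explicit). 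With these repairs, your outline coincides with the paper's proof.
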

 We remark that condition \re{ujrjnj}   holds trivially when $(P,Q)\in \cL N_j$, in which case it becomes
 $U_{j,PQ}=U_{j,PQ}$.
\begin{proof}
 To simplify  notation,
we abbreviate
$$
\rho_a=\rho_a(PQ), \quad\rho_b=\rho_b(PQ), \quad A_j=A_j(P,Q), \quad B_j=B_j(P,Q).
$$

Recall that $\la_e=\ov\la_e, \la_h=\ov\la_h^{-1}$ and $\la_{s+s_*}=\ov\la_s^{-1}$. By definition,
\ga\label{rSet-}\nonumber
S_e=T_{1e}T_{2e}, \quad S_h=T_{1h}T_{2h}, \quad S_{s}=T_{1s}T_{2(s+s_*)}, \quad S_{s+s_*}=T_{1(s+s_*)}T_{2s}.
\end{gather}
In the proof, we will   use the fact
  that $S_j$ is reversible by both involutions in the composition for $S_j$. In particular,
\eq{tijsj}
T_{1j}S_jT_{1j}=S_j^{-1},\quad \forall j.
\eeq
 However, we have $T_{2(s+s_*)}S_sT_{2(s+s_*)}=S_s^{-1}$
and $T_{2s}S_{s+s_*}T_{2s}=S_{s+s_*}^{-1}$.
For simplicity,  we will  derive identities by using \re{tijsj} and
\eq{se-1}
 S_e^{-1}=\rho S_e\rho, \quad S_h^{-1}=\rho S_h\rho, \quad S_{s+s_*}^{-1}=\rho S_s\rho.
\eeq
 Finally, we need one more identity.
Recall that
\begin{alignat*}{4}
&T_{1e}T_{2j}=T_{2j}T_{1e}, \quad j\neq e;\quad
&&T_{1h}T_{2j}=T_{2j}T_{1h}, \quad j\neq h;\\
&T_{1s}T_{2j}=T_{2j}T_{1s}, \quad j\neq s+s_*;\quad
&&T_{1(s+s_*)}T_{2j}=T_{2j}T_{1(s+s_*)}, \quad j\neq s.
\end{alignat*}
Therefore, for any $j$ we have the identity
\eq{sjt1}
T_1S_j T_1=S_j^{-1}.
\eeq
In what follows, we will derive all identities by using \re{tijsj}, \re{se-1} and \re{sjt1}, as well
as $S_iS_j=S_jS_i$,  $T_{1i}T_{1j}=T_{1j}T_{1i}$ and $T_2=\rho T_1\rho$.

(i)
 The centralizer of $\cL S$ is easy to describe.
Namely, $\var\in\cL C(\cL S)$  if and only if
\gan
\label{ujs=-}
U_j\circ S_j=\mu_jU_j, \quad U_j\circ S_k=U_j, \quad k\neq j,\\
\quad V_j\circ S_j=\mu_j^{-1}V_j,\quad V_j\circ S_k=V_j, \quad k\neq j.\label{vjsj}
\end{gather*}
 For $\var\rho=\rho\var$, we need
\ga\label{uhou}
  U_h=\ov {U_h\circ\rho},
\quad U_{s+s_*}=\ov{U_s\circ\rho},\\
V_{e}=\ov{U_e\circ\rho},
 \quad V_h=\ov {V_h\circ\rho}, \quad V_{s+s_*}=\ov{V_s\circ\rho}.
\label{ueve}
\end{gather}
Hence, using \re{compos-rho}-\re{rhoepq}, we have $\ov U_{e,PQ}
=V_{e,\rho(PQ)}=V_{e,\rho_e(QP)}$. The other equalities are obtained in the same way.

(ii) 
 If $\var\in\cL C(\cL S,T_1)\subset\cL C(S,T_1)$, then it satisfies
\begin{gather}\label{ujs=}
V_j=\lambda_{j}^{-1}U_{j}\circ T_{1}.
\end{gather}
This implies $(\ref{vjlju})$.

(iii) Assume furthermore that $\var\in \cL C(\cL S, T_1, \rho)$. Eliminating $V_e$ 
from \re{ujs=} and \re{ueve}, we obtain
 \eq{uele}
 \nonumber
 U_e=\la_e\ov{U_e\circ\rho\circ T_1}.
 \eeq
According to \re{T1rho}, we obtain   \re{uenu} by
$$
U_{e,\rho_b\rho_a}=\la_e\ov\la^{Q-P}\ov U_{e,PQ}=\nu_{PQ}\ov U_{e,PQ}.
$$

%
%
%

(iv)  Let $\var\in\cL C(\cL T_1,\cL T_2)$. Then, in particular, we have
\aln 
U_{j}=U_j(T_{1k}), \quad k\neq j; \quad
V_j=\la_j^{-1}U_j\circ T_1.
\end{align*}
Let $(P,Q)\in \cL R_j\setminus \cL N_j$.
We compose $U_j$ successively by each $T_{1 k}$ if $q_k>p_k$.
We emphasize that 
 such a $k$ is a hyperbolic index.  The previous identity yields
\eq{uL}
U_{j,PQ}=L_{j,PQ}U_{j,A_jB_j}, \quad L_{j, P Q}:=\prod_{k\neq j,  p_k< q_k}\la_{k}^{ q_k- p_k}.
\eeq
By the definition of $\nu_{PQ}^+$, we conclude 
\eq{Ljpqn}
L_{j,PQ}=\nu_{PQ}^+, \quad (P,Q)\in\cL R_j. 
\eeq
If $(P,Q)\in  \cL N_j$, then $(A_j,B_j)=(P,Q)$ and we   have $L_{j,PQ}=\nu_{PQ}^+=1$, so that the   relation \re{uL} just becomes the identity $U_{j,PQ}=U_{j,PQ}$.


Assume now that $\var\in\cL C(\cL T_1,\cL T_2,\rho)$. In addition to the previous conditions, we have \re{uhou} and \re{ueve}.
Hence, \re{uhus}, \re{uenu} and \re{uL} lead to:
\begin{alignat*}{4}
\nu_{PQ}\ov U_{e,\rho_e(PQ)}&=U_{e,PQ}=L_{e,PQ}U_{e,A_eB_e},\quad && (P,Q)\in \cL R_e;\\
\ov U_{h,\rho_h(PQ)}&=U_{h,PQ}=L_{h,PQ}U_{h,A_hB_h},\quad &&(P,Q)\in \cL R_h;\\
\ov U_{s+s_*,\rho(PQ)}&=U_{s,PQ}=L_{s,PQ}U_{s,A_sB_s},\quad &&(P,Q)\in \cL R_s.
\end{alignat*}
Since $\rho_e, \rho_h$ are involutions on  $\cL R_e$ and $\cL R_h$, respectively,
and since $\rho$ is a bijection from $\cL R_s$ onto $\cL R_{s+s_*}$, we obtain
\begin{alignat*}{4}
\nu_{\rho_e(PQ)}\ov U_{e,PQ}&=L_{e,\rho_e(PQ)}U_{e,(A_e,B_e)\circ\rho_e(PQ)},\quad && (P,Q)\in \cL R_e;\\
\ov U_{h,PQ}&=L_{h,\rho_h(AB)}U_{h,(A_h,B_h)\circ\rho_h(PQ)},\quad && (P,Q)\in \cL R_h;\\
\ov U_{s+s_*,PQ}&=L_{s,\rho(AB)}U_{s,(A_s,B_s)\circ\rho(PQ)},\quad && (P,Q)\in \cL R_{s+s_*}.
\end{alignat*}
By \re{Ljpqn}, we copy  the values $L_{j,\rho( P Q)}=\nu_{\rho(PQ)}^+$ from  \re{nupm1++}. We have
\begin{alignat*}{4}
\nu^+_{\rho_j(PQ)}&=\nu^+_{PQ}, \quad &&\text{if $j \neq e
$, and $ (P,Q)\in\cL R_j$;}\\
 \nu^+_{\rho_e(PQ)}&=\nu_{PQ}\nu_{PQ}^+,
 \quad && \text{if $(P,Q)\in\cL R_e$};\\
 \nu_{\rho_e(PQ)}&=\nu_{PQ}, \quad &&\text{if $(P,Q)\in\cL R_e$}.
\end{alignat*}
Finally, we obtain
\begin{eqnarray*}
U_{j,PQ}&=& \nu_{PQ}^+ 
\ov U_{j,(A_j,B_j)\circ\rho_j(PQ)},\quad (P,Q)\in \cL R_j, \quad j=e,h; \\
U_{s+s_*,PQ}&=&\nu_{PQ}^+\ov U_{s,(A_s,B_s)\circ\rho(PQ)},\quad (P,Q)\in \cL R_{s+s_*}.
\end{eqnarray*}
Therefore, we have derived necessary conditions for the centralizers. 
Let us verify that the conditions   are also sufficient. 
Of course, the verification for (i) is straightforward.
Furthermore,  that $\var=I+(U,V)$ commutes with $S_1, \ldots, S_p$
is equivalent to $U_{j,PQ}=V_{j,QP}=0$ for all $(P,Q)\in \cL R_j$, which
is also trivial in cases (ii) and (iii).

For (ii),  \re{muimuj} and \re{vjlju} imply that $\var$ commutes with $T_1$. We  verify that $\var$ commutes
with $\rho$. In other words, \re{veqpo} and \re{uhqp} hold.  The latter follows immediately from \re{uhus} and \re{vjlju}.
For the former, take $(P,Q)\in \cL R_e$. By \re{vjlju} and \re{uenu}, we get $V_{e,QP}=\nu_{PQ}U_{e,PQ}=\ov U_{e,\rho_e(PQ)}$, which is \re{veqpo}.


For (iii),  let us  verify that \re{ujrjnj}, 
 \re{muimuj}, and \re{vjlju} are sufficient conditions for $\var\in\cL C(\cL
T_1,\cL T_2)$. By \re{vjlju}, we get $\var T_1=T_1\var$.
 Also,  for $\var\in\cL C(\cL T_1)$ it remains to show that for $(P,Q)\in\cL R_j$
 \eq{uit1j}
 (U_j\circ T_{1k})_{PQ}=U_{j,PQ}, \quad k\neq j; \quad (U_{j}\circ T_{1j})_{QP}=\la_jV_{j,QP}.
 \eeq
   We introduce $(P_j,Q_j)$ via $\xi^P\eta^Q\circ T_{1j}=\la_j^{p_j-q_j}\xi^{P_j}\eta^{Q_j}$
 and also denote $(P_j,Q_j)$ by $(P,Q)_j$.
We
 remark that \re{ujrjnj} 
  also holds for $(P,Q)\in\cL N_j$.
Therefore, we will use \re{ujrjnj} 
for all $(P,Q)\in\cL R_j$.

  For $k\neq j, h$, we  have $(P_k,Q_k)=(P,Q)$. Thus in this case we immediately
 get the first identity in \re{uit1j}.
  Using \re{ujrjnj} twice, we obtain for $j\neq h$
 \begin{align*}
 (U_j\circ T_{1h})_{PQ}&=\la_h^{p_h-q_h}U_{j,(PQ)_h}=\la_h^{p_h-q_h}\nu_{(PQ)_h}^+U_{j,(A_j,B_j)(P,Q)}\\
 &=\la^{p_h-q_h}\nu_{(PQ)_h}^+ \ov \nu_{PQ}^+U_{j,PQ}= U_{j,PQ}.
 \end{align*}
 Combining with the identities which we have proved, we get    $(U_j\circ T_{1j})_{QP}=(U_j\circ T_1)_{QP}=(\la_jV_j)_{QP}$ for $j\neq h$.
 This gives us all the identities in \re{uit1j} for $(P,Q)\in\cL R_j$. These identities are trivial when $(P,Q)$ is not in $\cL R_j$. Therefore, we have shown that these conditions are sufficient for $\var\in\cL C(\cL T_1,\cL T_2)$.

 Finally, we need to verify that \re{muimuj}, \re{vjlju}, and \re{ujrjnj}-\re{sqnupq2} imply that $\var$ and $\rho$ commute. In other words, we need to verify \re{uhus} and \re{uenu}, by (iii).
 We have
\begin{equation}\label{non-commut}
 (A_eB_e)\circ\rho_e\circ (A_eB_e) = (A_eB_e)\circ\rho_e \quad  \text{on $\cL R_e$}.
 \end{equation}
Let $(P,Q)\in \cL R_e$.  By \re{ujrjnj}, \re{sqnupq}, \re{non-commut} and \re{ujrjnj}, we get
$$U_{e,PQ}=\nu_{PQ}^+U_{e,(A_e,B_e)(P,Q)}=  \nu_{PQ}^+\ov U_{e,(A_e,B_e)\rho_e(P,Q)}=
\nu_{PQ}^+\nu^+_{\rho_e(P,Q)}\ov U_{e,\rho_e(P,Q)}.$$  By \re{nupm1++}, \re{nupm1}, $\nu_{PQ}^+\nu^+_{\rho_e(PQ)}=\nu_{PQ}$. We obtain \re{uenu}.
%
%
%
Let us prove \re{uhus} with $PQ\in \cL R_{s+s_*}$. Using   \re{ujrjnj}, \re{sqnupq2} with $PQ=(A_{s+s_*}B_{s+s_*})(PQ)$,\re{commut-arho+},
\re{ujrjnj} with $PQ=\rho(PQ)$ successively, we get
\aln
U_{s+s_*,PQ}&=\nu^+_{PQ}U_{s+s_*,A_{s+s_*}B_{s+s_*}(PQ)}=\nu^+_{PQ}\ov U_{s,\rho(  A_{s+s_*}B_{s+s_*}(PQ))}
\\
&= \nu^+_{PQ}\ov U_{s,A_{  s}B_{  s}(\rho(PQ))}=\nu^+_{PQ}\nu^+_{\rho(PQ)}\ov U_{s,\rho(PQ)}.
\end{align*}
which gives us \re{uhus} by \re{nupm1++}.
To prove \re{uhus} for hyperbolic index, apply successively from left to right, \re{ujrjnj},  \re{sqnupq1} with $PQ=(A_hB_h)(PQ)$, \re{commut-arho} and \re{ujrjnj} with $PQ=\rho_h(PQ)$~:
\begin{align*}
  \nu_{PQ}^+U_{h,PQ}
&=U_{h,(A_hB_h)(PQ)}=\ov  U_{h,\rho_h(A_hB_h)(PQ)}\\
&=\ov U_{h,(A_hB_h)\rho_h(PQ)}=\nu_{\rho_h(PQ)}^+\bar U_{h,\rho_h(PQ)}.
\end{align*}
By \re{nupm1++} again, we obtain \re{uhus}.
 The proof is complete.
\end{proof}

 \subsection{Normalized mappings}
We have described the conditions on centralizers. We now determine  complements of these conditions
to define normalized mappings.
\begin{defn}\label{dfnorm} Let $\var=I+(U,V)$ be a formal mapping tangent to the identity.
\bppp
\item We say that $\var$ is {\it normalized} with respect to $S_1,\ldots, S_p$ if
\eq{ujpq=0}\nonumber
U_{j,PQ}=0=V_{j,QP}, \quad \text{if $(P,Q)\in\cL R_j, \quad \forall j$}.
\eeq
Furthermore, $\rho\var\rho$ is  normalized w.r.t. $S_1,\ldots, S_p$ if and only if $\var$ is.
\item  We say that $\var$ is {\it normalized} w.r.t.
$\{\cL S, T_1\}$, if
\eq{}\label{vjlju++}
V_{j,QP}=- \nu_{PQ} U_{j,PQ},
\quad (P,Q)\in\cL R_j.
\eeq
\item
 We say that $\var$ is {\it normalized} w.r.t.
$\{\cL S, T_1,\rho\}$ if
\begin{alignat}{4}
\label{uhus+1} U_{h,PQ}&=- \ov{U}_{h,\rho(PQ)}, \  &&\forall (P,Q)\in\cL R_h;
\\
\label{ussp}
 U_{s+s_*,PQ}&=-\ov{U}_{s,\rho(PQ)},\  &&\forall(P,Q)\in\cL R_{s+s_*};\\
\label{uhus+3} U_{e,PQ}&=-\nu_{PQ}\ov{U}_{e,\rho_e(PQ)},\quad  &&\forall(P,Q)\in\cL R_e.
\end{alignat}
\item Let $p>1$. We say that $\var$ is {\it normalized} w.r.t. $\{\cL T_1,\cL T_2\}$ if
\eq{ujrjnjC}
  U_{j, P Q}=0,\quad (P,Q)\in \cL N_j.
\eeq
 We say that $\var$ is {\it normalized} w.r.t. $\{\cL T_1,\cL T_2,\rho\}$ if
\begin{alignat}{4}
\label{UePQ-}
U_{e, P Q} &= -  
 \ov U_{e,   (A_e,B_e)\circ
 \rho_e( P, Q)},\quad &&\forall (P,Q)\in \cL N_e;\\
\label{UhPQ-}
U_{h, P Q}&=-\ov U_{h,\rho( P, Q)}, &&\forall (P,Q)\in\cL N_h;\\
\label{UssP} U_{s+s_*,PQ}&=-  
\ov U_{s, 
\rho( P, Q)}, \quad &&\forall (P,Q)\in  \cL N_{s+s_*}.
\end{alignat}
\eppp
  The set of normalized mapping w.r.t. to a family $\cL F$ is denoted ${\cL C}^{\mathsf{c}}(\cL F)$.
\end{defn}

\begin{lemma}\label{FHG-}
 Let $F$ be a formal map which is tangent to the identity.
There exists a unique formal decomposition $F=HG^{-1}$ with $G\in{\cL C}(\cL S,T_1,\rho)$
$($resp. ${\cL C}(\cL T_1,\cL T_2, \rho))$
and $H\in {\cL C}^\mathsf{c}(\cL S,T_1,\rho)$ $($resp. ${\cL C}^\mathsf{c}(\cL T_1,\cL T_2, \rho)))$.
If $F$ is convergent, then $G$ and $H$ are also convergent.
\end{lemma}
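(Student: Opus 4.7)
The strategy is to reduce both cases to the general decomposition principle of \rl{fhg-}. For each of the two families, I will exhibit an $\rr$-linear projection $\pi$ on $(\widehat{\mathfrak M}^2_n)^n$ onto the normalized subspace, whose kernel is the centralizer subspace; I will verify that $\pi$ preserves degrees and satisfies the estimate $\pi(E) \prec C\,E_{sym}$ for some universal constant $C$. The assertion then follows immediately from \rl{fhg-}.

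For the case $\{\cL S, T_1, \rho\}$, the centralizer is described in \rp{STiR}(iii) and the normalized space in \rd{dfnorm}(iii). I build $\pi$ orbit by orbit. For each $(j, P, Q)$ with $(P, Q) \notin \cL R_j$, the centralizer forces $U_{j,PQ} = V_{j,QP} = 0$, so $\pi$ acts as the identity on these coefficients. For $(P, Q) \in \cL R_j$ I group together the finitely many Taylor coefficients tied to $(U_{j,PQ}, V_{j,QP})$ through the centralizer relations: the partner coefficients $V_{j,QP}$ via $V = \nu_{PQ}U$, the partner $U_{j,\rho(PQ)}$ via the anti-holomorphic relation from $\rho$ (hyperbolic and complex cases), or $U_{e,\rho_e(PQ)}$ via $\rho_e$ in the elliptic case. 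Because the normalized conditions only sign-flip the $\rho$- and $\rho_e$-relations on the $U$-variables, a direct linear-algebraic computation on each orbit shows that $\hat{\cL G}$ and $\hat{\cL H}$ are $\rr$-complementary; explicit averaging formulas such as
\[
\pi(E)^u_{h,PQ} = \tfrac{1}{2}\bigl(E^u_{h,PQ} - \ov E^u_{h,\rho(PQ)}\bigr), \quad \pi(E)^v_{h,QP} = E^v_{h,QP} - \tfrac{\nu_{PQ}}{2}\bigl(E^u_{h,PQ} + \ov E^u_{h,\rho(PQ)}\bigr)
\]
give the normalized projection of a hyperbolic orbit, and analogous formulas apply to elliptic and complex orbits. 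Degree preservation is automatic, since $\rho$, $\rho_e$, $(A_j, B_j)$, and the swap $(P, Q) \leftrightarrow (Q, P)$ all preserve $|P| + |Q|$.

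The majorization $\pi(E) \prec C\,E_{sym}$ is the crux. Each coefficient of $\pi(E)$ is an $\rr$-linear combination, with scalar weights of absolute value at most $1$, of finitely many entries $E_{j', P'Q'}$ or their complex conjugates, where $(j', P'Q')$ is obtained from $(j, PQ)$ by a permutation of the $2p$ variables $(\xi_1, \eta_1, \ldots, \xi_p, \eta_p)$ together with a relabeling of the target basis vector. Indeed, by \re{rRho-} the map $\rho$ acts on monomials by swapping $\xi_k \leftrightarrow \eta_k$ for elliptic and hyperbolic indices and $\xi_s \leftrightarrow \xi_{s+s_*}$, $\eta_s \leftrightarrow \eta_{s+s_*}$ for complex indices (together with complex conjugation of the coefficient); $\rho_e$ acts similarly by \re{rhoepq}; and $\nu_{PQ} = \pm 1$ by \rl{nunu+}. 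Since conjugation preserves absolute values, the triangle inequality yields $|\pi(E)^{u/v}_{j, PQ}| \le C\,(E_{sym})^{u/v}_{j, PQ}$ with $C$ independent of $(j, P, Q)$.

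With $\pi$ in hand, \rl{fhg-} produces the unique formal decomposition $F = HG^{-1}$ with $G - I$ in the centralizer and $H - I$ normalized, and delivers convergence of $G$ and $H$ when $F$ converges. The second case $\{\cL T_1, \cL T_2, \rho\}$ is handled identically, now using the characterizations \rp{STiR}(iv) and \rd{dfnorm}(iv); the only new feature is the multiindex operation $(A_j, B_j)$ in \re{ujrjnj}, which permutes $\xi_k \leftrightarrow \eta_k$ for indices $k \neq j$ with $q_k > p_k$ and is therefore again absorbed by $E_{sym}$. The main technical point to check is the orbit-by-orbit $\rr$-complementarity of $\hat{\cL G}$ and $\hat{\cL H}$, which rests on the sign calibration in \rd{dfnorm}: the centralizer and normalized conditions differ by a single sign on each generator of the relation lattice, so each orbit splits as a direct sum of the $\pm 1$ eigenspaces of the natural $\rr$-linear involutions $U \mapsto \ov U \circ \rho$ and $U \mapsto \nu\,\ov U \circ \rho_e$, including at fixed points of these involutions where the eigendecomposition is simply the splitting of $\cc$ into real and imaginary parts.
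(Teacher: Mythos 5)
Your proposal is correct and follows the paper's own proof in all essentials: both reduce the lemma to \rl{fhg-} by constructing an orbit-wise $\rr$-linear projection $\pi$ onto $\cL C^{\mathsf c}_2$, both use the $\tfrac{1}{2}(U \pm \text{conjugate image})$ averaging formulas coming from the $\pm1$ eigenspaces of the involutions built from $\rho$, $\rho_e$, and $\nu_{PQ}$, and both close by observing that each of $\rho$, $\rho_e$, $(A_j,B_j)$ acts on multiindices by permuting the variables $(\xi_k,\eta_k)$, so $\pi(E)\prec E_{sym}$. The only differences from the paper are stylistic (your eigenspace terminology vs.\ the paper's direct substitution check), so there is nothing to add.
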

\begin{proof} We will apply \rl{fhg-} as follows. Let
$\hat H$   be the set of mappings
in $ {\cL C}_2^{\mathsf{c}}(\cL S, \cL T_{1}, \rho)$.
Note that $\hat H$ is a $\rr$-linear subspace of   $({\widehat { \mathfrak M}}_n^2)^n$.
We will define a $\rr$-linear projection $\pi$ from  $({\widehat { \mathfrak M}}_n^2)^n$
onto $\hat H$ such that $\pi$ preserves the degree of $F$ if $F$ is homogeneous. We will
 show that $\hat G=(\operatorname{I}-\pi)\hat H$  agrees with  ${\cL C}_2(\cL S, \cL T_{1}, \rho)$.
We will derive estimates on $\pi$ stated in \rl{fhg-}, from which we conclude the convergence of $H, G$.
The same argument will be applied  to  the second case of $\cL C(\cL T_1, 
\rho)$ and $\cL C^{\mathsf c}(\cL T_1,
\rho)$.

For the first case, let us define a projection $\pi \colon ({\widehat { \mathfrak M}}_n^2)^n 
\to \hat H$.
We decompose
$$
(U,V)=(U'+U'',V'+V''),\quad
\pi (U,V)=(U',V').
$$
 We first define
\eq{notinrj}
U'_{j,PQ}=U_{j,PQ}, \quad V_{j,PQ}'=V_{j,PQ}, \quad U_{j,PQ}''=0,
\quad V_{j,PQ}''=0, \eeq
for  $ (P,Q)\not\in\cL R_j.$
Suppose that $(P,Q)\in \cL R_e$. We have
$$
U_{e,PQ}=U_{e,PQ}'+U_{e,PQ}'', \quad U_{e,\rho_e(PQ)}=U_{e,\rho_e(PQ)}'+U_{e,\rho_e(PQ)}''.
$$
According to \re{uhus+3} and  \re{uenu}, we  need to seek solutions that satisfy
\eq{uepq'}
U_{e,PQ}'+\nu_{PQ}\ov U_{e,\rho_e(PQ)}'=0, \quad
U_{e,PQ}''-\nu_{PQ}\ov U_{e,\rho_e(PQ)}''=0.
\eeq
Hence,  for $(P,Q)\in \cL R_e$ we  choose
\begin{gather}\label{uepq'+}
\nonumber
 U_{e,PQ}'=\f{1}{2}(U_{e,PQ}-\nu_{PQ}\ov U_{e,\rho_e(PQ)}), \   U_{e,PQ}''=\f{1}{2}(U_{e,PQ}+\nu_{PQ}\ov U_{e,\rho_e(PQ)}).
\end{gather}
 We verify directly that the solutions satisfy \re{uepq'} as follows: 
\aln
U_{e,PQ}'+\nu_{PQ}\ov U_{e,\rho_e(PQ)}'&=
\f{1}{2}(U_{e,PQ}-\nu_{PQ}\ov U_{e,\rho_e(PQ)})\\ &\quad +
\f{1}{2}(\nu_{PQ}\ov U_{e,\rho_e(PQ)}-\nu_{PQ}\nu_{\rho_e(PQ)}\ov U_{e,PQ})=0.
\end{align*}
 Here we have used  that $\rho_e$ is an involution on  $\cL R_e$
and $\nu_{\rho_e(PQ)}\nu_{PQ}=1$ from \re{nupm1+}.

For $(P,Q)\in \cL R_h$,   we achieve   \re{uhus+1} and    the first identity in \re{uhus} by taking
\ga\label{uhpq'}\nonumber
 U_{h,PQ}'=\f{1}{2}(U_{h,PQ}- \ov U_{h,\rho(PQ)}),
\quad  U_{h,PQ}''=\f{1}{2}(U_{h,PQ}+ \ov U_{h,\rho(PQ)}).
\end{gather}
For $(P,Q)\in\cL R_{s+s_*}$,  we achieve  the second identity in \re{uhus} and \re{ussp} by taking
\ga\label{usspq'}\nonumber
 U_{s+s_*,PQ}'=\f{1}{2}(U_{s+s_*,PQ}-\ov U_{s,\rho(PQ)}),
\quad  U_{s+s_*,PQ}''=\f{1}{2}(U_{s+s_*,PQ}+ \ov U_{s,\rho(PQ)}).
\end{gather}

We have determined coefficients for $U_{j,PQ}', U_{j,PQ}''$ with $(P,Q)\in\cL R_j$.
Let us set for $(P,Q)\in\cL R_j$,
\begin{eqnarray}
V_{j,QP}'' &=& \la_j^{-1}\la^{P-Q}U_{j,PQ}''\label{v''},\\
V_{j,QP}'&=&V_{j,QP}-V_{j,QP}''.
\end{eqnarray}
This fulfills the conditions on $V_j'$ and $V_j''$ easily. Note that the first identity means that $(U'',V'')$
commutes with $T_1$. We have obtained the required formal decomposition.

To prove the convergence,  we start with
\begin{equation}\label{nupq}
\la_j^{-1}\la^{P-Q} = \nu_{PQ}=\pm1
\end{equation}
 for $(P,Q)\in\cL R_j$.
So $\pi$ is indeed an $\rr$-linear projection which preserves  degrees. Since
$|\nu_{PQ}|=1$, 
we have that
$$
|U_{PQ}'|\leq \max_{(P',Q')}|U_{P'Q'}|.
$$
Here $(P',Q')$ runs over all permutations of $(P,Q)$ in $2p$ coordinates.
The same holds for $V'$. Hence, with the notation of \rl{fhg-}, we have
$$
\{\pi (U,V)\}_{sym}\prec (U, V)_{sym}.
$$
The existence and uniqueness as well as the convergence also follow
 from \rl{fhg-}.

We now consider the second case of $\cL C(\cL T_1,\cL T_2,\rho)$ by  minor changes.
Let us define a projection $\pi\colon(({\widehat { \mathfrak M}}_n^2)^n\to \hat H$. Here $\hat H$
is the space associated with the mappings satisfying the normalized conditions \re{ujrjnjC}-\re{UssP}.
Let  $\hat G=(\operatorname{I}-\pi)\hat H.$ 
We decompose as above
$$
(U,V)=(U'+U'',V'+V''),\quad
\pi (U,V)=(U',V').
$$
Recalling that $\iota_e=(A_e,B_e)\circ\rho_e$ is  an involution on $\cL N_e$, we  choose~:
\begin{alignat}{4}
U_{j,PQ}''&=\f{1}{2}(U_{j,PQ}+  
\ov U_{j, 
\rho_j(PQ)}), \quad && (P,Q)\in\cL N_h,\\
\label{consist1} U_{j,PQ}'&=\f{1}{2}(U_{j,PQ}-  
\ov U_{j, 
\rho_j(PQ)}),\quad  && (P,Q)\in\cL N_h,\\ 
U_{e,PQ}''&=\f{1}{2}(U_{e,PQ}+\ov U_{e,\iota_e(PQ)}), &&\quad (P,Q)\in\cL N_e,\\
\label{consist2}
U_{e,PQ}'=&\f{1}{2}(U_{e,PQ}-U_{e,\iota_e(PQ)}),&&\quad (P,Q)\in\cL N_e,\\
U_{s+s_*,PQ}'' &= \f{1}{2}(U_{s+s_*,PQ}+
\ov U_{s, 
\rho(PQ)}),\quad && (P,Q)\in\cL N_{s+s_*},\\
\label{consist3}
U_{s+s_*,PQ}'&=\f{1}{2}(U_{s+s_*,PQ}- 
\ov U_{s, 
\rho(PQ)}),\quad && (P,Q)\in\cL N_{s+s_*}.
\end{alignat}
  We still use \re{notinrj} for $(P,Q)\not\in\cL R_j$.   For $(P,Q)\in R_j$,
define $V_{j,QP}''$  by \re{v''} and   $V_{j,QP}'=V_{j,QP}-V_{j,QP}''$,
 after  we set
\eq{ujrjnjCs}
U_{j, P Q}'' =\nu_{P Q}^+U''_{j,(A_j,B_j)( P, Q)},
\quad
U_{j, P Q}' =U_{j,PQ}-U_{j,PQ}'', \
(P,Q)\in\cL R_j\setminus\cL N_j.
\eeq

Let us   verify that $\pi (U,V)=(U',V')$ is in $\hat H$. 
To verify  \re{UePQ-} for  $j=e$,   via  \re{consist1}
we compute
\aln
U'_{e,PQ}+  
\ov U'_{e,(A_e,B_e)\circ\rho_e(PQ)}=
\f{1}{2}(U_{e,PQ}-   
\ov U_{e,\iota_e(PQ)})
  +
\f{1}{2}(U_{e,\iota_e(PQ)}-  
\ov U_{e,PQ})=0.
\end{align*}
We also know that  $\rho$ is an involution on $\cL N_h$ and it is a bijection from $\cL N_{s+s_*}$ onto $\cL N_{s}$.
Analogously, we verify \re{UhPQ-}    and  \re{UssP} via \re{consist1} and \re{consist3}.
Note that $(P,Q)\to (A_j,B_j)(P,Q)$ is a projection on   $\cL N_j$. Analogously, we verify \re{ujrjnjC} via \re{ujrjnjCs}.
This shows that $\pi (U,V)$ is in $\hat H$. We can also verify that $(U'',V'')=(\operatorname{I}-\pi
)(U,V)$ satisfies the conditions
on the centralizer,  i.e. it  is in $\hat G$.

As before, we have
$$
|U_{j,PQ}'|, |U_{j,PQ}''|\leq  C \max_i\max_{(P',Q')\text{permutation of } (P,Q)}|U_{i,P'Q'}|.
$$
Equations   \re{v''} lead to the same inequality for $V''$ and hence for $V'=V-V''$. Hence, again the result follows from \rl{fhg-}.
\end{proof}

\subsection{Convergence of linearizations.}
\begin{prop}\label{linearS} Assume that the family of involutions $\{\cL T_1, \cL T_2, \rho\}$ is
formally linearizable. Assume further that $\sigma_1,\ldots, \sigma_p$
 defined by \rea{sigma_i}-\rea{sigma_i+}, are linear.
 \bppp
\item There is
 a  biholomorphic mapping in the centralizer of $\{\cL S,\rho\}$
which linearizes $\tau_1$ and $\tau_2$.
\item
Assume further that $\tau_1=T_1$ and $\tau_2=T_2$.
Then $\{\tau_{11},\ldots, \tau_{1p},\rho\}$ is holomorphically linearizable.
\eppp
\end{prop}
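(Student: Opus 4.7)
The plan is to use the hypothetical formal linearization together with the decomposition of \rl{FHG-} to produce a canonical formal conjugacy, and then establish its convergence by a majorant argument.

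For part (i), the hypothesis furnishes a formal biholomorphism $\Psi=I+O(2)$ with $\Psi\rho=\rho\Psi$ and $\Psi^{-1}\tau_{ij}\Psi=T_{ij}$; in particular $\Psi^{-1}\sigma_j\Psi=S_j$, and since $\sigma_j=S_j$ by assumption this means $\Psi\in\cL C(\cL S,\rho)$ as a formal series. Apply the first case of \rl{FHG-} to decompose $\Psi=HG^{-1}$ with formal $G\in\cL C(\cL S,T_1,\rho)$ and formal $H\in\cL C^{\mathsf c}(\cL S,T_1,\rho)$. Since $G$ commutes with $T_1$, $\rho$, and hence with $T_2=\rho T_1\rho$, the identity $H^{-1}\tau_iH=G^{-1}T_iG=T_i$ holds for $i=1,2$; moreover $H$ inherits commutation with $\rho$ and $\cL S$ from $\Psi$ and $G$. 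For part (ii), with $\tau_1=T_1$ and $\tau_2=T_2$, the formal $\Psi$ lies in $\cL C(T_1,T_2,\rho)$; applying the second case of \rl{FHG-} yields $\Psi=HG^{-1}$ with $G\in\cL C(\cL T_1,\cL T_2,\rho)$, $H\in\cL C^{\mathsf c}(\cL T_1,\cL T_2,\rho)$, whence $H^{-1}\tau_{1j}H=G^{-1}T_{1j}G=T_{1j}$ for each $j$ and $H$ commutes with $\rho$.

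What remains is convergence of the normalized formal $H$. Writing $\tau_1=T_1+R_1$ and $H=I+h$, the conjugacy equation $\tau_1H=HT_1$ reads
\[
h\circ T_1-T_1\circ h=R_1(I+h),
\]
with analogous equations for $\tau_2$ in part (i) and for each $\tau_{ij}$ in part (ii). The normalization conditions of \rd{dfnorm} determine every Taylor coefficient of $h$ uniquely: on the resonant set $\cL R_j$ the relevant divisor factor is $\nu_{PQ}=\pm 1$ by \rl{nunu+} and therefore introduces no denominators, while off $\cL R_j$ the divisor $\la^{P-Q}-\la_j$ remains bounded away from zero because of the elementary structure of the spectrum of $T_1$ together with the involutive character of $\tau_1$. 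Because $|\nu_{PQ}|=1$, the projection $\pi$ underlying \rl{FHG-} satisfies a uniform-in-degree bound $\pi(F)\prec C F_{\mathrm{sym}}$, so the coupled conjugacy equations can be majorized by a scalar functional equation solvable at the origin by the holomorphic implicit function theorem, in the spirit of the majorant argument in \rt{thm-conv-nf}.

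The main obstacle is precisely this majorant step: one must simultaneously handle several coupled conjugacy equations (two in case (i), $2p$ in case (ii)) while preserving the normalization and $\rho$-equivariance. Compatibility of the different solutions is ensured by the uniqueness half of \rl{FHG-} combined with the complete description of centralizers and of their normalized complements in \rp{STiR} and \rd{dfnorm}: once $h$ is identified as the unique formal normalized conjugacy of one of the equations, it automatically satisfies the others. Convergence then reduces, as in the proof of \rt{thm-conv-nf}, to solving a single scalar majorant equation at the origin via the holomorphic implicit function theorem.
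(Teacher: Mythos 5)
Your opening architecture matches the paper's: use the formal linearization $\Psi$, reduce to the decomposition $\Psi=\Psi_1\Psi_0^{-1}$ from \rl{FHG-}, identify $\Psi_1$ as the unique normalized formal conjugacy, and then estimate $\Psi_1$. However, there is a genuine gap in your convergence argument, and it concerns the single most important mechanism in the proof.

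You write that off $\cL R_j$ the divisor $\la^{P-Q}-\la_j$ ``remains bounded away from zero because of the elementary structure of the spectrum of $T_1$ together with the involutive character of $\tau_1$.'' This is false in the setting of the proposition. The hyperbolic eigenvalues satisfy $|\mu_h|=1$, so under the Brjuno assumption of \rt{rigidQ} the quantities $\mu^{P-Q}-\mu_j$ are subject to genuine small-divisor accumulation; nothing about $T_1$ being an involution prevents $\la^{P-Q}-\la_j$ from being arbitrarily small along infinitely many multi-indices. What actually kills all these potential small divisors is not a lower bound on them but the fact that the coefficients $U_{j,PQ}$ and $V_{j,QP}$ of $\Psi_1$ \emph{vanish identically} for $(P,Q)\notin\cL R_j$, because $\Psi_1$ lies in $\cL C(\cL S)$ (this follows from $\Psi\in\cL C(\cL S,\rho)$ together with $\Psi_0\in\cL C(\cL S,T_1,\rho)$ in the decomposition, as you correctly observed). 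The linearity of $\sigma_1,\ldots,\sigma_p$ is precisely what places $\Psi_1$ in the $\cL S$-centralizer and thereby confines all nonzero coefficients to the resonant set $\cL R_j$, where \rl{nunu+} gives $\nu_{PQ}=\pm1$ and there is nothing to divide by. Your proof attempt never invokes this vanishing; without it the majorant step you sketch cannot be carried out, since the coupled equations would have unbounded divisors off $\cL R_j$.

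There is also a smaller issue of the same flavor. Even on $\cL R_j$, for the hyperbolic and complex indices $j=h,s,s+s_*$ one needs more than $\nu_{PQ}=\pm1$: in part (i) the combination of the $\rho$-equivariance of $\Psi_1$ (\rp{STiR} (iii)) with the normalization conditions of \rd{dfnorm} (iii) forces $U_{j,PQ}=0$ for all $(P,Q)\in\cL R_j$ when $j\neq e$, so only the elliptic components $U_{e,PQ}, V_{e,QP}$ are nontrivial and they are determined directly from $g_e\circ\Psi_1$; likewise in part (ii) one uses \rp{STiR} (iv), \rd{dfnorm} (iv), and the identities derived by composing the conjugacy equations with the various $T_{1\ell}$ to reduce $(P,Q)\in\cL R_j\setminus\cL N_j$ to the already-controlled $\cL N_j$ case. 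Your proposal gestures at this by invoking uniqueness, but the actual cancellations are what produce the estimate $|U_{j,PQ}|\leq C|\{g\circ\Psi_1\}_{PQ}|$ that feeds the majorant equation, and they need to be exhibited, not just asserted to exist. With the centralizer-forced vanishing and these cancellations in place, the final majorant step is routine, as you say; without them, it does not close.
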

\begin{proof} (i) Suppose that $\Psi$ is a formal mapping satisfying
\eq{conj-tj}\nonumber
\Psi^{-1}\tau_{1j}\Psi=T_{1i_j}, \quad \Psi\rho=\rho\Psi.
\eeq
Then $T_{1j}=(L\Psi)\circ T_{1i_j}\circ (L\Psi)^{-1}$, and $L\Psi$ commutes with $\rho$.
 Replacing $\Psi$ by $\Psi\circ L\Psi^{-1}$, we may assume that $\Psi$ is tangent to the identity and $i_j=j$.
We decompose
 $
\Psi=\Psi_1\Psi_0^{-1},
$
where $\Psi_1$ is normalized w.r.t. $\cL S,T_1,\rho$ and $\Psi_0$ is in the centralizer of $\cL S,T_1,\rho$.
Since $\Psi,\Psi_0$ commute with the $S_j$'s and $\rho$, then $\Psi_1$ commutes with the $S_j$'s and $\rho$ too.
We now let $\Psi$ denote $\Psi_1$.

To be more specific, let us write
$$
\tau_{1}\colon \begin{cases}
    \xi'_i= \lambda_{i}\eta_{i}+ f_{i}(\xi,\eta)
    \quad i=1,\ldots, p,\\ \eta'_i=\lambda_{i}^{-1}\xi_{i}+ g_{i}
    (\xi,\eta)\quad i=1,\ldots, p,\\
\end{cases}
$$
and
$$
\Psi\colon\begin{cases}
\xi'_i= \xi_{i}+U_{i}(\xi,\eta)\quad i=1,\ldots,p,\\
\eta'_i=\eta_{i}+V_{i}(\xi,\eta)\quad i=1,\ldots, p.\\
\end{cases}
$$
Let us write that $\Psi$ conjugates $\tau_{1}$ to
$$
T_1\colon
    \xi'_i= \lambda_{i}\eta_{i},\quad
      \eta'_i=\lambda_{i}^{-1}\xi_{i}, \quad i=1,\ldots,p.
$$
 We have $\Psi\circ T_{1}= \tau_{1} \circ
\Psi$; that is
\ga\label{lai-1-}
\lambda_{i}V_i-U_i\circ T_{1}=   - f_{i}\circ \Psi(\xi,\eta)\quad i=1,\ldots, p,\\
\label{lai-1}\lambda_{i}^{-1}U_i-V_i\circ T_{1}= -g_{i}\circ \Psi(\xi,\eta)\quad i=1,\ldots, p.
\end{gather}
Since $\Psi$ is normalized with respect $\{\cL S, T_1,\rho\}$, it satisfies \rd{dfnorm}   (iii).
Since  $\Psi$ commutes with each $S_j$, then $U_{j,PQ}=V_{j,QP}=0$ for $(P,Q)\not\in\cL R_j$.  Since it also commutes with $\rho$, then by \re{uhus} and \re{uhus+1}-\re{ussp} we obtain $U_{j,PQ}=0$ for $(P,Q)\in\cL R_j$
and $j=h,s,s+s_*$.

We need to majorize $U_{e,PQ},V_{e,QP}$ for $(P,Q)\in\cL R_e$.
 By   \re{lai-1} and \re{nupq}, we obtain
$$
U_{e,PQ}-\nu_{PQ}^{-1}V_{e,QP}=-\la_e\{g_e\circ\Psi\}_{PQ}.
$$
Using  \re{veqpo} and \re{uhus+3}, we obtain  $V_{e,QP}=\ov U_{e,\rho_e(PQ)}=-\nu_{PQ}^{-1}U_{e,PQ}$, and
hence
$$
U_{e,PQ}=-\f{1}{2}\la_e\{g_e\circ\Psi\}_{PQ},\quad
V_{e,QP}=\f{1}{2}\nu_{PQ}\la_e\{g_e\circ\Psi\}_{PQ}.
$$
Therefore, we have
$$
|V_{e,QP}|,|U_{e,PQ}|\leq C \left|\{g_j\circ\Psi\}_{PQ}\right|.
$$
The above holds for $(P,Q)\in\cL R_e$. It holds trivially for $(P,Q)\not\in\cL R_e$.
 In view of \re{FiGp},  
we then have
$$
\psi_{sym}\prec C g_{sym}\circ\Psi_{sym}= g_{sym}\circ(I_{sym}+\psi_{sym}).
$$
Therefore, $\psi_{sym}$ is convergent at the origin and so is $\Psi$.

(ii)
Assume now that $\sigma=S, \tau_1=T_1, \tau_2=T_2$ are linear. Suppose that $\Psi$ linearizes the $\{\tau_{ij}\}$ and
commutes with $\rho$. We decompose $\Psi=\Psi_1\Psi_0^{-1}$ with $\Psi_1$ being normalized w.r.t. $\cL T_1,\cL T_2,\rho$ and
with $\Psi_0$ being in the centralizer of $\cL T_1,\cL T_2,\rho$.
 Since $\Psi^{-1}\tau_{ij}\Psi=T_{ij}$, we have
$
\Psi_1^{-1}\tau_{i j}\Psi_1=\Psi_0^{-1}T_{i j}\Psi_0=T_{i j}.
$
Hence, $\Psi_1$ linearizes the $\tau_{i j}$ and is normalized w.r.t $\cL T_1,\cL T_2,\rho$. Since $\Psi,\Psi_{ 0}$ commute with $\cL S$, $T_1$ and $\rho$, so does $\Psi_1$.

We recall
$$
T_{1 j}\colon \begin{cases}
    \xi'_j= \lambda_{j}\eta_{j} \\
    \eta'_j=\lambda_{j}^{-1}\xi_{j} \\
    \xi'_k=\xi_{k},\quad k\neq j\\
    \eta'_k=\eta_{k},\quad k\neq j,\\
\end{cases}
\quad
\tau_{1 j}\colon \begin{cases}
    \xi'_j= \lambda_{j}\eta_{j}+ f_{j j}(\xi,\eta)\\
    \eta'_j=\lambda_{j}^{-1}\xi_{j}+ g_{j j}(\xi,\eta)\\
    \xi'_k=\xi_{k}+ f_{jk}(\xi,\eta),\quad k\neq j\\
    \eta'_k=\eta_{k}+ g_{jk}(\xi,\eta),\quad k\neq j.\\
\end{cases}
$$
Since we have $\Psi\circ T_{1j}= \tau_{1j} \circ \Psi$, we obtain the following relations
\begin{equation}\label{lin-invol}
\begin{cases}
\lambda_{j}V_j-U_j\circ T_{1 j}= - f_{j j}\circ \Psi\\
\lambda_{j}^{-1}U_j-V_j\circ T_{1 j}=-g_{j j}\circ \Psi\\
U_k-U_k\circ T_{1 j}=- f_{jk}\circ \Psi,\quad k\neq j\\
V_k-V_k\circ T_{1 j}=- g_{jk}\circ \Psi,\quad k\neq j.
\end{cases}
\end{equation}
Since $\Psi\in\cL C(\cL S,T_1,\rho)$, combining \re{uhus},   \re{vjlju} with
  the normalizing conditions \re{UhPQ-}, \re{UssP}, we find   that $U_{j,PQ}=0=V_{j,QP}$ for
  $(P,Q)\in \cL N_j$ and $j=h,s,s+s_*$.
  Using $\Psi\rho=\rho\Psi$, we get $V_{e}=\ov{ U_{e}\circ\rho}$.  By the first equation above, we get
  $$
  \lambda_e\ov{ U_e\circ\rho}\circ T_{1e}-U_e=-f_{ee}\circ \tau_{1e}\circ\Psi.
  $$
  For $(P,Q)\in\cL N_e$, we have  $(\lambda_e U_e\circ\rho\circ T_{1e})_{PQ}=\ov U_{e,A_eB_e(\rho_e(PQ))}$. By
  \re{UePQ-}, we get
  \eq{ueqpne}
  U_{e,PQ}=\f{1}{2}\left\{f_{e,e}\circ \tau_{1e}\circ\Psi\right\}_{PQ}, \quad   V_{e,QP}=\nu_{PQ}U_{e,PQ},
  \quad \ (PQ)\in\cL N_e.
  \eeq

We now majorize $U_{j,PQ}, V_{j,QP}$ for $(P,Q)\in\cL R_j\setminus\cL N_j$. Fix $(P,Q)\in\cL R_k\setminus\cL N_k$. Start with some $j$ such that $p_j<q_j$.
In the second last  identity  in \re{lin-invol}, let us compose on the right by  $T_{1 j'}$ with $j'\neq k,j$  to get
$$
  U_k\circ T_{1 j'}-U_k\circ T_{1 j}\circ T_{1 j'}=- f_{j,k}\circ \Psi\circ T_{1 j'} =f_{j,k}\circ \tau_{1 j'}\circ \Psi.
$$
Let   $\{\ell_1,\dots, \ell_d\}$ 
be the set of $i  \neq k$ such that
  $p_i<q_i$.
  Composing successively with the $T_{1l_j}$'s and adding, we get
$$
U_k - U_k\circ T_{1\ell_1}\circ\cdots T_{1\ell_d}=-\sum_{i=1}^{ d} f_{\ell_i, k}\circ\tau_{1\ell_{1}}\circ\cdots\circ \tau_{1\ell_{i-1}}\circ\Psi.
$$
Hence, if $PQ\in \cL R_k\setminus \cL N_k$, then
$$
U_{k,PQ}=\{U_k\circ T_{1\ell_1}\circ \dots\circ T_{1\ell_d}\}_{PQ}-\left\{\sum_{i=1}^d f_{\ell_i,k}\circ\tau_{1\ell_{ 1}}\circ\cdots\circ \tau_{1\ell_{i-1}}\circ\Psi\right\}_{PQ}.
$$
The first term on the right-hand side,  $U_{k,(A_k,B_k)(PQ)}$,  is either zero or majorized by \re{ueqpne}.
The summations have finitely many combinations. This shows that $U_{k}\prec a_k\circ\ov\Psi$.
By \re{vjlju}, we obtain $V_{j}\prec (U_j)_{sym}$. This shows that
$(U,V)\prec b\circ(I_{sym}+(U,V)_{sym})$ for some analytic mapping $b=O(2)$.  Using  \rl{fhg-}, we obtain
the convergence of $U_k,V_k$.
\end{proof}


\begin{thm}\label{rigidQ}
 Let $M$ be a germ of analytic submanifold that is an higher order perturbation of a product quadric $Q$ in $\cc^{2p}$. Assume that $M$ satisfies
 condition~J and it is 
  formally equivalent to  $Q$.
 Suppose that each hyperbolic component has an eigenvalue $\mu_h$ which
is either a root of unity or satisfies the Brjuno condition \rea{bnI} in which $\cL I=0$, $\ell=1,n=1, \mu_i=\mu_{i,j}=\mu_h$, and    each $\mu_s$ is not a root of unity and satisfies the Brjuno condition. Then $M$ is holomorphically equivalent
to the product quadric.
\end{thm}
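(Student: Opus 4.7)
The plan is to reduce rigidity to the linearization of three successive, increasingly refined systems of commuting objects, and to concentrate all the genuine small-divisor analysis into a single step. First, since $M$ is formally equivalent to the product quadric $Q$, \rp{mmtp} produces a formal biholomorphism $\Phi$ tangent to the identity and commuting with $\rho_0$ that sends $\{\tau_{11},\ldots,\tau_{1p}\}$ onto $\{T_{11},\ldots,T_{1p}\}$; in particular it linearizes each $\sigma_j$ defined in \rea{sigma_i}-\rea{sigma_i+}. Because $M$ satisfies condition J, the resulting linear family $\cL S=\{S_1,\ldots,S_p\}$ is diagonal and the $\sigma_j$ commute pairwise, so $\{\sigma_1,\ldots,\sigma_p\}$ is a formally linearizable abelian family with diagonal spectrum $\{\mu_j,\mu_j^{-1}\}$.

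The main analytic step is to linearize that abelian family holomorphically. I would apply the linearization theorem of \cite{stolo-bsmf}, whose small-divisor hypothesis is exactly the collective Brjuno condition assumed in the statement of the theorem: the $\mu_s$ are non-resonant Brjuno numbers, and every non-root-of-unity $\mu_h$ is Brjuno, while each $\mu_h$ that is a root of unity contributes no small divisors in the corresponding hyperbolic block. This yields a convergent biholomorphism $\Phi_1$ with $\Phi_1^{-1}\sigma_j\Phi_1=S_j$ for every $j$. Using the normalized/centralizer decomposition in \rl{FHG-} together with the uniqueness of the normalized factor from \rl{lem-nf-nf}, I may further arrange $\rho\Phi_1=\Phi_1\rho$: the reversibility relations $\sigma_j^{-1}=\rho\sigma_{\pi(j)}\rho$ for the appropriate permutation $\pi$ force $\rho\Phi_1\rho$ to be another normalized linearization of the same family, hence equal to $\Phi_1$. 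After this step I may assume $\sigma_j=S_j$ and $\rho$ is given by \rea{rRho-}.

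The second and third steps are essentially direct applications of \rp{linearS}. With $\sigma=S$ already linear and the formal linearization of $\tau_1,\tau_2$ still commuting with $\rho$, part (i) produces a convergent biholomorphism $\Phi_2\in\cL C(\cL S,\rho)$ that linearizes both $\tau_1$ and $\tau_2$ without destroying the achieved form of $\cL S$. With $\tau_1=T_1$ and $\tau_2=T_2$ now realized, part (ii) provides a convergent $\Phi_3\in\cL C(\cL S,T_1,T_2,\rho)$ that simultaneously linearizes the full set $\{\tau_{11},\ldots,\tau_{1p},\rho\}$. By the equivalence of classifications in \rp{inmae}, the composition $\Phi_1\Phi_2\Phi_3$ is induced by a convergent biholomorphism conjugating $M$ to the product quadric $Q$.

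The main obstacle is the first step: all genuine small divisors are concentrated in the family $\{\sigma_j\}$, and their control is delegated to the Brjuno-type result in \cite{stolo-bsmf}. The subsequent two steps are Poincar\'e-like in spirit because the divisors that appear in the cohomological equations projected onto the normalized subspaces all have modulus one by \rl{nunu+}, so the majorant estimates close via an implicit function theorem. A secondary but delicate bookkeeping point, handled by \rp{STiR} and \rl{FHG-}, is to keep each successive conjugation inside the centralizer of the previously linearized data so that earlier normalizations are not broken.
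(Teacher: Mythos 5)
Your proposal follows exactly the paper's own strategy: reduce via \rp{mmtp} to formal linearization of the Moser--Webster involutions, holomorphically linearize the abelian family $\{\sigma_1,\ldots,\sigma_p\}$ through the Brjuno-type linearization theorem of \cite{stolo-bsmf} while preserving $\rho$-commutation via the uniqueness of the normalized factor, then apply the two parts of \rp{linearS}, and finish with \rp{inmae}. The one minor slip is that forcing $\rho\Phi_1\rho=\Phi_1$ after the first step uses the centralizer decomposition of \rl{pcn0} (relative to $\cL C(\cL S)$ and $\cL C^{\mathsf c}(\cL S)$ alone) together with \rl{lem-nf-nf}, not \rl{FHG-}, which concerns the finer decompositions relative to $T_1$ or $\cL T_1,\cL T_2$ used only in the later steps.
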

\begin{proof}
We first apply a   theorem (with $\mathcal I=0$) in~\cite{stolo-bsmf} that linearize simultaneously and holomorphically the $\sigma_1,\ldots, \sigma_p$.
 Note that the small divisor condition in this special case
is equivalent that each $\mu_h$ is either a root of unity or a Brjuno number.
 Then, we apply successively the two assertions of \rp{linearS}. Hence, in good holomorphic coordinates, $\{\tau_{11},\ldots,\tau_{1p},\rho\}$ are linear. Then, by \rp{inmae}, the manifold is holomorphically equivalent to the quadric.
\end{proof}


As in the case of \rt{abelinv}, we can  also prove the first part of the above   proof by
 applying R\"ussmann's theorem \cite{Ru02} successively to each $\sigma_i$. This is due to the commutativity property and the special type of the linear parts that lead to the relatively simpler relations on $\cL C(S_i)$ and $\cL C^\mathsf{c}(S_i)$ for each fixed $i$.
\setcounter{thm}{0}\setcounter{equation}{0}
\section{
Existence of attached  complex manifolds}
\label{secideal}

We are interested in
complex submanifolds $K$  in $\cc^{2p}$ that    intersect the real
submanifold $M$ at the origin. Recall that $M$ has real dimension $2p$. Generically, the origin is
an isolated intersection point if $\dim K=p$.  Let us consider the situation when
the intersection has dimension $p$. Without further restrictions, there are many such
complex submanifolds; for instance, we can take a $p$-dimensional
 totally real and real analytic
submanifold $K_1$ of $M$. We then let $K$ be the complexification of $K_1$.
 To ensure the uniqueness or finiteness of  the complex submanifolds $K$,
we therefore introduce
the following.
\begin{defn} Let $M$ be a formal real submanifold of dimension $2p$ in $\cc^{2p}$.
We say that a formal complex submanifold $K$ is  {\it attached} to $M$ if $K\cap M$
 contains at least two germs of  totally real and formal submanifolds $K_1, K_2$  of dimension $p$
that intersect transversally at the origin. Such a pair $\{K_1,K_2\}$ are called a pair of {\it asymptotic} formal submanifolds of $M$.  \end{defn}

Before we present the details, let us describe the main steps to derive the results. 
We first  derive the results at the formal level.
We then apply the results of \cite{Po86} and \cite{stolo-bsmf}. The proof of the co-existence of convergent and divergent attached
submanifolds will rely on a theorem of
P\"oschel  on stable invariant submanifolds and Siegel's small divisor technique.

We now describe the formal results.
When $p=1$,  a non-resonant hyperbolic $M$
 admits a unique attached formal holomorphic curve~\cite{Kl85}.  When $p>1$,  new situations arise.
First, we show that there are obstructions to
attach formal submanifolds.
However, the  formal obstructions disappear when  
 $M$ admits the maximum number
of deck transformations and   $M$ is non-resonant. 
 These two conditions allow us to express $M$
in an 
equivalent form \re{masym}.  This equivalent form for $M$, which has not been used so far, will play an essential
role in our proof 
 for $p>1$.

 We will   consider  a real submanifold
$M$ which is   a higher order perturbation of a non-resonant  product quadrics.
By adapting the proof of Klingenberg~\cite{Kl85} to  the manifold $M$  \re{masym},
we will show the existence of a unique attached  formal submanifold for a prescribed non-resonance condition. As in~\cite{Kl85},
we  also show that  the complexification of $K$ in $\cL M$ is a pair of invariant formal submanifolds $\cL K_1,\cL K_2$ of $\sigma$.
Furthermore,  $K$ is convergent if and only if $\cL K_1$ is convergent.



Let us first recall the values of the Bishop invariants. The types of the invariants play an important role
 for the existence
and the convergence of  attached formal complex submanifolds. 
 From  
  \re{gaala1} and \re{msms-+},
we recall that
\ga\label{gens11}
 \gamma_e^{-1}= \la_e+\la_e^{-1}, \quad
\gamma_h^{-1}= \la_h+\ov\la_h,\quad
\gamma_s^{-1}= 1+\ov\la_s^{ 2},\\
\label{gehs11}
 0<\gaa_e<1/2, \quad
 \gaa_h>1/2, \quad
\gaa_s\in (-\infty, 1/2)+i(0,\infty), \quad
\gaa_{s+s_*}=1-\ov\gaa_s.
\end{gather}
Here we exclude the case that $\RE\gamma_s=1/2$ or $\gamma_s<1/2$ as we will assume that $\sigma$ has distinct eigenvalues.  We normalize
\ga\label{Larange}
\la_e>1,\quad |\la_h|=1,\quad |\la_s|>1,\quad\la_{s+s_*}=\ov\la_{s}^{-1};\\
\arg\la_{h}\in(0,\pi/2), \quad\arg\la_s\in(0,\pi/2).
\label{Larange+}
\end{gather}
Recall that $\mu_j=\la_j^2$. By \re{gens11},  we have
\eq{gsgssmu}
\gaa_j^2=\frac{\mu_j}{(1+\mu_j)^2}, \quad j=e,h; \qquad
\gaa_s\ov\gaa_{s+s_*}= \frac{\ov\mu_s}{(1+\ov\mu_s)^2}.
\eeq
We first verify the following.
\le{disga2} Let $\gaa_j,\la_j$ be given by \rea{gens11}-\rea{Larange+}. Let $\mu_j=\la_j^2$.
Assume that $\mu_1$,   $\mu_1^{-1}, \ldots$,  $\mu_p$,  $ \mu_p^{-1}$ are distinct. Then
  $\gaa_{e}^2,  \gaa_{h}^2,   \ov\gaa_s\gaa_{s+s_*},  \gaa_{s}\ov\gaa_{s+s_*}$
are distinct $p$ numbers.  The latter is equivalent to $\gaa_1, \ldots, \gaa_p$ being distinct.
\ele
\begin{proof} Note that $x^{-1}+x$ and $x^{-1}$
   decrease strictly  on $(0, 1)$.  So $\gaa_e^2,\gaa_h^2$ are distinct.
 We also have
 $
 \gaa_s\ov\gaa_{s+s_*}=\gaa_s-\gaa_s^2.
 $
If $a,b$ are complex numbers, then
 $a-a^2=b-b^2$ if and only if $a=b$ or $a+b=1$. Since $\gaa_s$ is not real,
 then $\gaa_s\ov\gaa_{s+s_*}$ are different from $\gaa_e^2$ and $\gaa_h^2$.
 For any distinct complex numbers $a_{1},   a_2$ in $(-\infty,1/2)+i(0,\infty)$.
 We have $1-a_2\neq 1-a_1, a_1, a_2$. The lemma
 is proved.
 \end{proof}

Let us 
first investigate the numbers  of pairs of formal asymptotic submanifolds and attached formal submanifolds.
\begin{lemma}\label{asynum}
Let $M$ be a formal submanifold that is  a third order perturbation of  a product quadric $Q$ in $\cc^{2p}$.
Assume that  $M$   
has distinct eigenvalues $$\mu_1,\dots, \mu_p, \quad\mu_1^{-1}, \ldots, \mu_p^{-1}.$$
  \bppp
\item If $M$ admits an attached formal submanifold,  its CR singularity has no elliptic component.
\item If $Q$ has no elliptic components, then
$Q$ has  at least  $2^{h_*+s_*-1}$ pairs of asymptotic totally real and real analytic submanifolds
that 
are contained in a single  attached complex submanifold.
\item  There is no formal  submanifold   attached to
$$
M\colon z_3=(z_1+2\gaa_1\ov z_1)^2+(z_2+2\gaa_2\ov z_2)^3,
\quad z_4=(z_2+2\gaa_2\ov z_2)^2.
$$
Here  $M$ has a hyperbolic complex tangent at the origin.
\item Assume that $M$ has no elliptic component and it admits the maximum number of formal deck transformations.
 Given $\e_h,\e_s=\pm1$, let $\nu=\nu_\epsilon:=( \nu_1,\ldots,\nu_p)$ with
\eq{nunue-}
\nu_h:=\mu_h^{\e_h},\quad  \nu_s:= \ov\mu_s^{\e_{s}},\quad  \nu_{s_*+s}:= \mu_s^{-\e_{s}}
\eeq
Suppose that
\eq{nunue+}
\nu^Q\neq\nu_j^{-1}, \quad \forall Q\in\nn^p, \quad |Q|>0, \quad 1\leq j\leq p.
\eeq
Then $M$ admits  a unique pair of asymptotic formal submanifolds $K_1,K_2$ such that  each  $K_i$
is defined by $z'=\rho_i(z')$ for   a formal anti-holomorphic involution $\rho_i$ and 
the linear part of  $\ov\rho_2^{-1}\ov\rho_1$  has eigenvalues   $\nu_1,\ldots, \nu_p$.
In particular, if \rea{nunue+} holds for each $\nu$ of the form \rea{nunue-} then
  $M$ admits exactly $2^{h_*+s_*-1}$ pairs
of asymptotic formal submanifolds.  \eppp
 \end{lemma}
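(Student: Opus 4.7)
The key tool is the correspondence between attached complex submanifolds of $M$ and pairs of $\sigma$-invariant formal submanifolds of $\cL M$. If $K$ is attached to $M$ with pair $\{K_1, K_2\}$, then the embedding $z \mapsto (z, \bar z)$ lifts each $K_i$ to a complex $p$-submanifold $\cL K_i \subset \cL M$ satisfying $\rho(\cL K_i) = \cL K_i$; holomorphicity of $K$ combined with the transversality of $K_1 \cap K_2$ forces the deck involution $\tau_1 = \tau_{11} \circ \cdots \circ \tau_{1p}$ to exchange $\cL K_1 \leftrightarrow \cL K_2$, and hence $\sigma = \tau_1 \tau_2$ leaves each $\cL K_i$ invariant. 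Conversely any such pair descends via $\pi_1$ to an attached $K$. The problem is thereby reduced to classifying $\rho$-stable, transverse pairs of $\sigma$-invariant $p$-dimensional formal submanifolds of $\cL M$.

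For Part (i), the tangent space $T_0 \cL K_1$ in the normal-form coordinates $(\xi, \eta)$ must be generated by eigenvectors of $S$ and $d\rho|_0$-invariant. On any elliptic factor $(\xi_e, \eta_e)$ the only $S_e$-invariant lines are the eigenspaces $\{\eta_e = 0\}$ and $\{\xi_e = 0\}$ (eigenvalues $\mu_e, \mu_e^{-1}$ with $\mu_e > 1$), but $\rho$ acts by $(\xi_e, \eta_e) \to (\bar\eta_e, \bar\xi_e)$, swapping them; so no $\rho$-invariant $S$-invariant line exists, obstructing the construction at the tangent-space level. For Part (ii) with $e_* = 0$: on each hyperbolic factor $\rho_h$ preserves each $S_h$-eigenspace individually, yielding $2$ choices for $T_0\cL K_1$ (with the transverse one forced on $T_0 \cL K_2$); on each complex pair $(s, s+s_*)$, the $\rho$-swap of indices combined with the diagonal $S$-action yields $2$ compatible configurations. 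Each linear choice integrates (since the quadric equations are already in diagonal form) to a linear attached $K$, producing $2^{h_* + s_*}$ ordered configurations and thus $2^{h_* + s_* - 1}$ unordered pairs.

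For Part (iii), I would set up a formal graph ansatz for $K$ over two appropriately chosen coordinates, substitute into the defining equations of the given hyperbolic $M$, and show that the cubic coupling $(z_2 + 2\gamma_2 \bar z_2)^3$ in the equation for $z_3$ produces a $\bar z_2$-dependence incompatible with the holomorphicity of $K$; the inconsistency already appears in the cubic-order expansion. For Part (iv), I follow Klingenberg's $p = 1$ strategy adapted to the product setting. Using the equivalent form \re{masym}, parametrize $K_1$ via the formal antiholomorphic involution $\rho_1$; the $\sigma$-invariance of $\cL K_1$ becomes a recursive system for the Taylor coefficients of $\rho_1$ whose linear operator at order $n$ is diagonal in a suitable eigenbasis with eigenvalues of the form $\nu^Q - \nu_j^{-1}$ for $|Q| = n$. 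Hypothesis \rea{nunue+} says these are all nonzero, so the system is uniquely solvable at each order, yielding the unique pair for each linear-level sign choice $\epsilon = (\epsilon_h, \epsilon_s)$; taking the $2^{h_* + s_*}$ choices modulo the $K_1 \leftrightarrow K_2$ pairing recovers Part (ii).

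The main obstacle is Part (iv): verifying that the recursive system takes the claimed diagonal form with eigenvalues $\nu^Q - \nu_j^{-1}$. This requires careful exploitation of the shape of \re{masym} together with the reversibility relation $\sigma^{-1} = \rho \sigma \rho$; the $p = 1$ analogue is clean, but the product structure couples factors, and the non-resonance \rea{nunue+} must be shown to decouple the full system in the chosen eigenbasis before the order-by-order inversion can be carried out.
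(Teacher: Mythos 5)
Your overall framework---correspondence between attached complex submanifolds and pairs of $\sigma$-invariant formal submanifolds of $\cL M$, with $\tau_1$ exchanging the two sheets---is exactly the content the paper proves in Theorem~\ref{invep}, and it is the right picture for parts (ii) and (iv). But it cannot be used as the foundation for parts (i)--(iii), and this is where your proposal has a real gap.

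The difficulty is that your framework invokes the deck involution $\tau_1 = \tau_{11}\circ\cdots\circ\tau_{1p}$ for $M$ and hence $\sigma = \tau_1\tau_2$. Those exist for $M$ only under Condition~D (maximum number of deck transformations). The lemma assumes Condition~D only in part (iv); in (i)--(iii) the manifold $M$ is an arbitrary third-order perturbation of the product quadric, and as Example 2.4 of the paper shows, deck transformations of $\pi_1$ can be destroyed by such perturbations. In fact the explicit example in part (iii) itself fails Condition~D: the cubic term $(z_2+2\gamma_2\bar z_2)^3$ in the first defining equation blocks the ``square-root'' deck transformation in the $w_1$-direction, so there are only $2$, not $4$, deck transformations. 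So for (i) your claim that $T_0\cL K_1$ ``must be generated by eigenvectors of $S$''---which you justify via $\sigma$-invariance of $\cL K_1$---has no available justification: there may be no $\sigma$ at all. The paper's proof of (i) avoids this entirely: it compares the two expressions \eqref{Qzpb} for the attached $K$, extracts the quadratic-level constraint $\tilde{\mathbf A}=-\boldsymbol\gamma^{-1}-\mathbf A$ and $\mathbf A\bar{\mathbf A}=\mathbf I$ directly from the defining equations, block-decomposes $\mathbf A$ using \rl{disga2}, and sees that the elliptic diagonal entries would have to satisfy $a_e+\bar a_e=-\gamma_e^{-1}$, $a_e\bar a_e=1$, which is impossible for $0<\gamma_e<1/2$. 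This is elementary and uses no deck transformations and no normal-form coordinates. To rescue your geometric picture you would have to replace ``$\sigma$-invariance of $\cL K_1$'' by ``$T_0\cL K_1$ is constrained by the \emph{quadratic part} of $M$ (which coincides with that of $Q$), and this constraint, when pushed through the linear change of coordinates to $(\xi,\eta)$, is precisely $S$-invariance of the tangent space''---but establishing that equivalence amounts to redoing the paper's matrix computation.

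Two smaller points. In (ii) your count of $\rho$-invariant eigenplanes in each complex block is correct (exactly $\{\eta_s=\eta_{s+s_*}=0\}$ and $\{\xi_s=\xi_{s+s_*}=0\}$ are $\rho$-stable among the $\binom{4}{2}$ coordinate planes), but again one must verify that these tangent planes \emph{integrate} to actual linear asymptotic submanifolds of $Q$; the paper does this by solving the quadratic compatibility for $(a_s,\tilde a_s)$ and $(b_s,\tilde b_s)$ explicitly, and ``the quadric equations are already in diagonal form'' is not a substitute for that check. In (iii) your phrasing ``produces a $\bar z_2$-dependence incompatible with the holomorphicity of $K$'' is not quite the obstruction---the defining map $\bar\rho_1$ of $K$ is allowed to depend on $\bar z'$ anti-holomorphically; the actual obstruction in the paper is a divisibility failure: eliminating $\tilde w_2$ from the $z_4$-equation and matching cubic terms forces $z_1$ to divide $2(1-2\gamma_2\lambda_2)^3 z_2^3$, which is false. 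Finally, for (iv) your roadmap matches the paper's, and you correctly flag the decoupling step as the real work; the paper handles it by passing to the ``squared'' form \eqref{defml} (available under Condition~D), introducing the auxiliary unknowns $f_j,f_j^*$ in \eqref{zh2g}--\eqref{zh2g6}, and eliminating them to get the $2\times 2$ system in $R_{j,Q},\tilde R_{j,Q}$ whose determinant is essentially $\nu^Q-\nu_j^{-1}$, so non-resonance \eqref{nunue+} is precisely what makes the recursion invertible.
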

\begin{proof} (i) Let $M$ be defined by
\eq{}
z_{p+j}=Q_j(z',\ov z')+H_j(z',\ov z'), \quad 1\leq j\leq p
\eeq
where $H_j(z',\ov{z'})=O(|z'|^3)$ and each $Q_j$ is quadratic.
Let $\{K_1,K_2\}$ be a pair of asymptotic formal submanifolds of $M$ 
intersecting  a formal complex
submanifold $K$.
We know that the totally real spaces $T_0K_1, T_0K_2$ are contained in $T_0M$, the $z'$-subspace. Let $K_i'$ be the projection of $K_i$
onto the $z'$-subspace, then $K_1',K_2' $ are still totally real.
Let $K_1'$ be defined by
\gan
K_1'\colon \ov z'={\mathbf A}z'+R(z'), \quad \ov {\mathbf A}{\mathbf A}={\mathbf I},  \quad R(z')=O(2)
\end{gather*}
such that $\rho_1(z'):=\ov{\mathbf A}\ov z'+\ov R(\ov z')$ defines an anti-holomorphic  formal involution.  Let $K_2$ be the (formal)
 fixed-point
set of  the anti-holomorphic
involution
$\rho_2(z')=\ov{\tilde{\mathbf A}}\ov z'+\ov{\tilde R(z')}$ with $\tilde R(z')=O(2)$.  Then $K_1,K_2$ intersect transversally at the origin
if and only if
$
\det(\tilde{\mathbf A}-\mathbf A)\neq0.
$
Let us define  holomorphic mappings
\begin{equation}\label{def-rhobar}
\ov\rho_i(z'):=\ov{\rho_i(z')}, \quad i=1,2. 
\end{equation}
Then $K$ is given by
\eq{dfKzp}
z_{p+j}''=Q_j(z',\ov\rho_{i}(z'))+H_j(z',\ov\rho_i(z')), \quad i=1,2, \quad j=1, \ldots, p.
\eeq
The  two equations agree, if and only if
\eq{Qzpb}
Q_j(z', \ov\rho_{1}(z'))+H_j(z',\ov\rho_1(z'))=Q_j(z',\ov\rho_{2}(z'))+H_j(z',\ov\rho_2(z')), \quad 1\leq j\leq p.
\eeq
 Then the asymptotic totally real submanifolds $\{K_1,K_2\}$ are defined by
\eq{dfki}
K_i\colon z_{p+j}=Q_j(z',\ov z')+H_j(z',\ov z'), \quad 1\leq j\leq p, \quad \rho_i(z')=z'.
\eeq

Recall that
\aln
Q_j(z',\ov z)&=(z_j+2\gaa_j\ov z_j)^2,\quad j=e,h;\\
Q_s(z',\ov z')&=(z_{s+s_*}+2\gaa_{s+s_*}\ov z_s)^2,\\
Q_{s+s_*}(z',\ov z')&=
(z_{s}+2\gaa_{s}\ov z_{s+s_*})^2.
\end{align*}
Let us first find necessary conditions on the linear parts of $\rho_i$ for \re{Qzpb} to  be solvable.
Let $w'={\mathbf A}z'$ and $\tilde w'=\tilde{\mathbf A}z'$.
Comparing the quadratic terms in \re{Qzpb}
for $i=1,2$, we see that
\gan
(z_j+2\gaa_jw_j)^2=(z_j+2\gaa_j\tilde
w_j)^2,\\
( z_{s+s_*}+2\gaa_{s+s_*}w_s)^2=( z_{s+s_*}+2\gaa_{s+s_*}\tilde
w_s)^2,\\
(z_{s}+2\gaa_{s}w_{s+s_*})^2=( z_{s}+2\gaa_{s}\tilde
w_{s+s_*})^2.
\end{gather*}
Here $\gaa_{s+s_*}=1-\ov\gaa_s$, by \re{gehs11}.
For each $j$, $w_j\neq\tilde w_j$. Otherwise, the fixed points of $\rho_1$ and $\rho_2$
do not intersect transversally.   Therefore, the above $3$ identities can be written as
\gan
z_j+2\gaa_jw_j=-(z_j+2\gaa_j\tilde
w_j),\\
z_{s+s_*}+2\gaa_{s+s_*}w_s=-( z_{s+s_*}+2\gaa_{s+s_*}\tilde
w_s),\\
z_{s}+2\gaa_{s}w_{s+s_*}=-( z_{s}+2\gaa_{s}\tilde
w_{s+s_*}).
\end{gather*}
In the matrix form, we get  
\begin{equation}\label{tildeAA}
\tilde {\mathbf{A}}=-{\boldsymbol{\gamma}}^{-1} -\mathbf{A},\text{ with}\quad
\boldsymbol{\gamma}:=\begin{pmatrix}
  \boldsymbol{\gaa}_{e_*}    &\mathbf{0} &\mathbf{0} &\mathbf{0}\\
    \mathbf{0}&\boldsymbol\gaa_{h_*}&\mathbf{0}& \mathbf{0}  \\
   \mathbf{0}&\mathbf{0}&\mathbf{0}&\boldsymbol{\gaa}_{s_*} \\
   \mathbf{0}&\mathbf{0}&{\boldsymbol{\tilde \gaa}}_{s_*}&\mathbf{0}
\end{pmatrix}.
\end{equation}
Here   in matrices $\tilde{\boldsymbol{\gaa}}_{s_*}=\mathbf I_{s_*}-\ov {\boldsymbol{\gaa}}_{s_*}$.
Let us express in block matrices
$$
{\mathbf A}=\begin{pmatrix}
{\mathbf A}_{e_*e_*}  &{\mathbf A}_{e_*h_*} &{\mathbf A}_{e_*s_*} &{\mathbf A}_{e_*(2s_*)} \\
{\mathbf A}_{h_*e_*}  &{\mathbf A}_{h_*h_*} &{\mathbf A}_{h_*s_*} &{\mathbf A}_{h_*(2s_*)} \\
{\mathbf A}_{s_*e_*}  &{\mathbf A}_{s_*h_*} &{\mathbf A}_{s_*s_*} &{\mathbf A}_{s_*(2s_*)} \\
{\mathbf A}_{(2s_*)e_*}  &{\mathbf A}_{(2s_*)h_*} &{\mathbf A}_{(2s_*)s_*} &{\mathbf A}_{(2s_*)(2s_*)}
\end{pmatrix}
$$
where the diagonal block matrices are of sizes $e_*\times e_*,h_*\times h_*,s_*\times s_*$, and $s_*\times s_*$, respectively.
By  $\mathbf{A}\ov {\mathbf{A}}=\mathbf I$,  $\tilde {\mathbf{A}}\ov{\tilde {\mathbf{A}}}=\mathbf I$ and \re{tildeAA}
we get $(\tilde {\mathbf{A}}\ov{\tilde {\mathbf{A}}}-\mathbf{A}\ov {\mathbf{A}})\ov\gaa=0$, i.e.
$
{\boldsymbol{\gamma}}^{-1}+\mathbf{A}+\boldsymbol{\gamma} ^{-1}\ov {\mathbf{A}}\ov{\boldsymbol{\gamma}}=0.
$
Recall that $\gaa_1^2,\ldots, \gaa_{e_*+h_*}^2$
 are real and distinct. It is easy to see that $\mathbf{A}_{e_*h_*}=0$, $\mathbf{A}_{h_*e_*}=0$, and $\mathbf{A}_{e_*e_*}, \mathbf{A}_{h_*h_*}$ are diagonal. Also,
\eq{aeses}
\mathbf{A}_{e_*e_*}+\ov {\mathbf{A}}_{e_*e_*}=-\boldsymbol{\gamma}_{e_*}^{-1}, \quad \mathbf{A}_{h_*h_*}+\ov {\mathbf{A}}_{h_*h_*}=-\boldsymbol{\gamma}_{h_*}^{-1}.
\eeq
In block matrices, we obtain
\ga 
{\boldsymbol{\gamma}}_j^{-1} \ov {\mathbf{A}}_{j(2s_*)}\ov{\tilde{\boldsymbol{\gamma}}}_{s_*}=- {\mathbf{A}}_{js_*},   \qquad
{\tilde{\boldsymbol{\gamma}}}_{s_*} ^{-1}\ov{\mathbf{A}}_{ (2s_*)j}\ov{\boldsymbol{\gamma}}_{j}=-{\mathbf{A}}_{s_*j}; \\ 
{\boldsymbol{\gamma}}_j^{-1}\ov {\mathbf{A}}_{js_*}\ov{\boldsymbol{\gamma}}_{s_*}=-{\mathbf{A}}_{j(2s_*)},
 \qquad{\boldsymbol{\gamma}}_{s_*}^{-1} \ov {\mathbf{A}}_{s_*j}{\boldsymbol{\gamma}}_{j}=- {\mathbf{A}}_{(2s_*)j}; \\ 
{\tilde{\boldsymbol{\gamma}}}_{s_*}^{-1}\ov {\mathbf{A}}_{(2s_*)(2s_*)}\ov{
\tilde{\boldsymbol{\gamma}}}_{s_*}=- {\mathbf{A}}_{s_*s_*},
 \qquad{\tilde{\boldsymbol{\gamma}}}_{s_*}^{-1}\ov {\mathbf{A}}_{(2s_*)s_*}\ov{\boldsymbol{\gamma}}_{s_*}=-{\mathbf{A}}_{s_*(2s_*)}-{\tilde{\boldsymbol{\gamma}}}_{s_*}^{-1},
\label{tgss}
\\
{\boldsymbol{\gamma}}_{s_*}^{-1}\ov {\mathbf{A}}_{s_*(2s_*)}\ov{\tilde{\boldsymbol{\gamma}}}_{s_*}=- {\mathbf{A}}_{(2s_*)s_*}-{\boldsymbol{\gamma}}_{s_*}^{-1},
 \qquad  {\boldsymbol{\gamma}}_{s_*}^{-1}\ov {\mathbf{A}}_{s_*s_*}\ov{\boldsymbol{\gamma}}_{s_*}=-{\mathbf{A}}_{(2s_*)(2s_*)}.
\end{gather} 
In the first $4$ equations,  we have $j=e_*,h_*$.

By \rl{disga2},  we know that $\gaa_e^2,\gaa_h^2$, and $\gaa_{s}\ov\gaa_{s+s_*}$ are distinct.
 Thus,  ${\mathbf{A}}_{js_*}={\mathbf{A}}_{j(2s)_*}=\mathbf 0$
and ${\mathbf{A}}_{s_*j}={\mathbf{A}}_{(2s_*)j}=\mathbf 0$ for $j=e_*,h_*$.  Since $\gaa_{s}\ov{{\gaa}}_{s+s_*}$ is
different from all $\gaa_{s+s_*}\ov\gaa_{s}$, then ${\mathbf{A}}_{s_*s_*}={\mathbf{A}}_{(2s_*)(2s_*)}=\mathbf 0$ while
${\mathbf{A}}_{s_*(2s_*)}$, ${\mathbf{A}}_{(2s_*)s_*}$ are diagonal.
Now ${\mathbf{A}}\ov {\mathbf{A}}=\mathbf I$ implies that
\eq{aehs}
{\mathbf{A}}_{e_*e_*}\ov {\mathbf{A}}_{e_*e_*}=\mathbf I, \quad
{\mathbf{A}}_{h_*h_*}\ov {\mathbf{A}}_{h_*h_*}=\mathbf I, \quad
{\mathbf{A}}_{s_*(2s_*)}\ov {\mathbf{A}}_{(2s_*)s_*}=\mathbf I.
\eeq
 Combining the first identities  in \re{aeses} and \re{aehs}, we know that  the  diagonal $e$th element $a_e$ of
 ${\mathbf{A}}_{e_*e_*}$ must satisfy
 $
 a_e+
 \ov a_e=-\gaa_e^{-1}, 
   a_e\ov a_e=1.
 $
 Since   $0<\gaa_e<1/2$,  there is no such solution $a_e$ if $e_*>0$.
  We have verified (i).

(ii)  For the hyperbolic components,  by \re{gens11} we have $\gaa_h^{-1}=\la_h+\ov\la_h$ with $|\la_h|=1$
  By the second identities in
  \re{aeses},  \re{aehs}, and by \re{tildeAA},  we obtain $(a_h,\tilde a_h)=(-\lambda_h,-\bar\lambda_h)$ or $(-\bar\lambda_h,-\lambda_h)$.
For the complex components, we  use  ${\mathbf{A}}_{s_*(2s_*)}\ov {\mathbf{A}}_{(2s_*)s_*}=\mathbf I$
and multiply the second identity in \re{tgss} by the diagonal matrix $\mathbf A_{s_*(2s_*)}$.  Thus the $ 
 s
 $th diagonal element $a_s$ of  $\mathbf A_{s_*(2s_*)}$ satisfies
$
a_s(a_s+\tilde\gaa_s^{-1})+\tilde \gaa_s^{-1}\ov\gaa_s=0.
$
By the last identity in \re{gsgssmu}, we get
$$
a_s^2+(1 + 
\mu_s^{-1})a_s+ 
   \mu_s^{-1}=0.
$$
 Obviously $a_s=-1, -\mu_s^{-1}$ are solutions.
By \re{tildeAA}, we get $(a_s,\tilde a_s)=(-1, 1- 
\tilde\gamma_s^{-1}) 
=(-1,-\mu_s^{-1})$ or $(- 
\mu_s^{-1},  
\mu_s^{-1}- 
\tilde\gamma_s^{-1})=(-\mu_s^{-1},-1)$. 
 Each tuple determines 
 a tuple $(b_s,\tilde b_s)$ by \re{aehs}, with $b_s$ being the diagonal entries of $\mathbf A_{(2s_*)s_*}$.
 We verify that
$(b_s,\tilde b_s)=(\ov a_s^{-1},\ov{\tilde a_s}^{-1})$.
There are exactly $2^{h_*+s_*-1}$ solutions for $\mathbf A$ and $\tilde {\mathbf A}$  since we can only determine the
pairs
$\{\mathbf A_{h_*h_*}, \tilde {\mathbf A}_{h_*h_*}\},  
\{\mathbf A_{s_*(2s_*)}, 
\tilde{\mathbf A}_{s_*(2s_*)}\}
$.
Indeed, we have
\begin{gather}
\mathbf A=\begin{pmatrix}
\diag (a_h) &{\mathbf{ 0}}&{\mathbf{ 0}} \\
{\mathbf{ 0}}&{\mathbf{ 0}} & \diag (a_s) \\
{\mathbf{ 0}}& \diag (b_s) &{\mathbf{ 0}}
\end{pmatrix}, \quad
\tilde A=\begin{pmatrix}
\diag (\tilde a_h) &{\mathbf{ 0}}&{\mathbf{ 0}} \\
{\mathbf{ 0}}&{\mathbf{ 0}} & \diag (\tilde a_s) \\
{\mathbf{ 0}}& \diag (\tilde b_s) &{\mathbf{ 0}}
\end{pmatrix},\nonumber\\
\label{ata-1}
\diag\nu:=\mathbf {\tilde A}^{-1}\mathbf A=\begin{pmatrix}
\diag (\tilde a_h^{-1}a_h) &{\mathbf{ 0}}&{\mathbf{ 0}} \\
{\mathbf{ 0}}&\diag (\tilde b_s^{-1}b_s) &{\mathbf{ 0}}  \\
{\mathbf{ 0}}&{\mathbf{ 0}}  &\diag (\tilde a_s^{-1}a_s)
\end{pmatrix},\\
\label{nunue}
 \nu=\mu_\epsilon=( 
 \mu_h^{\e_h},\ov\mu_s^{\e_{s}},\mu_{s}^{-\e_{s}}), \quad \e_h^2,\e_s^2=1, \quad \nu_{s+s_*}=\ov \nu_{s}^{-1},
\end{gather}
 where there are 
$2^{h_*+s_*}$ distinct combinations.
Thus,  we get exactly $2^{h_*+s_*-1}$ pairs $\{K_{\e}^1,K_{\e}^2\}$
of asymptotic linear submanifolds indexed by  $\e=(\e_1, \ldots,\e_{h_*+s_*})$ with $\e_j^2=1$
 for the product quadric.
 The attached formal submanifolds associated  to these linear asymptotic submanifolds are  unique and restricting to $\e_i=1$ for all $i$, it  is given by
\aln
z_{p+h}=(1-{4}\gaa_h^{2})z_h^2,\
z_{p+s}=(1-2\gaa_{s+s_*})^2z_{s+s_*}^2,\
z_{p+s+s_*}=(1-2\gaa_s)^2z_{s}^2.
\end{align*}
 Here we have used $(1-{4}\gaa_h^{2})=(1-2\gamma_h\lambda_h)^2$.

 In summary, we have shown that there are exactly $2^{h_*+s_*-1}$ pairs of linear anti-holomorphic involutions $\{\rho_1, \rho_2\}$.  In (iv) we show that under the non-resonant conditions on $\mu_1,\dots, \mu_p$,
 they are the only pairs of  anti-holomorphic involutions.
 This finishes the proof of  (ii).

(iii).
Let us continue the computation for the perturbations.
We have determined linear parts of antiholomorphic involutions $\rho_i$.
We expand components of $R(z')$ as
$
R_{j}(z')=\sum_{k=2}^{\infty}R_{j; k}(z'). 
$
Here $R_{j;k}$ are homogeneous terms of degree $k$.  We expand $\tilde R_j$ analogously.
Suppose that terms of order up to $k-1$ in $R_j,\tilde R_j$
have been determined. For the hyperbolic components, we need to solve the equations
\ga\label{zhrh}
4\sqrt{1-4\gaa_h^{2}}z_h(R_{h;k}(z')+{\tilde R_{h;k}}(z'))=\cdots,
\end{gather}
where the right-hand side has been determined.  Indeed, let us compute the  terms of degree $k$
in 
\re{Qzpb} to obtain
$$
(1-2\gaa_j\lambda_j)^2z_j^2 + 2(1-2\gaa_j\lambda_j)z_jR_{j;k}= (1-2\gaa_j\lambda_j^{-1})^2z_j^2 + 2(1-2\gaa_j\lambda_j^{-1})z_j\tilde R_{j;k}+ {\mathcal R}
$$
where ${\mathcal R}$ is a polynomial that depends on $\tilde R_{j;l},R_{j;l}$, $l<k$. Since $(1-2\gaa_j\lambda_j)=-(1-2\gaa_j\lambda_j^{-1})$, we obtain \re{zhrh}.

	When $p>1$, the system of equations \re{zhrh} cannot be solved
even formally, unless the right-hand side is divisible by $z_h$.  When $p=1$,
the equation \re{zhrh} is clearly solvable. In fact, under the non-resonant condition on $\mu_1$, the formal  anti-holomorphic  involutions $\{\rho_1,\rho_2\}$
can be uniquely determined.

 Let us keep the above notation and compute for the example stated in (iii).
We need to solve
\aln
(z_1+2\gaa_1\tilde  w_1)^2+(z_2+2\gaa_2\tilde  w_2)^3& =
(z_1+2\gaa_1 w_1)^2+(z_2+2\gaa_2 w_2)^3,\\
(z_2+2\gaa_2\tilde  w_2)^2&=(z_2+2\gaa_2 w_2)^2.
\end{align*}
Again $\tilde w_2- w_2$ cannot be identically zero. Thus
$
\tilde w_2=- w_2-\gaa_2^{-1} z_2.
$
Then we need to solve
\aln
(z_1+2\gaa_1\tilde  w_1)^2 =
(z_1+2\gaa_1 w_1)^2+2(z_2+2\gaa_2 w_2)^3.
\end{align*}
By (ii), we know that $ w_1=-\la_1z_1+R_1(z')$ and $ w_2= -\la_2 z_2+R_2(z')$ with $R_i(z')=O(2)$.
Also $\tilde  w_1=-\ov\la_1z_1+\tilde R_1(z')$ and $\tilde w_2= -\ov\la_2 z_2+\tilde R_2(z')$.
Comparing the cubic terms implies that $z_1$ must divide
$
2(1 -2{\gaa_2}\la_2)^3z_2^3,
$
which is a contradiction.

(iv)
For a general $M$, following Klingenberg \ci{Kl85} we reformulate the problem by considering the
following equations
\aln 
h(z')&=Q(z',\ov \rho_i(z'))+H(z',\ov \rho_i(z')),\quad i=1,2,\\
h^*(\ov\rho_i(z'))&=\ov Q( \ov\rho_i(z'), z')+\ov {H}(\ov\rho_i(z'),z'), \quad i=1,2.
\end{align*}
Here $h,h^*, \ov\rho_i$ are unknowns.  Initially, we require that
 $\ov\rho_1,\ov\rho_2$ be arbitrary biholomorphic maps, except
 their linear parts
match with $z'\to Az'$ and $z'\to\tilde Az'$.
This will ensure that the solutions $\ov\rho_i$ are unique and they are involutions.

 As demonstrated in (iii), in general there is no formal submanifold attached to $M$.
Thus we assume that $M$ is a higher order perturbation of
 product quadric without elliptic component and it admits the maximum number of deck transformation.


We  
 may assume that
\al
\label{zpsq}
z_{p+h}&=(z_h+2\gaa_h\ov z_h+E_{ h}(z',\ov z'))^2, \\
\quad z_{p+s}&=(z_s+2\gaa_s\ov z_{s+s_*}+  E_{ s}(z',\ov z'))^2,\\
z_{p+s+s_*}&=(
z_{s+s_*}+2\gaa_{s+s_*}\ov  z_s+  E_{ s+s_*}(z',\ov z'))^2.
\label{zpsq3}
\end{align}
For late references, we express the above in an abbreviated form:
\eq{defml}
M\subset\cc^{2p}\colon z_{p+j}=(L_j(z',\ov z')+E_j(z',\ov z'))^2, \quad 1\leq j\leq p.
\eeq
 We fix linear parts of $\rho_i$ such that
 $$
  \rho_1(z')=\ov A\ov z'+\ov R(\ov z'), \quad  \rho_2(z')=\ov {\tilde A}\ov z'+\ov {\tilde R}(\ov z').
   $$
For $i=1,2$ we then need to solve   $\ov\rho_i$ 
from
 \al\label{zh2g}
 z_h+2\gaa_h\ov \rho_{ih}+ E_h(z',\ov\rho_i)&=(-1)^if_h(z'),\\
 z_s+2\gaa_s\ov \rho_{is+s_*}+E_{s}(z',\ov\rho_i)&=(-1)^if_{s}(z'),\\
\label{zsss}
 z_{s+s_*}+2\gaa_{s+s_*}\ov\rho_{is}+E_{s+s_*}(z',\ov\rho_i)&=(-1)^if_{s+s_*} (z'),\\
 \label{2ghz}
  2\gaa_hz_h+\ov \rho_{ih}+ \ov E_h(\ov\rho_i,z')&=(-1)^i f^*_h(\ov\rho_i),\\
  2\ov\gaa_sz_{s+s_*}+\ov \rho_{is}+\ov E_{s}(\ov\rho_i,z')&=(-1)^if_s^*(\ov\rho_i),\\
  2\ov\gaa_{s+s_*}z_s+\ov\rho_{is+s_*}+\ov E_{s+s_*}(\ov\rho_i,z')&=
  (-1)^if_{s+s_*}^*(\ov\rho_i). \label{zh2g6}
  \end{align}
 Suppose that we have  determined terms of $R_{j},\tilde R_j,
 f_j,f_{j}^*$ of order $<k$.
 We have
 $$
 \ov\rho_1(z')=\mathbf Az'+R(z'), \quad \ov\rho_1^{-1}(z')=\mathbf A^{-1}z'-\mathbf A^{-1}R'(\mathbf A^{-1}z'),
 $$
 where  the terms in $R'-R$ of order $k$ depend only on terms of $R$ of order $<k$.
For terms of order $k$, by eliminating $f_j$, we    need to solve
 \ga\label{rjq}
 R_{jQ}+\tilde R_{jQ}=\cdots
 \end{gather}
 where the dots denote terms which have been determined.
 We compose from
 right in the last 3 identities for $i=1$ (resp. $i=2$)  by $\ov\rho_1^{-1}$ (resp. $\ov\rho_2^{-1}$).    Eliminating $f^*$ from the new identities, we obtain
 $$
 \mathbf A^{-1}R (\mathbf A^{-1}z') +
  \tilde {\mathbf A}^{-1}\tilde R (\tilde{\mathbf A}^{-1}z') =\cdots.
  $$
Recall that  $\tilde{\mathbf A}^{-1}{\mathbf A}=\diag\nu$  with $\nu:=\nu_{\e}$. 
Multiplying on the left by $\tilde {\mathbf A}$,  using $\tilde{\mathbf A}{\mathbf A}^{-1}=(\diag\nu)^{-1}$, and  evaluating at $z'= {\mathbf A}\tilde z'$,
we thus need to solve \re{rjq} and
$$
\nu_j^{-1}R_{j,Q}+\nu^Q {\tilde R}_{j,Q}=\cdots.$$
  This shows that   $R_{j}, \tilde R_{j}$ are uniquely determined
as   \eq{nuqn0}
  \nu^Q\neq\nu_j^{-1}, \quad Q\in\nn^p, \quad |Q|>1,\quad 1\leq j\leq p.
 \eeq

 To verify that $\rho_i$ are involutions,  we compose by $\ov \rho_i^{-1}$ from
 right
 in \re{zh2g}-\re{zsss}, and we apply complex conjugate to the coefficients of the new identities.
 This results in   \re{2ghz}-\re{zh2g6} in which $(\ov\rho_i, f_j^*)$ are replaced
 by $(\ov{(\ov\rho_i)^{-1}}, \ov f_i)$. We can also start with  \re{2ghz}-\re{zh2g6}
  and apply the same procedure to get \re{zh2g}-\re{zsss}, in which $(\ov\rho_i, f_i)$
 are replaced by $(\ov{(\ov\rho_i)^{-1}}, \ov f^*_i)$.  By the uniqueness of the solutions, we conclude
 that  $\ov {(\ov\rho_i)^{-1}}=\ov \rho_i$ as both sides have the same linear part.
 We now have
 $
 \ov{(\ov\rho_i)^{-1}(\ov z')}=\ov{\rho_i(z')}$. Hence, by \re{def-rhobar}, $ \ov z'=\ov\rho_i(\rho_i(z'))=\ov{\rho_i^2(z')}.
  $
   This shows that each $\rho_i$ is an involution.
 \end{proof}

We now can prove the following theorem.
\begin{thm}\label{invep}
  Let $M$ be a real analytic submanifold in $\cc^{2p}$ defined by \rea{defml}
 without 
 elliptic components.
  Assume that in $(\xi,\eta)$ coordinates,
   $D\sigma(0)$ is diagonal and has distinct eigenvalues $\mu_1,\ldots, \mu_p$,  $\mu_1^{-1}, \ldots, \mu_p^{-1}$.
Let $\nu=\nu_\e$ be of the form \rea{nunue} and satisfy \rea{nuqn0}.
  Then $M$ admits
a unique pair of 
formal asymptotic submanifold  $\{K^{\e}_1, K^{\e}_2\}$ such that the complexification
 of $K^{\e}_1$ in $\cL M$
is an invariant formal submanifold $\cL H_\e$
of $\sigma$ that is tangent to 
\eq{clhe}
\cap_{\e_j=1}\{\eta_j=0\}\cap\cap_{\e_j=-1}\{\xi_j=0\}.
\eeq
Furthermore, the complexification of $K_2^\e$ equals $\tau_1\cL H_\e$. \end{thm}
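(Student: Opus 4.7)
The plan is to realize $\mathcal{H}_\epsilon$ as the complexification inside $\cL M$ of the totally real formal submanifold $K_1^\epsilon$ furnished by \rl{asynum}(iv), and then to prove $\sigma$-invariance by a uniqueness argument tied to the non-resonance hypothesis \rea{nuqn0}.

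First I invoke \rl{asynum}(iv): under the hypotheses of the theorem, it produces the unique pair $\{K_1^\epsilon, K_2^\epsilon\}$ with $K_i^\epsilon = \fix(\rho_i)$ for formal anti-holomorphic involutions $\rho_i$ satisfying $D(\ov\rho_2^{-1}\ov\rho_1)(0) = \diag(\nu_1,\ldots,\nu_p)$. I then define
\[
\mathcal{H}_\epsilon := \{(z',w') \in \cL M: w' = \ov\rho_1(z')\},
\]
a formal $p$-dimensional submanifold of $\cL M$ that is the natural complexification of $K_1^\epsilon$ under the embedding $z \mapsto (z,\ov z)$. Its tangent space at $0$ is the graph of $A := D\ov\rho_1(0)$, whose block structure is read off from \rea{ata-1}. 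Converting from the $(z',w')$ coordinates to the $(\xi,\eta)$ coordinates in which $S,T_{ij},\rho$ take the normal forms \rea{rSxi-mv}--\rea{rRho-mv}, a direct linear-algebra check identifies this graph with the coordinate subspace \rea{clhe}, and in these coordinates the eigenvalues of $D\sigma(0) = S$ along the tangent subspace are exactly $\nu_1,\ldots,\nu_p$.

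The central step is $\sigma$-invariance of $\mathcal{H}_\epsilon$. I proceed via a uniqueness argument: the eigenvalues of $S$ on the complementary transverse subspace to \rea{clhe} are $\{\nu_j^{-1}\}_{j=1}^p$, so for a graph $w = \phi(z)$ over the tangent subspace the invariance equation $\sigma(\operatorname{graph}\phi) \subset \operatorname{graph}\phi$ unfolds degree by degree into linear equations with factors $\nu^Q - \nu_j^{-1}$, nonzero by \rea{nuqn0}. Hence $\sigma$ admits a unique formal invariant $p$-submanifold $\widetilde{\mathcal{H}}$ through $0$ tangent to \rea{clhe}, constructed by a Poincar\'e-Dulac style graph transform. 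It therefore suffices to show $\sigma(\mathcal{H}_\epsilon) \subset \mathcal{H}_\epsilon$, for then $\mathcal{H}_\epsilon = \widetilde{\mathcal{H}}$. To verify this inclusion, I will unfold the pushforward equation $\sigma_*\ov\rho_1 = \ov\rho_1$ using $\sigma = \tau_1\tau_2$ and $\tau_2 = \rho_0\tau_1\rho_0$; the resulting functional identity for $\ov\rho_1$ coincides with the system \rea{zh2g}--\rea{zh2g6} that uniquely characterizes $\rho_1$ in \rl{asynum}(iv), so the inclusion holds.

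Finally, the identification $\tau_1\mathcal{H}_\epsilon = \mathcal{K}_2^\epsilon$ (the complexification of $K_2^\epsilon$) follows because both are the two branches of the lift of the attached complex submanifold $K \supset K_1^\epsilon \cup K_2^\epsilon$ under the $2^p$-to-$1$ branched covering $\pi_1 : \cL M \to \cc^{2p}$: one branch is parameterized by $w' = \ov\rho_1(z')$, the other by $w' = \ov\rho_2(z')$, and they are exchanged by the deck transformation $\tau_1$. The main obstacle I anticipate is the $\sigma$-invariance step: the bookkeeping of the pushforward $\sigma_*\ov\rho_1$ and its matching with the system \rea{zh2g}--\rea{zh2g6} requires careful use of the reversibility $\sigma = \tau_1\rho_0\tau_1\rho_0$, and of the fact that the non-resonance \rea{nuqn0} is exactly what is needed for both the invariance condition and the characterization of $\rho_1$ to be uniquely solvable.
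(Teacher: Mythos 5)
Your overall framework---invoke Lemma \ref{asynum}(iv), complexify $K_1^\e$ inside $\cL M$ as the graph $w'=\ov\rho_1(z')$, identify its tangent space with \re{clhe}, and then establish $\sigma$-invariance---tracks the paper's structure. But the pivotal step, $\sigma$-invariance, is where the proposal has a genuine gap, and you flag it yourself as ``the main obstacle I anticipate.'' The vague plan to ``unfold the pushforward equation $\sigma_*\ov\rho_1=\ov\rho_1$'' and match it with \re{zh2g}--\re{zh2g6} is not an argument: those equations characterize the pair $(\rho_1,\rho_2)$ jointly and are not in an obviously $\sigma$-equivariant form, so the proposed bookkeeping is not clearly going to close. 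The appeal to a unique Poincar\'e--Dulac invariant graph $\widetilde{\cL H}$ is also logically idle, since once you have $\sigma(\cL H_\e)\subset\cL H_\e$ you are already done and never need $\widetilde{\cL H}$.

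The paper sidesteps all of this with a short structural trick that your proposal misses entirely, and which is precisely why the realization \re{defml} (with the square on the right-hand side) was arranged in Proposition~\ref{inmae}. From \re{zh2g}--\re{zsss} with $i=1$, the set $\cL K_1=\{w'=\ov\rho_1(z')\}$ coincides with $\{L(z',w')+E(z',w')=-f(z')\}$ in $\cL M$. On $\cL M$ each $(L_j+E_j)^2=z_{p+j}$ is a $\pi_1$-invariant function, hence invariant under the deck transformation $\tau_1$; so each $L_j+E_j$ is either invariant or skew-invariant under $\tau_1$, and a check of linear parts shows all are skew-invariant. Therefore $\tau_1(\cL K_1)=\{L+E=f\}=\cL K_2$, and since $\rho\cL K_i=\cL K_i$ and $\tau_2=\rho\tau_1\rho$ one gets $\sigma(\cL K_1)=\cL K_1$ for free. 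Your final step is also asserted rather than proved: you claim the two branches ``are exchanged by the deck transformation $\tau_1$,'' but there are $2^p$ deck transformations and there is nothing a priori singling out $\tau_1$; the paper's skew-invariance computation is exactly what identifies $\tau_1$ as the transformation swapping the branches. So the proposal needs to be repaired by replacing the ``unfold the pushforward'' step and the unsupported claim about $\tau_1$ with the skew-invariance argument that exploits \re{defml}.
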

\begin{proof} Let $K_i=K_i^\e$.
We will follow Klingenberg's approach for $p=1$,  by using the deck transformations.
Suppose that
$K$ is an attached formal complex submanifold which intersects with $M$ at two
totally real formal submanifolds $K_1,K_2$.
We first embed
$K_{1}\cup K_{2}$ into $\cL M$ as $M$ is embedded into $\cL M$. Let $\cL K_{i}$ be
the complexification of $K_{i}$ in $\cL M$. Since $\rho$ fixes $K_{i}$ pointwise,
then $\rho\cL K_{i}=\cL K_{i}$.

 We want to show that $\tau_1(\cL K_{1})=\cL K_{2}$;
thus $\cL K_{i}$ is invariant under $\sigma$.  We can see that $\cL K_{i}$ is defined by
\eq{briz}
\ov\rho_i(z')=w'.
\eeq
On $\cL K_{1}$,  by \re{zh2g} and \re{zsss} we have
$\tilde L(z',w')+E(z',w')=-f(z')$.  The latter defines a complex
submanifold of dimension $p$. Thus it must be  $\cL K_{1}$.
On $\cL M$,
$$
(\tilde L_j(z',w')+E_j(z',w'))^2=z_{p+j}
$$
are invariant by $\tau_{1}$. Thus each $\tilde L_j(z',w')+E_j(z',w')$ is either
invariant or skew-invariant by $\tau_1$. Computing the linear part, we conclude
that they are all skew-invariant by $\tau_1$. Hence $\tau_1(\cL K_1)$ is defined by
$
\tilde L(z',w')+E(z',w')=f(z'),
$
which is the defining equations for $\cL K_2$.     We must identify the tangent space of $\cL K_1$ at the origin.
Let us verify \re{clhe} for $\e_j=1$ for all $j$, while the general case is analogous. Let $A, S$ be the linear parts
of $\ov\rho_1$ and $\sigma=\tau_1\tau_2$.  Define $e(z',w')=w'-A(z')$. The tangent space to $\cL K_1$ at the origin is $\{e(z',w')=0\}$. From the proof of \rl{asynum} (ii), the
matrix of $A$ is
 $$\mathbf A=\begin{pmatrix}
\diag (a_h) &{\mathbf{ 0}}&{\mathbf{ 0}} \\
{\mathbf{ 0}}&{\mathbf{ 0}} & \diag (a_s) \\
{\mathbf{ 0}}& \diag (b_s) &{\mathbf{ 0}}
\end{pmatrix}=
\begin{pmatrix}
-\diag (\la_h) &{\mathbf{ 0}}&{\mathbf{ 0}} \\
{\mathbf{ 0}}&{\mathbf{ 0}} & -\mathbf I \\
{\mathbf{ 0}}& -\mathbf I&{\mathbf{ 0}}
\end{pmatrix}.$$
Thus
$
e_h=w_h+\la_hz_h, e_s= w_s+z_{s+s_*}$,  and $  e_{s+s_*}=w_{s+s_*}+ z_s.
$
Using the formulas \re{tau10e} and \re{tau12s} of $\tau_1,\tau_2$ when $M$ is the product quadric, we can verify that
$e_{s+s_*}\circ S=\ov\mu_s e_{s+s_*}, e_s\circ S=\mu_s^{-1}e_s$,
and $e_h\circ S=\ov\mu_h e_h$. Therefore,  $e_j(z',w')=c_j\eta_j$.

Finally, if $\cL K_1$ is convergent, then \re{briz} implies that $\ov\rho_1$ is convergent. Hence $K_1$, the fixed point set of $\rho_1$,
 is convergent.\end{proof}

We now study the convergence of attached formal submanifolds.
Let us first recall a  theorem of P\"oschel~\cite{Po86}. Let $\nu$ and $\e$ be as in \re{nunue}.
Define
$$
\omega_{\nu}(k)=\min_{1<|P|\leq 2^k, P\in\nn^p}\min_{1\leq i\leq p}
\left\{|\nu^P-\nu_i|,
|\nu^P-\nu_i^{-1}|\right\}.
$$
Suppose that
\eq{omnu}
-\sum\frac{\log\omega_{\nu}(k)}{2^k}<\infty.
\eeq
Then the unique invariant formal submanifold of $\sigma$ that is tangent  to the $\cL H_\e$  defined by \re{clhe}
is convergent.

We now obtain a consequence of \rt{invep} and P\"oschel's theorem.
\begin{thm}\label{pocor}
 Let $M$ be as in \rta{invep}.
 Let $\nu=\mu_\e$ be given by \rea{nunue}.   Assume that $\nu=(\mu_1^{\epsilon_1}, \ldots, \mu_{p}^{\epsilon_{p}})$
satisfy  \rea{omnu}. Then $M$ admits an attached complex submanifold.
\end{thm}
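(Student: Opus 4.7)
The plan is to combine \rt{invep} with the convergence theorem of P\"oschel recalled just before the statement; the proof is then essentially a matter of book-keeping.

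First I would observe that the Diophantine condition \re{omnu} implies the weak non-resonance condition \re{nuqn0}. Indeed, if $\nu^Q=\nu_j^{-1}$ for some $Q\in\nn^p$ with $|Q|>1$ and some $j$, then $\omega_\nu(k)=0$ for all $k$ with $2^k\geq|Q|$, contradicting the summability of $-\log\omega_\nu(k)/2^k$. Hence \rt{invep} applies and produces a unique pair of formal asymptotic submanifolds $\{K_1^\e,K_2^\e\}$, whose complexification in $\cL M$ yields a formal $\sigma$-invariant submanifold $\cL H_\e$ tangent to the coordinate subspace \re{clhe}, together with $\tau_1\cL H_\e$ equal to the complexification of $K_2^\e$.

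The key step is to upgrade $\cL H_\e$ from formal to convergent via P\"oschel's theorem. The final computation in the proof of \rt{invep} shows that the tangent space to $\cL H_\e$ at the origin is spanned by coordinate directions on which the linear part $S=D\sigma(0)$ acts diagonally with eigenvalues $\nu_1,\dots,\nu_p$, while the complementary coordinate subspace carries the eigenvalues $\nu_1^{-1},\dots,\nu_p^{-1}$. This is precisely the hyperbolic splitting in P\"oschel's setting, and \re{omnu} is exactly his small-divisor hypothesis. Consequently $\cL H_\e$ is the Taylor expansion of a convergent germ of complex submanifold of $\cL M$.

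Finally, I would pass from $\cL H_\e$ back to the attached complex submanifold $K$. From the defining equation \re{briz}, the convergence of $\cL H_\e$ is equivalent to the convergence of the anti-holomorphic mapping $\ov\rho_1$, hence of the involution $\rho_1$; its fixed-point set $K_1^\e$ is therefore a germ of totally real real-analytic submanifold of $\cc^{2p}$. The same reasoning applied to $\tau_1\cL H_\e$ yields convergence of $\rho_2$ and of $K_2^\e$. The attached submanifold $K$ defined by \re{dfKzp} via the convergent $\ov\rho_1$ is then an honest germ of complex submanifold of $\cc^{2p}$ containing $K_1^\e\cup K_2^\e$. The only step requiring genuine input is the invocation of P\"oschel's theorem; everything else is a direct translation through the correspondences already established in \rt{invep}, and so I expect no serious obstacle beyond carefully matching the eigenvalue indexing to the form required by P\"oschel.
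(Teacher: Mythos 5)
Your proposal is correct and follows exactly the route the paper intends: it presents Theorem~\ref{pocor} as a direct consequence of Theorem~\ref{invep} combined with P\"oschel's theorem, which the paper has just recalled in the precise form needed (the unique formal invariant submanifold of $\sigma$ tangent to $\cL H_\e$ is convergent under \eqref{omnu}). You also supply the one detail the paper leaves implicit, namely that \eqref{omnu} forces $\omega_\nu(k)>0$ for all $k$ and hence $\nu^Q\neq\nu_j^{-1}$ for $|Q|>1$, so that the hypothesis \eqref{nuqn0} of Theorem~\ref{invep} is automatically satisfied. The last step — transferring convergence of $\cL H_\e$ to convergence of $\ov\rho_1$ via \eqref{briz}, thence to $\rho_1,\,K_1^\e,\,K_2^\e$ and finally to the attached manifold $K$ given by \eqref{dfKzp} — is exactly the closing remark in the proof of Theorem~\ref{invep}, so nothing is missing.
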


 To study the
 convergence of all attached formal manifolds, we use 
  a
 theorem
in~\cite{stolo-bsmf} to conclude simultaneous convergence of all attached formal submanifolds. In fact the conclusion
is much more stronger.  
It is based on the simultaneous linearization of the $\sigma_j$'s on the resonant ideal, i.e. the ideal $\cL I$ generated by $\xi_1\eta_1,\ldots, \xi_p\eta_p$.
Define with $D\sigma_i(0):=\diag (\mu_{i,1}, \ldots, \mu_{i,n})$,
\begin{equation}\label{bnI}
\omega_{\cL S,{\cL  I}}(k)=\inf\left\{\max_{1\leq i\leq l}|\mu_{i}^Q-\mu_{i,j}|\neq 0\colon
\;2\leq |Q|\leq 2^k, 1\leq j\leq n,Q\in \nn^n, x^Q\not\in {\cL  I}\right\}
\end{equation}
where $\mu_i^Q:=\mu_{i,1}^{q_1}\cdots\mu_{i,n}^{q_n}$. As in ~\cite{stolo-bsmf}, we say that the family $\cL S$ is {\it Diophantine} on $\cL I$, if the sequence of numbers \re{bnI} satisfies \re{omnu}.

\begin{thm} 
Let $M$ be as in \rta{pocor}, given by \rea{defml}. 
Assume furthermore that $M$ is non resonant. Suppose that $\cL S$ is Diophantine on $\cL I$ or that
$M$ has an abelian CR singularity of  pure complex type. 
 Then
 all attached formal submanifolds are convergent.
Moreover, the complex submanifolds  $K_j$ 
 attached to   pairs of antiholomorphic involutions
$\{\rho_{j1},\rho_{j2}\}$ have the form
\eq{eqkjL}
K_j\colon z_{p+i}=(L_i(z',\ov\rho_{j1}(z'))+E_i(z',\ov\rho_{j1}(z')))^2, \quad 1\leq i\leq p.
\eeq
\end{thm}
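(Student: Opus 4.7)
Here is the plan. By Theorem~\ref{invep}, each pair of formal asymptotic submanifolds $\{K^\e_1,K^\e_2\}$ of $M$ corresponds to an invariant formal submanifold $\cL H_\e$ of $\sigma=\tau_1\tau_2$ inside $\cL M$, tangent to the coordinate subspace \re{clhe}, with $K_1$ recovered from $\cL H_\e$ via the defining equation \re{briz} $w'=\ov\rho_1(z')$, and $K_2$ obtained from $\tau_1\cL H_\e$. The proof therefore reduces to the simultaneous convergence of all $\cL H_\e$, of which there are $2^{h_*+s_*-1}$ pairs, and to deriving the explicit form \re{eqkjL} from the convergent $\ov\rho_{j1}$.

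First I would observe that each coordinate subspace \re{clhe} is invariant, at the linear level, under every $\sigma_j$ in the family $\cL S=\{\sigma_1,\ldots,\sigma_p\}$, and that these subspaces are contained in the zero set of the resonant ideal $\cL I=(\xi_1\eta_1,\ldots,\xi_p\eta_p)$. The strategy is then to apply the main theorem of \cite{stolo-bsmf}, which asserts that a formally linearizable commuting family of germs of biholomorphisms is holomorphically linearizable on $\cL I$ whenever the Brjuno-type small divisor condition \re{bnI} is satisfied. A convergent biholomorphism $\Phi$ linearizing $\cL S$ modulo $\cL I$ sends the formal invariant subspace $\cL H_\e$ to the analytic coordinate subspace \re{clhe}, producing the desired holomorphic realization of $\cL H_\e$.

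Second, I must verify the hypotheses of the Stolovitch theorem in each of the two cases. In the abelian pure complex case the $\sigma_j$'s commute by definition of an abelian CR singularity, and Theorem~\ref{abelinv} already furnishes a convergent normalization that simultaneously diagonalizes them on $\cL I$, so the invariant submanifolds are convergent at once. In the Diophantine case I would use the non resonance of $\mu_1,\ldots,\mu_p$ together with the existence of the invariant formal submanifolds from Theorem~\ref{invep}: distinct non-resonant eigenvalues of $\sigma$ uniquely determine the invariant formal submanifolds tangent to each \re{clhe}, and since each $\sigma_j$ preserves these subspaces at the linear level and must preserve $\cL H_\e$ (by formal uniqueness within its eigenspace decomposition), formal commutativity of the family modulo $\cL I$ follows, as does formal linearizability on $\cL I$. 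Applying the theorem of \cite{stolo-bsmf} under \re{bnI} then yields simultaneous convergence of all $\cL H_\e$.

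Finally, once $\cL H_\e$ is convergent, \re{briz} shows that $\ov\rho_{j1}$ is a germ of holomorphic map, hence $\rho_{j1}$ is an antiholomorphic involution whose fixed-point set is a convergent totally real submanifold; the same applies to $\rho_{j2}$ via $\tau_1\cL H_\e$. The attached complex submanifold is then recovered as the graph over the $z'$-plane of $z_{p+i}=(L_i(z',w')+E_i(z',w'))^2$ restricted to $w'=\ov\rho_{j1}(z')$, giving precisely \re{eqkjL}.

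The hard part will be the bookkeeping needed in the Diophantine case to certify pairwise commutativity of $\{\sigma_j\}$ and their formal linearizability on $\cL I$ in the absence of the abelian hypothesis. This rests on using the uniqueness of the invariant formal submanifolds granted by non resonance (Theorem~\ref{invep}), together with the fact that each $\sigma_j$ commutes with $\sigma$ at the linear level; once this is established, the small divisor condition \re{bnI} on the family $\cL S$ transfers directly to the invocation of \cite{stolo-bsmf} and the proof concludes.
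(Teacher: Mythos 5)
Your overall strategy matches the paper's: reduce via Theorem~\ref{invep} to convergence of the formal invariant submanifolds $\cL H_\e$ of $\sigma$, dispatch the abelian pure-complex case via the normal form in Theorem~\ref{abelinv}, and in the Diophantine case invoke the simultaneous linearization on the ideal $\cL I$ from \cite{stolo-bsmf}. The conclusion formula~\eqref{eqkjL} is correctly traced back to~\eqref{dfKzp} and~\eqref{briz}.

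However, the middle step you inserted in the Diophantine case does not hold up. You argue that each $\sigma_j$ must preserve $\cL H_\e$ ``by formal uniqueness within its eigenspace decomposition'' and that formal commutativity of $\{\sigma_j\}$ modulo $\cL I$ then follows. The uniqueness of $\cL H_\e$ as the invariant formal submanifold of $\sigma$ tangent to~\eqref{clhe} can only be leveraged to conclude $\sigma_j(\cL H_\e)=\cL H_\e$ if you already know $\sigma_j(\cL H_\e)$ is $\sigma$-invariant, which in turn requires $\sigma_j\sigma=\sigma\sigma_j$ --- but that is precisely what you are trying to establish, and outside the abelian hypothesis there is no reason for it. The fact that the linear parts $S_j$ commute does not promote to the nonlinear $\sigma_j$. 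Moreover, even if each $\sigma_j$ did preserve $\cL H_\e$, having a shared invariant submanifold does not force the maps to commute on the ambient germ, even modulo $\cL I$. The paper's own (terse) proof makes no such commutativity claim: it appeals directly to \cite{stolo-bsmf}[Theorem~2.1] together with Theorem~\ref{invep}, with the formal linearization of $\sigma$ along $\cL I$ supplied by the collection of formal invariant manifolds $\cL H_\e$ for all $\e$. You should remove the circular commutativity argument and instead state that the formal invariant manifolds of Theorem~\ref{invep}, taken over all $\e$, furnish the formal linearization along $\cL I$ that the Stolovitch theorem requires, and that the small divisor condition~\eqref{bnI} is exactly the remaining hypothesis. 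You should also add the paper's final observation: once the linearization on the zero set of $\cL I$ is achieved, $\sigma$ is linear there, so the solutions $\ov\rho_{j1},\ov\rho_{j2}$ of~\eqref{zh2g}--\eqref{zh2g6} become linear and there are exactly $2^{h_*+s_*-1}$ such pairs.
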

\begin{proof}  When $M$ has an abelian CR singularity of pure complex type, from the normal form of $\tau_{ij}$
in \rt{abelinv} we know that all invariant submanifolds of $\sigma$ that are tangent to \re{clhe}
are convergent.
The non-abelian CR singularity case is a consequence of the theorem of simultaneous linearization of the $\sigma_j$'s along the resonant ideal $\cL I$~\cite{stolo-bsmf}[theorem 2.1] and \rt{invep}. 
Since, in good holomorphic coordinates, $\sigma$ is linear on the zero set of the resonant
 ideal, the solutions $\{\ov\rho_1,\ov\rho_2\}$
to \re{zh2g}-\re{zh2g6} are linear and there are $2^{h_*+s_*-1}$ pairs $\{\ov\rho_{j1},\ov\rho_{j2}\}$ of solutions.
The equation \re{eqkjL} is derived in  \re{dfKzp} for a general situation.
\end{proof}

\bibliographystyle{alpha}
\def\cprime{$'$}

\end{document}